\newtheoremstyle{mythm}
  {\topsep} 
  {\topsep} 
  {\itshape} 
  {} 
  {\bfseries} 
  {.} 
  {.5em} 
  {} 
\theoremstyle{plain}
\newtheorem{theorem}{Theorem}[section]
\newtheorem{lemma}[theorem]{Lemma}
\newtheorem{proposition}[theorem]{Proposition}
\crefname{proposition}{proposition}{proposition}
\newtheorem{corollary}[theorem]{Corollary}
\crefname{claim}{claim}{claims}
\theoremstyle{definition}
\newtheorem{definition}[theorem]{Definition}
\crefname{construction}{Construction}{Constructions}
\newtheorem{remark}[theorem]{Remark}
\newtheorem*{remark*}{Remark}
\newcommand\footnoteref[1]{\protected@xdef\@thefnmark{\ref{#1}}\@footnotemark}
\newcommand{\RR}{\mathbb{R}}
\newcommand{\CC}{\mathbb{C}}
\newcommand{\C}{\mathbb{C}}
\newcommand{\PP}{\mathbb{P}}
\newcommand{\D}{\mathbb{D}}
\newcommand{\DD}{\mathbb{D}}
\newcommand{\CP}{\CC\PP}
\newcommand{\ZZ}{\mathbb{Z}}
\newcommand{\TT}{\mathbb{T}}
\newcommand{\SSS}{\mathbb{S}}
\renewcommand{\S}{\mathbb{S}}
\newcommand{\calF}{\mathcal{F}}
\newcommand{\X}{{\mathcal{X}}}
\newcommand{\F}{{\mathcal{F}}}
\newcommand{\G}{{\mathcal{G}}}
\newcommand{\R}{{\mathbb{R}}}
\newcommand{\N}{{\mathbb{N}}}
\renewcommand{\d}{{\operatorname{d}}}
\newcommand{\rank}{{\operatorname{rk}}}
\newcommand{\Op}{{\mathcal{O}p}}
\newcommand{\wtd}{\widetilde}
\newcommand{\vol}{{\operatorname{vol}}}
\newcommand{\tois}{\xrightarrow{\sim}}
\newcommand{\Sigmatilde}{\widetilde{\Sigma}}
\newcommand{\phitilde}{\widetilde{\phi}}
\newcommand{\absholonomy}{{$\vert$holonomy$\vert$-like }}
\newcommand{\holonomy}{{holonomy-like }}
\DeclareMathOperator{\Id}{Id}
\DeclareMathOperator{\Int}{Int}
\DeclareMathOperator{\Diff}{Diff}
\DeclareMathOperator{\supp}{supp}
\title{
Existence of conformal symplectic foliations\\ on closed manifolds
}
\author{Fabio Gironella\footnote{Humboldt University, Berlin, Germany. Email: \url{gironelf@math.hu-berlin.de}, \url{fabio.gironella.math@gmail.com}} \and Lauran Toussaint\footnote{Université Libre de Bruxelles, Brussels, Belgium. Email: \url{lauran.toussaint@ulb.be}}}
\date{}
\begin{document}
\maketitle

 \begin{abstract}
 
 We consider the existence of symplectic and conformal symplectic (codimension-$1$) foliations on closed manifolds of dimension $\geq 5$.
 Our main theorem, based on a recent result by Bertelson--Meigniez, states that in dimension at least $7$ any almost contact structure is homotopic to a conformal symplectic foliation.
 In dimension $5$ we construct explicit conformal symplectic foliations on every closed, simply-connected, almost contact manifold, as well as honest symplectic foliations on a large subset of them. 
 Lastly, via round-connected sums, we obtain, on closed manifolds, examples of conformal symplectic foliations which admit a linear deformation to contact structures.
 
\end{abstract}

\section{Introduction}
\label{sec:intro}

Manifolds are often studied through geometric structures living on them. 
This approach requires structure with a suitable balance of "flexibility" and "rigidity".
Roughly speaking a flexible structure is determined purely by algebraic topological data. 
As such, they often exist in large classes of examples but do not reflect many geometric properties of the manifold. 
On the contrary, rigid structures impose more constraints on the ambient space. 
The downside is that the questions of existence and classification become more involved for them.
\\
This phenomenon is nicely illustrated by the tight-overtwisted dichotomy for contact structures. 
Overtwisted contact structures satisfy an $h$-principle \cite{Eli89,BEM15} and are flexible. The classification of tight contact structures is much more intricate, reflecting their rigidity.

In this line of thought we study here (codimension-$1$) foliations  whose leaves are endowed with a symplectic-type geometric structure.
(Without explicit mention of the contrary, the word ``foliation'' will always mean ``codimension $1$ foliation'' in this paper.)
We consider the following variations:

\begin{itemize}
\item A \emph{symplectic foliation} is a foliation $\F$ endowed with a leafwise symplectic form, i.e. $\omega \in \Omega^2(\F)$ which is closed and non-degenerate.

\item A \emph{strong symplectic foliation} is a symplectic foliations $(\F,\omega)$ for which $\omega$ admits an extension to a closed $2$-form on $M$.

\item A \emph{conformal symplectic foliation} $(\F,\eta,\omega)$ on $M^{2n+1}$ consists of a smooth foliation $\F$, endowed with differential forms $\eta \in \Omega^1(\F)$ and $\omega \in \Omega^2(\F)$ satisfying:
\[ \d \eta = 0 , \quad \omega^n > 0, \quad \d_\eta \omega := \d \omega - \eta \wedge \omega = 0.\]
Analogously to the non-foliated case, one considers such pairs up the equivalence relation described by $(\eta,\omega) \sim (\eta + d f, e^f \omega)$ for any $f \in C^\infty(M)$. 
Also, the non-degeneracy of $\omega$ implies that $\eta$ is uniquely defined by the equation $\d \omega = \eta \wedge \omega$.
Hence, strictly speaking it is not an additional datum but we include it for notational convenience.
\end{itemize}

The formal counterpart of all three structures above is an \emph{almost contact structures}, i.e. a pair $(\zeta,\mu)$ consisting of an hyperplane field $\zeta$ endowed with a non-degenerate form $\mu \in \Omega^2(\zeta)$. 
Thus, it is natural to ask which almost contact manifold admit foliations of each of the types above.

Symplectic foliations naturally generalize symplectic structures to the foliated setting. However, their existence is a big open problem in this theory.
For instance, it is not known which spheres $\S^{2n+1}$, $n \geq 3$, admit such foliations, and even the fact that $\SSS^5$ admits one is a non-trivial result \cite{Mit18}.\\
Strong symplectic foliations share the same problem and are in fact even more rigid. For example, their leaf spaces are isomorphic to those of taut foliations in dimension $3$ \cite{MTdPP18}.
(Recall that a foliation is \emph{taut} if it admits a closed transversal intersecting every leaf.)
Notice that Meigniez showed that in higher dimensions taut foliations are flexible \cite{Mei17}. 
Hence, the rigidity of strong symplectic foliations suggests that they could provide the ``right'' generalization of taut foliations to higher dimensions.

The main result of this paper states that an almost contact structures is not only necessary, but also sufficient for the existence of a conformal symplectic foliation in dimension at least $7$:

\begin{theorem}\label{thm:exist_conf_sympl_fol}
Let $M$ be a closed manifold of dimension $2n+1 \geq 7$, endowed with an almost contact structure $(\zeta,\mu)$. 
Then, there exists an exact conformal symplectic foliation $(\F,\eta,\d_\eta\lambda)$ on $M$ homotopic to $(\zeta,\mu)$ through almost contact structures.
\end{theorem}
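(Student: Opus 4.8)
The plan is to build the conformal symplectic foliation by combining the recent Bertelson--Meigniez $h$-principle for foliations (the ``recent result'' advertised in the abstract) with an open-book--type construction that endows the leaves with the required $d_\eta$-exact form. First I would recall that an almost contact structure $(\zeta,\mu)$ on $M^{2n+1}$ gives, by pure algebraic topology, a formal datum for a codimension-$1$ foliation whose leaves carry a leafwise almost symplectic structure: namely $\zeta$ is a candidate tangent distribution to $\F$, and $\mu$ a leafwise non-degenerate $2$-form. The subtlety is that we want not just a symplectic foliation but a \emph{conformal symplectic}, indeed an \emph{exact} one, meaning $\omega = d_\eta\lambda$ for a leafwise $1$-form $\lambda$ and a closed leafwise $1$-form $\eta$; the Lichnerowicz differential $d_\eta = d - \eta\wedge\cdot$ has no cohomology in positive degree once $\eta$ is not exact, so requiring exactness is actually the \emph{flexible} normalization rather than an extra constraint, and this is what makes the result plausible in this generality.

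The key steps, in order, would be: (1) turn $(\zeta,\mu)$ into a formal foliated datum, choosing a closed $1$-form $\eta$ on $M$ (or on the eventual leaves) — here one may need to first homotope $(\zeta,\mu)$ so that $\zeta$ admits a closed defining-type structure leafwise, or work with a Haefliger structure; (2) invoke Bertelson--Meigniez to realize $\zeta$ (up to homotopy of hyperplane fields) by a genuine foliation $\F$, while simultaneously carrying along the leafwise almost symplectic form so that $\F$ has a leafwise non-degenerate $2$-form $\omega_0$ in the correct homotopy class — this is the $h$-principle step that replaces the formal data by honest data; (3) correct $\omega_0$ to make it leafwise $d_\eta$-closed: since leafwise $d_\eta$-cohomology in degree $2$ vanishes (when $\eta|_{\text{leaf}}\neq 0$) or can be controlled, a Moser-type or convex-integration argument upgrades the non-degenerate $\omega_0$ to a non-degenerate $d_\eta$-closed $\omega = d_\eta\lambda$ without leaving the homotopy class; (4) check that the resulting $(\F,\eta,\omega)$ is homotopic to $(\zeta,\mu)$ as almost contact structures, which is essentially bookkeeping of the homotopies used in (2) and (3).

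The hard part will be step (2)–(3): ensuring that the Bertelson--Meigniez machine can be fed the extra leafwise almost symplectic data and outputs a foliation whose \emph{leaves} actually admit a $d_\eta$-exact symplectic form, rather than merely a foliation with the right tangent distribution. Concretely, one must arrange that on each leaf the chosen $\eta$ restricts to a nowhere-vanishing closed $1$-form (or handle the locus where it vanishes separately), and that the convex-integration/Moser correction in step (3) can be performed \emph{parametrically over the leaf space} and compatibly with the foliation's holonomy — leaves are typically non-compact and non-closed, so one cannot naively invoke compact Moser stability and must instead exploit the vanishing of $d_\eta$-cohomology (an open, non-compact--friendly statement) to integrate the correction leafwise. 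A secondary technical point is verifying that the homotopy of hyperplane fields produced by the $h$-principle can be promoted to a homotopy of almost contact structures, i.e. that the non-degenerate $2$-form is dragged along coherently; this should follow from the relative/parametric form of the $h$-principle but requires care in setting up the formal data in step (1) so that everything is natural.
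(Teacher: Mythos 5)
Your proposal does not match the paper's strategy and contains a genuine gap in the central step.

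The paper's actual route is: homotope $(\zeta,\mu)$ to a minimal (hence taut) foliation using Meigniez's earlier flexibility result for taut foliations in codimension one, carry the almost symplectic data along, and then prove Proposition~\ref{prop:conf_sympl_fol_taut}. That proposition works by removing tubular neighborhoods of two transverse curves, interpreting the three remaining pieces as foliated almost symplectic cobordisms, applying the Bertelson--Meigniez $h$-principle (Theorem B of \cite{BerMei}) to the middle piece to get a conformal symplectic foliated cobordism with overtwisted contact boundaries, and then refilling the tubular neighborhoods using \Cref{cor:ConfFoliatedFilling}, which in turn exploits the \emph{symmetry} of the foliated conformal symplectic cobordism relation and the Eliashberg--Murphy $h$-principle. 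You conflate \cite{BerMei} with \cite{Mei17}: the Bertelson--Meigniez result is about foliated cobordisms between non-empty boundary components intersecting every leaf, and cannot be fed a closed manifold directly.

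The more serious gap is your step (3). You propose to start from any leafwise non-degenerate $\omega_0$ on a foliation $\F$ and ``correct'' it to be $d_\eta$-closed by a Moser-type or convex-integration argument, on the grounds that $d_\eta$-cohomology vanishes in degree $2$ when $\eta$ is non-exact on the leaf. This reasoning is flawed: vanishing of $H^2_{d_\eta}$ of a leaf tells you that a $d_\eta$-\emph{closed} $2$-form is $d_\eta$-exact, but it does not let you deform a generic non-degenerate $2$-form to a closed one while staying non-degenerate. That is a genuine partial differential relation of symplectic type, for which no such soft argument is available (were it available, symplectic foliations would satisfy an $h$-principle in the strongest sense, which is open and suspected false). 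The whole point of the paper's decomposition is to sidestep this: the Eliashberg--Murphy and Bertelson--Meigniez overtwisted $h$-principles provide the only known route to go from almost symplectic cobordisms to symplectic ones, and they crucially require overtwisted concave boundary. This is also why the paper restricts to dimension $\geq 7$: \Cref{cor:ConfFoliatedFilling} needs a symplectic cobordism from an overtwisted sphere to the tight sphere, which does not exist in dimension $4$ (Mrowka--Rollin), so the $5$-dimensional case must be treated by different, ad hoc methods. Your proposal neither addresses the dimension restriction nor contains the decomposition-and-refill mechanism that makes the paper's argument work.
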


The main result of \cite{Mei17} implies that any almost contact structure in dimension $\geq 5$ is homotopic to a taut almost symplectic foliation.
As such our main theorem is an immediate consequence of the following:

\begin{proposition}\label{prop:conf_sympl_fol_taut}
Let $n \geq 3$, and $M^{2n+1}$ be a closed manifold with a taut almost symplectic foliation $(\G,\mu)$.
Then, $M$ admits an exact conformal symplectic foliation $(\F,\eta,\d_\eta\lambda)$ such that
\begin{enumerate}[(i)]
\item $(\F,\eta,\d_\eta\lambda)$ is homotopic to $(\G,\mu)$ among almost contact structures.
\item $\F$ is obtained from $\G$ by turbulizing along (nested) hypersurfaces diffeomorphic to $\S^1 \times \S^{2n-1}$.
\end{enumerate}
\end{proposition}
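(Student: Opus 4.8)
The plan is to start from the taut almost symplectic foliation $(\G,\mu)$ and use the turbulization procedure to create the conformal symplectic structure. Recall that turbulization modifies a foliation in a neighborhood of a codimension-$1$ submanifold $N$ (here diffeomorphic to $\S^1 \times \S^{2n-1}$) by "spinning" the leaves so that $N$ itself becomes a leaf and nearby leaves spiral towards it. The key insight is that tautness provides a closed transversal, which can be thickened to yield embedded copies of $\S^1 \times \S^{2n-1}$ that intersect every leaf; turbulizing along these is what will allow us to absorb the failure of $\mu$ to be leafwise closed into the conformal factor $\eta$.

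**Main steps, in order:**

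First, I would make precise the local model for turbulization near $\S^1 \times \S^{2n-1}$: on a collar $\S^1 \times \S^{2n-1} \times (-\epsilon,\epsilon)$ one writes down an explicit foliation interpolating between the product foliation (slices $\{t = \mathrm{const}\}$) away from the hypersurface and a foliation having $\S^1 \times \S^{2n-1}$ as a leaf. Second, I would construct on this model an explicit exact conformal symplectic form: take $\lambda$ essentially a primitive coming from a Liouville-type structure on $\S^{2n-1}$ (or from the symplectic form on the leaves of $\G$ restricted to the collar), pick $\eta$ supported in the collar and designed precisely so that $\d_\eta\lambda = \d\lambda - \eta\wedge\lambda$ is leafwise nondegenerate across the turbulized region — the $\eta\wedge\lambda$ correction term is what compensates for the degeneration of $\d\lambda$ as the leaves wrap around. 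Third, away from all the turbulization collars, $\F$ agrees with $\G$ and there $\mu$ is already leafwise symplectic (after a homotopy making it closed, using that $(\G,\mu)$ is almost symplectic, or by directly invoking whatever normal form the tautness hypothesis affords); I would glue the local conformal symplectic models to this global one, checking the matching of $\eta$ and $\omega$ on the overlaps. Fourth, for item (i), I would track the homotopy class: turbulization does not change the homotopy class of the underlying hyperplane field (the collar modification is through hyperplane fields), and the conformal symplectic form $\d_\eta\lambda$ is homotopic through nondegenerate leafwise $2$-forms to $\mu$ because the interpolation in the collar can be done through nondegenerate forms — so $(\F,\eta,\d_\eta\lambda)$ and $(\G,\mu)$ are homotopic as almost contact structures. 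Item (ii) is then immediate from the construction.

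**The main obstacle:**

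The hard part will be the gluing and the nondegeneracy across the turbulized collar: as leaves spiral towards the new compact leaf $\S^1 \times \S^{2n-1}$, the naive leafwise restriction of $\d\lambda$ develops a kernel in the "spinning" direction, and one must verify that a suitably chosen $\eta$ (supported in the collar, closed, and with the right sign/size relative to $\lambda$) makes $\d_\eta\lambda$ nondegenerate everywhere — this is a genuine computation in the local model, and getting the signs and the interpolation cutoff functions right is delicate. A secondary difficulty is ensuring the closed transversal of the taut foliation $\G$ can be upgraded to the required product hypersurfaces $\S^1 \times \S^{2n-1}$ with trivialized normal bundle and with the leafwise symplectic geometry in a standard form on their neighborhoods; this uses tautness together with the $h$-principle input, and one must be careful that the several nested turbulizations do not interfere with one another, which is why the statement specifies "nested" hypersurfaces.
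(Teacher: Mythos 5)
Your proposal misidentifies where the difficulty lies, and the resulting gap is fatal. You treat the step of passing from the almost symplectic $\mu$ to a leafwise closed (conformal symplectic) $2$-form on the bulk of the manifold as a minor normalization — you write that away from the turbulization collars ``$\mu$ is already leafwise symplectic (after a homotopy making it closed, using that $(\G,\mu)$ is almost symplectic, or by directly invoking whatever normal form the tautness hypothesis affords).'' No such homotopy exists in general on a closed manifold; making a leafwise almost symplectic form closed is precisely the existence problem that this proposition is solving, and tautness by itself gives no such normal form. The actual proof reverses the emphasis: the turbulization computation you flag as the main obstacle is the explicit, routine part (it is \Cref{thm:ConformalTurbulization}), and the genuinely hard input is a foliated h-principle that only applies on manifolds with boundary, so the whole architecture is a decomposition designed to create boundary where there is none.

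Concretely, the paper's proof uses tautness to fix two disjoint closed transversals $\gamma_\pm$ and removes solid-torus neighborhoods $\S^1\times\D^{2n}$ of them. The complement is then a foliated almost symplectic cobordism with two boundary components each meeting every leaf, and Bertelson--Meigniez's h-principle (\Cref{thm:ber_mei}, from \cite{BerMei}) is what homotopes the almost symplectic leafwise structure there to an exact conformal symplectic one with \emph{overtwisted} contact foliated boundary. The solid tori are then filled back in via \Cref{cor:CobordismSphereFilling}, which hinges on overtwistedness of the boundary contact structure and on the Eliashberg--Murphy cobordism theorem \cite{EliMur15} (this is also the source of the $n\geq 3$ restriction, which your argument never explains). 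A further parametric application of \cite{BEM15} together with \Cref{lem:ConformalRelativeTrick} is needed to match the homotopies over the intermediate thickened tori $T_\pm$, because the middle h-principle is not relative to the boundary. None of these ingredients — the decomposition along two transversals, the Bertelson--Meigniez h-principle, the overtwisted boundaries, the sphere-filling corollary, or the dimension restriction — appears in your outline, and without the h-principle your step 3 simply has no justification. The claim in your preamble that turbulization ``absorb[s] the failure of $\mu$ to be leafwise closed into the conformal factor $\eta$'' is also misleading: turbulization only modifies a collar and cannot repair the global non-closedness of $\mu$ on the rest of the manifold.
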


The idea of the proof of \Cref{prop:conf_sympl_fol_taut} is as follows. 
First, we interpret the complement of (tubular neighborhoods of) two transverse curves as a foliated almost symplectic cobordism between almost contact foliations. 
Using the h-principle result from \cite{BerMei}, this cobordism can be homotoped to a conformal symplectic foliation with foliated contact overtwisted boundaries.
Then, we extend the conformal symplectic foliation over the neighborhoods of the transverse curves using the following result: 

\begin{corollary}\label{cor:ConfFoliatedFilling}
Let $\xi_{ot}$ be any overtwisted overtwisted contact structure on $\S^{2n-1}$, $n >2$, in the same almost contact class as the standard tight contact structure. Then, there exists a foliated conformal symplectic cobordism
\[
\emptyset \to \S^1 \times (\S^{2n-1},\xi_{ot}),
\]
where the latter is endowed with the foliation by contact spheres $(\S^{2n-1},\xi_{ot})$.
\end{corollary}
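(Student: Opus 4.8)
The plan is to build the foliated conformal symplectic cobordism $\emptyset \to \S^1 \times (\S^{2n-1},\xi_{ot})$ by hand, using an explicit model on a thickened disk and then turbulizing. First I would recall that an overtwisted $\xi_{ot}$ on $\S^{2n-1}$ in the standard almost contact class bounds — by the Borman--Eliashberg--Murphy $h$-principle together with the handlebody/surgery picture — an honest Weinstein-type (or at least Liouville) filling $(W^{2n}, \lambda_W)$ with $\partial W = (\S^{2n-1}, \xi_{ot})$; more precisely, since $\xi_{ot}$ is homotopic to $\xi_{std}$ as an almost contact structure, one may take $W = \D^{2n}$ with a (non-exact near the boundary, ``overtwisted'') Liouville-type structure whose contact-type boundary is $\xi_{ot}$. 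The key point is that I do not need a genuine Liouville structure on all of $W$: I only need a conformal symplectic pair $(\eta_W, \omega_W = \d_{\eta_W}\lambda_W)$ with $\omega_W^n > 0$ that is of contact type near $\partial W$. Crossing with $\S^1$ and choosing $\eta$ to have a small $\S^1$-component will produce, away from a core region, a conformal symplectic foliation on a collar of $\S^1 \times \S^{2n-1}$ whose leaves are the cobordism slices; this is the ``outer'' part of the cobordism.

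Next I would address the ``capping off'' near $\S^1 \times \{0\} \subset \S^1 \times \D^{2n}$. The standard trick is turbulization: modify the product foliation $\{pt\} \times \D^{2n}$ near the central circle so that the leaves spiral and limit onto a compact leaf $\S^1 \times \S^{2n-1}_{small}$, after which one can glue in a genuinely foliated-by-contact-spheres piece. Concretely, one writes the conformal symplectic data in a collar $[0,1]_r \times \S^1 \times \S^{2n-1}$ using a function interpolating between ``the disk slices of the filling'' near $r=1$ and ``the turbulized, spiraling leaves'' near $r=0$, checking at each step that $\d\eta = 0$, $\d_\eta\omega = 0$ and $\omega^n > 0$ are preserved. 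The overtwistedness of $\xi_{ot}$ is exactly what allows the spiraling leaves to be filled/closed up: by the contact $h$-principle in the overtwisted regime, the contact structures appearing on the spiraling leaves can be homotoped (through the cobordism) to the fixed $\xi_{ot}$, so the compact leaf carries precisely the foliation by contact spheres $(\S^{2n-1}, \xi_{ot})$ demanded in the statement. I expect this gluing/interpolation step, and in particular keeping track of the conformal factor $\eta$ across the turbulization while maintaining leafwise non-degeneracy, to be the main technical obstacle.

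Finally I would assemble the pieces: the explicit conformal symplectic structure on the turbulized neighborhood of $\S^1 \times \{0\}$, the ``outer'' collar built from the Liouville-type filling $W$ of $\xi_{ot}$, and — if one wants the cobordism to genuinely start from $\emptyset$ rather than just be a half-open collar — observe that $\S^1 \times \D^{2n}$ is a compact manifold with boundary $\S^1 \times \S^{2n-1}$, so the whole construction lives on $\S^1 \times \D^{2n}$ and exhibits it as the desired cobordism $\emptyset \to \S^1 \times (\S^{2n-1}, \xi_{ot})$. The hypothesis $n > 2$ enters through the availability of the Borman--Eliashberg--Murphy $h$-principle for overtwisted contact structures in dimension $2n-1 \geq 5$, which is what lets us realize $\xi_{ot}$ both as a boundary of a controlled filling and as the contact structure on the final compact leaf; in dimension $3$ the analogous statement would need the $3$-dimensional overtwisted $h$-principle and a separate argument, which is why it is excluded here.
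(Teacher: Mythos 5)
There is a genuine gap, and it occurs at the very first step. You propose to start from a ``Liouville-type'' (or, in a fallback version, conformal symplectic $(\eta_W,\d_{\eta_W}\lambda_W)$) structure on $W=\D^{2n}$ with contact-type boundary $(\S^{2n-1},\xi_{ot})$. No such object exists. Overtwisted contact structures are not symplectically fillable, and on a contractible manifold such as $\D^{2n}$ any Lee form $\eta_W$ is exact, so a conformal symplectic structure on $\D^{2n}$ is conformally equivalent to a genuine symplectic one. The paper itself flags exactly this point in the introduction right after stating the corollary: a ball filling an overtwisted sphere cannot carry a symplectic structure, \emph{hence no conformal symplectic structure either}. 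So the ``outer'' building block of your plan does not exist, and crossing a non-existent filling with $\S^1$ and tweaking $\eta$ does not fix it: the leaves $\{pt\}\times W$ remain contractible, and the restriction of $\eta$ to them remains exact.

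The actual proof in the paper (via \Cref{cor:CobordismSphereFilling}, then \Cref{lemma:cobord_between_homotopic_str_transverse_boundary}) runs in the opposite direction to the one you sketch. Near the core circle of $\S^1\times\D^{2n}$ one keeps the honest product foliation with the \emph{standard} symplectic form $\omega_{st}$ on the disk slices --- nothing overtwisted happens there. All the nontrivial work happens in the outer annular region $\S^1\times[\varepsilon,1]\times\S^{2n-1}$, where $\Omega$ restricts leafwise to an almost symplectic cobordism from $(\S^{2n-1},\alpha_{st})$ to $(\S^{2n-1},\alpha_{ot})$. One first homotopes the plane field so that the foliation spirals and acquires compact leaves $\S^1\times\S^{2n-1}$ (turbulization), and then one has to promote the leafwise almost symplectic data to genuine conformal symplectic data. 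The crucial ingredient for this is Eliashberg--Murphy's theorem on overtwisted-to-tight almost symplectic cobordisms (\Cref{thm:EliMurphyMain}), combined with \Cref{rmk:weak_relat_to_relat} (for conformal symplectic structures, weakly-relative homotopies become relative). Your write-up instead invokes the Borman--Eliashberg--Murphy $h$-principle \cite{BEM15} for overtwisted contact structures on a fixed manifold; that is a different result and by itself does not supply the cobordism-to-(conformal-)symplectic step. It is precisely this Eliashberg--Murphy cobordism theorem, applied only in the annular region and never on a disk, that makes the construction go through and that forces the hypothesis $n>2$. So while your mention of turbulization, spiraling leaves, and the role of overtwistedness points in a reasonable direction, the proposal as written rests on a filling that cannot exist and misidentifies the main technical tool.
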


\begin{remark}
\label{rmk:alternative_proof_main_cor}
Alternatively, \Cref{thm:exist_conf_sympl_fol} also follows from \cite[Theorem C]{BerMei} and using \Cref{cor:ConfFoliatedFilling} only once.
As far as existence of conformal symplectic foliations is concerned, the advantage of our approach is the fact that \Cref{prop:conf_sympl_fol_taut} does not require the foliation to be \emph{holonomous} (as defined in \cite[Theorem C]{BerMei}).
\end{remark}

It is well known that a ball does not admit any symplectic structure filling an overtwisted contact structure on the boundary sphere, and hence no conformal symplectic structure either (since the ball is contractible). 
On the other hand, the corollary says that in the foliated setting this is possible.
Its proof is based on the fact that the foliated conformal symplectic cobordism relation between (unimodular) contact foliations is \emph{symmetric} (c.f. \Cref{prop:CobordismEquivalenceRelation}). 
This is in stark contrast with the symplectic case (foliated or not), where the cobordism relation is far from symmetric.\\
The dimensional assumption in the above corollary is essential.
Indeed, its proof uses \cite{EliMur15} to construct a symplectic cobordism from $(\S^{2n-1},\xi_{ot})$ to $(\S^{2n-1},\xi_{st})$. Such a cobordism does not exist for $n=2$, see \cite{MroRol06,KroMroBook}. 
This is also the reason for the dimensional assumption in \Cref{thm:exist_conf_sympl_fol}.

In dimension $5$ we take a different approach and focus on simply connected manifolds. Here, using turbulization of conformal symplectic foliations together with Barden's classification \cite{Bar65} we prove:

\begin{theorem}\label{thm:conf_sympl_fol_5folds}
Any simply connected $5$-manifold admits a conformal symplectic foliation.
\end{theorem}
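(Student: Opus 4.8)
The strategy is to reduce \Cref{thm:conf_sympl_fol_5folds} to the existence of an almost contact structure and then invoke the machinery developed above, using that every closed simply connected $5$-manifold carries an almost contact structure, and that conformal symplectic foliations are stable under turbulization and connected sums. First I would recall Barden's classification \cite{Bar65}: every closed simply connected $5$-manifold is diffeomorphic to a connected sum of a finite list of ``building blocks'', namely $\S^5$, the nontrivial $\S^3$-bundle over $\S^2$ (sometimes called the Wu manifold's partner, more precisely $\S^3 \widetilde\times \S^2$), the $X_j$'s (including $X_{-1}$, the Wu manifold, although this one is not spin and must be handled as part of the $X_\infty$ family) and the $X_\infty = \S^2 \times \S^3$. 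Since all of these are known to admit almost contact structures (indeed, in dimension $5$ the only obstruction is the third integral Stiefel--Whitney class $W_3 = \beta w_2$, which vanishes on all simply connected $5$-manifolds by work of Geiges and others), every closed simply connected $5$-manifold is almost contact.

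The main technical issue is that \Cref{thm:exist_conf_sympl_fol} has the dimensional restriction $2n+1 \geq 7$, so it does not directly cover dimension $5$. Therefore I would instead proceed by direct construction on each building block and then glue. The plan is: (1) exhibit a conformal symplectic foliation on each of the finitely many building blocks appearing in Barden's list; (2) show that the conformal symplectic foliation property is preserved under connected sum of $5$-manifolds, by arranging the foliations near the connect-sum locus to be standard (e.g. by a Reeb-type or turbulized model near an $\S^4 \subset M_i$ transverse to $\F_i$, so that the two pieces glue along $\S^1 \times \S^4$-type hypersurfaces exactly as in the turbulization of \Cref{prop:conf_sympl_fol_taut}). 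For step (1), $\S^5$ carries a conformal symplectic foliation by Mitsumatsu-type or Lawson-type constructions, or alternatively by turbulizing a symplectic foliation coming from a Liouville structure; $\S^2 \times \S^3$ and the twisted $S^3$-bundle over $S^2$ fiber over $\S^2$, so one can build a foliation transverse to the fibers and deform the leafwise symplectic data into conformal symplectic form; the $X_j$ manifolds, being total spaces of circle bundles or obtained as quotients, admit foliations coming from explicit open-book or contact-type decompositions that can be turbulized.

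The hard part will be step (2): controlling the conformal symplectic data near the gluing region so that the Lee form $\eta$ and the leafwise form $\omega$ match up after the connected sum, since $\eta$ being closed is a cohomological constraint and $\d_\eta \omega = 0$ is a twisted-closedness condition that is not obviously local. The cleanest route is to observe that each building block's foliation can be arranged to be \emph{turbulized} near a chosen transverse $\S^4$, i.e. to look like the standard turbulization model $\S^1 \times \S^4$ with an explicit conformal symplectic structure (the same local model produced by the turbulizations in \Cref{prop:conf_sympl_fol_taut}), and then the connected sum of the two pieces along these standard necks automatically inherits a conformal symplectic foliation because the gluing is by a diffeomorphism preserving the standard model. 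One then checks by induction on the number of connect-sum factors that the resulting foliation is a genuine conformal symplectic foliation on the connected sum, and finally verifies that the ambient almost contact structure realized is the given one (or, since we only claim bare existence of \emph{a} conformal symplectic foliation, this last homotopy-theoretic bookkeeping can be dropped). I expect the bulk of the work to be in making the local turbulization models on the building blocks genuinely standard and compatible, rather than in the enumeration or the almost-contact input.
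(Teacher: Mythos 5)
Your plan fails at step (2), and the failure is more basic than you anticipate. You propose to perform an ordinary connected sum and control the foliation by "turbulizing near an $\S^4 \subset M_i$ transverse to $\F_i$". But the boundary of a small $5$-ball is \emph{never} transverse to a codimension-$1$ foliation: any hyperplane field on a ball, restricted to the bounding $\S^4$, has points of tangency by a Poincar\'e--Hopf argument. There is simply no smooth codimension-$1$ foliation on $M_1 \# M_2$ adapted to the ordinary connected-sum decomposition, and this is exactly the reason the paper introduces \emph{round} connected sums $M_1 \#_{\S^1} M_2$ (\Cref{sec:fol_round_handle}), which remove tubular neighborhoods $\S^1 \times \D^4$ of curves \emph{positively transverse} to the foliations and glue along $\S^1 \times \S^3$. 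That gluing locus can be turbulized, and \Cref{prop:round_sum_conf_sympl_fol} makes it work for conformal symplectic foliations. (Your reference to gluing along "$\S^1 \times \S^4$-type hypersurfaces" in a $5$-manifold is also dimensionally inconsistent, which suggests the confusion here.)

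Because the gluing operation changes from $\#$ to $\#_{\S^1}$, the building blocks must change too; you cannot just use Barden's prime factors. The paper's strategy is instead: (a) construct explicit conformal symplectic foliations on $\S^5$, $\S^2\times\S^3$, $X_\infty$, $X_\infty \# \S^2\times\S^3$, and on doubled symplectic mapping tori $N_k$, $P$ with $H_2 = \ZZ_k \oplus \ZZ_k$ and $\ZZ\oplus\ZZ$ respectively (\Cref{prop:double_conf_sympl_fol}); (b) compute how round connected sum acts on $\pi_1$, $H_2$ and $w_2$ (\Cref{lemma:round_conn_sum_homology}), using crucially that the mapping tori have $\pi_1 = \ZZ$ generated by the transverse curve; and (c) match the invariants of any target $M$ using Barden's classification \Cref{thm:w2}. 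Finally, a factual slip: $W_3$ does \emph{not} vanish on all simply connected $5$-manifolds. It is nonzero on the Wu manifold $X_{-1}$ and on every $X_j$ with $1 \leq j < \infty$; these admit no almost contact structure (\Cref{lemma:simpl_conn_5fold_contact}) and are excluded from the theorem (as the paper's abstract and the start of \Cref{sec:proof_conf_sympl_fol_5folds} make clear by restricting to almost contact $M$). Your claim that every simply connected $5$-manifold is almost contact is therefore false.
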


In many cases, the foliations in \Cref{thm:conf_sympl_fol_5folds} contain closed leaves diffeomorphic to $\S^1 \times \S^3$, and cannot be symplectic.
This being said, using a surgery construction on contact open books from \cite{Eli04}, we prove the following:
\begin{theorem}
\label{thm:sympl_fol_5folds}
Every simply-connected almost contact $5$-manifold $M$ satisfying at least one of the following conditions admits 
symplectic foliation:
\begin{enumerate}[(i)]
    \item $\rank(H^{2}(M;\ZZ))\geq 2$,
    \item\label{item:thm_sympl_fol_5folds_2} $M$ is spin (i.e.\ with trivial second Stiefel--Whitney class $w_2$) and $\rank(H^{2}(M;\ZZ))\leq 1$,
    \item $M$ is spin and $H^2(M;\ZZ) = \ZZ_p \oplus \ZZ_p$ for $p$ relatively prime to $3$,
    \item $M$ is spin and $H^2(M;\ZZ) = \ZZ_q \oplus \ZZ_q$ for $q$ relatively prime to $2$.
\end{enumerate}
\end{theorem}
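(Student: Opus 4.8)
The plan is to produce the symplectic foliation by realising $M$ as the total space of an open book whose page $P$ is an exact symplectic (Weinstein) $4$-manifold, whose monodromy is a symplectomorphism supported away from $\partial P$, and, crucially, whose binding $B=\partial P$ is a closed $3$-manifold that fibres over $S^1$. Granting such an open book, the complement $M\setminus\nu(B)$ of a tubular neighbourhood $\nu(B)\cong B\times D^2$ is the mapping torus of the monodromy, hence fibres over $S^1$ with fibre $P$, and so carries a leafwise symplectic form induced by $\omega_P$ (which descends to the mapping torus precisely because the monodromy is symplectic). One then turbulises along $B$ in the style of the Lawson--Mitsumatsu construction (cf.\ \cite{Mit18}), re-foliating $\nu(B)$ so that its unique closed leaf is $B\times S^1$. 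Since $B$ fibres over $S^1$, the leaf $B\times S^1$ fibres over $T^2$ and therefore carries a closed symplectic form (Thurston); the delicate point, carried out following \cite{Mit18}, is that this form and $\omega_P$ can be glued across the turbulised region into a genuine leafwise symplectic form on $M$. It is exactly the fibring of the binding over $S^1$ that makes this an honest symplectic foliation rather than merely a conformal one: turbulising instead along transverse circles would create non-symplectic closed leaves diffeomorphic to $S^1\times S^3$.

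I would begin by invoking Barden's classification \cite{Bar65} to write $M$ as a connected sum of $S^5$, copies of $S^2\times S^3$, and the torsion building blocks $X_k$ with $H_2(X_k;\ZZ)\cong\ZZ_k\oplus\ZZ_k$, $k$ a prime power; under the spin hypothesis the non-spin pieces --- the Wu manifold and the nontrivial $S^3$-bundle over $S^2$ --- do not occur. The four hypotheses then separate the manifolds into three situations: in case (i) and in the subcase of (ii) with $\rank(H^2(M;\ZZ))=1$ one has $M\cong N\#(S^2\times S^3)$ with $N$ a closed simply connected almost contact $5$-manifold; the remaining part of (ii) is $M=S^5$, which is Mitsumatsu's example \cite{Mit18}; and (iii)--(iv) concern the purely torsion manifolds $M\cong X_{k_1}\#\cdots\#X_{k_r}$.

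For $M\cong N\#(S^2\times S^3)$ I would start from a contact structure on $N$ --- which exists since $N$ is simply connected and almost contact --- supported by an open book with Weinstein page and binding $B_0$, and then apply the surgery construction on contact open books of \cite{Eli04} along a $\partial$-parallel unknotted Legendrian contained in a page. This simultaneously performs the connected sum with $S^2\times S^3$ on the total space and replaces the binding $B_0$ by $B_0\#(S^1\times S^2)$, which fibres over $S^1$, while keeping the page Weinstein and the monodromy a symplectomorphism. The turbulisation of the first paragraph then applies and settles case (i) together with the $\rank(H^2(M;\ZZ))=1$ part of (ii).

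The heart of the matter --- and the step I expect to be the main obstacle --- is the purely torsion case. Here I would realise $X_k$ by an open book whose binding is a torus bundle over $S^1$ with finite-order (periodic) monodromy $A\in SL(2,\ZZ)$: such a binding automatically fibres over $S^1$, and periodicity of $A$ is exactly what makes it a symplectomorphism of the model $2$-torus. The torsion subgroup $\coker(\Id-A)$ of the first homology of this mapping torus must be coprime to $k$ for the total space of the open book to have $H_2\cong\ZZ_k\oplus\ZZ_k$: choosing $A$ of order $3$, so that $\coker(\Id-A)\cong\ZZ_3$, forces $\gcd(k,3)=1$ and yields case (iii), while choosing $A$ of order $4$, so that $\coker(\Id-A)\cong\ZZ_2$, forces $\gcd(k,2)=1$ and yields case (iv). Making the identification of the resulting total space with $X_k$ precise (up to connected sums handled by the previous step), and checking the gluing of the several symplectic forms across the turbulised region in the general construction, is where the real work lies; once these are in place, the four cases follow.
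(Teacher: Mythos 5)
Your overall strategy — build an open book whose page has cosymplectic boundary, take the leafwise symplectic foliation on the mapping torus, turbulize along the binding, and glue — is exactly the paper's strategy (Proposition \ref{prop:from_OBD_to_sympl_fol}). Your treatment of case (i) and the rank-$1$ part of (ii) also matches the paper's: start from a contact open book on a simply-connected almost contact $5$-manifold, apply Eliashberg's capping construction to make the page's boundary cosymplectic at the cost of a surgery along an $S^1$ (i.e.\ a connected sum with $S^2\times S^3$), and turbulize; the paper's Lemma \ref{lem:EliashbergTrickClassification} identifies the resulting total space, and $S^5$ is Mitsumatsu \cite{Mit18}. Up to here your proposal is in substance what the paper does.

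For cases (iii)--(iv) you depart from the paper's proof, and here the gaps are genuine. The paper realizes $M_p$ directly as the Brieskorn manifold $\Sigma(p,3,3,3)$ (resp.\ $M_q=\Sigma(q,4,4,2)$), with the open book given by projection to the last coordinate; the binding $\Sigma(3,3,3)$ (resp.\ $\Sigma(4,4,2)$) is an $S^1$-bundle over the genus-$1$ curve $\{z_1^3+z_2^3+z_3^3=0\}\subset\CP^2$ (resp.\ $\{z_1^4+z_2^4+z_3^2=0\}\subset\CP(1,1,2)$). Note that these bindings are Nil manifolds: they are $T^2$-bundles over $S^1$ with \emph{parabolic}, infinite-order monodromy, not with finite-order monodromy as you propose. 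Your justification for finite order — ``periodicity of $A$ is exactly what makes it a symplectomorphism of the model $2$-torus'' — is incorrect: every element of $SL(2,\ZZ)$ preserves the standard area form on $T^2$, so periodicity is unrelated to being a symplectomorphism. You also never specify a page and monodromy realizing $M_p$ with your proposed binding; the claim that coprimality of $k$ with $\coker(\Id-A)$ is forced by $H_2\cong\ZZ_k\oplus\ZZ_k$ is a reverse-engineered numerology, not a consequence of anything you construct.

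The more structural gap is that ``the binding fibres over $S^1$, hence admits a cosymplectic structure'' is not sufficient. The paper's Lemma \ref{lem:cosymplecticboundaryperturbation} requires a cosymplectic pair $(\gamma,\mu)$ on $\partial\Sigma$ satisfying two compatibility conditions with the page: $\gamma\wedge\omega_\partial=0$, and $[\mu]$ in the image of $H^2(\Sigma)\to H^2(\partial\Sigma)$. Verifying the second one for the Brieskorn pages is precisely the content of Lemma \ref{lem:BrieskornRestriction}, a nontrivial argument using $S^1$-basic cohomology and the cohomology of weighted projective spaces. Your proposal invokes Thurston to produce \emph{some} symplectic form on $B\times S^1$, but the form that arises on the compact boundary leaf after turbulization is the one dictated by $\omega_\partial$ and the chosen admissible form (Theorem \ref{thm:turbulization}); it has to match on both sides of the gluing, and this is controlled by the perturbation Lemma \ref{lem:cosymplecticboundaryperturbation}, not by an abstract existence statement. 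Without addressing this cohomological step, the construction does not close.
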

In fact, manifolds satisfying $(i)$ and the (only) manifold with $\rank = 1$ satisfying $(ii)$, admit infinitely many, pairwise distinct symplectic foliations.
Here, by the rank $\rank(G)$ of a finitely generated $\ZZ-$module $G$ we mean the rank of its free part.

Going back to the general setting of ambient dimension $2n+1\geq5$, we prove a conformal symplectic foliated analogue of an observation due to Milnor \cite[Corollary 6]{Law71}.
This gives an ``explicit'' (at least at the level of underlying smooth foliations) method of constructing conformal symplectic foliations on connected sums with exotic spheres:

\begin{theorem}
\label{thm:conf_sympl_fol_up_to_exotic_sphere}
If $M$ admits a conformal symplectic foliation, then so does the connect sum $M\#\Sigma$ for any exotic sphere $\Sigma$.
The underlying smooth foliation on $M\#\Sigma$ is obtained from the one on $M$ by (iterated) turbulization along a transverse curve, 
and cutting and regluing an open disc in a leaf via an exotic diffeomorphism.
\end{theorem}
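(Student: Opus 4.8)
The plan is to mimic Milnor's argument (as in \cite[Corollary 6]{Law71}) in the conformal symplectic category, the main new difficulty being that cutting and regluing a disc in a leaf must be carried out compatibly with the leafwise conformal symplectic form and with the transverse structure of the foliation. Recall that an exotic sphere $\Sigma^{m}$ (with $m=2n+1$) bounds a parallelizable manifold, and more to the point, $M\#\Sigma$ is obtained from $M$ by cutting out an embedded disc $D^{m}\subset M$ and regluing it via an orientation-preserving diffeomorphism $\varphi\in\Diff^+(\S^{m-1})$ representing $[\Sigma]$ under the Gromoll--Milnor identification $\Theta_m\cong\pi_0\Diff^+(\S^{m-1})$ (for $m\neq 4$). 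So the first step is to arrange that $M$ carries a conformal symplectic foliation $(\F,\eta,\d_\eta\lambda)$ with a transverse curve $\gamma$ along which we may turbulize; turbulization replaces a neighborhood $\S^1\times D^{m-1}$ of $\gamma$ by a region foliated so as to contain a leaf $L$ diffeomorphic to $\S^1\times\S^{m-1}$, and (by the usual turbulization construction for conformal symplectic foliations, which I would recall from the construction underlying \Cref{prop:conf_sympl_fol_taut}(ii)) the leafwise conformal symplectic form extends over this region. In particular $L$ carries an exact conformal symplectic structure $(\eta|_L,\d_{\eta|_L}\lambda|_L)$.

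The second step is the local surgery inside the leaf $L\cong\S^1\times\S^{m-1}$. Choose a small disc $D^{m-1}\subset\S^{m-1}$, hence a disc $\S^1\times D^{m-1}$, but better: inside $L$ pick an embedded $(m)$-dimensional disc... no --- $L$ has dimension $m$, so pick an embedded disc $D^{m}\subset L$ contained in a single flow box. Near $D^m$ the foliation is a product $D^m\times(-\epsilon,\epsilon)$ with the conformal symplectic data pulled back from $D^m$. Now cut along $\partial D^m=\S^{m-1}$ and reglue via $\varphi$, doing the same regluing on each nearby leaf $D^m\times\{t\}$ simultaneously (i.e. reglue $D^m\times(-\epsilon,\epsilon)$ to its complement via $\varphi\times\mathrm{id}$). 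This is well-defined as a smooth foliated manifold and produces $M\#\Sigma$ as the ambient manifold, with $L$ replaced by $L\#\Sigma$ and the nearby leaves correspondingly modified; the surgery is supported in $D^m\times(-\epsilon,\epsilon)$. The key point is that $\varphi$ can be chosen supported in a collar of $\partial D^m$, and on that collar we can homotope it (rel the two boundary spheres, through diffeomorphisms) so that the pullback of the conformal symplectic data matches the original — this is a Moser-type / Gray-stability argument in the relative, parametrized (in $t$) setting, using that $\mathrm{Symp}$-rigidity is not needed here because we only need to fix up a form on a collar where the difference is exact and small. One must check the cocycle/matching conditions for $(\eta,\omega)$ across the regluing locus; since $\eta$ is closed and $\omega$ is $\d_\eta$-closed and the regluing is by a diffeomorphism isotopic to the identity on the collar, the glued forms agree up to a conformal factor $e^f$, which is exactly the gauge ambiguity built into the definition of conformal symplectic foliation.

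The third step is bookkeeping: verify that the resulting structure on $M\#\Sigma$ is a genuine conformal symplectic foliation (leafwise $\omega^n>0$ is preserved since $\varphi$ is orientation-preserving and we only rescaled by $e^f>0$; $\d\eta=0$ and $\d_\eta\omega=0$ hold leafwise by construction), and that its underlying smooth foliation is as described — obtained from $\F$ by iterated turbulization along $\gamma$ and then cutting/regluing a disc in a leaf by an exotic diffeomorphism. The main obstacle I anticipate is precisely the Moser-type matching on the collar: one needs the regluing diffeomorphism $\varphi$ of $\S^{m-1}$ to be isotoped, within its mapping class (so as not to change $[\Sigma]$), to one under which the two conformal symplectic germs on the two sides of the cut become strictly equal after an allowed gauge change, and this must be done uniformly in the transverse parameter $t$ and compatibly with the foliated structure near the boundary of the surgery region. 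Since the ambiguity is governed by an exact $1$-form (hence killed by the $(\eta,\omega)\sim(\eta+\d f,e^f\omega)$ equivalence) and a Moser deformation for the $2$-form (available because $H^2$ of a collar on a sphere is controlled and the discrepancy is exact), this should go through, but it is where the real content lies; everything else is the standard Milnor connected-sum-with-exotic-sphere trick transported into a leaf.
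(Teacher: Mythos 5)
Your outline starts in the right place (turbulize along a transverse curve to create a closed toral leaf, then cut and reglue a region inside that leaf), but the central step of your argument is wrong, and this is exactly where the real content of the theorem lives.

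You assert that after cutting and regluing via the exotic diffeomorphism $\varphi$, the conformal symplectic data can be matched across the cut by ``a Moser-type / Gray-stability argument'' because ``the discrepancy is exact'' and is ``killed by the $(\eta,\omega)\sim(\eta+\d f, e^f\omega)$ equivalence.'' This does not work. The issue is not whether two cohomologous (or conformally cohomologous) $2$-forms can be interpolated rel boundary; it is whether the regluing diffeomorphism itself can be replaced, within its smooth isotopy class, by one that preserves the leafwise conformal symplectic structure. That is a statement about the group of compactly supported symplectomorphisms, not about exactness of forms, and in general it is \emph{false}: for the standard symplectic $\RR^{2n}$, Gromov's theorem says the group of compactly supported symplectomorphisms is contractible, so an exotic compactly supported diffeomorphism can never be made symplectic. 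No Moser or Gray argument circumvents this, because rescaling by $e^f$ and changing $\eta$ by $\d f$ does not change the diffeotopy class of the gluing map.

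The paper gets around this by a flexibility phenomenon specific to overtwisted contact geometry. First, a single turbulization is not enough: the closed leaf $\SSS^1\times\SSS^{2n-1}$ it produces carries the conformal symplectization of the \emph{tight} $\alpha_{st}$. The proof inserts a trivial cobordism $[0,1]\times\SSS^1\times\SSS^{2n-1}$ using \Cref{prop:interpolating_conformal_symplectizations_homotopic_contact_str} (this is the ``iterated turbulization'' in the statement) so as to create a closed leaf containing an arbitrarily long piece $\bigl((-R_0,R_0)\times\SSS^{2n-1},\,\d(e^t\alpha_{ot})\bigr)$ of the symplectization of an \emph{overtwisted} sphere. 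Then the key input is \cite[Theorem 9.2]{CKS18} (the appendix by Courte): any compactly supported diffeomorphism of $\RR^{2n}$ is isotopic to a compactly supported symplectomorphism of the symplectization of the standard overtwisted $\RR^{2n-1}$. This is precisely what lets one reglue by an exotic diffeomorphism and stay in the conformal symplectic category. Your proposal contains no analogue of this ingredient, and no Moser argument can replace it; this is the gap.

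Two smaller points. The cut should be along a codimension-one region of $M$ lying inside a leaf (an embedded $\RR^{2n}$, or $(-R_0,R_0)\times\SSS^{2n-1}$, as in the paper), not along the codimension-two sphere $\partial D^m$; the regluing map is a compactly supported diffeomorphism of that $2n$-dimensional region, extended as the identity, and it is because it is compactly supported (and, after Courte, symplectic) that the foliated structure on the complement glues back without any further matching. Your parametrized ``$\varphi\times\mathrm{id}$'' description is both imprecise and unnecessary once the regluing map is symplectic.
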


The proof uses the conformal symplectic turbulization procedure in order to introduce open leaves which are symplectomorphic to the symplectization of an overtwisted sphere, as well as the results from \cite[Appendix by Courte]{CKS18} that allow to realize any smoothly exotic diffeomorphism of a disk as a compactly supported symplectomorphism of said symplectization.

Lastly, we make a few remarks concerning deformations of conformal symplectic foliations to contact structures, in the spirit of \cite{EliThu98}.
More precisely, we consider \emph{type I} linear contact deformations  (see \Cref{def:TypeIConformalSymplectic}) which are a higher dimensional analogue of the linear deformations from \cite{EliThu98}.

We show that some of the compact leaves contained in the foliation from \Cref{cor:ConfFoliatedFilling}
\emph{obstruct} the existence of a type I deformation (c.f.\ \Cref{lem:obstruction_deformation}). 
As a result, the foliations from \Cref{thm:exist_conf_sympl_fol} cannot be deformed. 
However, if we avoid the use of \Cref{cor:ConfFoliatedFilling}, these problematic contact leaves do not appear, and we obtain examples of (taut) conformal symplectic foliations which admit type I linear contact deformations on closed manifolds.

In order to give the precise statement, recall that given closed curves $\gamma_\pm \subset M_\pm$, the \emph{round-connected sum} $M_- \#_{\S^1} M_+$ is obtained by removing tubular neighborhoods of two (disjoint, parallel) copies of $\gamma_\pm$ inside $M_\pm$ respectively, and gluing along the resulting boundary components.

\begin{theorem}
\label{thm:deformations_self_round_connected_sums}
Let $M^{4n+3}$, $n \geq 1$, be an almost contact manifold, and $\gamma\subset M$ a closed curve. 
Then, there are infinitely many $k>0$ for which the iterated round-connected sum
\[ M \#_{\S^1} (\bigsqcup_{i=1}^k T^2 \times \S^{4n+1}),\]
admits a conformal symplectic foliation having a Type I deformation to contact structures. 
Here, the round-connected sum pairs a parallel copy $\gamma_i$ of $\gamma$ with a curve $\S^1 \times \{pt\}$ in the $i$-th copy of $T^2 \times \S^{4n+1}$.
\end{theorem}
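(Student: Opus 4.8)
The plan is to run the construction behind \Cref{prop:conf_sympl_fol_taut}, but to replace the appeal to \Cref{cor:ConfFoliatedFilling} by an alternative foliated conformal symplectic cap supported on a piece of $\TT^2\times\S^{4n+1}$ which, unlike the cap of \Cref{cor:ConfFoliatedFilling}, contains no closed leaf of the type shown in \Cref{lem:obstruction_deformation} to obstruct Type I deformations. First I would fix, via \cite{Mei17}, a taut almost symplectic foliation $\G$ on $M$ positively transverse to $\gamma$, and hence to $k$ disjoint parallel copies $\gamma_1,\dots,\gamma_k$ of $\gamma$. Removing open tubular neighbourhoods $\nu(\gamma_i)\cong\S^1\times\DD^{4n+2}$ exhibits $W:=M\setminus\bigsqcup_i\nu(\gamma_i)$ as a foliated almost symplectic cobordism from $\emptyset$ to $\bigsqcup_{i=1}^k\S^1\times\S^{4n+1}$, with a foliated almost contact structure on the boundary. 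As in the proof of \Cref{prop:conf_sympl_fol_taut}, and --- by \Cref{rmk:alternative_proof_main_cor} --- without any holonomy hypothesis, the $h$-principle of \cite{BerMei} homotopes it to an exact conformal symplectic foliation $(\F_W,\eta_W,\d_{\eta_W}\lambda_W)$ on $W$ whose boundary is the foliation by overtwisted contact spheres $\bigsqcup_{i=1}^k\S^1\times(\S^{4n+1},\xi_{ot})$, all in the standard almost contact class.

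Next I would build, on $N:=\TT^2\times\S^{4n+1}$, the conformal symplectic cap. Writing $\TT^2=\S^1_s\times\S^1_t$ and letting $\alpha_{st}$ be the standard tight contact form on $\S^{4n+1}$, the $2$-form $\Omega:=\d\alpha_{st}-\d t\wedge\alpha_{st}$ together with the Lee form $\d t$ defines a locally conformally symplectic structure on $\S^1_t\times\S^{4n+1}$ --- the mapping torus of the symplectization of $(\S^{4n+1},\alpha_{st})$ --- and pulling it back along the projection $N\to\S^1_t\times\S^{4n+1}$ gives a conformal symplectic foliation of $N$ with leaves $\{s\}\times\S^1_t\times\S^{4n+1}$. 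Excising from each leaf a small Darboux ball centred on the curve $\S^1_s\times\{pt\}$, i.e.\ removing $\nu(\S^1_s\times\{pt\})\cong\S^1\times\DD^{4n+2}$, leaves on $N':=N\setminus\nu(\S^1_s\times\{pt\})$ a conformal symplectic foliation whose boundary is the foliation by the tight sphere $\S^1\times(\S^{4n+1},\xi_{st})$, occurring as a concave contact end. Interposing the $\S^1$-product of an Eliashberg--Murphy symplectic cobordism \cite{EliMur15} from $(\S^{4n+1},\xi_{ot})$ to $(\S^{4n+1},\xi_{st})$ --- the same device used in \Cref{cor:ConfFoliatedFilling} --- and using the symmetry of the foliated conformal symplectic cobordism relation between unimodular contact foliations (\Cref{prop:CobordismEquivalenceRelation}) to make the contact data match along the overtwisted sphere, I glue this cap to each boundary component of $W$. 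As the Eliashberg--Murphy cobordism is topologically a product, the result is an exact conformal symplectic foliation $\F$ on the round-connected sum $M\#_{\S^1}\bigl(\bigsqcup_{i=1}^k\TT^2\times\S^{4n+1}\bigr)$, with the curves paired as in the statement.

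Finally I would produce the Type I deformation. By construction the closed leaves of $\F$ are the turbulization hypersurfaces $\S^1\times\S^{4n+1}$ created on the $W$-side (together with whatever closed leaves $\G$ already carried), and none of them is of the kind excluded by \Cref{lem:obstruction_deformation}, whose obstruction is pinned to leaves coming from \Cref{cor:ConfFoliatedFilling} --- which we have deliberately bypassed. It then remains to assemble a Type I linear contact deformation in the sense of \Cref{def:TypeIConformalSymplectic}. Such a deformation is prescribed by a defining form $\alpha$ for $\F$ together with an extension of a conformal representative of $\d_\eta\lambda$ to a global $2$-form, and its existence reduces to a positivity condition on the resulting relative cohomology class; this class depends linearly on the number $k$ of caps, the coefficient of $k$ being a fixed class coming from the Lee form $\d t$ of the mapping-torus structure on each $N'_i$. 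Fixing the defining data on $W$ once and for all and letting $k$ vary, the condition is therefore met for $k$ ranging over a suitable infinite set, and for those $k$ the local linear deformations --- standard on the contact collars and on the mapping-torus caps, and provided by \Cref{def:TypeIConformalSymplectic} on $W$ --- patch together into the desired Type I deformation of $\F$.

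The step I expect to be the main obstacle is this last one: pinning down precisely which cohomological invariant the $\TT^2\times\S^{4n+1}$-caps adjust, checking that it depends linearly and nontrivially on $k$, and --- most delicately --- matching the local linear deformations across the gluing hypersurfaces $\S^1\times(\S^{4n+1},\xi_{ot})$, where $\F$ passes from the abstract $h$-principle foliation on $W$, through the interposed Eliashberg--Murphy cobordism, to the explicit mapping-torus model on $N'_i$. A secondary difficulty is checking that the conformal symplectic foliation on $W$ genuinely admits a Type I deformation compatible, across these gluing hypersurfaces, with the standard one on the caps, which requires tracking the conformal class of $\omega$ and the Lee form $\eta$ carefully through the turbulization.
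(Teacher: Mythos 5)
Your architecture diverges substantially from the paper's, and the divergence is where the gap lies. The paper removes only \emph{two} parallel copies of $\gamma$, applies \cite{BerMei} to the resulting cobordism (which genuinely has both $\partial_+$ and $\partial_-$ nonempty, as Theorem B requires), and then \emph{bridges} the two overtwisted ends directly by $\S^1\times X$, where $X$ is an honest Liouville cobordism $(\S^{2n-1},\xi'_{ot,+})\to(\S^{2n-1},\xi_{ot,-})$ built from $\D^{2n}$ minus $N$ subballs, $1$-handle attachments, and an Eliashberg--Murphy piece. Because $X$ is a Liouville cobordism, no turbulization is needed, so no concave--concave model appears and \Cref{thm:TypeIDeformations} applies directly. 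Your plan instead removes $k$ copies and tries to \emph{cap} each boundary component separately. Two problems. First, \cite[Theorem B]{BerMei} requires a split $\partial_\pm W$ with \emph{both} parts nonempty, so the output cannot have all $k$ boundary components carrying positive overtwisted contact structures as you assert; at least one must be on the $\partial_-$ side. Second, your cap $N'\cup(\text{EM}\times\S^1)$ has the overtwisted sphere as a \emph{concave} end (the Liouville field of the mapping-torus model points into $N'$ at $\partial N'$, and EM puts $\xi_{ot}$ at the concave end), so it can only cap the $\partial_+$ components of $W$; the $\partial_-$ components require a \emph{filling} $\emptyset\to\S^1\times(\S^{4n+1},\xi_{ot})$. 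No such Liouville filling exists (overtwisted contact manifolds are not co-fillable), and the only foliated device for producing one is precisely the inversion of \Cref{prop:CobordismEquivalenceRelation}, which reintroduces the concave--concave closed leaf that \Cref{lem:obstruction_deformation} rules out. So the construction has a hole exactly where the obstruction bites.

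There is also a gap in your mechanism for ``infinitely many $k$.'' You propose that the existence of the Type I deformation reduces to a positivity condition on a relative cohomology class depending linearly on $k$, with slope coming from the Lee form of the mapping torus. This is not the actual mechanism, and it cannot be: the relevant obstruction at the gluing interface is a \emph{homotopy-theoretic} one, namely whether a $k$-fold (self-)connected sum of the almost contact class of $\xi'_{ot,+}$ on $\S^{4n+1}$ equals that of $\xi_{ot,-}$. The paper resolves this via \cite{Har63}: since $\S^{4n+1}$ has $4n+1\equiv 1\pmod 4$, the group of homotopy classes of almost contact structures on it under connected sum is finite, so infinitely many $N$ do the job, and each Weinstein $1$-handle in $X$ accounts (after crossing with $\S^1$) for one round-connected sum with $T^2\times\S^{4n+1}$. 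Your cohomological argument neither identifies this group nor explains the hypothesis $\dim M = 4n+3$, which is precisely what makes the \cite{Har63} finiteness available; this is a sign the argument is aimed at the wrong invariant.
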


\begin{remark}
\label{rmk:starting_from_fol}
In the above statement, if one already has a taut almost symplectic foliation on $M$, by choosing a transverse curve $\gamma$ intersecting every leaf, the resulting smooth foliation on the round connect sum manifold can be arranged to coincide with the one on $M$ away from the round connect sum region.
\end{remark}

\begin{remark}
\label{rmk:deform_non_taut}
The foliations obtained in \Cref{thm:deformations_self_round_connected_sums} are taut.
However, tautness is not a necessary condition for the existence of Type I deformations. 
For instance, gluing two mapping tori of Liouville domains (after turbulizing at their boundary), yields a conformal symplectic foliation on a closed manifold admitting a Type I deformation \cite[Theorem 2.6.31]{ToussaintThesis}.\\
In fact, we show (c.f.\ \Cref{prop:existence_deformations_contact_away_tubes}) that on the complement of any three curves, two of which are parallel copies of each other, an almost contact structure is homotopic to a \emph{non-taut} exact conformal symplectic foliation admitting a linear deformation to contact structures.
\end{remark}

\subsubsection*{Outline of the paper}

In \Cref{sec:prelim} we give some preliminary statements and considerations.
More precisely, in \Cref{sec:simply-conn-5folds} we recall the results by Smale \cite{Sma62} and Barden \cite{Bar65} which classify closed simply-connected $5-$manifolds.
In \Cref{sec:normal_forms_near_boundary} we give some normal forms for conformal symplectic manifolds near their boundary, which naturally generalize to the foliated setting.
Then, in \Cref{sec:sympl_turbul} we describe the symplectic turbulization and resulting gluing statement for turbulized foliations. 
In \Cref{sec:conf_sympl_cobordisms} we describe how the results from \cite{EliMur15} give homotopies relative to the boundary in the conformal symplectic setting, and recall one of the main results of \cite{BerMei}.

In \Cref{sec:sympl_fol_5folds} we first describe a version of Eliashberg's capping construction for $3$-dimensional open books, that allows us to pass from a $5$-dimensional contact open book to an open book with cosymplectic type boundary. 
We then use this to give a proof of \Cref{thm:sympl_fol_5folds}, i.e.\ we give explicit symplectic foliations on some simply-connected $5$-manifolds.

In \Cref{sec:constructions_conf_sympl_fol} we give some general constructions of conformal symplectic foliations.
More precisely, in \Cref{sec:turbulization_and_gluing} we describe the main procedures we use in order to construct conformal symplectic foliations, namely conformal symplectic turbulization and the resulting gluing statement.
In \Cref{sec:conf_sympl_homot_contact_str} we describe a conformal symplectic foliation on $\S^1\times [0,1]\times M$ which is tangent to the two boundary components and restricts to it to the conformal symplectization of any two contact structures on $M$ which are homotopic as almost contact structures; this will be used in the later sections.
In \Cref{sec:fol_round_handle} we describe how to construct conformal symplectic foliations on round connect sums of certain manifolds, which will be useful in the proof of \Cref{thm:sympl_fol_5folds}.
Lastly, in \Cref{sec:conf_sympl_fol_conn_sum_exotic_spheres} we prove \Cref{thm:conf_sympl_fol_up_to_exotic_sphere}, stating that, on the connected sum of a conformal-symplectically foliated manifold with an exotic sphere, one can construct (explicitly in terms of the original foliation) a conformal symplectic foliation.

In \Cref{sec:proof_conf_sympl_fol_5folds} we prove \Cref{thm:conf_sympl_fol_5folds} on the existence of conformal symplectic foliations on closed simply-connected almost contact $5$-manifolds explicitly given by gluing turbulized symplectic mapping tori.

In \Cref{sec:general_existence} we deal with the general existence statement for conformal symplectic foliations in dimensions $\geq 7$.
More precisely, in \Cref{sec:conf_sympl_fol_cobordisms} we describe some useful conformal symplectic foliations on manifolds with non-empty boundary, which will be used in the sections that follow.
In \Cref{sec:from_alm_sympl_to_conf_sympl} we describe the conformal symplectic Reeb components used in the proof of the main result of \Cref{sec:general_existence}.
In \Cref{sec:proof_h_principle} we prove \Cref{thm:exist_conf_sympl_fol}, stating that the existence of a conformal symplectic foliation in each almost contact class in dimension at least $7$.
Then, in \Cref{sec:proof_main_thm} we prove \Cref{prop:conf_sympl_fol_taut} that states the existence of a conformal symplectic leafwise structure on any taut almost symplectic foliation in dimension $\geq7$, after adding some (homotopically trivial) Reeb components.
Lastly, in \Cref{sec:deformation_to_contact} we collect some observations regarding contact deformations of the conformal symplectic foliations described in the previous sections.
More precisely, we start the subsection by recalling the definition of type I linear deformation, as well as a general existence theorem, and prove \Cref{lem:obstruction_deformation} stating that a model we introduce in the proof of \Cref{thm:exist_conf_sympl_fol} obstruct the existence of type I deformations.
In \Cref{sec:deformation_to_contact_non_taut} we describe how the foliations from \Cref{thm:exist_conf_sympl_fol} on the complement of some codimension $0$ submanifolds give examples of non-taut conformal symplectic foliation on manifolds with boundary which are deformable to contact structures; we also describe how to extend this deformation to the closed manifold as a deformation to hyperplane fields which are contact structures away from three hypersurfaces. 
Lastly, in \Cref{sec:deformation_for_round_connected_sums} we prove \Cref{thm:deformations_self_round_connected_sums}, which gives examples of linearly contact-deformable conformal symplectic foliations on closed manifolds of dimension $4k+3\geq 7$, as well as give details concerning \Cref{rmk:deform_non_taut}, dealing with linearly deformable non-taut conformal symplectic foliations on manifolds with non-empty boundary.

\subsubsection*{Acknowledgements} 

The authors are very grateful to thank Álvaro del Pino for his insight and support.
They also wish to thank Ga\"el Meigniez and Mélanie Bertelson for their useful feedback on a preliminary draft, and in particular for pointing out a mistake in a statement about deformations to contact structures.\\
The first author is supported by the European Research Council (ERC) under the European Union’s Horizon 2020 research and innovation programme (grant agreement No. 772479).
The second author is supported by the F.R.S-FNRS and the FWO under the Excellence of Science programme (grant No. 30950721).

\section{Preliminaries}
\label{sec:prelim}

\subsection{Simply connected $5$-manifolds}
\label{sec:simply-conn-5folds}

The classification of simply connected $5-$manifolds was started by Smale in \cite{Sma62}, who dealt with the case of vanishing second Stiefel--Whitney class, and was completed by Barden in \cite{Bar65}, who covered the general case. Recall that, by the universal coefficient theorem, for a simply connected manifold $M$ we can regard the second Stiefel--Whitney class as a map $w_2(M)\colon H_2(M)\to\ZZ_2$.

\begin{theorem}[\cite{Bar65}]
\label{thm:w2}
Two simply connected $5-$manifolds $M_1$ and $M_2$ are diffeomorphic if and only if there exists an isomorphism of groups $\phi\colon H_2(M_1) \to H_2(M_2)$ preserving the linking product and such that $w_2(M_1) = w_2(M_2)\circ \phi$.
\end{theorem}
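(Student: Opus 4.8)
The plan is to combine Poincaré duality with handlebody theory and the $h$-cobordism theorem. First I would record the homological constraints. A simply connected closed $5$-manifold $M$ is orientable, so Poincaré duality and the universal coefficient theorem give $H_0(M)\cong H_5(M)\cong\ZZ$, $H_1(M)=H_4(M)=0$, $H_3(M)$ free of some rank $r$, and $H_2(M)\cong\ZZ^{r}\oplus T$ with $T=\mathrm{Tor}\,H_2(M)$; moreover the linking pairing $\lambda\colon T\times T\to\QQ/\ZZ$ is nonsingular. An elementary argument on nonsingular linking forms (working one prime at a time) shows $(T,\lambda)$ is isometric to an orthogonal sum of the standard forms on groups $\ZZ_j\oplus\ZZ_j$, so the pair $(H_2(M),\lambda)$ is encoded by $r$ together with the multiplicities of these blocks. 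Thus the content of the theorem is that this data, \emph{together with} the homomorphism $w_2(M)\colon H_2(M)\to\ZZ_2$, is a complete diffeomorphism invariant (the converse implication being clear, since $\lambda$ and $w_2$ are diffeomorphism invariants).

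Second, I would put $M$ in normal handle position. Using simple connectivity one eliminates all $1$-handles — trading each against a $2$- or a $3$-handle, which in ambient dimension $5$ relies on the Whitney trick in the simply connected range — and dually all $4$-handles, leaving a decomposition with one $0$-handle, some $2$-handles, some $3$-handles and one $5$-handle. Hence $M=V\cup_{\partial}W$, where $V$ is a boundary connected sum of copies of $D^3\times\S^2$ (with possibly one twisted $D^3\,\tilde\times\,\S^2$) built from the $0$- and $2$-handles, and $W$ is the analogous handlebody for the dual $3$- and $5$-handles. The attaching data of the $2$- and $3$-handles — their intersection and self-linking numbers, and crucially the normal framings of the $2$-handle cores, which is exactly where the $w_2$ information lives — is algebraic and can be normalized.

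Third, I would exhibit standard building blocks: $\S^5$; the two $\S^3$-bundles over $\S^2$ (the product, spin, and the twisted one, non-spin with $w_2\neq0$ on the free generator); the Wu manifold $SU(3)/SO(3)$, which has $H_2=\ZZ_2$ and $w_2$ nonzero on the generator; and, for each $j\geq2$, a manifold $X_j$ with $H_2(X_j)=\ZZ_j\oplus\ZZ_j$, standard linking form and $w_2=0$, constructed by surgery on connected sums of $\S^2\times\S^3$ (or as the boundary of an explicit plumbing). Connected sums of these realize every allowed triple $(H_2,\lambda,w_2)$. Conversely, matching the normalized $2$/$3$-handle data of $M$ from Step 2 to that of the appropriate connected sum of models produces a homotopy equivalence, and then an $h$-cobordism, between $M$ and that model; since an $h$-cobordism between simply connected $5$-manifolds is a simply connected $6$-dimensional $h$-cobordism, the $h$-cobordism theorem makes it a product, so $M$ is diffeomorphic to the model. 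Applying this to $M_1$ and $M_2$ and using that $\phi$ preserves $\lambda$ and $w_2$ — hence identifies their model decompositions — yields $M_1\cong M_2$.

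The main obstacle is Step 2 together with the completeness of the residual framing data: dimension $5$ is the borderline case for the Whitney trick, so eliminating the $1$-handles takes real work, and the role of $w_2$ on the torsion of $H_2$ — embodied by the Wu manifold — is precisely the phenomenon invisible to Smale's spin classification that Barden's refinement must capture. By comparison, the homological bookkeeping, the construction of the explicit models, and the appeal to the $h$-cobordism theorem are routine.
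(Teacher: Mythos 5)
The paper does not prove this theorem; it is imported as a black box from Barden's 1965 paper (and Smale 1962 for the spin case), so there is no in-text argument to compare yours against, only the citation. That said, your sketch does trace the shape of Barden's original proof: pass to a handle decomposition with handles only of index $0,2,3,5$ (elimination of $1$-handles being the delicate step in dimension $5$), normalize the attaching data of the middle handles, exhibit the prime models, and upgrade a homotopy equivalence matching the invariants to a simply connected $6$-dimensional $h$-cobordism.

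Two points need repair. First, the linking form on $\mathrm{Tor}\,H_2$ of a closed oriented $5$-manifold is \emph{skew}-symmetric, and nondegenerate skew-symmetric linking forms on finite abelian groups do \emph{not} always decompose into hyperbolic blocks on $\ZZ_j\oplus\ZZ_j$: there is an indecomposable form on $\ZZ_2$, given by $\lambda(1,1)=\tfrac12$, realized precisely by the Wu manifold. Your Step 1 asserts a decomposition into $\ZZ_j\oplus\ZZ_j$ blocks only, which is incompatible with the Wu manifold (torsion a single $\ZZ_2$) that you then correctly list among the Step 3 building blocks; reconciling these --- in particular understanding when the lone $\ZZ_2$ can appear and how $w_2$ restricted to it constrains the manifold --- is part of the content of the theorem and cannot be dismissed as elementary bookkeeping. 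Second, promoting ``matched handle data'' to an $h$-cobordism is not routine: you first need $M_1$ and $M_2$ to be oriented cobordant (true, since $\Omega_5^{\mathrm{SO}}=0$), and then must kill the homotopy of the cobordism below the middle dimension and invoke a diffeomorphism-realization lemma stating that any isomorphism of $(H_2,\lambda,w_2)$ of the normalized handlebody core is induced by a diffeomorphism; that lemma is Barden's central technical result, and your outline defers essentially all the work to exactly this step.
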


As a consequence, one can prove that
any closed (oriented) simply connected $5$-manifold $M$ can be decomposed uniquely into prime manifolds
\[ M = X_j \# M_{k_1} \# \dots \# M_{k_s}\]
with $-1 \leq j \leq \infty$, $s \geq 0$, $k_1$ and $k_i$ dividing $k_{i+1}$ or $k_{i+1} = \infty$. Moreover, the decomposition contains at most one summand of type $X_j$ and possibly no summand of type $M_k$.
The prime manifolds satisfy:

\begin{enumerate}[label=(\roman*)]
    \item $X_{-1}= SU_3/SO_3$ is the Wu-manifold, the homogeneous space obtained from the standard inclusion of $SO_3$ into $SU_3$, and it has $H_2(X_{-1})=\ZZ_2$, $w_2(X_{-1})\neq 0$ and $W_3(X_{-1})\neq 0$, where $W_3$ is the third integral Stiefel-Whitney class.
    \item $X_0=\SSS^5$, having $H_2(\SSS^5)=0$, $w_2(\SSS^5)=0$ and $W_3(\SSS^5)= 0$.
    Furthermore, $X_0$ is only needed in the decomposition if $M\simeq \SSS^5$.
    \item $X_\infty=\SSS^2\overset{\sim}{\times}_\gamma \SSS^3$ the total space of the non-trivial $\SSS^3-$bundle over $\SSS^2$, which has $H_2(X_\infty)=\ZZ$, $w_2(X_\infty)\neq0$ and $W_3(X_\infty)= 0$.
    \item $M_\infty = \SSS^2 \times \SSS^3$ which has $H_2(M_\infty)=\ZZ$, $w_2(M_\infty)=0$ and $W_3(M_\infty)= 0$.
    \item for $1<k<\infty$, $M_k$ has $H_2(M_k)=\ZZ_k\oplus \ZZ_k$, $w_2(M_k)=0$ and $W_3(M_k)= 0$.
    \item for $1\leq j <\infty$, $X_j$ has $H_2(X_j)=\ZZ_{2^j}\oplus \ZZ_{2^j}$, $w_2(X_j)=0$ and $W_3(X_j)\neq 0$.
\end{enumerate}

A (conformal) symplectic foliated manifold in particular admits a (corank-$1$) distribution endowed with a non-degenerate $2$-form. 
That is, the prime manifolds which are relevant to us admit almost contact structures.

\begin{lemma}[{\cite[Lemma 7]{Gei91}}]
\label{lemma:simpl_conn_5fold_contact}
A simply connected $5-$manifold admits an almost contact structure if and only if its third integral Stiefel--Whitney class $W_3$ vanishes.
\end{lemma}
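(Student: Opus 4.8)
The statement to prove: a simply-connected $5$-manifold $M$ admits an almost contact structure if and only if $W_3(M) = 0$. This is essentially an obstruction-theoretic computation, following Geiges.

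\medskip

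The plan is to translate the existence of an almost contact structure into the existence of a reduction of structure group, and then run obstruction theory, crucially using that $M$ is simply connected and $5$-dimensional so that very few obstruction classes survive.

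First I would recall that an almost contact structure on $M^{2n+1}$ is equivalent to a reduction of the structure group of $TM$ from $\mathrm{SO}(2n+1)$ to $\mathrm{U}(n)\times 1$; equivalently (stably, or via the splitting $TM \cong \zeta \oplus \RR$) it is a stable almost complex structure, i.e.\ a lift of the classifying map $M \to B\mathrm{SO}(2n+1)$ through $B\mathrm{U}(n) \to B\mathrm{SO}(2n+1)$. For $n=2$ this means lifting $M \to B\mathrm{SO}(5)$ to $B\mathrm{U}(2)$. The obstructions to such a lift lie in $H^{k+1}(M; \pi_k(\mathrm{SO}(5)/\mathrm{U}(2)))$. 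Since $M$ is a closed simply-connected $5$-manifold, only $k = 1, 2, 3, 4, 5$ can possibly contribute, and because $M$ is simply connected $H^1 = 0$ and (by Poincaré duality and simple-connectivity) one knows the cohomology is concentrated in degrees $0,2,3,5$. So I would compute $\pi_k(\mathrm{SO}(5)/\mathrm{U}(2))$ for $k \le 4$: the fibration $\mathrm{U}(2) \to \mathrm{SO}(5) \to \mathrm{SO}(5)/\mathrm{U}(2)$ together with the known low homotopy of $\mathrm{SO}(5)$ and $\mathrm{U}(2)$ shows that $\mathrm{SO}(5)/\mathrm{U}(2)$ is simply connected with $\pi_2 = \ZZ$ (this is the first potential obstruction) and $\pi_k$ for $k=3,4$ turning out to be finite/manageable; one identifies $\mathrm{SO}(5)/\mathrm{U}(2) \cong \mathbb{CP}^3$ (via $\mathrm{Sp}(2)/(\mathrm{U}(1)\times\mathrm{Sp}(1))$ or the twistor description), which pins down all the homotopy groups. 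With $\mathbb{CP}^3$: $\pi_2 = \ZZ$, $\pi_3 = 0$, $\pi_4 = \ZZ$, $\pi_5 = \ZZ$.

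The degree-reasons argument then runs: the primary obstruction lives in $H^3(M;\pi_2) = H^3(M;\ZZ)$ and is identified with the integral Stiefel–Whitney class $W_3(M)$ (the obstruction to an almost complex, equivalently stable complex, structure in this dimension is classically $W_3$); the next obstruction would live in $H^4(M;\pi_3(\mathbb{CP}^3)) = H^4(M;0) = 0$, so it vanishes automatically; and the final one in $H^5(M;\pi_4(\mathbb{CP}^3)) = H^5(M;\ZZ) \cong \ZZ$, which I would argue vanishes or can be corrected by a suitable choice on the $4$-skeleton — here one uses that the obstruction to extending over the top cell can be killed by modifying the lift over the $4$-skeleton, or one invokes that for oriented closed $M^5$ the relevant top obstruction is detected by a characteristic number that is automatically zero (e.g. because $H^4(M;\ZZ)$ is torsion by Poincaré duality when $M$ is simply connected, or because the map from the $4$-skeleton already extends). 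Putting these together: the lift exists iff $W_3(M) = 0$. The converse ($W_3 = 0$ is necessary) is the easy direction — an almost contact structure gives a stable complex structure, which forces $W_3 = 0$ since $W_3$ is the obstruction to such a structure on the $3$-skeleton.

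\medskip

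The main obstacle I expect is the top-dimensional obstruction in $H^5(M;\pi_4(\mathbb{CP}^3)) \cong \ZZ$: one must show it either vanishes or is unobstructed after re-choosing the lift on lower skeleta. This requires a genuine argument (computing the relevant secondary obstruction / showing the indeterminacy coming from $H^4(M;\pi_4)$ — wait, the indeterminacy of the degree-$5$ obstruction comes from the action of $H^4(M;\pi_4(\mathbb{CP}^3))$ on lifts over the $4$-skeleton, and since $H^4(M;\ZZ)$ surjects appropriately this indeterminacy is all of $H^5(M;\ZZ)$, killing the obstruction). The cleanest route is probably to quote that closed almost-complex-up-to-stabilization $5$-manifolds are classified by $W_3$, which is exactly Geiges's Lemma 7; so in the write-up I would present this as: recall $\mathrm{SO}(5)/\mathrm{U}(2) \cong \mathbb{CP}^3$, run the obstruction sequence as above, and cite the vanishing of the top obstruction via the indeterminacy argument.
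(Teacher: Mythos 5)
The paper itself does not prove this lemma; it simply cites \cite[Lemma~7]{Gei91}, so you are on your own here. Your strategy — view an almost contact structure as a lift of the Gauss map $M\to B\mathrm{SO}(5)$ through $B\mathrm{U}(2)$, identify the fibre $\mathrm{SO}(5)/\mathrm{U}(2)\cong\CC\PP^3$, and run obstruction theory over a simply-connected closed $5$-manifold — is correct and is indeed what Geiges does. The identification of the primary obstruction in $H^3(M;\pi_2(\CC\PP^3))=H^3(M;\ZZ)$ with $W_3$ is also right (it is the unique nonzero universal class in $H^3(B\mathrm{SO}(5);\ZZ)=\ZZ_2$).

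However, your table of homotopy groups of $\CC\PP^3$ is wrong, and this is exactly what creates the ``main obstacle'' you flag at the end. From the Hopf fibration $\S^1\to\S^7\to\CC\PP^3$ one has $\pi_k(\CC\PP^3)\cong\pi_k(\S^7)$ for $k\ge 3$, hence $\pi_3=\pi_4=\pi_5=\pi_6=0$ — not $\pi_4\cong\ZZ$, $\pi_5\cong\ZZ$ as you claim. Consequently the obstruction groups $H^4(M;\pi_3)$ and $H^5(M;\pi_4)$ both have zero coefficient group, and the lift extends over the $4$- and $5$-skeleta for free once $W_3$ vanishes. There is no top-dimensional obstruction to kill, and the indeterminacy discussion you sketch (which, as you admit, is hand-wavy — and which also contains the minor slip that $H^4(M;\ZZ)$ is ``torsion'' when in fact $H^4(M;\ZZ)\cong H_1(M;\ZZ)=0$ here) is unnecessary. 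Fixing $\pi_4(\CC\PP^3)=0$ turns your proposal into a complete, clean proof; as written, the final paragraph rests on an incorrect homotopy computation and a gap you yourself acknowledge.
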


As $w_2$ is additive under connected sums and $W_3$ vanishes if and only if $w_2$ is in the image of the reduction mod $2$ morphism $H_2(\cdot,\ZZ)\to H_2(\cdot,\ZZ/2\ZZ)$, according to \cite{Bar65}, any simply connected $5-$manifold which admits a symplectic foliation is hence of the form
\begin{equation}
\label{eq:possibilities_dim5}
M=\SSS^5, \text{ or } M = M_{k_1}\# \ldots \# M_{k_s}, \text{ or }
M = X_\infty \# M_{k_1} \# \ldots \# M_{k_s} ,
\end{equation}
with $s \geq 0$, $k_1$ and $k_i$ dividing $k_{i+1}$ or $k_{i+1} = \infty$.
Moreover, a complete set of invariants for manifolds in this class are the homology group $H_2$ and the fact whether $w_2$ is trivial or not.

\subsection{Normal form around the boundary}
\label{sec:normal_forms_near_boundary}

Let $(M^{2n},\eta,\omega)$ be a conformal symplectic manifold with boundary $\partial M$, and denote by $(\eta_\partial,\omega_\partial)$ the restriction of the forms to the boundary.
Notice that $\omega_\partial$ is $\d_{\eta_\partial}$-closed, and that it has one-dimensional kernel. 
Moreover, one can always find an \emph{admissible form} for $\omega_\partial$, by which we mean  a $\beta \in \Omega^1(\partial M)$ such that
\begin{equation}\label{eq:admissibleform}
    \omega_\partial^{n-1} \wedge \beta > 0.
\end{equation}
For instance, for any choice of vector field $X$ on $M$ transverse to the boundary, one can take $\beta := \iota_X\omega|_{\partial M}$.
Note that the converse is also true: a choice of admissible form determines a vector field transverse to the boundary.

\begin{theorem}\label{thm:ConformalSymplecticNormalform}
Let $(M,\eta,\omega)$ be a conformal symplectic manifold, with $\partial M \neq \emptyset$.
Let also $\beta \in \Omega^1(\partial M)$ be any admissible form for $(\eta_\partial,\omega_\partial)$.
Then, a neighborhood of the boundary is conformally equivalent to:
\[ 
\Big( (-\varepsilon,0] \times \partial M,\, \eta = \eta_\partial,\, \omega = \omega_\partial + \d_{\eta_{\partial}}(t \beta)\Big).
\]
\end{theorem}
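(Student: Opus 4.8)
The plan is to follow the standard Moser-type / Darboux-type argument used for symplectic collar neighborhoods, adapted to the conformally symplectic setting via the twisted differential $\d_\eta$. First I would use the admissible form $\beta$ to produce a vector field $X$ transverse to $\partial M$ with $\iota_X\omega|_{\partial M} = \beta$ (this is the converse direction already noted in the excerpt: non-degeneracy of $\omega$ means the map $X \mapsto \iota_X\omega|_{\partial M}$ from sections of $TM|_{\partial M}$ transverse to $\partial M$ to admissible forms is onto). Flowing along (a rescaling of) $X$ gives a diffeomorphism $\Phi$ from a collar $(-\varepsilon,0]\times \partial M$ onto a neighborhood of $\partial M$ in $M$ with $\Phi|_{\{0\}\times\partial M} = \mathrm{id}$. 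Pulling back, we may as well assume $M = (-\varepsilon,0]\times \partial M$, that $\partial_t$ is the transverse vector field, and that at $t=0$ we have $\iota_{\partial_t}\omega|_{t=0} = \beta$, $\eta|_{t=0} = \eta_\partial$, $\omega|_{t=0} = \omega_\partial$.

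Next I would record that $\eta$ is \emph{closed}, so after a further diffeomorphism (or simply by Poincaré's lemma on the collar relative to the boundary slice) one can arrange $\eta = \eta_\partial$ identically on the collar: writing $\eta = \eta_\partial + dh$ for a function $h$ vanishing on $\{0\}\times\partial M$, the conformal equivalence $(\eta,\omega)\sim(\eta - dh, e^{-h}\omega)$ normalizes $\eta$ to $\eta_\partial$ while keeping the boundary data fixed. So it suffices to treat the case $\eta \equiv \eta_\partial$, a genuinely closed $1$-form pulled back from $\partial M$, and then $\d_\eta = \d_{\eta_\partial}$ is a fixed differential with $\d_\eta^2 = 0$ (since $\d\eta = 0$), and $\omega$ is $\d_\eta$-closed. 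The target model form is $\omega_0 := \omega_\partial + \d_{\eta_\partial}(t\beta)$, which is manifestly $\d_{\eta_\partial}$-closed; one checks it is non-degenerate near $t=0$ because its restriction to $t=0$ is $\omega_\partial$ plus a term with a $dt$ factor, and $\omega_0^n|_{t=0}$ pairs the $\partial M$-part $\omega_\partial^{n-1}\wedge(\text{stuff})$ against $dt$ — here the admissibility condition $\omega_\partial^{n-1}\wedge\beta > 0$ is exactly what guarantees $\iota_{\partial_t}\omega_0|_{t=0} = \beta$ and hence $\omega_0^n > 0$ for $\varepsilon$ small.

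Then I would run a Moser argument on the family $\omega_s := (1-s)\omega_0 + s\omega$, $s\in[0,1]$. Both endpoints are $\d_{\eta_\partial}$-closed and agree to first order along $\{0\}\times\partial M$ (they have the same value and the same $\iota_{\partial_t}$-contraction there), so $\omega - \omega_0$ vanishes along the boundary and, crucially, is $\d_{\eta_\partial}$-exact relative to the boundary: by a twisted relative Poincaré lemma on the collar, $\omega - \omega_0 = \d_{\eta_\partial}\sigma$ for a $1$-form $\sigma$ vanishing on $\{0\}\times\partial M$ (the twisted de Rham complex on $(-\varepsilon,0]\times\partial M$ is computed by the same contracting homotopy as the untwisted one up to conjugation, since $\eta_\partial$ is pulled back from $\partial M$ — indeed $\d_{\eta_\partial} = e^{g}\d e^{-g}$ only locally, but the relative Poincaré lemma for $\d_\eta$ along the $t$-direction still holds because $\d_\eta(t\,\iota_{\partial_t}\alpha\,dt\text{-type primitive})$ works formally). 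Choose the time-dependent vector field $Y_s$ by $\iota_{Y_s}\omega_s = -\sigma$ (possible for $\varepsilon$ small since all $\omega_s$ are non-degenerate near the boundary); then $Y_s$ vanishes along $\{0\}\times\partial M$, so its flow $\psi_s$ fixes the boundary, is defined for all $s\in[0,1]$ on a possibly smaller collar, and satisfies $\frac{d}{ds}\psi_s^*\omega_s = \psi_s^*(\lie_{Y_s}\omega_s + \frac{d}{ds}\omega_s) = \psi_s^*(\d_{\eta_\partial}\iota_{Y_s}\omega_s + \iota_{Y_s}\d_{\eta_\partial}\omega_s + \omega - \omega_0)$; using $\d_{\eta_\partial}\omega_s = 0$ and $\d_{\eta_\partial}\iota_{Y_s}\omega_s = -\d_{\eta_\partial}\sigma = -(\omega-\omega_0)$ this is zero, so $\psi_1^*\omega = \omega_0$ and $\psi_1$ pulls $\eta = \eta_\partial$ back to itself. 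Composing $\psi_1$ with the earlier normalizations gives the claimed conformal equivalence.

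The main obstacle I expect is twofold and both parts are about the twisted differential rather than the Moser machinery itself: (1) justifying the \emph{twisted} relative Poincaré lemma that produces the primitive $\sigma$ vanishing on the boundary — one needs $H^2_{\d_{\eta_\partial}}$ of the collar relative to $\partial M$ to vanish, which follows from the collar deformation retracting onto $\partial M$ together with the fact that $\eta_\partial$ is a pullback, but this should be stated carefully (the cleanest route is to check that the standard $t$-direction integration homotopy operator intertwines with multiplication-by-$e^{\pm g}$ only when $\eta_\partial$ has a global primitive on $\partial M$, which it need not; instead one verifies directly that $K\alpha := \int_0^t \iota_{\partial_t}\alpha\,dt'$ still satisfies $\d_{\eta_\partial}K + K\d_{\eta_\partial} = \mathrm{id} - (\text{restriction to } t=0)$ because the extra $\eta_\partial\wedge(-)$ terms have no $dt$ and cancel in the commutator); and (2) ensuring, via the admissibility inequality, that $\omega_0$ and hence all interpolants $\omega_s$ stay non-degenerate on a uniform collar $(-\varepsilon,0]$ so that $Y_s$ is defined — this is where one shrinks $\varepsilon$. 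Everything else (the flow fixing the boundary, integrability of $Y_s$ on a shrunk collar, the bookkeeping of the two conformal rescalings) is routine.
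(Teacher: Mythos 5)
Your overall strategy is the same as the paper's: first normalize the Lee form to $\eta_\partial$ by a conformal rescaling $(\eta,\omega)\rightsquigarrow(\eta - df,\,e^{-f}\omega)$ using a relative Poincar\'e lemma (the paper's \Cref{lem:conformalexact_trick}), then run a Moser-type argument on the linear interpolation between $\omega$ and the model $\omega_\partial + \d_{\eta_\partial}(t\beta)$, again using the relative Poincar\'e lemma for $\d_{\eta_\partial}$ to produce the primitive. The paper's only real shortcut is that it outsources the Moser step to a cited result of Bande/Banyaga--Kotschick rather than writing out the flow; your preliminary renormalization of the transverse vector field so that $\iota_{\partial_t}\omega|_{t=0}=\beta$ is harmless but not actually needed, since the set of admissible forms is convex and so the interpolants $\omega_s$ are automatically non-degenerate near $t=0$ on an arbitrary collar.

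There is, however, one concrete error in your Moser computation. You invoke a Cartan formula for the twisted differential as if $\lie_{Y_s}\omega_s = \d_{\eta_\partial}\iota_{Y_s}\omega_s + \iota_{Y_s}\d_{\eta_\partial}\omega_s$, and conclude $\frac{d}{ds}\psi_s^*\omega_s = 0$, hence $\psi_1^*\omega = \omega_0$ on the nose. This is too strong. The correct identity is
\[
\lie_X\alpha \;=\; \d_\eta\iota_X\alpha + \iota_X\d_\eta\alpha + \eta(X)\,\alpha,
\]
so with your $Y_s$ one actually gets $\frac{d}{ds}\psi_s^*\omega_s = \psi_s^*\big(\eta_\partial(Y_s)\,\omega_s\big)$, which integrates to $\psi_1^*\omega = g\,\omega_0$ for a positive function $g$ with $g|_{\partial M}=1$, i.e.\ only a \emph{conformal} equivalence. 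This is exactly what the theorem asserts, so the error is benign, but as written your derivation overclaims an exact symplectomorphism, and the step ``this is zero'' is wrong unless $\eta_\partial(Y_s)\equiv 0$. The paper avoids the issue by quoting a Moser theorem that already returns $\phi_s^*\omega_s = g_s\,\omega_0$ with the conformal factor built in; if you want to keep the self-contained computation, just track $g_s := \exp\big(\int_0^s \psi_r^*(\eta_\partial(Y_r))\,dr\big)$ and note that the Lee form of $g_1\omega_0$ is automatically $\eta_\partial + \d\log g_1 = \psi_1^*\eta_\partial$, which is what a conformal equivalence requires.
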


The following elementary observation will be used in the proof.
\begin{lemma}\label{lem:conformalexact_trick}
Let $\eta$ be a closed $1$-form on $M$, such that $\ker \eta \pitchfork \partial M$. If $\alpha \in \Omega^k(M)$ satisfies
\[ \d_\eta \alpha = 0, \quad \alpha|_{\partial M} = 0,\]
then, locally around the boundary, $\alpha$ is $\d_\eta$-exact.
\end{lemma}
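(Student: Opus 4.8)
The plan is to reduce to the ordinary Poincaré lemma by a ``twisting'' change of primitive. Since $\eta$ is closed, locally around any boundary point we may write $\eta = \d h$ for some smooth function $h$; then the twisted differential satisfies $\d_\eta = e^{h}\,\d\,(e^{-h}\,\cdot\,)$, i.e. $\d_\eta \alpha = 0$ is equivalent to $\d(e^{-h}\alpha) = 0$, and $\d_\eta$-exactness of $\alpha$ is equivalent to $d$-exactness of $e^{-h}\alpha$. So it suffices to show: if $\gamma := e^{-h}\alpha$ is a closed $k$-form vanishing on $\partial M$, then $\gamma$ is exact in a neighborhood of the boundary. Note $\gamma|_{\partial M} = 0$ since $\alpha|_{\partial M}=0$.

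First I would set up a collar. Using the hypothesis $\ker\eta \pitchfork \partial M$ (equivalently $\d h$ is nonzero in the normal direction, so after shrinking we may use $h$ itself, or a transverse vector field, to build a product structure) I would identify a neighborhood of a boundary point with $(-\varepsilon,0]\times V$ where $V\subset\partial M$ is a ball, with $\{0\}\times V \subset \partial M$. Write the product coordinate as $t$ and let $r_s\colon (-\varepsilon,0]\times V \to (-\varepsilon,0]\times V$, $r_s(t,x) = (st, x)$ for $s\in[0,1]$, which is a deformation retraction onto $\{0\}\times V$, with $r_1 = \mathrm{id}$ and $r_0$ the projection to the boundary slice. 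Then I would apply the standard homotopy (chain contraction) formula: there is an operator $K$ with $\gamma - r_0^*\gamma = \d K\gamma + K\,\d\gamma$. Since $\d\gamma = 0$ this gives $\gamma = r_0^*\gamma + \d(K\gamma)$, and $r_0^*\gamma = 0$ because $\gamma$ pulls back to the boundary as $\gamma|_{\partial M}=0$ (the map $r_0$ factors through $\partial M$). Hence $\gamma = \d(K\gamma)$ is exact near the boundary, and unwinding, $\alpha = e^{h}\gamma = e^{h}\d(e^{-h}\cdot e^{h}K\gamma) = \d_\eta(e^{h}K\gamma)$ is $\d_\eta$-exact near the boundary.

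The only point requiring a little care — and the one I'd flag as the main (minor) obstacle — is making sure the collar is compatible with the condition $\ker\eta\pitchfork\partial M$, so that the homotopy $r_s$ really is well-defined and the vanishing $r_0^*\gamma=0$ genuinely uses $\alpha|_{\partial M}=0$; but transversality is an open condition and the collar neighborhood theorem handles this. Everything else is the Poincaré lemma with parameters, applied after the elementary conjugation $\d_\eta = e^h \d e^{-h}$.
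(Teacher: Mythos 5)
Your conjugation $\d_\eta = e^h\,\d\,(e^{-h}\,\cdot\,)$ requires $\eta=\d h$, and you invoke it only after shrinking to a ball $V\subset\partial M$. That is the genuine gap: the lemma is needed (and used in the proof of Theorem~\ref{thm:ConformalSymplecticNormalform}) to produce a single primitive on a full collar $(-\varepsilon,0]\times\partial M$, not a collection of local primitives around each boundary point. When $[\eta_\partial]\neq 0$ in $H^1(\partial M)$ --- which is exactly the interesting conformal situation, and occurs in the applications --- $\eta$ is not exact on any neighborhood of $\partial M$, so no global potential $h$ exists, and the local primitives you obtain do not a priori glue: their differences on overlaps are $\d_\eta$-closed but need not be $\d_\eta$-exact. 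As written, then, your argument proves something strictly weaker than the statement requires.

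The fix is small and lands you on the paper's argument. Drop the conjugation and run the homotopy-formula computation directly for $\d_\eta$ on the collar. Choose the collar so that $\iota_{\partial_t}\eta=0$ (this is precisely what the transversality hypothesis buys you, since $\ker\eta$ meets $\partial M$ transversally); then $\eta=\eta_\partial$ is pulled back from $\partial M$, so $r_s^*\eta=\eta$, and for the radial generating field $X_s$ one has $\iota_{X_s}\eta=0$, whence $\d_\eta\iota_{X_s}+\iota_{X_s}\d_\eta$ agrees with the ordinary Lie derivative. The chain-contraction identity then reads $\alpha=r_0^*\alpha+\d_\eta K\alpha$ with $r_0^*\alpha=0$, and $K\alpha$ is a primitive on the whole collar. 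The paper phrases this same computation as a direct $t$-integration: writing $\alpha=\alpha_t+\d t\wedge\beta_t$, the vanishing of $\iota_{\partial_t}\eta$ yields $\alpha_0=0$, $\dot\alpha_t=\overline{\d}_\eta\beta_t$, and $\alpha=\d_\eta\bigl(\int_0^t\beta_s\,\d s\bigr)$.
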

\begin{proof}[Proof of \Cref{lem:conformalexact_trick}]
Fix a collar neighborhood $(-\varepsilon,0]\times \partial M$ of the boundary. In these coordinates we can write
\[ \alpha = \alpha_t + \d t \wedge \beta_t,\]
for $\alpha_t \in \Omega^k{\partial M}$, $\beta_t \in \Omega^{k-1}(\partial M)$, and $t \in (-\varepsilon,0]$. 

We may assume $\partial_t \in \ker \eta$ which implies that $\eta = \eta_\partial$ does not have a $\d t$ component. Then, it follows from $\d_\eta \alpha =0$ and the above equation that $\alpha_0 = 0$, $\overline{\d}_\eta \alpha_t = 0$, and $\dot{\alpha}_t = \overline{\d}_\eta \beta_t$.
Therefore we have
\[ \alpha_t = \alpha_t -\alpha_0 =  
\int_0^t \dot{\alpha}_s \d s =
\overline{\d}_\eta \left( \int_0^t \beta_s \d s\right),
\]
which in turn implies 
\[ 
\alpha = 
\overline{\d}_\eta \left( \int_0^t \beta_s \d s\right) + \d t\wedge \beta_t =  
\d_\eta \left( \int_0^t \beta_s \d s\right).
\qedhere
\]

\end{proof}

\begin{proof}[Proof of Theorem \ref{thm:ConformalSymplecticNormalform}]
Fix a collar neighborhood $(-\varepsilon,0] \times \partial M$ of the boundary. In these coordinates we define
\[ \wtd{\eta} := \eta_\partial,\quad \wtd{\omega} := \omega_\partial + \d_{\wtd{\eta}}(t\beta),\]
which is easily checked to be a conformal symplectic structure for $\varepsilon>0$ small enough. 
Since $\eta - \wtd{\eta}$ is a $\d$-closed form whose restriction to $\partial M$ vanishes, it follows from Lemma \ref{lem:conformalexact_trick}
that
\[\eta = \wtd{\eta} + \d f,\]
for some function $f$ satisfying $f|_{\partial M}  = 0$.
Note that $(\eta,\omega)$ is equivalent to $(\wtd{\eta},e^{-f}\omega)$. 
Similarly to before, as $e^{-f}\omega - \wtd{\omega}$ is $\d_{\wtd{\eta}}$-closed and vanishes on $\partial M$, \Cref{lem:conformalexact_trick} implies that
\[ \wtd{\omega} = e^{-f}\omega + \d_{\eta_{\partial}} \beta,\]
for some $\beta \in \Omega^1(M)$.
As such, the $1$-parameter family of conformal symplectic structures
\[ \eta_s:= \eta_\partial,\quad \omega_s := s e^{-f} \omega + (1-s) \wtd{\omega} = e^{-f}\omega+ (1-s) \d_{\eta_\partial} \beta,\]
satisfies the conditions conditions of  \cite[Theorem 4]{Ban02} (or \cite[Corollary 3.2]{BanKot09}). 
Thus there exists an isotopy $\phi_s$ and a family of functions $g_s$ such that $\phi_s^*\omega_s = g_s \omega_0$ (and hence $\phi_s^*\eta_s = \eta_0 + d g_s$). 
In particular, $\phi_1$ is the desired conformal symplectomorphism.
\end{proof}

Just as in the symplectic setting we distinguish special types of boundaries.
\begin{definition}\label{def:ConformalContactType}
A conformal symplectic manifold $(M,\eta,\omega)$ is said to have  \emph{convex (resp.\ concave) boundary of contact type} if there exists a vector field $X$ defined on $\Op(\partial M)$, transverse to $\partial M$, pointing outwards (resp.\ pointing inwards) and which is \emph{$\eta-$Liouville} for $\omega$, i.e.\ satisfies
\[ 
\omega = \d_\eta \iota_X \omega.
\]
\end{definition}

Of course, when $\eta = 0$ this agrees with the usual definition for symplectic manifolds. 
An immediate corollary of the normal form of \Cref{thm:ConformalSymplecticNormalform} is that being of contact type can be detected on the boundary (as in the honest symplectic case):

\begin{lemma}\label{lem:contacttypeboundaryequivalence}
Let $(M,\eta,\omega)$ be a conformal symplectic manifold with boundary. Then the following are equivalent:
\begin{enumerate}[(i)]
    \item $(\eta,\omega)$ is of convex/concave contact type at the boundary; 
    \item There exists a vector field $X$ on $M$ transverse to $\partial M$,  pointing outwards/inwards such that for all $p \in \partial M$
    \[ \omega_p = (\d_\eta \iota_X\omega)_p,\]
    \item There exists an admissible form $\alpha \in \Omega^1(\partial M)$ for $\omega_\partial := \omega|_{\partial M}$ satisfying
    \[ \omega_\partial =  \pm\d_{\eta_\partial}\alpha.\]
\end{enumerate}
\end{lemma}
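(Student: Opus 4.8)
The plan is to prove the three equivalences $(i)\Leftrightarrow(ii)\Leftrightarrow(iii)$, treating the convex and concave cases simultaneously (the sign in $(iii)$ keeps track of which one). The implication $(i)\Rightarrow(ii)$ is essentially immediate: an $\eta$-Liouville vector field $X$ defined only on $\Op(\partial M)$ can be extended to all of $M$ (multiply by a cutoff function, using that transversality and the sign of the normal component are open conditions), and the condition $\omega_p=(\d_\eta\iota_X\omega)_p$ is only asked at points $p\in\partial M$, where it already holds. Conversely $(ii)\Rightarrow(i)$ needs a small argument: from a globally-defined $X$ transverse to $\partial M$ with $\omega=\d_\eta\iota_X\omega$ \emph{along} $\partial M$, one wants an honestly $\eta$-Liouville field near the boundary. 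I would obtain this by combining $X$ with the collar normal form: in the coordinates of \Cref{thm:ConformalSymplecticNormalform}, $\partial_t$ is $\eta$-Liouville for $\omega_\partial+\d_{\eta_\partial}(t\beta)$ when $\beta=\iota_{\partial_t}\omega|_{\partial M}$, so the only thing to check is that $\beta$ agrees with $\iota_X\omega|_{\partial M}$ for the right admissible form — which brings us naturally to $(iii)$.

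For $(ii)\Leftrightarrow(iii)$: given $X$ as in $(ii)$, set $\alpha:=\iota_X\omega|_{\partial M}\in\Omega^1(\partial M)$. Transversality of $X$ to $\partial M$ together with $\omega^n>0$ forces $\omega_\partial^{n-1}\wedge\alpha>0$ (this is the standard linear-algebra fact that $\iota_X\omega$ restricted to a hyperplane complementary to $X$ is "admissible"), so $\alpha$ is an admissible form. Restricting the equality $\omega=\d_\eta\iota_X\omega$ to $\partial M$ and using that pullback commutes with $\d_\eta$ (for the restricted $\eta_\partial$) gives $\omega_\partial=\d_{\eta_\partial}\alpha$ in the convex case; in the concave case, $X$ points inward and one gets the minus sign. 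For the reverse direction, given an admissible $\alpha$ with $\omega_\partial=\pm\d_{\eta_\partial}\alpha$, the admissibility of $\alpha$ determines (as noted just before \Cref{thm:ConformalSymplecticNormalform}) a vector field transverse to $\partial M$ with $\iota_X\omega|_{\partial M}=\alpha$; one then plugs $\beta=\alpha$ into the normal form \Cref{thm:ConformalSymplecticNormalform}, in which $\omega=\omega_\partial+\d_{\eta_\partial}(t\alpha)$, and checks directly that $\pm\partial_t$ (outward/inward according to the sign) is $\eta$-Liouville there: indeed $\iota_{\partial_t}\omega=\iota_{\partial_t}\d_{\eta_\partial}(t\alpha)=\alpha-t\,\iota_{\partial_t}(\eta_\partial\wedge\alpha)=\alpha$ since $\eta_\partial$ has no $\d t$-component, whence $\d_\eta\iota_{\partial_t}\omega=\d_{\eta_\partial}\alpha=\pm\omega_\partial$, and one must still reconcile this with $\omega=\omega_\partial+\d_{\eta_\partial}(t\alpha)$ — so in fact the cleanest route is to verify that $Y:=\pm\partial_t$ satisfies $\lie^\eta_Y\omega=\omega$ along $\partial M$ and then argue as in the proof of the normal form, or simply observe $\d_\eta\iota_{\partial_t}\omega=\d_{\eta_\partial}\alpha$ and $\omega|_{t=0}=\omega_\partial=\pm\d_{\eta_\partial}\alpha$, giving the contact-type condition at $\partial M$, which is $(ii)$ (after extending $\partial_t$ off the collar).

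The main obstacle I anticipate is purely bookkeeping: keeping the $\eta$-twisted Cartan calculus straight (i.e.\ remembering that $\d_\eta\circ\d_\eta=0$ only because $\d\eta=0$, and that $\iota_X\d_\eta\omega+\d_\eta\iota_X\omega = \lie_X\omega-\eta(X)\omega$ is the correct twisted Cartan formula), and making sure the restriction-to-the-boundary operations interact correctly with $\d_\eta$ — precisely, that $(\d_\eta\gamma)|_{\partial M}=\d_{\eta_\partial}(\gamma|_{\partial M})$, which holds because $\iota^*\d=\d\iota^*$ and $\iota^*(\eta\wedge\gamma)=\eta_\partial\wedge(\gamma|_{\partial M})$ for the inclusion $\iota\colon\partial M\hookrightarrow M$. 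None of this is deep, but the signs in the concave case and the cutoff-extension in $(i)\Leftrightarrow(ii)$ are where a careless write-up would slip. Everything substantive is already packaged in \Cref{thm:ConformalSymplecticNormalform} and in the admissible-form/transverse-vector-field correspondence recalled before it, so the proof should be short.
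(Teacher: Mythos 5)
The paper itself offers no written proof of this lemma: it is introduced only as ``an immediate corollary of the normal form'' (\Cref{thm:ConformalSymplecticNormalform}), and your decision to run everything through that normal form is precisely the intended route. Your steps $(i)\Rightarrow(ii)$ (cutoff extension), $(ii)\Rightarrow(iii)$ (restrict, identify $\alpha=\iota_X\omega|_{\partial M}$, use $\iota^*\d_\eta=\d_{\eta_\partial}\iota^*$, and flip the sign of $\alpha$ in the concave case to make it admissible), and the bookkeeping remarks about the twisted Cartan calculus, are all correct.

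The gap is in returning to $(i)$. In both the $(ii)\Rightarrow(i)$ paragraph and the $(iii)\Rightarrow(i)$ paragraph you claim that $\partial_t$ is $\eta$-Liouville for $\omega=\omega_\partial+\d_{\eta_\partial}(t\beta)$ in the collar of \Cref{thm:ConformalSymplecticNormalform}. This is false as stated: $\d_\eta\iota_{\partial_t}\omega=\d_{\eta_\partial}\beta$ is $t$-independent, while $\omega=\omega_\partial+\d_{\eta_\partial}(t\beta)$ is not, so the two agree only at $t=0$. You half-notice the mismatch (``one must still reconcile this with\dots'') but your fallback only recovers the pointwise identity at $t=0$, i.e.\ condition $(ii)$, so your chain closes as $(i)\Rightarrow(ii)\Leftrightarrow(iii)$ without ever reaching $(i)$ again. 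The repair is a one-liner: under $(iii)$, with $\beta=\alpha$ the collar form becomes
\[
\omega \;=\; \omega_\partial+\d_{\eta_\partial}(t\alpha)
\;=\;
\pm\d_{\eta_\partial}\alpha+\d_{\eta_\partial}(t\alpha)
\;=\;
\d_{\eta_\partial}\bigl((t\pm1)\alpha\bigr),
\]
so the \emph{rescaled} vector field $X:=(t\pm1)\partial_t$ satisfies $\iota_X\omega=(t\pm1)\alpha$ and hence $\d_\eta\iota_X\omega=\omega$ on the whole collar; for small $|t|$ it is transverse, pointing outwards in the convex case ($1+t>0$) and inwards in the concave case ($t-1<0$). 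Equivalently, one could reparametrize $t\mapsto\log(1\pm t)$ to obtain the exponential normal form $\omega=\d_{\eta_\partial}(e^{\pm t}\alpha)$ of \Cref{cor:ConformalContactTypeNormalform}, in which $\pm\partial_t$ is $\eta$-Liouville outright. Once that line is inserted, your proof is complete and matches the route the paper has in mind.
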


For boundaries of contact type it is often useful to slightly rewrite the normal form from  \Cref{thm:ConformalSymplecticNormalform}. 
We state it here as a corollary for later reference.
\begin{corollary}
\label{cor:ConformalContactTypeNormalform}
Let $(M,\eta,\omega)$ be a conformal symplectic manifold with contact type boundary. 
Then, locally around the boundary, it is conformally equivalent to
\[ 
\Big( (-\varepsilon,0] \times \partial M,\eta = 
\eta_\partial, \omega = \d_{\eta_\partial}( e^{\pm t} \alpha)\Big),
\]
where $\alpha \in \Omega^1(M)$ is an admissible form such that $\omega_\partial = \d_{\eta_\partial} \alpha$. The sign is a plus (resp. minus) in the convex (resp. concave) boundary case.
\end{corollary}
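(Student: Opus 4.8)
The plan is to deduce \Cref{cor:ConformalContactTypeNormalform} directly from the normal form of \Cref{thm:ConformalSymplecticNormalform} together with the contact-type characterization in \Cref{lem:contacttypeboundaryequivalence}. By \Cref{lem:contacttypeboundaryequivalence}(iii), the contact-type hypothesis provides an admissible form $\alpha \in \Omega^1(\partial M)$ with $\omega_\partial = \pm \d_{\eta_\partial}\alpha$; I will treat the convex case ($+$) and note the concave case is identical with signs flipped. Feeding \emph{this particular} $\alpha$ into \Cref{thm:ConformalSymplecticNormalform} as the admissible form $\beta := \alpha$, I get that a neighborhood of $\partial M$ is conformally equivalent to
\[
\Big( (-\varepsilon,0] \times \partial M,\ \eta = \eta_\partial,\ \omega = \omega_\partial + \d_{\eta_\partial}(t\alpha)\Big)
= \Big( (-\varepsilon,0]\times\partial M,\ \eta=\eta_\partial,\ \omega = \d_{\eta_\partial}\big((1+t)\alpha\big)\Big),
\]
using $\omega_\partial = \d_{\eta_\partial}\alpha$.

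The remaining task is the purely local, fiber-preserving change of the collar coordinate $t$ turning $(1+t)\alpha$ into $e^{s}\alpha$. I would define $s = s(t)$ (independent of the point in $\partial M$) by $e^s = 1+t$, i.e.\ $s = \log(1+t)$, which is a diffeomorphism from a small interval $(-\varepsilon,0]$ onto a small interval $(-\varepsilon',0]$ fixing the endpoint $0$; pulling back along the diffeomorphism $(s,x)\mapsto(t(s),x) = (e^s-1,x)$ of the collar carries $\d_{\eta_\partial}((1+t)\alpha)$ to $\d_{\eta_\partial}(e^s\alpha)$. Since this diffeomorphism is the identity on $\partial M$ and intertwines $\eta_\partial$ with itself (because $\eta_\partial$ has no $\d t$-component in these coordinates, exactly as arranged in the proof of \Cref{thm:ConformalSymplecticNormalform} where $\partial_t \in \ker\eta$), it is a conformal equivalence, and composing with the one from \Cref{thm:ConformalSymplecticNormalform} gives the claimed model. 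For the concave case one instead uses $\omega_\partial = -\d_{\eta_\partial}\alpha$, so $\omega = \d_{\eta_\partial}((t-1)\alpha) = -\d_{\eta_\partial}((1-t)\alpha)$, and the substitution $e^{-s} = 1-t$ yields $\omega = \d_{\eta_\partial}(e^{-s}\alpha)$ after noting an overall sign that is absorbed by replacing $\alpha$ with $-\alpha$ (which is still admissible up to orientation); I should be slightly careful here that the sign conventions in \eqref{eq:admissibleform} and in the statement are matched, checking that $\d_{\eta_\partial}(e^{-s}\alpha)$ restricts to $\omega_\partial$ at $s=0$ with the correct orientation.

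The only genuine subtlety — hardly an obstacle — is bookkeeping of signs and of which interval the new coordinate ranges over: one must check that after the logarithmic reparametrization the collar still has the form $(-\varepsilon',0]\times\partial M$ with $0\mapsto\partial M$, which holds since $\log(1+t)$ and $-\log(1-t)$ both vanish at $t=0$ and are increasing near $0$. Everything else is a direct substitution, so I would present this as a short proof: invoke \Cref{lem:contacttypeboundaryequivalence} to get $\alpha$, invoke \Cref{thm:ConformalSymplecticNormalform} with $\beta = \alpha$, rewrite $\omega_\partial + \d_{\eta_\partial}(t\alpha) = \d_{\eta_\partial}((1+t)\alpha)$, and finish with the coordinate change $t = e^{\pm s} \mp 1$.
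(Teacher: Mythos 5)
Your argument is correct and is exactly the (implicit) proof the paper has in mind: the paper states \Cref{cor:ConformalContactTypeNormalform} without proof, as a direct consequence of \Cref{thm:ConformalSymplecticNormalform} and \Cref{lem:contacttypeboundaryequivalence}, and your derivation — apply \Cref{lem:contacttypeboundaryequivalence}(iii) to produce $\alpha$, feed $\beta = \alpha$ into \Cref{thm:ConformalSymplecticNormalform}, collapse $\omega_\partial + \d_{\eta_\partial}(t\alpha)$ to $\d_{\eta_\partial}((1\pm t)\alpha)$, then reparametrize the collar coordinate logarithmically — is precisely that deduction. The coordinate change is a genuine conformal equivalence (in fact an honest isomorphism) since it is the identity on $\partial M$, preserves each slice $\{s\}\times\partial M$, and $\eta_\partial$ has no $\d t$-component.

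One remark on the sign bookkeeping you flagged yourself: in the concave case, after the substitution $e^{-s} = 1-t$ you obtain $\omega = -\d_{\eta_\partial}(e^{-s}\alpha)$, and absorbing the minus sign by passing to $\alpha' := -\alpha$ gives a form with $\omega_\partial = \d_{\eta_\partial}\alpha'$ but with $\alpha'\wedge\omega_\partial^{n-1} < 0$, i.e.\ $\alpha'$ is \emph{anti}-admissible (with respect to~\eqref{eq:admissibleform}). This is not a defect of your argument but a slight looseness in the wording of the corollary itself; indeed the paper's own later use of this normal form — in the concave case of the proof of \Cref{thm:ConformalTurbulization} — explicitly records $\gamma_\partial\wedge\alpha\wedge\omega_\partial^{n-1} < 0$, confirming the same sign you found. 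So your "admissible up to orientation" caveat is the right observation, and if you wanted to tighten the write-up you would simply state outright that in the concave case the $\alpha$ appearing in $\d_{\eta_\partial}(e^{-t}\alpha)$ is the negative of an admissible form.
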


The entire discussion above carries over to the foliated case.
Consider a conformal symplectic foliation $(M,\F,\eta,\omega)$ transverse to the boundary, and denote by $(F_\partial,\eta_\partial,\omega_\partial)$ its restriction to $\partial M$. 
As in Equation \ref{eq:admissibleform}, we define an \emph{admissible form for $\omega_\partial$} to be $\alpha \in \Omega^1(\F_\partial)$ such that
\begin{equation}\label{eq:ConformalAdmissibleForm}
    \alpha \wedge \omega_\partial^{n-1} > 0.
\end{equation}

The key point in the proof of \Cref{thm:ConformalSymplecticNormalform} is the Moser trick which produces the desired diffeomorphism as the flow of a vector field. The equation defining this vector field can be solved leafwise, that is, in the foliated case the vector field can be chosen tangent to the leaves. Thus, the Moser trick can be carried out leafwise, giving the following normal form.

\begin{theorem}
\label{thm:FoliatedConformalSymplecticNormalform}
Let $(\F,\eta,\omega)$ be a conformal symplectic foliation on $M$ transverse to the boundary. Denote by $(\F_\partial,\eta_\partial,\omega_\partial)$ the induced conformal symplectic foliation on the boundary. Then there exists a collar neighborhood $(-\varepsilon,0]\times \partial M$ on which
\[ \F = (-\varepsilon,0] \times \F_{\partial},\quad \eta = \eta_\partial,\quad \omega = \omega_\partial + \d_{\eta_\partial}(t \alpha).\]
\end{theorem}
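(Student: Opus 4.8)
\textbf{Proof plan for \Cref{thm:FoliatedConformalSymplecticNormalform}.}
The plan is to mimic the proof of \Cref{thm:ConformalSymplecticNormalform} verbatim, checking at each step that every analytic construction can be performed leafwise (equivalently, fiberwise over the collar direction) in a way that is smooth transversally. First I would fix a collar $(-\varepsilon,0]\times\partial M$ adapted to $\F$: since $\F$ is transverse to $\partial M$, the foliation near $\partial M$ is a product $(-\varepsilon,0]\times\F_\partial$, and after shrinking $\varepsilon$ we may take the collar coordinate $t$ to be a foliated coordinate, i.e.\ $\partial_t$ tangent to $\F$ (this is the foliated analogue of choosing $\partial_t\in\ker\eta$ in the non-foliated proof). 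On this collar set $\wtd{\eta}:=\eta_\partial$ and $\wtd{\omega}:=\omega_\partial+\d_{\eta_\partial}(t\alpha)$; the leafwise non-degeneracy of $\wtd\omega$ for $\varepsilon$ small is checked exactly as before, on each leaf.

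The next step is the two applications of a foliated version of \Cref{lem:conformalexact_trick}. The key observation is that the proof of that lemma is purely an integration in the $t$-variable along the leaves: writing a leafwise form as $\alpha=\alpha_t+\d t\wedge\beta_t$ with $\alpha_t\in\Omega^k(\F_\partial)$, $\beta_t\in\Omega^{k-1}(\F_\partial)$ depending smoothly on $t$, the identities $\alpha_0=0$, $\overline\d_\eta\alpha_t=0$, $\dot\alpha_t=\overline\d_\eta\beta_t$ hold leafwise, and the primitive $\int_0^t\beta_s\,\d s$ is again a smooth leafwise form. Applying this to the $\d$-closed leafwise $1$-form $\eta-\wtd\eta$ (which vanishes on $\partial M$) gives $\eta=\wtd\eta+\d f$ with $f|_{\partial M}=0$, so $(\eta,\omega)\sim(\wtd\eta,e^{-f}\omega)$; applying it again to the $\d_{\wtd\eta}$-closed leafwise $2$-form $e^{-f}\omega-\wtd\omega$ gives $\wtd\omega=e^{-f}\omega+\d_{\eta_\partial}\beta$ for some leafwise $1$-form $\beta$. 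Then the straight-line family $\omega_s:=e^{-f}\omega+(1-s)\d_{\eta_\partial}\beta$, $\eta_s:=\eta_\partial$ is a path of conformal symplectic foliations agreeing on $\partial M$ with fixed $\d_{\eta_\partial}$-primitive difference, and the Moser-type stability result (\cite[Theorem 4]{Ban02} / \cite[Corollary 3.2]{BanKot09}) produces the required vector field by solving $\iota_{X_s}\omega_s=-\beta$ leafwise — which is possible and smooth transversally precisely because $\omega_s$ is leafwise non-degenerate — whose time-one flow is the desired leafwise conformal symplectomorphism onto the normal form.

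The main obstacle is not conceptual but bookkeeping: one must make sure that the leafwise Moser argument is carried out with the parameter-dependence and transversal smoothness controlled, so that the resulting isotopy $\phi_s$ preserves $\F$ (equivalently, is generated by a vector field tangent to $\F$) and fixes $\partial M$, and that the cited stability theorems apply in the foliated setting — either by invoking a foliated version directly or by noting that the vector field $X_s$ produced from $\iota_{X_s}\omega_s=-\beta$ is automatically leafwise (since $\beta$ is) and integrates to a foliation-preserving isotopy. Once this is granted, $\phi_1$ gives the claimed collar normal form and the proof is complete; I would therefore keep the write-up short, stating explicitly that every step in the proof of \Cref{thm:ConformalSymplecticNormalform} is leafwise and that the Moser vector field can be chosen tangent to $\F$, and refer back to that proof for the details.
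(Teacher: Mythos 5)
Your proposal is correct and takes essentially the same route as the paper: the paper's own ``proof'' is simply the one-paragraph remark preceding the statement, which observes that the Moser vector field in the proof of \Cref{thm:ConformalSymplecticNormalform} can be chosen tangent to the leaves, so the argument carries over verbatim leafwise. Your write-up is a careful unpacking of exactly that observation, including the leafwise version of \Cref{lem:conformalexact_trick} and the transversal-smoothness bookkeeping for the Moser flow, which the paper leaves implicit.
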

Here, the notation $(-\varepsilon,0] \times \F_\partial$ refers to taking the product foliation:
\[ \bigcup_{\mathcal{L}_\partial \in \F_\partial} (-\varepsilon,0] \times \mathcal{L}_\partial,\]
where the union is over the leaves $\mathcal{L}_\partial$ of $\F_\partial$. 
According to our usual conventions, this means that if $\F_\partial = \ker \gamma_\partial$ and $\mu_\partial \in \Omega^{n-1}(\F_\partial)$ is a leafwise positive volume form, then $\gamma_\partial \wedge \d t \wedge \mu_\partial$ is a positive volume form on the collar neighborhood from the theorem.

We also point out that, when $\eta_\partial = 0$ in the above statement, we obtain the usual normal form for symplectic foliations.

\subsection{Symplectic gluing and turbulization}
\label{sec:sympl_turbul}

Constructing foliations often involves gluing foliated manifolds with boundary. 
As such it is useful to change foliations transverse to the boundary into ones tangent to the boundary. 
\\
Let $\F$ be a foliation on $M$ transverse to the boundary. Then there exists a collar neighborhood $(-\varepsilon,0] \times \partial M$ on which $\F$ is a product foliation, i.e. $\F = \ker \gamma_\partial$ with $\gamma_\partial \in \Omega^1(\partial M)$. To turn $\F$ into a foliation tangent to the boundary we need to assume that $\gamma_\partial $ is closed. 
Choose a smooth function $f:(-\varepsilon,0] \to [0,1]$ which is zero near $-\varepsilon$, $f(0) = 1$ and such that it can be smoothly extended as the constant function on $[0,\varepsilon)$.
Then, in the collar we can also define
\[ \wtd{\gamma} = (1-f(t)) \gamma_\partial + f(t) \d t.\]
We say that $\wtd{\F} := \ker \wtd{\gamma}$ is obtained from $\wtd{\F}$ by \emph{turbulization}. Note that $\wtd{\F}$ is tangent to the boundary and agrees with $\F$ away from the boundary. The condition that $f$ can be smoothly extended ensures that the gluing of two turbulized foliations is again smooth.

For symplectic foliations we also need control over the leafwise structure to ensure that the gluing is smooth. To make this precise consider a symplectic foliation $(\F,\omega)$ on $M$ tangent to the boundary. Choose a collar neighborhood of the boundary $k: (-\varepsilon,0] \times \partial M \to M$, and use it to define
\[ M_\infty := M \cup_{\partial M} [0,\infty) \times \partial M.\]
On $[0,\infty) \times \partial M$ we define an (a priori only continuous) extension of $(\F,\omega)$ by:
\[ \F' \coloneqq \bigcup_{t \in [0,\infty)} \{t \} \times \partial M,\quad \omega' \coloneqq \omega_\partial.\]
If this extension is smooth we say that the collar neighborhood is \emph{adapted}. 

\begin{definition}\label{def:tame_boundary}
A symplectic foliation is said to be \emph{tame at the boundary} if it admits an adapted collar neighborhood as above.
\end{definition}
It follows immediately that such manifolds can be glued by "matching admissible collars". To be precise we have:
\begin{proposition}
\label{thm:sympl_gluing_turbul}
Let $(M_i,\F_i,\omega_i)$, $i=1,2$ be symplectic foliations tame at the boundary. 
If there exists an orientation reversing diffeomorphism $\phi:\partial M_1\to \partial M_2$ satisfying $\phi^*\omega_{2,\partial} = \omega_{1,\partial}$, then $M_1\cup_\phi M_2$ admits a symplectic foliation.
\end{proposition}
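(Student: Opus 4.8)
The plan is to glue the two manifolds along the boundary using the diffeomorphism $\phi$, and then use the ``tame at the boundary'' hypothesis to check that the resulting foliation and leafwise symplectic form are smooth across the gluing locus. First I would recall that, since $(M_i,\F_i,\omega_i)$ is tame at the boundary, there is an adapted collar $k_i\colon(-\varepsilon_i,0]\times\partial M_i\to M_i$ such that the continuous extension $(\F_i',\omega_i')$ over $[0,\infty)\times\partial M_i$ with $\F_i'=\bigcup_t\{t\}\times\partial M_i$ and $\omega_i'=\omega_{i,\partial}$ is in fact smooth; in particular, in these coordinates $\F_i$ is already the product foliation $\ker(\d t)$ on a one-sided neighborhood of $\{0\}$ and $\omega_i$ extends smoothly to a $t$-independent (near $t=0$) form equal to $\omega_{i,\partial}$. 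Thus, replacing the collar on $M_1$ by its image under $t\mapsto -t$, the two adapted collars fit together via $\phi$ to give a two-sided collar $(-\varepsilon,\varepsilon)\times\partial M_1$ inside $M_1\cup_\phi M_2$, where on the $M_2$-side we use $\phi$ to identify $\partial M_2$ with $\partial M_1$.

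The key step is then to verify the matching. On the $M_1$-side of this two-sided collar the foliation is $\ker(\d t)$ and the leafwise form is $\omega_{1,\partial}$; on the $M_2$-side the foliation is again $\ker(\d t)$ (the collar coordinate now running in the positive direction) and the leafwise form is $\phi^*\omega_{2,\partial}$. The hypothesis $\phi^*\omega_{2,\partial}=\omega_{1,\partial}$ says precisely that these two halves agree on $\{0\}\times\partial M_1$, and tameness (smoothness of the extension across $\{0\}$) guarantees that all derivatives in the collar direction agree as well — indeed on an adapted collar the structure is literally independent of $t$ for $t$ near $0$, so the glued object is $C^\infty$. Therefore $(\F,\omega)$, defined to be $(\F_i,\omega_i)$ on $M_i$ and the product structure $(\ker\d t,\,\omega_{1,\partial})$ on the collar, is a well-defined smooth codimension-$1$ foliation on $M_1\cup_\phi M_2$ together with a smooth leafwise $2$-form. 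It remains to note that $\omega$ is leafwise closed and non-degenerate: both conditions hold on each $M_i$ by assumption and on the collar because there $\omega=\omega_{1,\partial}$, which is $\d$-closed and non-degenerate along the leaves since it is the restriction of the non-degenerate leafwise symplectic form $\omega_1$ to the (codimension-$0$ in each leaf) boundary foliation. Hence $(\F,\omega)$ is a symplectic foliation on $M_1\cup_\phi M_2$.

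The main obstacle — really the only subtle point — is orientation and co-orientation bookkeeping at the gluing: one must check that the co-orientation of $\F$, encoded by $\d t$ on each side, is consistent, which is exactly why $\phi$ is required to be \emph{orientation reversing} on the boundaries (equivalently, $\phi$ reverses the co-orientation induced by the outward collar parameters, so that $\d t$ on the $M_1$-side matches $\d t$ on the $M_2$-side after identification). I would also remark that the construction only uses the foliated normal form of \Cref{thm:FoliatedConformalSymplecticNormalform} (with $\eta=0$) to put each $\F_i$ in product form near $\partial M_i$, and that the role of ``tame'' is precisely to upgrade the a priori continuous $t$-independent extension to a smooth one so that no bump-function interpolation — which would destroy leafwise closedness of $\omega$ — is needed. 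This completes the proof.
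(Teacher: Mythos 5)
Your high-level strategy — glue the two manifolds along the adapted collars and use the tameness hypothesis to check that the result is smooth — is the right one, and is essentially what the paper has in mind (it states the proposition as "immediate"). However, there is a genuine error in your interpretation of tameness. You claim that "in these coordinates $\F_i$ is already the product foliation $\ker(\d t)$ on a one-sided neighborhood of $\{0\}$" and, later, that "on an adapted collar the structure is literally independent of $t$ for $t$ near $0$." This is false. The definition of an adapted collar (\Cref{def:tame_boundary}) only asserts that the \emph{extension} of $(\F_i,\omega_i)$ over the \emph{added half-cylinder} $[0,\infty)\times\partial M_i$ by the product data $(\ker\d t,\,\omega_{i,\partial})$ is smooth; it says nothing about $\F_i$ on $(-\varepsilon_i,0]$ being a product. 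In the intended applications (e.g.\ after turbulization as in \Cref{thm:turbulization}, or a Reeb component), the interior leaves of $M_i$ spiral onto the boundary leaf, so $\F_i$ is most certainly \emph{not} $\ker\d t$ on any one-sided neighborhood of the boundary, and there are no compact leaves other than $\partial M_i$ itself. The same misconception underlies your citation of \Cref{thm:FoliatedConformalSymplecticNormalform}: that normal form is for foliations \emph{transverse} to the boundary, whereas here $\F_i$ is \emph{tangent} to $\partial M_i$, so it does not apply.

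What tameness actually gives you is jet-matching: the $\infty$-jet of $(\F_i,\omega_i)$ along $\partial M_i$, computed from the interior side $t\in(-\varepsilon_i,0]$, coincides with the $\infty$-jet of the $t$-independent product data $(\ker\d t,\,\omega_{i,\partial})$. This is exactly the right amount of information for gluing. After forming the two-sided collar $(-\varepsilon_1,\varepsilon_2)\times\partial M_1$ inside $M_1\cup_\phi M_2$ as you describe (with the $M_2$ half reparametrized by $t\mapsto-t$ and $\partial M_2$ identified with $\partial M_1$ via $\phi$), both sides have boundary jets equal to the product jets; the hypothesis $\phi^*\omega_{2,\partial}=\omega_{1,\partial}$ ensures these product jets agree after the identification. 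Hence the jets from the $M_1$-side and the $M_2$-side agree with each other, so the glued plane field and leafwise $2$-form are $C^\infty$; leafwise closedness and non-degeneracy then hold because they hold on each $M_i$ and are closed conditions. So your conclusion is correct, but you should replace "the structure is literally $t$-independent near $t=0$" by "the boundary jets agree with the product jets," since the former is false and would in fact trivialize the whole tameness framework.
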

 Note that this means that the orientation on the boundary is not the one induced by the symplectic form.

To state the symplectic version of the turbulization construction we need one more definition: $(\F,\omega)$ is said to have \emph{boundary of cosymplectic type} if the restriction at the boundary $(\calF\vert_{\partial},\omega_\partial)$ admits a leafwise admissible form (see Equation \ref{eq:ConformalAdmissibleForm}) which is additionally leafwise closed; 
we will also call the latter an \emph{admissible form of cosymplectic type}. 
If additionally $\F_\partial$ is unimodular (i.e. defined by a closed $1$-form) then we say $(\F,\omega)$ has \emph{boundary of unimodular cosymplectic type}.

\begin{theorem}[{\cite[Section 1.7]{ToussaintThesis}}] 
\label{thm:turbulization}
Let $(M,\F,\omega)$ be a symplectic foliation with boundary of unimodular cosymplectic type.
Then $M$ admits a symplectic foliation $(\wtd{\F},\wtd{\omega})$ tame at the boundary.
Moreover:
\begin{enumerate}
\item $(\wtd{\F},\wtd{\omega})$ and $(\F,\omega)$ agree away from the boundary;
\item Let $\gamma_\partial$ be any closed form defining $\F_\partial$, and $\alpha$ any admissible form of cosymplectic type for $\omega_\partial$.
Then we can arrange that 
\[ \wtd{\omega}_\partial = \omega_\partial + \gamma_\partial \wedge \alpha,\]
on the boundary leaf $\partial M$.
\end{enumerate}
\end{theorem}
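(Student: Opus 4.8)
The plan is to reduce, via a normal form, to an explicit collar model near $\partial M$ and then turbulize both the underlying smooth foliation and the leafwise symplectic form, the latter being the delicate point. \textbf{Step 1 (normal form).} Applying \Cref{thm:FoliatedConformalSymplecticNormalform} with $\eta=0$ and taking the admissible form there to be the given leafwise-closed admissible form $\alpha$ of cosymplectic type, we get a collar $(-\varepsilon,0]\times\partial M$ on which $\F=(-\varepsilon,0]\times\F_\partial$ and, leafwise, $\omega=\omega_\partial+\d(t\alpha)$; since $\alpha$ is leafwise closed the term $t\,\d\alpha$ drops out on the leaves, so $\omega=\omega_\partial+\d t\wedge\alpha$ there. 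Using unimodularity of $\F_\partial$, fix a closed defining $1$-form $\gamma_\partial$ for $\F_\partial$ and pull it back to the collar. \textbf{Step 2 (turbulize the foliation).} Pick a smooth $f\colon(-\varepsilon,0]\to[0,1]$ that is $\equiv 0$ near $-\varepsilon$, $\equiv 1$ near $0$, and non-decreasing, and set $\wtd\gamma:=(1-f)\gamma_\partial+f\,\d t$ on the collar. Since $\gamma_\partial$ is closed, $\d\wtd\gamma=-f'\,\d t\wedge\gamma_\partial$, whence $\wtd\gamma\wedge\d\wtd\gamma=0$, so $\wtd\F:=\ker\wtd\gamma$ (extended by $\F$ outside the collar) is a foliation of $M$: it agrees with $\F$ near $t=-\varepsilon$ and is tangent to $\partial M$, which becomes a leaf. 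The requirement that $f\equiv 1$ near $0$ is precisely the ``smoothly extendable as a constant'' condition that will make the product extension over $[0,\infty)\times\partial M$ smooth.

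\textbf{Step 3 (the leafwise symplectic form — the crux).} I want $\wtd\omega\in\Omega^2(\wtd\F)$, leafwise closed and leafwise non-degenerate, that agrees with $\omega$ where $\wtd\F=\F$ and that, for $t$ near $0$, is the $t$-independent form whose restriction to $\F_\partial$ is $\omega_\partial$ and which on the boundary leaf equals $\omega_\partial+\gamma_\partial\wedge\alpha$; this yields item $(2)$ and makes the collar $[0,\varepsilon)\times\partial M$ adapted in the sense of \Cref{def:tame_boundary}. The naive idea of keeping $\omega_\partial$ and merely interpolating the $1$-form that multiplies $\alpha$ from $\d t$ to $\gamma_\partial$ — say $\wtd\omega=\omega_\partial+\sigma\wedge\alpha$ with $\sigma$ a convex combination of $\d t$ and $\gamma_\partial$ — does not work: along the tilting leaves of $\wtd\F$, the pairing of $\sigma$ with a leafwise vector transverse to $\F_\partial$ changes sign between the two ends of the transition, so such a form degenerates at some intermediate tilt. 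One must therefore also deform $\alpha$ (and $\omega_\partial$) across the transition region, threading the interpolation so that the top leafwise power of $\wtd\omega$ stays a positive multiple of the leafwise volume $\alpha\wedge\omega_\partial^{n-1}$, and add a leafwise-exact correction term to restore leafwise closedness. This is possible exactly because $\gamma_\partial$ is closed and $\alpha$ and $\omega_\partial$ are leafwise closed: these are the hypotheses that let the $2$-form ``close up'' consistently along the spiralling leaves that accumulate onto $\partial M$. The explicit bookkeeping is carried out in \cite[Section 1.7]{ToussaintThesis}.

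Assembling the pieces: away from $\partial M$ one has $(\wtd\F,\wtd\omega)=(\F,\omega)$ by the $f\equiv 0$ part of the collar, giving $(1)$; near $\partial M$ everything is the $t$-independent product, so $(\wtd\F,\wtd\omega)$ is tame at the boundary and the boundary-leaf restriction is $\omega_\partial+\gamma_\partial\wedge\alpha$, giving $(2)$. I expect the main obstacle to be Step 3: simultaneously guaranteeing leafwise non-degeneracy across the entire tilting region, leafwise closedness, and $t$-independence near $\partial M$. Non-degeneracy is the genuinely subtle point — a naive turbulization of $\omega$ degenerates at intermediate tilts — and it forces one to deform the leafwise symplectic structure in tandem with the foliation, using the cosymplectic-type data in an essential way.
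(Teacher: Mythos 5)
The paper does not actually prove this statement: it is quoted verbatim from \cite[Section 1.7]{ToussaintThesis}, and no proof is reproduced here. So there is no ``paper's own proof'' to compare against in the strictest sense. However, the paper \emph{does} give a full proof of the directly analogous conformal statement, \Cref{thm:ConformalTurbulization_cosymplectic}, of which the present theorem is the special case $\eta=0$, and that proof reveals that your Step 3 misdiagnoses what is actually needed.

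Your Steps 1 and 2 are exactly the paper's (normal form via \Cref{thm:FoliatedConformalSymplecticNormalform}, then turbulize $\gamma_\partial$ to $\wtd\gamma=f\gamma_\partial\pm g\,\d t$). The discrepancy is in Step 3. You argue that any form of the shape $\wtd\omega=\omega_\partial+\sigma\wedge\alpha$ fails and that one must ``also deform $\alpha$ and $\omega_\partial$'' and ``add a leafwise-exact correction term.'' But the proof of \Cref{thm:ConformalTurbulization_cosymplectic} (set $\eta_\partial=0$) writes down a \emph{single, $t$-independent} ambient $2$-form
\[ \wtd\omega=\omega_\partial+\d t\wedge\alpha\pm\alpha\wedge\gamma_\partial=\omega_\partial+(\d t\mp\gamma_\partial)\wedge\alpha, \]
and checks directly that $\wtd\gamma\wedge\wtd\omega^n=n(f+g)\,\gamma_\partial\wedge\omega_\partial^{n-1}\wedge\d t\wedge\alpha$, which is everywhere positive because $f,g\geq 0$ and $(f,g)\neq(0,0)$. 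No deformation of $\alpha$ or $\omega_\partial$ is performed, and no closedness correction appears. Your sign observation has a kernel of truth: if one uses a \emph{convex combination} $\sigma_c=(1-c)\,\d t+c\,\gamma_\partial$, then pairing $\sigma_c$ with the leafwise spanning vector transverse to $\F_\partial$ (which is a combination $\pm(f\partial_t)+g\,\partial_v$ as the leaf tilts) does change sign, exactly as you say. But the conclusion you draw from this is the wrong one. The remedy is simply to tilt the covector in the \emph{opposite} direction — towards $-\gamma_\partial$ rather than $+\gamma_\partial$ — or even to freeze it at $\sigma=\d t\mp\gamma_\partial$ altogether; this keeps the pairing of constant sign. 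The sign coupling between $\wtd\gamma=f\gamma_\partial\pm g\,\d t$ and the $\pm\alpha\wedge\gamma_\partial$ term is precisely what encodes this, and it is the one genuinely delicate point in the construction. Your proposed fix of simultaneously deforming $\omega_\partial$, $\alpha$ and adding exact correction terms is not what the paper's construction does and would needlessly complicate the argument. The deferral to \cite[Section 1.7]{ToussaintThesis} is fair (the paper does the same), but the preceding explanation of why a naive formula fails and how to repair it points the reader in the wrong direction.
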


One can apply this theorem to suitable decompositions of a manifold (as explicitly stated in \Cref{prop:from_OBD_to_sympl_fol} below) in order to obtain symplectic foliations.
In order to describe this, first recall that an \emph{abstract open book} is a pair $(\Sigma,\phi)$ consisting of a manifold with boundary $\Sigma$, and a diffeomorphism $\phi: \Sigma \tois \Sigma$ which is the identity near the boundary.
Out of this data we can construct a manifold $M_{(\Sigma,\phi)}$ with a natural decomposition
\[ M_{(\Sigma,\phi)} := (B \times \D^2) \cup_{B\times \S^1} \Sigma_\phi,\]
where $B:= \partial \Sigma$ and $\Sigma_\phi$ denotes the mapping torus associated to $\phi$, i.e. $\Sigma \times \R/\sim$ where $(x,t) \sim (\phi(x), t-1)$. Usually we refer to $B\times \D^2$, and $\Sigma_\phi$ respectively as the \emph{inside and outside components} of the open book.

\begin{definition}\label{def:obdtypes}
An abstract open book $(\Sigma,\phi)$ is said to be of:
\begin{enumerate}
    \item \emph{contact type} if $(\Sigma,\d \lambda)$ is a Liouville domain, and $\phi$ is an exact symplectomorphism;
    \item \emph{cosymplectic type} if $(\Sigma,\omega)$ is a symplectic manifold with boundary of cosymplectic type, and $\phi$ is a symplectomorphism.
\end{enumerate}
\end{definition}

It is well-known since \cite{Gir02} 
that if $(\Sigma,\d \lambda,\phi)$ is of contact type, then $M_{(\Sigma,\phi)}$ admits a contact structure supported by the open book. 
The analogue for symplectic foliations using open books of cosymplectic type follows directly from Theorem \ref{thm:turbulization}:

\begin{proposition}
\label{prop:from_OBD_to_sympl_fol}
Let $(\Sigma,\omega)$ be a $2n-$dimensional symplectic manifold with cosymplectic boundary $B$ and $\phi:\Sigma \to \Sigma$ a symplectomorphism which is the identity near the boundary. 
Then, the  open book $M_{(\Sigma,\phi)}$ admits a symplectic foliation.
\end{proposition}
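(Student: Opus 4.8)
The plan is to use the decomposition $M_{(\Sigma,\phi)} = (B\times\D^2)\cup_{B\times\S^1}\Sigma_\phi$, to equip each of the two pieces with a symplectic foliation transverse to the common boundary $B\times\S^1$, to make both of them tame at that boundary via \Cref{thm:turbulization}, and finally to glue the results using \Cref{thm:sympl_gluing_turbul}. Fix once and for all a closed admissible $1$-form $\alpha_B\in\Omega^1(B)$ for $\omega_B := \omega|_B$, i.e.\ with $\alpha_B\wedge\omega_B^{n-1}>0$; such a form exists by the cosymplectic boundary hypothesis. Throughout, $\theta$ denotes the circle coordinate shared by the mapping torus and by $\partial\D^2$.

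\emph{The outside component.} Since $\phi$ is a symplectomorphism, $\omega$ descends to a leafwise symplectic form $\omega^{\mathrm{out}}$ on the page foliation $\F^{\mathrm{out}}$ of $\Sigma_\phi$ (the pages being the copies of $\Sigma$). This foliation is transverse to $\partial\Sigma_\phi = B\times\S^1$, where it restricts to the foliation $\ker(\d\theta)$ by copies of $B$ and $\omega^{\mathrm{out}}$ restricts to $\omega_B$; hence $(\F^{\mathrm{out}},\omega^{\mathrm{out}})$ has boundary of unimodular cosymplectic type, with $\d\theta$ a closed defining $1$-form of $\F^{\mathrm{out}}_\partial$ and $\alpha_B$ a leafwise-closed admissible form for $\omega_B$. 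Applying \Cref{thm:turbulization} produces a symplectic foliation $(\wtd\F^{\mathrm{out}},\wtd\omega^{\mathrm{out}})$ on $\Sigma_\phi$, tame at the boundary, equal to $(\F^{\mathrm{out}},\omega^{\mathrm{out}})$ away from $\partial\Sigma_\phi$, and whose (now closed) boundary leaf $B\times\S^1$ carries the symplectic form $\omega_B + \d\theta\wedge\alpha_B$.

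\emph{The inside component.} This is the heart of the matter: $\D^2$ admits no codimension-$1$ foliation, so the foliation on $B\times\D^2$ must incorporate directions along $B$, and this is exactly where the cosymplectic structure of $B$ enters. Since $\alpha_B$ is closed, $\G_B := \ker\alpha_B$ is a codimension-$1$ foliation of $B$, and $\omega_B$ restricts to a symplectic form on each of its leaves because $\ker\alpha_B\cap\ker\omega_B = 0$: the admissibility $\alpha_B\wedge\omega_B^{n-1}>0$ forces $\alpha_B$ to be non-vanishing on the line field $\ker\omega_B$. Pulling $\G_B$ back under the projection $B\times\D^2\to B$ yields a codimension-$1$ foliation $\F^{\mathrm{in}}$ with leaves $(\text{leaf of }\G_B)\times\D^2$, and $\omega^{\mathrm{in}} := \omega_B + \omega_0$ — the sum of the pullbacks from the two factors, $\omega_0$ being a fixed area form on $\D^2$ — is a leafwise symplectic form for it. Exactly as before, $\F^{\mathrm{in}}$ is transverse to $\partial(B\times\D^2) = B\times\S^1$, there restricting to $\ker\alpha_B$ with $\omega^{\mathrm{in}}$ restricting to $\omega_B$ (the $\D^2$-area pulls back to zero on the circle factor), and the boundary is again of unimodular cosymplectic type, now with $\alpha_B$ defining $\F^{\mathrm{in}}_\partial$ and $\d\theta$ a leafwise-closed admissible form. \Cref{thm:turbulization} then gives $(\wtd\F^{\mathrm{in}},\wtd\omega^{\mathrm{in}})$, tame at the boundary, whose boundary leaf $B\times\S^1$ carries $\omega_B + \alpha_B\wedge\d\theta$.

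\emph{Gluing.} The two boundary symplectic forms $\omega_B + \d\theta\wedge\alpha_B$ and $\omega_B + \alpha_B\wedge\d\theta = \omega_B - \d\theta\wedge\alpha_B$ differ precisely by the sign of $\d\theta\wedge\alpha_B$, so the diffeomorphism $\psi\colon B\times\S^1\to B\times\S^1$, $(b,\theta)\mapsto(b,-\theta)$, reverses the $\S^1$-orientation and pulls one onto the other. By \Cref{thm:sympl_gluing_turbul} the manifold $(B\times\D^2)\cup_\psi\Sigma_\phi$ then admits a symplectic foliation; since $\psi$ extends over $B\times\D^2$ as $(b,z)\mapsto(b,\bar z)$, this manifold is diffeomorphic to $M_{(\Sigma,\phi)}$, and the proposition follows. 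The one genuinely non-formal step is the construction on $B\times\D^2$ in the third paragraph; once the foliation and leafwise form there are chosen and their boundary behaviour matched to the outside piece, what remains — checking the normal forms along the collars glue smoothly and that the orientation conventions of \Cref{thm:sympl_gluing_turbul} are met — is routine.
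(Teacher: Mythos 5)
Your proof is correct and follows essentially the same approach as the paper's: the page foliation with $\omega$ on $\Sigma_\phi$, the pullback of $\ker\alpha_B$ with a product symplectic form on $B\times\D^2$, turbulization of both pieces via \Cref{thm:turbulization}, and gluing along $B\times\S^1$ via $(b,\theta)\mapsto(b,-\theta)$ using \Cref{thm:sympl_gluing_turbul}. The only cosmetic difference is that the paper takes the specific area form $2r\,\d r\wedge\d\theta$ on $\D^2$ rather than an unspecified $\omega_0$, which does not affect the argument.
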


\begin{proof}
The proposition follows from applying Theorem \ref{thm:turbulization} to both components of the open book. 
On the outside component $\Sigma_\phi$ we take the symplectic foliation $(\F := \ker \d \theta,\omega)$ where $\d \theta$ denotes the pullback of the angular form on $\S^1$ under the projection $\pi:\Sigma_\phi \to \S^1$. 
After turbulization the boundary leaf equals
\[ (B \times \S^1, \omega_\partial + \d \theta \wedge \alpha),\]
where $\alpha\in\Omega^1(B)$ is any admissible form of cosymplectic type for $\omega_\partial$. 

Similarly, on the inside component $B\times \D^2$ we take the symplectic foliation $(\F := \ker\alpha, \omega := \omega_\partial + 2r\d r \wedge \d \theta)$, where $(r,\theta) \in \D^2$ denote polar coordinates. Again this satisfies the conditions in Theorem \ref{thm:turbulization} and the resulting boundary leaf equals
\[ (B \times \S^1, \omega_\partial + \alpha \wedge \d \theta).\]
Then $\phi:B \times \S^1 \to B \times \S^1$ defined by $(x,\theta) \mapsto (x,-\theta)$ is an orientation reversing diffeomorphism between the boundaries preserving the symplectic forms. Thus both pieces can be glued using \Cref{thm:gluing_turbul}.
\end{proof}

In most cases the page of a symplectic open book does not have boundary of cosymplectic type. 
For instance, this requires in particular the binding to admit a cosymplectic structure
to which there are topological obstructions; recall that a \emph{cosymplectic structure} on a manifold $M^{2n+1}$ is a pair $(\gamma,\mu)$ where $\gamma \in \Omega^1(M)$ and $\mu \in \Omega^2(M)$ are closed forms satisfying
\[ \gamma \wedge \eta^n > 0.\]

However, in some cases we can perturb the symplectic form on the page to one which has cosymplectic boundary (c.f.\ \cite[Lemma 6.4.6]{TorresThesis}): 
\begin{lemma}\label{lem:cosymplecticboundaryperturbation}
Let $(\Sigma,\omega)$ be a symplectic manifold, and $(\gamma,\mu)$ a cosymplectic structure on $\partial M$ such that:
\begin{enumerate}
    \item\label{item:lem_cosymplecticboundaryperturbation_1} $\gamma \wedge \omega_\partial = 0$;
    \item\label{item:lem_cosymplecticboundaryperturbation_2} $[\mu]$ is in the image of the restriction $H^2(\Sigma) \to H^2(\partial \Sigma)$.
\end{enumerate}
Then there exists a symplectic form $\wtd{\omega}$ on $\Sigma$ with boundary of cosymplectic type. 
\end{lemma}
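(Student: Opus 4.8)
The plan is to modify $\omega$ only near $\partial\Sigma$, interpolating from $\omega$ to a model form whose restriction to the boundary is the given cosymplectic $\mu$, in such a way that the result stays symplectic and has cosymplectic boundary in the sense of \Cref{thm:turbulization} (i.e.\ $\omega_\partial=\mu$ and the closed $1$-form $\gamma$ satisfies $\gamma\wedge\mu^{n-1}>0$, so $\gamma$ is an admissible form of cosymplectic type). First I would use a collar $(-\varepsilon,0]\times\partial\Sigma$ of the boundary, with coordinate $t$, and the normal form $\omega=\omega_\partial+\d(t\beta)$ near $\partial\Sigma$ for some $1$-form $\beta$ on $\partial\Sigma$ with $\omega_\partial^{n-1}\wedge\beta>0$ (this is the symplectic, i.e.\ $\eta=0$, case of \Cref{thm:ConformalSymplecticNormalform}). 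Hypothesis \eqref{item:lem_cosymplecticboundaryperturbation_1}, $\gamma\wedge\omega_\partial=0$, combined with $\gamma\wedge\mu^{n-1}>0$ and $\mu$ nondegenerate on $\ker\gamma$, is what will let the candidate model form $\mu+\d t\wedge\gamma$ be symplectic on a collar and have the required boundary behaviour; note $\gamma\wedge\omega_\partial=0$ forces $\omega_\partial$ to vanish on $\ker\gamma$ only up to the kernel direction, so $\omega_\partial$ and $\mu$ need not agree and an honest interpolation on the boundary leaf is required.

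The cohomological hypothesis \eqref{item:lem_cosymplecticboundaryperturbation_2} enters as follows. Pick a closed $2$-form $\tau$ on $\Sigma$ with $[\tau]=[\mu]$ in $H^2(\Sigma)$ restricting to $[\mu]$ in $H^2(\partial\Sigma)$; then $\mu-\tau|_{\partial\Sigma}$ is exact on $\partial\Sigma$, say $\mu-\tau|_{\partial\Sigma}=\d\sigma$. Next I would show that the linear path $\omega_s:=(1-s)\omega_\partial+s\,\mu$ of $2$-forms on the boundary leaf is, after perhaps shrinking, realizable: because $\gamma\wedge\omega_\partial=0$ and $\gamma\wedge\mu^{n-1}>0$, one checks $\gamma\wedge\omega_s^{n-1}>0$ for all $s$ (the cross terms involving $\gamma$ drop out on the boundary leaf, and positivity is preserved since $\mu^{n-1}$ dominates), so every $\omega_s$ is nondegenerate on $\ker\gamma$. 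Using a cutoff function $\chi(t)$ equal to $1$ near $t=0$ and supported near the boundary, I would set
\[
\wtd\omega := \omega + \d\big(\chi(t)\,(\sigma_{\mathrm{ext}} + \text{correction terms})\big),
\]
i.e.\ a $\d$-exact perturbation of $\omega$ supported in the collar, chosen so that $\wtd\omega|_{\partial\Sigma}=\mu$ and $\wtd\omega=\mu+\d t\wedge\gamma'$ in a smaller collar, for $\gamma'$ an admissible form of cosymplectic type. Concretely, on the collar one writes the target as $\mu_t + \d t\wedge\nu_t$, matches $\mu_0=\mu$ to $\omega_\partial$ by the path $\omega_s$ above, and solves $\dot\mu_t=\d\nu_t$ leafwise as in \Cref{lem:conformalexact_trick}; the exactness of $\mu-\omega_\partial$ on $\partial\Sigma$ (which is where \eqref{item:lem_cosymplecticboundaryperturbation_2} is used, via $\sigma$) is exactly what makes this primitive exist globally on $\partial\Sigma$.

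Nondegeneracy of $\wtd\omega$ on the collar is then the point requiring care: $\wtd\omega^n = (\mu_t + \d t\wedge\nu_t)^n = n\,\mu_t^{n-1}\wedge\d t\wedge\nu_t$, which is positive provided $\mu_t^{n-1}\wedge\nu_t>0$, i.e.\ provided $\nu_t$ stays admissible for $\mu_t$ along the homotopy; since $\gamma$ is admissible for $\mu_0=\mu$ and admissibility is an open condition, choosing the collar thin enough (so $\nu_t$ stays $C^0$-close to $\gamma$) secures this. Away from the collar $\wtd\omega=\omega$ is untouched, so it is symplectic there. I expect the \textbf{main obstacle} to be the bookkeeping that simultaneously (a) keeps the perturbation $\d$-exact on all of $\Sigma$ — which is why one passes through a closed representative $\tau$ of $[\mu]$ and only the \emph{difference} needs a global primitive on $\partial\Sigma$ — and (b) arranges the collar model to be literally $\mu+\d t\wedge\gamma'$ with $\gamma'$ closed (or at least admissible of cosymplectic type) so that \Cref{thm:turbulization} applies verbatim; reconciling the Moser-type interpolation $\omega_s$ on the boundary leaf with the requirement that the interpolating $1$-forms remain admissible is the delicate quantitative step, and the $\gamma\wedge\omega_\partial=0$ hypothesis is precisely the algebraic input that makes it go through.
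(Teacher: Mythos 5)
Your plan aims to perturb $\omega$ only in a collar so that $\wtd\omega|_{\partial\Sigma}$ literally becomes the cosymplectic form $\mu$, and to do this by a $\d$-exact perturbation. This has two genuine problems. First, a $\d$-exact perturbation cannot change the de Rham class of the restriction: $[\wtd\omega_\partial]=[\omega_\partial]$, so you cannot arrange $\wtd\omega_\partial=\mu$ unless $[\mu]=[\omega_\partial]$ in $H^2(\partial\Sigma)$, which is not among the hypotheses. (It is also more than the statement asks for: having \emph{boundary of cosymplectic type} only requires that $\wtd\omega_\partial$ admit a leafwise closed admissible $1$-form, not that $\wtd\omega_\partial$ itself be the $2$-form of a cosymplectic pair.) Second, the intermediate claim that $\gamma\wedge\omega_s^{n-1}>0$ for all $s$ along $\omega_s=(1-s)\omega_\partial+s\mu$ is false: by Hypothesis~1 every term with a factor of $\omega_\partial$ dies, leaving $s^{n-1}\gamma\wedge\mu^{n-1}$, which vanishes at $s=0$. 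This is precisely why $\omega$ itself need not have cosymplectic boundary, and it means $\gamma$ does not stay admissible along your path. The ``shrink the collar / admissibility is open'' argument does not rescue this, because the endpoint data — $\mu,\gamma$ at $t=0$ and $\omega_\partial,\beta$ in the interior — are fixed, and thinning the collar does not bring them closer together.

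The paper's argument sidesteps all of this. By Hypothesis~2 pick a \emph{closed} extension $\wtd\mu\in\Omega^2(\Sigma)$ of $\mu$ (your $\tau$ plus the usual cutoff correction), and set $\wtd\omega:=\omega+\varepsilon\wtd\mu$, a global additive perturbation, symplectic for $\varepsilon>0$ small. Then $\wtd\omega_\partial=\omega_\partial+\varepsilon\mu$, and Hypothesis~1 collapses the binomial expansion of $\gamma\wedge(\omega_\partial+\varepsilon\mu)^{n-1}$ to the single surviving term $\varepsilon^{n-1}\gamma\wedge\mu^{n-1}>0$; since $\gamma$ is also closed, it is an admissible form of cosymplectic type. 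No collar normal form, Moser interpolation, or boundary matching is needed. You correctly identified where each hypothesis enters, but the place to perturb is the global symplectic form, not its germ at the boundary.
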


\begin{proof}
Because of Hypothesis \ref{item:lem_cosymplecticboundaryperturbation_2}, there is a closed extension $\wtd{\mu} \in \Omega^2(\Sigma)$ of $\mu$. 
Then, for $\varepsilon > 0$ sufficiently small,
\[ \wtd{\omega} := \omega + \varepsilon \wtd{\mu},\]
defines a symplectic form on $\Sigma$. 
Lastly, by Hypothesis \ref{item:lem_cosymplecticboundaryperturbation_1}, $\gamma$ is a closed admissible form for $\wtd{\omega}_\partial = \omega_\partial + \varepsilon \mu$, which implies that $\wtd{\omega}$ has boundary of cosymplectic type.
\end{proof}

\subsection{Conformal symplectic cobordisms}
\label{sec:conf_sympl_cobordisms}

To construct (conformal) symplectic cobordisms we will often use the h-principle results from \cite{EliMur15,BerMei}. The formal data underlying such a cobordism consists of the following:

\begin{definition}\label{def:ALmostSymplecticCobordism}
An almost symplectic cobordism $(W,\omega):(M_-,\alpha_-,\omega_-) \to (M_+,\alpha_+,\omega_+)$ between (positive) almost contact manifolds $(M_\pm,\alpha_\pm,\omega_\pm)$ consists of
\begin{enumerate}[(i)]
    \item A smooth oriented cobordism $W$ from $M_-$ to $M_+$;
    \item An almost symplectic structure $\omega$ such that $\omega|_{\partial_\pm W} = \omega_\pm$.
\end{enumerate}
\end{definition}

By a homotopy of almost symplectic cobordisms, we mean a homotopy $\omega_{s}$, $ s\in [0,1]$ of $\omega$. This induces a homotopy of almost contact forms on the boundary. 
In case $\xi_\pm := \ker \alpha_\pm$ define honest contact structures, we often consider homotopies relative to the boundary. These come in two flavours:
\begin{definition}\label{def:CobordismHomotopy}
A homotopy of almost symplectic cobordisms $\omega_s$ is said to be:
\end{definition}
\begin{enumerate}[(i)]
\item weakly relative to the boundary if $\omega_s|_{\partial_\pm W} = f_{\pm,s} \d \alpha_\pm$, for positive functions $f_{\pm,s}\in C^\infty(M_\pm)$. 
\item relative to the boundary if $\omega_s|_{\partial_\pm W} = \d \alpha_\pm$. 
\end{enumerate}

Thus, a homotopy relative to the boundary preserves the contact form while a weakly relative homotopy only preserves the contact structure. 

\begin{theorem}[{\cite[Theorem 1.1]{EliMur15}}]
\label{thm:EliMurphyMain}
Let $(W^{2n},\Omega)$, $n \geq 2$, be an almost symplectic cobordism between (non-empty) contact manifolds $(M_\pm,\xi_\pm)$. 
Assume that $\xi_0$ is overtwisted. If $n=2$, additionally assume that $\xi_1$ is overtwisted. 
Then, $\omega$ is homotopic, weakly relative to $\partial W$, to an (exact) symplectic structure.
\end{theorem}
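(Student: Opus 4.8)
Since this is the main theorem of \cite{EliMur15}, what follows is not a self-contained proof but the structure of their argument, taking as black boxes the three flexibility inputs it rests on: Murphy's $h$-principle for loose Legendrians, the Cieliebak--Eliashberg $h$-principle for flexible Weinstein cobordisms, and the Eliashberg--Murphy $h$-principle for flexible Lagrangian caps. The plan is to build an exact symplectic structure on $W$ handle by handle along a Morse function $\phi\colon W\to[0,1]$ with $\phi^{-1}(0)=M_-$ and $\phi^{-1}(1)=M_+$, tracking at each stage a compatible almost complex structure, so that the outcome can be joined back to $\omega$ through almost symplectic structures, weakly relative to $\partial W$ in the sense of \Cref{def:CobordismHomotopy}.

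First, after a weak-rel homotopy one may assume that $\omega$ already equals a symplectization form $\d(e^{t}\alpha_\pm)$ on collars of $\partial_\pm W$, so that it remains to extend this symplectic collar over the interior. Order the handles of $\phi$ by index. Each handle of index $<n$ is attached by the standard Weinstein handle model: its formally isotropic attaching sphere is realized genuinely by Gromov's $h$-principle for subcritical isotropic embeddings, and the ambient almost complex structure supplies the conformal normal framing up to homotopy. A handle of index $n$ is attached along a sphere that is only \emph{formally} Legendrian in the contact boundary assembled so far; here one uses that overtwistedness of $\xi_-$ (the hypothesis ``$\xi_0$ overtwisted'') produces a loose chart in that boundary --- overtwistedness persists, and propagates loosely, under subcritical surgery --- so by Murphy's theorem the attaching sphere may be taken genuinely loose Legendrian, and attaching a Weinstein handle along a loose Legendrian is flexible and realizes the prescribed formal data. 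Thus the sub-$n$-skeleton becomes a flexible Weinstein cobordism carrying the right formal structure over it.

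The genuinely hard handles are those of index $>n$, which cannot be Weinstein handles at all. The point of \cite{EliMur15} is that, up to a homotopy of almost symplectic structures, such a handle can still be realized symplectically: instead of a Weinstein handle one glues in a \emph{flexible Lagrangian cap} --- an embedded exact Lagrangian disk with prescribed formal data, attached along an isotropic sphere inside a symplectization --- which is the Eliashberg--Murphy device for performing symplectic handle attachments of index $>n$ once enough loose charts are available. This step is the reason one needs ambient dimension $2n\geq6$ in general, and the reason one must additionally assume $\xi_+$ (``$\xi_1$'') overtwisted when $2n=4$. Concatenating the flexible Weinstein sub-$n$-skeleton with these capped high-index handles yields an exact symplectic form $\omega'$ on $W$ restricting to symplectization forms on $\partial_\pm W$; the compatible almost complex structures carried along the construction assemble, by a standard obstruction argument, into the required homotopy from $\omega$ to $\omega'$, weakly relative to $\partial W$.

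I expect the genuine obstacle to be precisely this last step, the handles of index $>n$: the flexible Lagrangian cap is the deep geometric ingredient, and it is what forces the dimension hypotheses here and, downstream, the restriction to $\dim M\geq7$ (respectively the separate treatment needed when $\dim M=5$) in \Cref{thm:exist_conf_sympl_fol}. Once the three $h$-principles are granted as inputs, the subcritical and index-$n$ steps and the final homotopy are soft; the remaining work is the careful bookkeeping of formal (almost complex and conformal symplectic) data along the handle construction.
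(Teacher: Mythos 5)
The paper does not prove \Cref{thm:EliMurphyMain}; it is quoted verbatim from \cite[Theorem~1.1]{EliMur15} and used as a black box, so there is no internal proof for you to reconstruct. Your text is instead an account of the Eliashberg--Murphy argument, and as such it is broadly faithful: the three $h$-principle inputs you name (Murphy's loose Legendrians, Cieliebak--Eliashberg flexible Weinstein theory, Eliashberg--Murphy Lagrangian caps) are indeed the engines of \cite{EliMur15}; the index-$\leq n$ handles are attached Weinstein-style, softly at subcritical index and flexibly via a loose chart at critical index; and the index-$>n$ handles are where the substance lies. You also correctly read through the typographical slip in the statement ($\xi_0,\xi_1$ should read $\xi_\pm$, and $\Omega,\omega$ should agree).

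The one place to push back is your phrasing of the index-$>n$ step. You write that one ``glues in'' a flexible Lagrangian cap. A Lagrangian cap is a properly embedded Lagrangian $n$-disk in a piece of symplectization with Legendrian --- hence $(n-1)$-dimensional, isotropic --- boundary, which matches your parenthetical description; but it enters the cobordism construction by \emph{excision}, not gluing: one removes a Weinstein tubular neighborhood of the cap from the symplectization, and the resulting Liouville cobordism realizes the high-index topology precisely because it is \emph{not} the trace of a Weinstein handle attachment. This is not mere wording. The attaching sphere of an index-$k>n$ handle of $W^{2n}$ is $(k-1)$-dimensional, hence too large to be isotropic in the $(2n-1)$-dimensional contact boundary, so there is no isotropic sphere along which to attach anything, and the naive ``handle by handle from below'' picture breaks exactly there; the cap's $(n-1)$-dimensional Legendrian boundary is unrelated to the attaching sphere of the $W$-handle, and conflating the two would make the construction fail at the outset. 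With that mechanism corrected, the rest of your outline --- and your remark that this step is the source of the dimension hypotheses, both in \Cref{thm:EliMurphyMain} and downstream in \Cref{thm:exist_conf_sympl_fol} and \Cref{cor:ConfFoliatedFilling} --- stands.
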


\begin{remark}
\label{rmk:weak_relat_to_relat}
For homotopies of symplectic structures, being relative or weakly relative to the boundary are rather different conditions.
For example, the above theorem cannot be strengthened to produce homotopies relative to the boundary. Indeed, by Stokes theorem, this would allow us to obtain symplectic cobordisms with negative volume.
For conformal symplectic structures no such obstruction exist. In fact, up to conformal equivalence, any homotopy weakly relative to the boundary is relative to the boundary.
\end{remark}
 Together with \Cref{thm:EliMurphyMain} this implies:

\begin{theorem}[\cite{EliMur15}]\label{thm:MakingCobordismSymplectic}
Let $(W^{2n},\omega)$ be an almost symplectic cobordism
\[ (M_-,\alpha_-) \sqcup (\S^{2n-1},\alpha_{ot}) \to (M_+,\alpha_+),\]
where $\xi_{ot}=\ker\alpha_{ot}$ is any overtwisted contact structure in the almost contact class of $(\S^{2n-1},\xi_{st})$, and $M_+$ is non-empty. If $n=2$ additionally assume $\xi_+=\ker\alpha_+$ is overtwisted. 
Then, $\omega$ is homotopic, relative to the boundary, to a conformal symplectic structure on $W$. 
\end{theorem}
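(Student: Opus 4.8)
\textbf{Proof plan for \Cref{thm:MakingCobordismSymplectic}.}

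The plan is to combine \Cref{thm:EliMurphyMain} with the observation recorded in \Cref{rmk:weak_relat_to_relat} that, in the conformal symplectic world, a homotopy weakly relative to the boundary can be promoted to one relative to the boundary. First I would apply \Cref{thm:EliMurphyMain} directly to the given almost symplectic cobordism $(W^{2n},\omega)$ with negative end $(M_-,\alpha_-)\sqcup(\S^{2n-1},\alpha_{ot})$ and positive end $(M_+,\alpha_+)$: the negative end contains the overtwisted sphere $(\S^{2n-1},\xi_{ot})$, and when $n=2$ we have additionally assumed $\xi_+$ overtwisted, so the hypotheses of \Cref{thm:EliMurphyMain} are met (note $M_+\neq\emptyset$ is needed so that $W$ is a genuine cobordism with the required flexibility). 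This produces a symplectic form $\omega'$ on $W$, homotopic to $\omega$ through almost symplectic structures, with $\omega'|_{\partial_\pm W}=f_{\pm}\,\d\alpha_\pm$ for some positive functions $f_\pm$, and with the homotopy itself weakly relative to the boundary, i.e. keeping each boundary restriction of the form $f_{\pm,s}\,\d\alpha_\pm$.

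The second step is the conformal rescaling. Write $f_\pm=e^{g_\pm}$ on a collar $(-\varepsilon,0]\times M_+$ (resp.\ $[0,\varepsilon)\times M_-$) of each boundary component, extend $g_\pm$ to a function $G$ on all of $W$ supported near $\partial W$, and consider the conformal symplectic structure $(\eta:=-\d G,\ \omega'':=e^{-G}\omega')$, which by definition of conformal equivalence represents the same conformal symplectic class as $(0,\omega')$. On the boundary this has $\omega''|_{\partial_\pm W}=e^{-g_\pm}f_\pm\,\d\alpha_\pm=\d\alpha_\pm$ and $\eta|_{\partial_\pm W}=0$, so the conformal symplectic structure is honestly relative to the boundary there. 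Performing the same conformal rescaling pointwise along the weakly relative homotopy $\omega'_s$ — using at each time $s$ the functions $g_{\pm,s}$ reading off the conformal factor of $\omega'_s|_{\partial_\pm W}$ against $\d\alpha_\pm$ — yields a homotopy of conformal symplectic structures from $(0,\omega)$ (the starting almost symplectic cobordism, which is conformal symplectic trivially with $\eta=0$) to $(\eta,\omega'')$, and this homotopy is now relative to the boundary in the sense that the induced structures on $\partial_\pm W$ are constantly $(0,\d\alpha_\pm)$.

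The main technical point to be careful about — and the step I expect to require the most bookkeeping — is making the conformal rescaling compatible with the collar structure so that $\eta$ really vanishes on $\partial W$ and the resulting $\omega''$ has the normal form of \Cref{cor:ConformalContactTypeNormalform} near the boundary, rather than merely agreeing with $\d\alpha_\pm$ on the boundary leaf. Concretely one wants $G$ (and each $G_s$) to depend only on the collar coordinate near $\partial W$, so that $\eta=-\d G$ is a multiple of $\d t$ there; this is arranged by first normalizing $\omega'$ via \Cref{thm:ConformalSymplecticNormalform} (or its symplectic analogue) on the collar before rescaling, at the cost of a further homotopy relative to the boundary which is harmless. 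Once this is set up, everything else is the formal manipulation sketched above, and the theorem follows.
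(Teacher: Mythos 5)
Your approach is exactly the paper's: the paper gives no detailed proof, merely asserting that the statement follows from \Cref{thm:EliMurphyMain} combined with \Cref{rmk:weak_relat_to_relat}, which is precisely what you flesh out. The core rescaling step is correct — taking $G_s$ to be any extension of $g_{\pm,s}:=\log f_{\pm,s}$ supported near $\partial W$ (with $G_0=0$, which holds since $f_{\pm,0}=1$) gives a homotopy $\omega_s'':=e^{-G_s}\omega'_s$ of almost symplectic structures satisfying $\omega_s''|_{\partial_\pm W}=\d\alpha_\pm$ throughout, ending at the conformal symplectic form $(\eta_1=-\d G_1,\omega_1'')$, and this is all the statement asks for.

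One small inaccuracy in your final paragraph: you cannot in general make $G_s$ depend only on the collar coordinate near $\partial W$, since its boundary value $g_{\pm,s}$ is a non-constant function on $\partial_\pm W$; the two conditions are incompatible unless $f_{\pm,s}$ is constant. What is true is that $\eta|_{\partial_\pm W}=-\d g_{\pm,s}$ need not vanish, but this is irrelevant: Definition \ref{def:CobordismHomotopy} of ``relative to the boundary'' constrains only $\omega_s|_{\partial_\pm W}$, not $\eta$. If a nicer collar form for the final conformal symplectic structure is wanted for gluing, the right move is to apply \Cref{cor:ConformalContactTypeNormalform} to the endpoint $(\eta_1,\omega_1'')$ \emph{after} the rescaling, not to try to force $G_s$ into $t$-dependent form. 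This does not affect the validity of the proof of the statement as written.
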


Using foliated Morse theory Bertelson and Meigniez extended \Cref{thm:EliMurphyMain} to the foliated setting.
For their statement recall that a foliation is taut if three is a transverse loop intersecting every leaf.

\begin{theorem}[{\cite[Theorem B]{BerMei}}]
\label{thm:ber_mei}
Let $(M,\F,\omega)$ be a taut, almost symplectic foliation, and $\eta\in \Omega^1(\calF)$ any leafwise closed form. 
Suppose that $\F$ is transverse to the boundary, and $\partial M$ splits as the disjoint union of two non-empty compact subsets $\partial_\pm M$, each intersecting every leaf of $\F$. 
Then there exists $\lambda \in \Omega^1(\F)$ such that
\begin{enumerate}[(i)]
    \item $\d_\eta \lambda$ is leafwise non-degenerate, and homotopic (among non-degenerate $2$-forms) to $\omega$;
    \item $\lambda$ restricts to a positive (resp. negative) overtwisted contact structure on every leaf of $\F|_{\partial_+ M}$ (resp $\F|_{\partial_- M}$).
\end{enumerate}
\end{theorem}

Recall that the foliated analogue of an overtwisted disk is an overtwisted basis. That is, a collection of foliated contact embeddings 
\[ h_i:[0,1] \times B^{2n}_{ot} \to (M,\F),\quad i=1,\dots,N\]
where $B$ denotes a $2n$-dimensional overtwisted ball, such that each leaf of $\F$ intersects at least one of the embeddings. 

In the previous theorem, the homotopy $\d_\eta\lambda$ to $\omega$ induces a homotopy of almost contact foliations on $\partial W$. An inspection of the proof in \cite{BerMei} shows that in some cases these almost contact foliations may assumed to be overtwised. More precisely:

\begin{corollary}\label{cor:BMRelative}
Let $(M,\F,\omega)$ satisfy the conditions of \Cref{thm:ber_mei}. 
Suppose $(\F,\omega)$ defines an overtwisted almost contact foliation 
$(\F_\partial,\alpha_\partial,\omega_\partial)$ on $\partial W$.
Then, the induced homotopy from $(\F_\partial,\lambda|_{\partial W})$ to  $(\F_\partial,\alpha_\partial,\omega_\partial)$ is through overtwisted almost contact foliations.
More precisely, these overtwisted almost contact foliations, seen as codimension $2$ almost contact foliations on $\partial M \times [0,1]_s$  (with $[0,1]_s$ parameter of homotopy) admit an overtwisted basis.
\end{corollary}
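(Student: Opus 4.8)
The plan is to revisit the proof of \Cref{thm:ber_mei} given in \cite{BerMei}, keeping track of what the construction does near $\partial M$ under the extra hypothesis of \Cref{cor:BMRelative}. Write the overtwisted basis of the given foliated overtwisted almost contact structure $(\F_\partial,\alpha_\partial,\omega_\partial)$ as a finite family of foliated contact embeddings $h_i\colon[0,1]_t\times B_{ot}\to(\partial M,\F_\partial)$, $i=1,\dots,N$, whose slices $\{t\}\times B_{ot}$ sweep out every leaf of $\F_\partial$, and fix a neighborhood $U\subset\partial M$ of the images of the $h_i$, saturated in the transverse $t$-direction, on which $\alpha_\partial$ is a genuine overtwisted contact form and $\omega_\partial$ coincides with $\d_{\eta_\partial}\alpha_\partial$ up to a positive leafwise function. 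Over $U$ there is then nothing to do: on $\partial M$ the leafwise almost symplectic structure already is a genuine overtwisted contact foliation there. The near-boundary part of the argument in \cite{BerMei} --- precisely the step that deforms $\omega|_{\partial M}$ into a foliated overtwisted contact structure, via the foliated Borman--Eliashberg--Murphy $h$-principle applied to the leafwise structure on the complement of a region where such a structure is already present --- can therefore be run relative to $U$. This produces $\lambda$ with $\lambda|_{\partial M}\equiv\alpha_\partial$ near $U$, together with a homotopy $\omega_s$ (with $\omega_0=\d_\eta\lambda$ and $\omega_1=\omega$) whose restriction to $\partial M$ is, near $U$ and for all $s$, the leafwise conformal rescaling connecting $\d_{\eta_\partial}\alpha_\partial$ to $\omega_\partial$; in particular it keeps $\ker\alpha_\partial$ a genuine overtwisted contact foliation throughout.

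Granting this near-boundary control, the conclusion is essentially bookkeeping. Each almost contact foliation in the induced homotopy on $\partial M$ is overtwisted, witnessed by the \emph{same} family $\{h_i\}$: near the image of $h_i$ the structure is, at every time $s$, the fixed overtwisted contact germ $(\ker\alpha_\partial,\d_{\eta_\partial}\alpha_\partial)$ pulled back by $h_i$. Viewing the homotopy as a codimension-$2$ almost contact foliation on $\partial M\times[0,1]_s$, with leaves $\mathcal L_\partial\times\{s\}$ for $\mathcal L_\partial$ a leaf of $\F_\partial$, take the maps $\wtd h_i\colon[0,1]_t\times[0,1]_s\times B_{ot}\to\partial M\times[0,1]_s$, $(t,s,b)\mapsto(h_i(t,b),s)$: the $(t,s)$-directions are transverse to this foliation, the pulled-back almost contact structure on each slice is the constant overtwisted contact germ, and every leaf $\mathcal L_\partial\times\{s\}$ contains a slice $h_i(\{t\}\times B_{ot})\times\{s\}$ for a suitable $i$ and $t$. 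Compactness of $\partial M\times[0,1]$ keeps the family finite, and $\{\wtd h_i\}$ is then the required overtwisted basis.

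The genuinely non-formal point --- and the one I expect to be the main obstacle --- is the assertion that the near-boundary step of \cite{BerMei} can be carried out rel $U$, i.e.\ without ever modifying the leafwise structure on a neighborhood of the prescribed overtwisted basis. Making this precise requires entering the foliated Morse theory of \cite{BerMei} and checking that the overtwisted basis it manufactures on the boundary can instead be supplied as part of the input, after which the (foliated, relative) Borman--Eliashberg--Murphy $h$-principle extends the construction over the rest of $M$ while fixing a neighborhood of that basis --- the same flavour of relative statement that underlies the parametric Borman--Eliashberg--Murphy theorem. A milder secondary point is to check that the leafwise conformal rescaling relating $\omega_\partial$ and $\d_{\eta_\partial}\alpha_\partial$ over $U$ can be inserted compatibly, and smoothly in all parameters, with the rest of the homotopy $\omega_s$.
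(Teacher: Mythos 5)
The paper offers no proof of this corollary: it appears immediately after the remark that ``an inspection of the proof in \cite{BerMei} shows'' the claim, with the burden of proof fully delegated to \cite{BerMei}. Your proposal is a natural fleshing-out of that remark --- you propose to run the \cite{BerMei} construction relative to a neighborhood $U$ of the given overtwisted basis, so that $\lambda|_{\partial M}$ agrees with $\alpha_\partial$ near $U$, the induced homotopy on $\partial M$ is a mere conformal rescaling there, and the parametric overtwisted basis on $\partial M\times[0,1]_s$ is just the given one extended constantly in $s$ --- and you correctly isolate the genuinely non-formal step the paper glosses over, namely verifying that the foliated Morse theory and foliated BEM $h$-principle of \cite{BerMei} can indeed be carried out relative to a prescribed overtwisted germ near part of the boundary rather than manufacturing that overtwisted basis from scratch. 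Since the paper provides no details, I cannot certify that your reading matches the authors' intended argument line by line, but it is the evident reading and no shorter route to the parametric overtwisted basis is visible from the paper's text; the honest flag at the end is exactly where the remaining work lies.
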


\section{Symplectic foliations on (some) simply-connected $5$-manifolds}
\label{sec:sympl_fol_5folds}

The usefulness of Proposition \ref{prop:from_OBD_to_sympl_fol} relies on producing symplectic abstract open books with boundary of cosymplectic type.
One can for instance obtain them by modifying open books supporting contact structures via \Cref{lem:cosymplecticboundaryperturbation}. 
We now describe another explicit procedure to obtain an open book of cosymplectic type from one of contact type.

Recall that, according to \cite{Gir02}, any contact manifold admits a supporting open book whose page is a Weinstein domain;
in particular, one can represent any contact manifold via abstract open books of contact type (as in Definition \ref{def:obdtypes}).
By attaching a symplectic handle to $\Sigma$, we can change its boundary type from contact to cosymplectic as desired. 
More precisely, since the boundary $\partial \Sigma$ is of contact type, it admits a supporting open book decomposition. 
Its outside component is a symplectic fibration over the circle. 
There is then a symplectic cobordism which, at the boundary, corresponds to removing the inside component of the open book and symplectically capping off its pages, yielding as new boundary a symplectic fibration over $\S^1$, which is in particular a cosymplectic structure. 

As we are interested in this paper in the $5-$dimensional case (i.e. $\dim \Sigma = 4$), we give here details only for this dimension, using a (partial) capping construction for $3-$dimensional contact manifolds to Eliashberg \cite{Eli04}.
More precisely, as $\partial \Sigma$ is a $3$-dimensional contact manifold in this case, 
by positive stabilization one can find a supporting open book with binding $\S^1$. 
The above construction then simply amounts to adding a $2$-handle to $\Sigma$.
(The details for the analogous construction in higher dimensions can be found in \cite{DMG14}.)

\begin{proposition}[\cite{Eli04}]
\label{thm:EliashbergTrick}
Let $(\Sigma^4,\d\lambda,\phi)$ be an abstract open book of contact type. 
Then there exists an abstract open book of cosymplectic type $(\wtd{\Sigma},\wtd{\omega},\wtd{\phi})$, 
where
\[ \wtd{\Sigma} := \Sigma \cup_{\S^1 \times \D^2} \D^2 \times \D^2,\]
is obtained from $\Sigma$ by attaching a symplectic $2$-handle, and $\phi$ is extended as the identity.
\end{proposition}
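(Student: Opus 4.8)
The plan is to make explicit, in dimension four, the capping cobordism sketched just before the statement, and to verify that the handle attachment produces a genuine open book of cosymplectic type. First I would recall the structure we start from: since $(\Sigma^4,\d\lambda)$ is a Liouville domain, $Y:=\partial\Sigma$ is a closed contact $3$-manifold $(Y,\xi)$ with contact form $\lambda|_Y$, and by positive stabilization (Giroux--Mohsen, or directly the Lutz--Martinet/Giroux theory in dimension $3$) we may arrange that $(Y,\xi)$ is supported by an open book whose binding is a single unknotted circle $\S^1$ bounding a page that is a once-punctured surface; stabilizing further we may take the page to be a disk or, more to the point, we only need that the binding is connected, i.e. $B_Y=\S^1$. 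Thus near $Y$ we have a collar $(-\varepsilon,0]\times Y$ on which $\omega=\d(e^t\lambda|_Y)$, and $Y$ itself carries the decomposition $Y=(\S^1\times\D^2)\cup_{\S^1\times\S^1}(P_Y)_{\psi}$ into the neighborhood of the binding and the mapping torus of the monodromy $\psi$ of a page $P_Y$.

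The next step is the handle attachment. I attach a four-dimensional Weinstein $2$-handle $\D^2\times\D^2$ to $\Sigma$ along the binding neighborhood $\S^1\times\D^2\subset Y$, forming $\wtd\Sigma:=\Sigma\cup_{\S^1\times\D^2}\D^2\times\D^2$; here the attaching circle $\S^1\times\{0\}$ is the binding, and one must choose the framing so that the handle caps off the pages $P_Y$ of the open book on $Y$, i.e. the cocore disk $\{0\}\times\D^2$ glues to each page $P_Y$ along its binding circle to produce the closed-up page $\wh{P}_Y:=P_Y\cup_{\S^1}\D^2$ (a closed surface with the single remaining boundary component, from $\partial\D^2\times\{0\}$, becoming the new binding of $\wtd\Sigma$). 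One then checks, using \Cref{lem:cosymplecticboundaryperturbation} or by a direct collar computation, that the symplectic form $\d\lambda$ extends over the handle to a symplectic form $\wtd\omega$ on $\wtd\Sigma$, and that the new boundary $\partial\wtd\Sigma$ is exactly the symplectic mapping torus $(\wh{P}_Y)_{\wh\psi}\to\S^1$ where $\wh\psi$ is $\psi$ extended by the identity over the capping disk; such a symplectic fibration over $\S^1$ is precisely a cosymplectic structure, so $\partial\wtd\Sigma$ is of cosymplectic type, with the fibration $1$-form $\gamma=\d\theta$ a closed admissible form of cosymplectic type.

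It remains to produce the extended monodromy $\wtd\phi:\wtd\Sigma\to\wtd\Sigma$. Since $\phi$ was the identity near $\partial\Sigma$, it is in particular the identity on the binding neighborhood $\S^1\times\D^2$, so it extends by the identity over the handle $\D^2\times\D^2$; this $\wtd\phi$ is a symplectomorphism of $(\wtd\Sigma,\wtd\omega)$ which is the identity near $\partial\wtd\Sigma$, hence $(\wtd\Sigma,\wtd\omega,\wtd\phi)$ is an abstract open book of cosymplectic type in the sense of Definition \ref{def:obdtypes}. Finally one checks $M_{(\wtd\Sigma,\wtd\phi)}\cong M_{(\Sigma,\phi)}$ as smooth manifolds: attaching the handle and then taking the open book amounts, on the level of $M_{(\Sigma,\phi)}$, to replacing the piece $B\times\D^2$ (the inside component, $B=\partial\Sigma=Y$) by the handle times $\D^2$ and reorganizing, which is a standard ``Lutz-type'' rearrangement leaving the total space unchanged — this diffeomorphism invariance is really the content of Eliashberg's capping trick and is the step I expect to require the most care, since one must track the open book decompositions of both $\partial\Sigma$ and $\partial\wtd\Sigma$ simultaneously and match the gluing maps. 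The main obstacle, then, is not the symplectic geometry (which is local near the collar and handled by Weinstein handle attachment) but this bookkeeping: arranging the stabilization of the contact open book on $\partial\Sigma$ so that the binding is connected and the handle framing is compatible, and verifying that the resulting $5$-manifold is unchanged.
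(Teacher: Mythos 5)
Your opening two paragraphs correctly capture the geometric idea the paper attributes to \cite{Eli04}: stabilize a supporting open book of $\partial\Sigma$ to get a connected $\SSS^1$ binding, attach a symplectic $2$-handle along that binding with the page framing, observe that $\partial\wtd\Sigma$ becomes a surface bundle over $\SSS^1$ (hence of cosymplectic type), and extend the monodromy by the identity over the handle. Two points need correction, however. First, a small but real one: you repeatedly call the handle a ``Weinstein $2$-handle,'' but a Weinstein handle is attached along a \emph{Legendrian} knot with the $tb-1$ framing and produces a Liouville domain with \emph{contact} boundary, which is exactly what we must avoid. Eliashberg's cap attaches a symplectic (non-Weinstein) $2$-handle along the \emph{transverse} binding with its page framing; it is precisely because the framing is the page framing that the new boundary is a closed-surface bundle over $\SSS^1$ rather than a contact manifold. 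Your parenthetical ``a closed surface with the single remaining boundary component\dots becoming the new binding'' also conflates a closed fiber with a bounded one: $\wh{P}_Y$ is closed, and the binding of the new $5$-dimensional open book is the entire $3$-manifold $\partial\wtd\Sigma$, not a circle.

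The more serious problem is your final paragraph. You assert that $M_{(\wtd\Sigma,\wtd\phi)}\cong M_{(\Sigma,\phi)}$ and present this as ``really the content of Eliashberg's capping trick'' and the step requiring the most care. This is false, and it is not claimed by the proposition. The proposition only says that $(\wtd\Sigma,\wtd\omega,\wtd\phi)$ is an abstract open book of cosymplectic type; it says nothing about the diffeomorphism type of the total space. In fact \Cref{lem:EliashbergTrickClassification} proves that passing from $(\Sigma,\phi)$ to $(\wtd\Sigma,\wtd\phi)$ replaces $\SSS^1\times\DD^4\subset M_{(\Sigma,\phi)}$ by $\DD^2\times\SSS^3$, i.e.\ performs surgery along a circle, and for $M_{(\Sigma,\phi)}$ simply connected this yields $M_{(\Sigma,\phi)}\#\SSS^2\times\SSS^3$, \emph{not} $M_{(\Sigma,\phi)}$. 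That the total space does change is essential to everything downstream (\Cref{thm:SFonconnectedsumwithspheres} and the proof of \Cref{thm:sympl_fol_5folds}). So your added ``final check'' is both unnecessary for the statement at hand and factually wrong; dropping it leaves a proof that, modulo the Weinstein/transverse-handle distinction, matches the sketch the paper gives before citing \cite{Eli04}.
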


In other words, one can associate 
to each contact $5$-manifold $(M,\xi)$ (or more precisely, to each open book supporting $\xi$) a different $5$-manifold $\wtd{M}$ which admits a symplectic foliation. 
In general, there is no clear way to identify $\wtd{M}$ other than being the manifold obtained from $M$ by surgery along a curve $\S^1 \subset M$.
However, if $M$ is simply connected one also describe $\wtd{M}$ in terms of the classification of Section \ref{sec:simply-conn-5folds}:

\begin{proposition}\label{thm:SFonconnectedsumwithspheres}
Let $(M,\xi)$ be a simply connected contact $5$-manifold. 
Then, $M \# \, \S^2\times\S^3$ admits infinitely many, pairwise smoothly distinct, symplectic foliations.
\end{proposition}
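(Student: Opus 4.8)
The plan is to combine \Cref{thm:EliashbergTrick} with \Cref{prop:from_OBD_to_sympl_fol}, and then identify the resulting manifold via the classification of simply-connected $5$-manifolds from \Cref{sec:simply-conn-5folds}. First I would represent $(M,\xi)$ by an abstract open book $(\Sigma^4,\d\lambda,\phi)$ of contact type, which exists by \cite{Gir02}; after positive stabilization I may assume its binding $\partial\Sigma$ is $\S^1$, i.e.\ a $3$-sphere carrying the standard contact structure, and I record that stabilization does not change $M$. Applying \Cref{thm:EliashbergTrick} yields an abstract open book $(\wtd\Sigma,\wtd\omega,\wtd\phi)$ of cosymplectic type, where $\wtd\Sigma = \Sigma\cup_{\S^1\times\D^2}\D^2\times\D^2$ and $\wtd\phi$ is $\phi$ extended by the identity over the handle. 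By \Cref{prop:from_OBD_to_sympl_fol}, the open book manifold $M_{(\wtd\Sigma,\wtd\phi)} =: \wtd M$ admits a symplectic foliation. It remains to show $\wtd M \cong M\#\,\S^2\times\S^3$.

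The topological identification is the heart of the argument. Since the $2$-handle is attached along $\partial\Sigma \times \{pt\}$-fiber, i.e.\ along a curve $\S^1\times\D^2\subset\partial\Sigma$ which is the binding of the stabilized page, the effect on the open book manifold is a surgery along a curve $\gamma\subset M$ that (being the binding of an open book) bounds a page and hence is nullhomotopic; since $M$ is simply connected this is automatic, but more importantly the normal framing is determined by the page, so this is a trivially-framed surgery on a nullhomotopic curve. Surgery along such a curve in a $5$-manifold replaces a neighborhood $\S^1\times\D^4$ by $\D^2\times\S^3$, which is precisely the operation of connect-summing with the total space of an $\S^3$-bundle over $\S^2$; the bundle is trivial (giving $\S^2\times\S^3$) versus twisted (giving $X_\infty$) according to whether the framing is even or odd, and the page framing is the $0$-framing, hence even. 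Therefore $\wtd M \cong M\#\,\S^2\times\S^3$. Alternatively, and perhaps more cleanly, one can argue via \Cref{thm:w2}: $H_2(\wtd M) \cong H_2(M)\oplus\ZZ$ (the new $\ZZ$ generated by the belt sphere $\S^3$ together with a capping disc of $\gamma$, which pairs trivially with everything under the linking form since $\S^3$ is a sphere), and $w_2(\wtd M)$ restricted to $H_2(M)$ is unchanged while it vanishes on the new $\ZZ$ summand precisely because the surgery framing is even; these invariants agree with those of $M\#\,\S^2\times\S^3$, so Barden's theorem gives the diffeomorphism.

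Finally, to get \emph{infinitely many, pairwise smoothly distinct} symplectic foliations on $M\#\,\S^2\times\S^3$, I would iterate: starting from $(M,\xi)$, apply the construction to obtain a symplectic foliation on $\wtd M\cong M\#\,\S^2\times\S^3$; but $\wtd M$ is again simply connected and, via \Cref{lemma:simpl_conn_5fold_contact} together with the fact that $W_3$ is unchanged, still admits a contact structure, so the construction can be applied again, producing a symplectic foliation on $M\#\,\S^2\times\S^3\#\,\S^2\times\S^3 \cong (M\#\,\S^2\times\S^3)\#\,\S^2\times\S^3$. Since $\S^2\times\S^3$ is \emph{not} a homotopy sphere, $M\#\,\S^2\times\S^3\not\cong (M\#\,\S^2\times\S^3)\#\,\S^2\times\S^3$ in general --- so this reasoning as stated produces foliations on \emph{different} manifolds, which is not what we want. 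The correct route to distinctness is instead to vary the input open book: infinitely many pairwise inequivalent supporting open books of $\xi$ (obtained e.g.\ by iterated positive stabilizations with different monodromies, or by varying the number of stabilizations) yield pages $\wtd\Sigma$ of unbounded topological complexity (e.g.\ unbounded first Betti number), and the symplectic foliation remembers enough of the page --- for instance through the diffeomorphism type of its compact leaf, the mapping torus $\Sigma_\phi$, which is determined up to diffeomorphism-of-foliated-manifold by $(\Sigma,\phi)$ --- to distinguish them. The main obstacle is precisely this last point: making rigorous that the resulting symplectic foliations are pairwise non-diffeomorphic as foliated manifolds, which requires exhibiting a foliation invariant (such as the topology of a closed leaf, or the holonomy data along the binding circle) that is genuinely sensitive to the complexity of the underlying open book rather than just to the diffeomorphism type of $\wtd M$.
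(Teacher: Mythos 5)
Your plan follows the paper's own proof: apply \Cref{thm:EliashbergTrick} and \Cref{prop:from_OBD_to_sympl_fol}, and identify the resulting manifold via \Cref{lem:EliashbergTrickClassification}, whose Mayer--Vietoris computation of $H_2$ and obstruction-theoretic computation of $w_2$ is close in spirit to your Barden-based sketch and makes your surgery/framing heuristic precise. One slip in your setup: $\partial\Sigma$ is a contact $3$-manifold, not ``$\S^1$'' nor $\S^3$ in general. What you want is that a supporting $3$-dimensional open book \emph{of} the $3$-manifold $\partial\Sigma$ has binding $\S^1$ after positive stabilization, and the symplectic $2$-handle of \Cref{thm:EliashbergTrick} is attached over the inside component $\S^1\times\D^2\subset\partial\Sigma$ of that $3$-dimensional open book.

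The gap you flag at the end is genuine, and your candidate invariant also needs correcting. The single compact leaf of the foliation produced by \Cref{prop:from_OBD_to_sympl_fol} is \emph{not} the mapping torus $\Sigma_\phi$; that is the outside component of the open book and, after turbulization, is a union of mostly non-compact leaves. The unique compact leaf is the hypersurface along which the turbulized inside and outside components are glued, namely $\partial\wtd{\Sigma}\times\S^1$. The paper closes the distinctness argument (in the remark immediately following the proposition) as follows: by Eliashberg's capping \cite{Eli04}, $\partial\wtd{\Sigma}$ is a surface bundle over $\S^1$ whose fiber is the closed surface obtained by capping the page of the $3$-dimensional open book chosen on $\partial\Sigma$. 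Positive stabilizations of \emph{that} $3$-dimensional open book leave $(\Sigma,\d\lambda,\phi)$, and hence $M$ and the diffeomorphism type $M\#\S^2\times\S^3$ given by \Cref{lem:EliashbergTrickClassification}, unchanged, while making the genus $\wtd{g}$ of the capped fiber arbitrarily large; different $\wtd{g}$ yield pairwise non-isomorphic $\partial\wtd{\Sigma}$, hence pairwise non-isomorphic compact leaves $\partial\wtd{\Sigma}\times\S^1$, hence pairwise distinct foliations. With the compact leaf correctly identified, this is exactly the invariant you were reaching for, and your proposal is otherwise complete.
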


\begin{proof}
It is enough to combine \Cref{prop:from_OBD_to_sympl_fol}, \Cref{thm:EliashbergTrick}, applied to some contact open book and infinitely many of its positive stabilizations, and \Cref{lem:EliashbergTrickClassification} below.
\end{proof}

\begin{remark}
In fact, $M \# \S^3 \times \S^2$ as in the above theorem admits infinitely many pairwise non-isomorphic symplectic foliations. 
Eliashberg's argument \cite{Eli04} shows that $\partial \wtd{\Sigma}$ is a symplectic fibration over $\S^1$ whose fiber is the closed surface obtained by capping the page of an open book decomposition of $\partial \Sigma$. 
That is, the fiber equals $\Sigma_{g}$, the genus-$g$ surface, for some $g \in \N$.
As positive stabilizations yield open books supporting the same contact structure on $\partial \Sigma$, one can also achieve as $\partial \Sigmatilde$ a symplectic fibration with fiber the closed surface $\Sigma_{\wtd{g}}$ for any $\wtd{g}\geq g_0$, where $g_0$ the genus of the page of the original open book decomposition chosen on $\partial \Sigma$. 
In particular, different choices of $\wtd{g}$ yield non-isomorphic boundaries of $\wtd{\Sigma}$.
Therefore, since the symplectic foliation constructed by Proposition \ref{prop:from_OBD_to_sympl_fol} has a single compact leaf diffeomorphic to $\partial \wtd{\Sigma} \times \S^1$, the resulting foliations will be non-isomorphic for different choices of $\wtd{g}$.
\end{remark}

\begin{lemma}\label{lem:EliashbergTrickClassification}
Suppose $M_{(\Sigma,\phi)}$ is simply connected, and $(\wtd{\Sigma},\wtd{\phi})$ be obtained from $(\Sigma,\phi)$ by attaching a $2$-handle and extending the monodromy as $\Id$. 
Then
\[ 
\pi_1(M_{(\wtd{\Sigma},\wtd{\phi})}) = 0,
\quad H_2(M_{(\wtd{\Sigma},\wtd{\phi})}) =
H_2(M_{(\Sigma,\phi)}) \oplus \ZZ, 
\quad w_2(M_{(\wtd{\Sigma},\wtd{\phi})}) =
w_2(M_{(\Sigma,\phi)}).
\]
That is, by Smale-Barden's classification,
\[ 
M_{(\wtd{\Sigma},\wtd{\phi})} = M_{(\Sigma,\phi)} \# \S^2 \times \S^3.
\]
\end{lemma}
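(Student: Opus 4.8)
The plan is to compute the effect on $\pi_1$, $H_2$, and $w_2$ of attaching a $2$-handle to the page $\Sigma$ and extending the monodromy by the identity, then invoke Smale--Barden's classification (\Cref{thm:w2}) together with the list in \Cref{sec:simply-conn-5folds}. First I would set up the handle-decomposition picture: attaching a $2$-handle $\D^2\times\D^2$ to $\Sigma$ along $\S^1\times\D^2\subset\partial\Sigma$ changes $\Sigma$ into $\wtd\Sigma$, and since the gluing curve lies in the binding region where $\phi=\Id$, the monodromy extends as the identity over the handle; thus $M_{(\wtd\Sigma,\wtd\phi)}$ is obtained from $M_{(\Sigma,\phi)}$ by a surgery along a curve $\S^1\subset M_{(\Sigma,\phi)}$ (a fiber of the binding's normal circle, pushed into a page), with framing the one induced by the page. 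Concretely, removing $\S^1\times\D^4$ and regluing $\D^2\times\S^3$: this is exactly the surgery description of a connected sum with an $\S^3$-bundle over $\S^2$, and the triviality of that bundle is what we must pin down.

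The $\pi_1$ computation is the easy part: by van Kampen, attaching a handle along a curve that is nullhomotopic in $M_{(\Sigma,\phi)}$ (which it is, since $M_{(\Sigma,\phi)}$ is simply connected and the curve bounds a disc in a page after the surgery description, or more directly because surgery on a curve in a simply-connected $5$-manifold along any framing keeps $\pi_1$ trivial as the attaching region is codimension $\geq 4$) leaves $\pi_1$ trivial; I would phrase this via the Mayer--Vietoris / van Kampen decomposition $M_{(\wtd\Sigma,\wtd\phi)}=M_{(\Sigma,\phi)}\setminus(\S^1\times\D^4)\ \cup\ \D^2\times\S^3$ glued along $\S^1\times\S^3$, noting $\S^1\times\S^3$ and $\D^2\times\S^3$ are simply connected after the first factor dies. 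For $H_2$, the Mayer--Vietoris sequence of this decomposition gives $H_2(M_{(\wtd\Sigma,\wtd\phi)})\cong H_2(M_{(\Sigma,\phi)})\oplus\ZZ$: the new $\ZZ$ summand is generated by $\{pt\}\times\S^2\subset$ the surgered region (equivalently the capped-off core disc together with the Seifert-type surface of the surgery curve), and one checks the connecting maps vanish because the surgery curve, being nullhomologous, kills no class and the $\S^3$ factor contributes nothing in degree $2$; I expect this bookkeeping with the exact sequence to be routine but is where I would be most careful about the framing.

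For $w_2$: since $w_2$ is additive under connected sum and the surgery adds a summand $X$ which is an $\S^3$-bundle over $\S^2$, and since the new $H_2$-generator has self-intersection/normal data determined by the framing, I would argue that the framing coming from a page of the open book is the one producing the \emph{trivial} bundle $\S^2\times\S^3$ rather than the twisted $X_\infty$. The cleanest route: the surgery curve bounds a disc in a page $\Sigma$ (after the $2$-handle is a priori attached along an unknot in $\partial\Sigma\cong\#(\S^1\times\S^2)$-type picture obtained from a Weinstein page), so the framing is the surface framing, which is the one for which the resulting $\S^3$-bundle is trivial; hence $w_2(M_{(\wtd\Sigma,\wtd\phi)})=w_2(M_{(\Sigma,\phi)})+w_2(\S^2\times\S^3)=w_2(M_{(\Sigma,\phi)})$, using $w_2(\S^2\times\S^3)=0$. \textbf{The main obstacle} is precisely this framing identification — ruling out that we get $X_\infty$ (which has $w_2\neq 0$) instead of $M_\infty=\S^2\times\S^3$. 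I would handle it by tracking the almost contact / spin structure: $M_{(\wtd\Sigma,\wtd\phi)}$ carries the symplectic foliation hence an almost contact structure restricting to $w_2(M_{(\Sigma,\phi)})$ on the old part, and a local computation near the new $\S^2\times\S^3$-region (where the foliation is the standard turbulized product and the leaf $\partial\wtd\Sigma\times\S^1$ is visibly spin over the handle) shows $w_2$ vanishes there; alternatively, since by hypothesis $M_{(\Sigma,\phi)}$ and $M_{(\wtd\Sigma,\wtd\phi)}$ have the same $H_2$ up to a free $\ZZ$ and the same $w_2$-map after this identification, Smale--Barden's \Cref{thm:w2} forces $M_{(\wtd\Sigma,\wtd\phi)}\cong M_{(\Sigma,\phi)}\#\S^2\times\S^3$, completing the proof.
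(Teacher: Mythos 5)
Your surgery description of $M_{(\wtd\Sigma,\wtd\phi)}$ as obtained from $M_{(\Sigma,\phi)}$ by surgery on a circle, and the van Kampen / Mayer--Vietoris computations of $\pi_1$ and $H_2$, match the paper's and are correct. The issue is $w_2$. You correctly identify the crux — ruling out $X_\infty$ (the twisted $\S^3$-bundle over $\S^2$) in favour of $\S^2\times\S^3$ — but none of the three routes you sketch actually closes the gap.

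Your first route ("the surgery curve bounds a disc in a page, so the framing is the surface framing, which produces the trivial bundle") is an assertion without a proof: the framings of the circle differ by $\pi_1(SO(4))=\ZZ_2$, and you need a concrete argument identifying which one the page/handle picture produces. Your second route ("a local computation near the new region shows $w_2$ vanishes there") is not an argument; $w_2$ is a global class and you'd still have to rule out a flip across the gluing region. Your third route is circular — you write "since by hypothesis [\,$M_{(\Sigma,\phi)}$ and $M_{(\wtd\Sigma,\wtd\phi)}$ have] the same $w_2$-map after this identification, Smale--Barden forces..." — but the equality of $w_2$-maps is exactly what you are trying to prove, not a hypothesis.

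The paper resolves this via obstruction theory, using the simple connectedness of $M_{(\Sigma,\phi)}$ in an essential way that your proposal never exploits: view $w_2$ as the obstruction to a global orthonormal $4$-frame; since $M_{(\Sigma,\phi)}$ is simply connected, the surgery circle's neighbourhood $\S^1\times\D^4$ sits inside an embedded ball $\D^5$, over which $TM$ is trivial, so the $4$-frame over $\S^1\times\S^3$ is a map into the Stiefel manifold $V_4(\RR^5)$ that factors through a contractible space and is therefore nullhomotopic; hence it extends over $\D^2\times\S^3$, and $w_2(M_{(\wtd\Sigma,\wtd\phi)})$ vanishes whenever $w_2(M_{(\Sigma,\phi)})$ does. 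Combined with the naturality argument in cohomology (which gives the converse implication), this is what pins down the bundle. You should replace your three tentative sketches with this (or an equivalent) concrete framing argument.
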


\begin{proof}
Observe that the boundary of $\wtd{\Sigma}$ is obtained from that of $\Sigma$ by doing a surgery:
\[ \partial \wtd{\Sigma} = \left( \partial \Sigma \setminus \S^1 \times \DD^2\right) \cup \DD^2 \times \S^1.\]
Using also that $\wtd{\phi}$ is the identity on the $2$-handle we obtain:
\begin{align*}
    M_{(\wtd{\Sigma},\wtd{\phi})} & =
    \partial \wtd{\Sigma}\times \DD^2 \cup \Sigmatilde_{\phitilde}
    \\ & =
    \left((\partial \Sigma\setminus \S^1\times\DD^2)\cup \DD^2\times\S^1\right)\times \DD^2 \cup (\Sigma \cup \DD^2\times\DD^2)_{\phitilde}
    \\ &=
    (\partial \Sigma\times \DD^2 \cup \Sigma_\phi \setminus \S^1\times \DD^2\times \DD^2)\cup (\DD^2\times \S^1\times \DD^2 \cup \DD^2\times \DD^2 \times \S^1)
    \\ & =
    M_{\Sigma,\phi}\setminus \S^1\times \DD^2 \times \DD^2 \cup \DD^2\times (\S^1\times \DD^2\cup \DD^2\times \S^1)
    \\ & =
    M_{\Sigma,\phi}\setminus \S^1\times \DD^4 \cup \DD^2\times \S^3,
\end{align*}
where each of the gluing maps is the identity.
That is, $M_{(\wtd{\Sigma},\wtd{\phi})}$ is obtained from $M_{(\Sigma,\phi)}$ by surgery along an $\S^1$.
This proves the claims about the fundamental and second homology groups.

By Mayer-Vietoris' theorem,
\[H^2(M_{(\Sigma,\phi)}) = H^2(M_{(\Sigma,\phi)} \setminus \S^1 \times \DD^4),\quad \text{and,}\quad 0 \to \ZZ \to H^2(M_{(\wtd{\Sigma},\wtd{\phi})}) \to H^2(M_{(\Sigma,\phi)} \setminus \S^1 \times \DD^4) \to 0.\]
Hence, by naturality of $w_2$, we see that if $w_2(M_{(\Sigma,\phi)})$ is non-trivial then so is $w_2(M_{(\wtd{\Sigma},\wtd{\phi})})$.

For the other implication recall that, on a $5$-manifold, $w_2$ is the obstruction to finding an orthogonal $4$-frame.
Since $M_{(\Sigma,\phi)}$ is simply connected there exists an embedded ball $\D^{5}\subset M_{(\Sigma,\phi)}$ containing $\S^1 \times \D^4$. The restriction of $TM$ to $\D^5$  is trivial. Hence over the boundary of $M\setminus \S^1 \times \D^4$ the frame is given by a map $\S^1 \times \S^3 \to V_4(\R^5)$ into the Stiefel manifold $V_4(\R^5)$. 
Observe that since $\S^1 \times \D^4$ is contractible (inside $\D^5$) this map is homotopic to the trivial map. \\
As such it can be extended over $\DD^2 \times \S^3$, proving that $M_{(\wtd{\Sigma},\wtd{\phi})}$ admits a global $4$-frame. We conclude that $w_2(M_{(\wtd{\Sigma},\wtd{\phi})}) = 0$ if and only if $w_2(M_{(\Sigma,\phi)}) =0 $. 
Lastly, the classification from Section \ref{sec:simply-conn-5folds} shows that the above operation is equivalent to taking the connected sum with $\S^2 \times \S^3$.
\end{proof}

\subsection{Proof of \Cref{thm:sympl_fol_5folds}}
\label{sec:proof_sympl_fol}

\paragraph*{Case $\rank(H_2(M;\ZZ))\geq2$:}

By Barden's classification (Theorem \ref{thm:w2}) there is a simply connected manifold $\wtd{M}$ such that $M = \wtd{M} \# \S^3 \times \S^2$.
More precisely, $\wtd{M}$ is the unique simply-connected $5-$manifold satisfying $H^2(\wtd{M},\ZZ) \oplus \ZZ = H^2(M,\ZZ)$, and $w_2(\wtd{M}) = w_2(M)$. 
Since $\wtd{M}$ admits a contact structure by \cite[Theorem 8]{Gei91}, it follows from  \Cref{thm:SFonconnectedsumwithspheres} that $M$ admits infinitely many symplectic foliations.
\hfill\qedsymbol

\paragraph*{Cases $\S^5$ and $\S^3 \times \S^2$:}
A leafwise symplectic structure on the Lawson foliation \cite{Law71} of $\S^5$ has already been found by Mitsumatsu \cite{Mit18}.
The existence of infinitely many symplectic foliations on $\S^3 \times \S^2$ follows from applying \Cref{thm:SFonconnectedsumwithspheres} to $\S^5$. 
Note that the resulting foliation is just the product of $\S^2$ with the Reeb foliation on $\S^3$.
\hfill\qedsymbol

For the next two cases we use the observation from \cite{KwovKoe16} that for the Brieskorn manifold
\[ \Sigma(a) = \Sigma(a_0,\dots,a_3) := \{z \in \C^4 \cap \S^7 \mid \sum_{j=1}^3 z_j^{a_j} = 0 \},\]
comes equipped with a contact form
\begin{equation}\label{eqn:ctct_form_brieskorn}
 \alpha := \frac{i}{2} \sum_{j=1}^3(z_j \d \overline{z}_j - \overline{z}_j \d z_j).
\end{equation}
Moreover, the projection $\pi: \C^4 \to \C$ given by $(z_0,\dots,z_3) \mapsto z_0$ induces an open book decomposition supporting $\alpha$, whose page is symplectomorphic to the Brieskorn variety
\[ V_\varepsilon(a_1,a_2,a_3) = \{ z \in \C^3 \mid \sum_{j=1}^{3} z_j^{a_j} = \varepsilon\}.\]
As usual we denote by $\alpha_B$ the restriction of $\alpha$ to the binding $B:= \Sigma(a) \cap \{z_0 = 0\} = \Sigma(a_1,a_2,a_3)$.

\paragraph*{Case $H_2(M;\ZZ)=\ZZ_p\oplus\ZZ_p$ for $p$ relatively prime to $3$.}

Consider $\Sigma(p,3,3,3)$ and observe it is simply connected, with $H_2(\Sigma(p,3,3,3))=\ZZ_p\oplus \ZZ_p$ and vanishing second Stiefel Whitney class (c.f.\ for instance \cite{KwovKoe16}).
In other words, $\Sigma(p,3,3,3)$ is nothing else than the manifold $M_p$ from \Cref{sec:simply-conn-5folds}.

The binding of the open book described above is $\Sigma(3,3,3) \subset \S^5$. The restriction of the Hopf fibration $h:S^5 \to \CP^2$ induces a fibration of $\Sigma(3,3,3)$ over the complex curve $S:= \{\sum_{j=1}^3 z_j^3\} \subset \CP^2$. 
According to the genus formula (see e.g.\ \cite[Page 53]{ACGHBook}), 
$S$ is diffeomorphic to a torus and we denote by $\theta_1,\theta_2 \in \Omega^1(\Sigma(3,3,3))$ the pullback of the two angular forms on $\TT^2$. 
Since the Reeb vector field $R_B$ of $\alpha_B$
is tangent to the fibers of the Hopf fibration, it follows that $\d \alpha_B$ is a multiple of $\theta_1 \wedge \theta_2$. 
Hence,
\[ \gamma :=\theta_2,\quad \eta:= \alpha_B \wedge \theta_1,\]
defines a cosymplectic structure on $B$ satisfying $\gamma \wedge \d \alpha_B =0$. 
Furthermore, by \Cref{lem:BrieskornRestriction} below
$[\eta]$ is in the image of the restriction map $\iota^*:H^2(V(3,3,3)) \to H^2(\Sigma(3,3,3))$. 
Then it follows from \Cref{prop:from_OBD_to_sympl_fol,lem:cosymplecticboundaryperturbation} that $\Sigma(p,3,3,3)$ admits a symplectic foliation.

\begin{lemma}\label{lem:BrieskornRestriction}
Let $\mathbf{a}=(a_1,a_2,a_3)$, and $A$ the least common multiple of the $a_i$'s.
Consider the $\S^1-$action on $\C^3$ defined by
\[
\lambda \cdot(z_1,z_2,z_3) = 
(\lambda^{A/a_1} z_1, \lambda^{A/a_2} z_2,\lambda^{A/a_3} z_3),
\] 
and assume that the basic cohomology group $H^2_b(\Sigma(\mathbf{a});\ZZ)$ is 
generated by the differential of the contact form on $\Sigma(\mathbf{a})$ as in \Cref{eqn:ctct_form_brieskorn}).
Then, the restriction map 
\[ \iota^*:H^2(V(\mathbf{a})) \to H^2(\Sigma(\mathbf{a})),\]
is surjective.
\end{lemma}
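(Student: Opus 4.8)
The plan is to exploit the $\S^1$-action and the fact that $\Sigma(\mathbf{a})$ is (up to the action) the unit circle bundle in a Seifert-type fibration whose base is the weighted-projective curve $V(\mathbf{a})/\S^1$, combined with the general principle that for a principal (or Seifert) $\S^1$-bundle $\pi\colon\Sigma\to\Sigma/\S^1$ the pullback $\pi^*$ identifies the basic cohomology $H^\bullet_b(\Sigma)$ with $H^\bullet(\Sigma/\S^1;\RR)$, and that a class on $\Sigma$ is pulled back from the base precisely when it is basic. Concretely, I would first set up the Gysin sequence for the $\S^1$-action on $\Sigma(\mathbf{a})$ (or the long exact sequence in basic cohomology), so that any class in $H^2(\Sigma(\mathbf{a}))$ that is basic comes from $H^2$ of the quotient orbifold; by the hypothesis that $H^2_b(\Sigma(\mathbf{a});\ZZ)$ is generated by $[\d\alpha]$, every basic degree-$2$ class is a multiple of $[\d\alpha]$.

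The second step is to observe that the same $\S^1$-action extends to $\C^3$ and hence to the Milnor fiber $V(\mathbf{a})$ (which is $\S^1$-invariant since the defining equation $\sum z_j^{a_j}=\varepsilon$ is homogeneous of degree $A$ under this weighting — here one must be a little careful and arrange $\varepsilon$ so that the level set is genuinely invariant, or replace $V(\mathbf{a})$ by the affine variety and use that the inclusion is an $\S^1$-equivariant homotopy equivalence onto its core). On $V(\mathbf{a})$ the contact form $\alpha$ of Equation~\eqref{eqn:ctct_form_brieskorn} restricts to a primitive of a symplectic form (it is the standard Liouville form up to scale), and $\d\alpha|_{V(\mathbf{a})}$ is an $\S^1$-invariant, hence basic, symplectic class. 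Therefore I would show $\iota^*[\d\alpha|_{V(\mathbf{a})}] = [\d\alpha|_{\Sigma(\mathbf{a})}]$ — which is immediate since $\iota$ is just the inclusion of the level set $\{z_0=0\}\cap\S^7$ and both forms are restrictions of the ambient $\d\alpha$ — and then conclude: given an arbitrary class $c\in H^2(\Sigma(\mathbf{a}))$, it need not itself be basic, but the part we care about is: since $H^2(\Sigma(\mathbf{a}))$ is finite except for the contributions detected by $\d\alpha$, and the transgression/Gysin argument shows every class is, up to the image of $H^0$ under the Euler class, a multiple of $[\d\alpha]$, the map $\iota^*$ hits a generator and hence (being a ring map, or by the finiteness of the torsion part which is handled separately by naturality) is surjective.

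The main obstacle I anticipate is the bookkeeping of torsion versus free parts: $H^2(\Sigma(\mathbf{a});\ZZ)$ is typically all torsion together with the single free direction carrying $[\d\alpha]$, while $H^2(V(\mathbf{a});\ZZ)$ can have its own torsion, so surjectivity of $\iota^*$ on the nose (rather than rationally) requires either invoking the known explicit computation of these cohomology groups for Brieskorn pairs (e.g.\ via the Milnor fibration and the monodromy, as in the references already cited) or a Mayer--Vietoris argument on the open book $\Sigma(a)=\Sigma(\mathbf{a})\times\D^2\cup V(\mathbf{a})_\phi$ showing that $H^2(\Sigma(\mathbf{a}))$ is a quotient of $H^2$ of the page. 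The cleanest route is probably the latter: write down the open-book Mayer--Vietoris sequence for $\Sigma(a)\subset\S^7$, identify the restriction $H^2(\text{page})\to H^2(\text{binding})$ with $\iota^*$, and read off surjectivity directly from the sequence together with the hypothesis on $H^2_b$. I would present that argument, and treat the $\S^1$-action hypothesis purely as the input that pins down which class survives.
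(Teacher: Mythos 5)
Your first route (Gysin/basic cohomology on $\Sigma(\mathbf{a})$ alone) does not close: the hypothesis concerns $H^2_b(\Sigma(\mathbf{a}))$, and the generator $[\d\alpha_B]$ of that basic group maps to \emph{zero} in the ordinary $H^2(\Sigma(\mathbf{a}))$ (it is the differential of the globally defined $1$-form $\alpha_B$), so no transgression argument on $\Sigma(\mathbf{a})$ alone gives control over ordinary $H^2(\Sigma(\mathbf{a}))$; and the claim that ``every class is, up to the Euler class, a multiple of $[\d\alpha]$'' conflates basic and ordinary cohomology. Your second route --- the open-book Mayer--Vietoris for $\Sigma(A,\mathbf{a})\subset\S^7$ --- is indeed the paper's strategy, but as you have sketched it it also does not close, for reasons you have not confronted: in ordinary cohomology the outside piece is the mapping torus $V(\mathbf{a})_\phi$, whose $H^2$ depends on the monodromy and is not $H^2(V(\mathbf{a}))$; the middle term $H^2(\S^1\times\Sigma(\mathbf{a}))$ carries an extra $H^1(\Sigma(\mathbf{a}))$ summand; and $H^3(\Sigma(A,\mathbf{a}))$ is in general nonzero (it is Poincar\'e dual to $H_2$ of a closed $5$-manifold), so exactness at the middle term gives no surjectivity. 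Thus ``read off surjectivity directly from the sequence'' is a genuine gap.

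The missing idea, which repairs all three problems at once, is to extend the weighted $\S^1$-action to the ambient $\Sigma(A,\mathbf{a})\subset\S^7$ and run the Mayer--Vietoris sequence in $\S^1$-\emph{basic} cohomology: since every orbit meets each page exactly once, $H^2_b$ of the outside piece is $H^2(V(\mathbf{a}))$; since $\S^1\times\Sigma(\mathbf{a})$ is foliated by orbits transverse to the copies of the binding, its $H^2_b$ is $H^2(\Sigma(\mathbf{a}))$; the tubular neighborhood of the binding retracts $\S^1$-equivariantly so its $H^2_b$ is $H^2_b(\Sigma(\mathbf{a}))$; and $H^3_b(\Sigma(A,\mathbf{a}))=H^3(\CP(\mathbf{w}))=0$ by Kawasaki's computation for weighted projective space. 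This yields $H^2(V(\mathbf{a}))\oplus H^2_b(\Sigma(\mathbf{a}))\xrightarrow{i^*-j^*} H^2(\Sigma(\mathbf{a}))\to 0$, and the hypothesis then enters in a way your plan does not anticipate: it is used to prove that $j^*=0$, because its sole generator $[\d\alpha_B]$ becomes exact upon passing to ordinary cohomology. Surjectivity of $i^*=\iota^*$ then follows; the basic-cohomology formulation also disposes of the torsion bookkeeping you flagged as an obstacle.
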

Notice that the contact form $\alpha$ on $\Sigma(\mathbf{a})$ given by  \Cref{eqn:ctct_form_brieskorn} is \emph{not} a basic form, so that $[\d \alpha]$ is \emph{a priori} not zero in $H^2_b(\Sigma(\mathbf{a}))$; here we make the additional assumption that it generates the whole basic cohomology group.

\begin{proof}
Consider the $5-$dimensional Brieskorn manifold $\Sigma(A,\mathbf{a})= \Sigma(A,a_1,a_2,a_3) \subset \S^7$. 
The restriction of the projection 
\[
\pi:\C^{4} \to \C, \quad (z_0,\dots,z_3) \mapsto z_0,
\]
defines an open book decomposition of $\Sigma(A,\mathbf{a})$, with page $V_\varepsilon(\mathbf{a})$ and binding $\Sigma(\mathbf{a})$. 
Furthermore, there is a weighted $\S^1$-action on $\Sigma(A,\mathbf{a})\subset \C^4$ given by
\[ 
\lambda \cdot (z_0,z_1,z_2,z_3) = (\lambda z_0, \lambda^{A/a_1}z_1,\lambda^{A/a_2}z_2, \lambda^{A/a_3}z_3).
\]
Notice that this action restricts to the one described in the statement on the binding $\Sigma(3,3,3)=\{z_0=0\}$.

Let $\tau(\Sigma(\mathbf{a}))$ be a tubular neighborhood of the binding invariant under the $\S^1$-action, and consider (part of) the Mayer-Vietoris sequence in $\S^1$-basic cohomology:
\[  
H_b^2(\Sigma(A,\mathbf{a}) \setminus \Sigma(\mathbf{a})) \oplus H_b^2(\tau(\Sigma(\mathbf{a})))  
\xrightarrow{i^* - j^*} 
H_b^2(\partial \tau(\Sigma(\mathbf{a}))) 
\to
H^3_b(\Sigma(A,\mathbf{a})).
\]
 Recall that, for any proper 
$G$-manifold $M$, we have $H_b^\bullet(M) = H^\bullet(M/G)$, where the latter denotes singular cohomology, (see e.g.\ \cite[Theorem 30.36]{Michor}), and that $G$-equivariant maps which are $G$-equivariantly homotopic induce the same map in basic cohomology (see e.g.\ \cite[Lemma 30.34]{Michor}). 
This implies that
\[H^3_b(\Sigma(A,\mathbf{a})) = H^3(\CP(\mathbf{w})) = 0,\]
where $\CP(\mathbf{w})$ denotes the weighted projective space with weight $\mathbf{w}=(1,A/a_1,A/a_2,A/a_3)$, and the last equality follows from \cite[Theorem 1]{Kawasaki}.
(Notice that $H^3(\CP(\mathbf{w}))$ denotes here the singular homology of $\CP(\mathbf{w})$ as a quotient topological space, and \emph{not} its orbifold (singular) cohomology as an orbifold.)  
Moreover, since each $\S^1-$orbit intersects the page in a single point, we also have that
\[ 
H^2_b(\Sigma(A,\mathbf{a}) \setminus \Sigma(\mathbf{a})) 
=
H^2(V_\varepsilon(\mathbf{a})),
\]
and that, because $\partial \tau(\Sigma(\mathbf{a}))$ is foliated by $\S^1-$orbits and by  copies of the binding given by its intersection with the pages of the open book,  
\[
H_b^2(\partial \tau(\Sigma(\mathbf{a}))) 
=
H^2(\Sigma(\mathbf{a})).
\]
Lastly, note that $\tau(\Sigma(\mathbf{a})$ retracts, in a $\S^1$-equivariant way, onto the binding $\Sigma(\mathbf{a})$. 
Summarizing, the relevant part of the Mayer-Vietoris sequence can be rewritten as:
\[ 
H^2(V_\varepsilon(\mathbf{a})) \oplus H^2_b(\Sigma(\mathbf{a})) 
\xrightarrow{i^*-j^*} H^2(\Sigma(\mathbf{a})) 
\to 
0.
\]

To see that $i^*$ is surjective it suffices to show that $j^*=0$. 
For this, consider on $\Sigma(A,\mathbf{a})$ the contact form as in \Cref{eqn:ctct_form_brieskorn}.
Observe that its Reeb vector field $R$ is the infinitesimal generator of the $\S^1$-action. 
As such, $\d \alpha$ defines a class in $H^2_b(\Sigma(1,\mathbf{a}))$. 
Moreover, since $\alpha$ is supported by the open book, the restriction $\d\alpha_B$ of $\d\alpha$ to the binding $B=\Sigma(\mathbf{a})$ is again of the form in \Cref{eqn:ctct_form_brieskorn}, hence $[\d \alpha_B] \in H^2_b(\Sigma(\mathbf{a}))$ is, by assumption, a generator.

Hence, in order to prove that 
\[
j^*\colon H^{2}_b(\Sigma(\mathbf{a})) =H^2_b(\tau \Sigma(\mathbf{a})) \to 
H^{2}_b(\partial\tau\Sigma(\mathbf{a})) = H^2(\Sigma(\mathbf{a}))
\]
is the trivial map, one has to prove that the generator $[\d\alpha_B]$ is sent to $0$.
But this simply follows from the fact that $j^*[\d\alpha_B]$, seen in $H^2(\Sigma(\mathbf{a}))$, is the differential of $[\alpha_B]\in\Omega^1(\Sigma(\mathbf{a}))$.
\end{proof}

\paragraph*{Case $H_2(M;\ZZ)=\ZZ_q\oplus\ZZ_q$ for $q$ relatively prime to $2$.}
This case is analogous to the one above.
Here, we look instead at the Brieskorn manifold 
\[
\Sigma(q,4,4,2)=\{(z_0,z_1,z_2,z_3)\in\CC^4\cap\SSS^{7} \, \vert \, z_0^q + z_1^4 + z_2^4 + z_3^2= 0 \},
\]
which is, as observed in \cite{KwovKoe16}, a simply-connected spin manifold with $H_2$ equal to $\ZZ_q\oplus \ZZ_q$. 
In other words, $\Sigma(q,4,4,2)$ is diffeomorphic to $M_q$ of \Cref{sec:simply-conn-5folds}.

The open book we consider is again induced by the ambient projection $\CC^4\to \CC$, $(z_0,z_1,z_2,z_3)\to z_0$. 
This time, however, the pages are naturally symplectomorphic to the Brieskorn variety
\[
V(4,4,2)=\{(z_1,z_2,z_3)\in\CC^3 \, \vert \, z_1^4 + z_2^4 + z_3^2 = \varepsilon \},
\]
with boundary $\Sigma(4,4,2)$, which naturally lives in $\SSS^5$. 
Now, there is a natural orbi-bundle projection $\SSS^5\to \CC P^2(1,1,2)$, $(z_1,z_2,z_3)\to [z_1,z_2,z_3]$, where $\CC P^2(1,1,2)$ is the weighted projective space given by the quotient of $\CC^3$ by the action $\lambda \cdot (z_1,z_2,z_3)=(\lambda z_1,\lambda z_2,\lambda^2 z_3)$ for every $\lambda\in\CC\setminus\{0\}$.
We are interested at the restriction of this orbi-bundle to $\Sigma(4,4,2)\subset \SSS^5$. 
The complex orbi-surface $S=\{z_1^4+z_2^4+z_3^2=0\}\subset \CC P(1,1,2)$ (which is well defined because the polynomial is invariant under the action of $\CC\setminus\{0\}$ on $\S^5$) is actually smooth, so that the restriction $\Sigma(4,4,2)\to S$ is a smooth fibration.
Moreover, according to the genus formula for surfaces in weighted projective spaces \cite[Theorem 5.3.7]{Hos16}, $S$ has genus $1$, and is hence diffeomorphic to a $2-$torus $\TT^2$.
The proof now can be concluded word by word as in the case above.
\hfill \qedsymbol

\section{Constructions of conformal symplectic foliations}
\label{sec:constructions_conf_sympl_fol}
We describe here the turbulization procedure, as well as the cobordisms and surgeries needed for the constructions of (conformal) symplectic foliations in the subsequent sections.

\subsection{Gluing and Turbulization}
\label{sec:turbulization_and_gluing}

Let $(\F,\eta,\omega)$ be a symplectic foliation on $M$ tangent to the boundary.
To glue two such manifolds, the boundaries must be isomorphic as conformal symplectic manifolds. Moreover, to ensure the resulting foliation is smooth, we also need to control the variation of $(\F,\eta,\omega)$ in the direction normal to $\partial M$.

Recall that the linear holonomy of $\F$ is encoded in a leafwise cohomology class. If $\F = \ker \gamma$ it can be computed as follows. The integrability condition of $\F$ implies
\[ \d \gamma = \mu \wedge \gamma,\]
for some $\mu \in \Omega^1(M)$. The restriction $\mu_\F := \mu|_\F$ is closed and the cohomology class $[\mu_\F] \in H^1(\F)$ depends only on $\F$. We refer to $\mu_\F$ as a \emph{holonomy form} of $\F$.

To encode the behaviour of $(\F,\eta,\omega)$ near the boundary fix two $1-$forms $\mu,\nu$ on $\partial M$
and a collar neighborhood $k:(-\varepsilon,0] \times \partial M \to M$. 
Using the collar neighborhood we define
\[ M_\infty := M \cup_{\partial M, k} [0,\infty) \times \partial M,\]
and extend $(\F,\eta,\omega)$ over $[0,\infty) \times \partial M$ by:
\[ \F := \ker\, (-t \mu + \d t),\quad \eta := \eta_\partial + \nu,\quad \omega:= \omega_\partial.\]
If this extension is smooth we say that the collar is \emph{$(\mu,\nu)-$adapted}.

\begin{definition}
A conformal symplectic foliation is \emph{$(\mu,\nu)$-tame at the boundary}, if it admits a $(\mu,\nu)-$adapted collar neighborhood as above.
If $\mu = \pm \nu$, we will say \emph{positive/negative $\mu$-tame}, and if $\mu = \nu = 0$, simply \emph{tame at the boundary}.
\end{definition}

Although the above definition makes sense for any choice of $\mu$ and $\nu$, is it most significant when the Lee form $\eta$ of the leafwise conformal symplectic structure coincides (at least up to sign) with the holonomy form $\mu$. 
In fact, this condition makes sense globally, and not only around the boundary.

\begin{definition}\label{def:hol_like}
A conformal symplectic foliation $(\F,\eta,\omega)$ is said to be \holonomy if 
$\eta$ equals a holonomy form $\mu_\F$ of $\F$. If the equality only holds up to sign we call the foliation \absholonomy.
\end{definition}
Note that in the definition of \absholonomy we do not ask the sign to be the same at every point. That is, in the region where $\mu_\F = 0$, the sign can change.

As stated above, the tameness condition ensures that when gluing manifolds along their boundaries the resulting conformal symplectic foliation is smooth. The precise statement is as follows: 
\begin{proposition}
\label{thm:gluing_turbul}
Let $(M_i,\F_i,\eta_i,\omega_i)$, $i=1,2$ be (conformal) symplectic foliations $\mu_i-$tame at the boundary. 
If there exists an orientation reversing diffeomorphism $\phi:\partial M_1\to \partial M_2$ respecting the induced (conformal) symplectic structures on the boundary
and the holonomy forms $\mu_i$, 
then $M_1\cup_\phi M_2$ admits a (conformal) symplectic foliation. 
\end{proposition}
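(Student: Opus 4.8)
The plan is to reduce the statement to the non-foliated gluing result combined with the smoothness provided by $\mu_i$-tameness. First I would invoke the $\mu_i$-adapted collar neighborhoods guaranteed by the hypothesis: on each $M_i$ there is a collar $k_i\colon(-\varepsilon,0]\times\partial M_i\to M_i$ and an extension of $(\F_i,\eta_i,\omega_i)$ to $M_{i,\infty}=M_i\cup[0,\infty)\times\partial M_i$ given by $\F_i=\ker(-t\mu_i+\d t)$, $\eta_i=\eta_{i,\partial}+\mu_i$ (or $-\mu_i$ in the negative case), $\omega_i=\omega_{i,\partial}$, and this extension is smooth by definition of tameness. In particular, after truncating the positive half-collars to $[0,\varepsilon)$, each $(M_i,\F_i,\eta_i,\omega_i)$ is smoothly isomorphic to a conformal symplectic foliation on $M_i\cup_{\partial M_i}\bigl([0,\varepsilon)\times\partial M_i\bigr)$ which, on the added piece, has the explicit product-type normal form above.

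The key step is then to glue these two ``thickened'' pieces along $\phi$. Since $\phi\colon\partial M_1\to\partial M_2$ is an orientation-reversing diffeomorphism intertwining the conformal symplectic boundary data and the holonomy forms $\mu_i$, it extends to a diffeomorphism $\Phi\colon[0,\varepsilon)\times\partial M_1\to(-\varepsilon,0]\times\partial M_2$ by $\Phi(t,x)=(-t,\phi(x))$, and I would check that under $\Phi$ the forms $(-t\mu_1+\d t)$, $\eta_{1,\partial}+\mu_1$, $\omega_{1,\partial}$ pull back, up to the conformal equivalence $(\eta,\omega)\sim(\eta+\d f,e^f\omega)$, to the corresponding normal-form data on the $M_2$ side; this is where the sign conventions (the "$-t$" in the hyperplane-field equation, and matching $\mu_1=\phi^*\mu_2$) and the orientation-reversal of $\phi$ are used precisely so that the leafwise symplectic orientations agree after gluing. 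The $t$-coordinate on the $M_1$ collar glues to the (reversed) $t$-coordinate on the $M_2$ collar, so the two half-collars assemble into a genuine collar $(-\varepsilon,\varepsilon)\times\partial M_1$ inside $M_1\cup_\phi M_2$ on which $\F$, $\eta$, $\omega$ are smooth and agree with the original data away from it. Outside this region $M_1\cup_\phi M_2$ is just the disjoint union of $M_1\setminus\Op(\partial M_1)$ and $M_2\setminus\Op(\partial M_2)$ with their original structures, so the result is a globally well-defined smooth conformal symplectic foliation; the symplectic (i.e.\ $\eta\equiv0$) case is the special case where all $\mu_i$, $\eta_i$ vanish, recovering Proposition~\ref{thm:sympl_gluing_turbul}.

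The main obstacle I anticipate is the careful bookkeeping that the \emph{conformal} equivalence, rather than strict equality, of the two boundary structures is enough: a priori $\phi$ only respects $(\eta_\partial,\omega_\partial)$ up to the relation $(\eta,\omega)\sim(\eta+\d f,e^f\omega)$, so on the overlap one must absorb a conformal factor $e^f$ with $f$ supported in the collar and vanishing (to high order) at the two ends, using the leafwise version of \Cref{lem:conformalexact_trick} to see that the discrepancy in $\eta$ is leafwise $\d$-exact and hence realizable by such an $f$. One should also confirm that the holonomy form really is the obstruction being matched — i.e.\ that two conformal symplectic foliations transverse/tangent to a common hypersurface with the same leafwise conformal symplectic data and the same holonomy form agree to first order in the normal direction after a conformal change — which follows from the normal form of \Cref{thm:FoliatedConformalSymplecticNormalform} together with the definition of $(\mu,\nu)$-adapted collar. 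Everything else is routine: smoothness of the glued structure is local and was built into the tameness hypothesis, and non-degeneracy of $\omega$ and closedness of $\eta$, $\d_\eta\omega$ are preserved because they hold on each piece and on the overlap.
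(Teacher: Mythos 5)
The paper itself states this proposition without proof, presenting it as essentially definitional (``The precise statement is as follows:'' following the definition of $(\mu,\nu)$-tameness). Your approach — unpack the $(\mu,\nu)$-adapted collar, realize the bicollar gluing via $\Phi(t,x)=(-t,\phi(x))$, check compatibility of the model data using $\phi^*\mu_2=\mu_1$ and $\phi^*\omega_{2,\partial}=\omega_{1,\partial}$, and absorb a possible conformal factor — is exactly the intended argument, and the overall structure is sound.

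There is, however, one point that needs to be stated more carefully, because as phrased it would not establish smoothness. You write that the two boundary data ``agree to first order in the normal direction after a conformal change''. First-order agreement at $\{t=0\}$ is not sufficient for the glued foliation to be smooth; one needs agreement to \emph{infinite} order. What tameness actually gives — and what you should say — is that each $\F_i$ has the same $\infty$-jet along its boundary leaf as the explicit model $\ker(-t\mu_i+\d t)$ (and likewise $\eta_i$, $\omega_i$ match $\eta_{i,\partial}+\nu_i$, $\omega_{i,\partial}$ to infinite order); since $\Phi^*(-s\mu_2+\d s)=-(-t\mu_1+\d t)$ and $\Phi^*(\eta_{2,\partial}+\nu_2)=\eta_{1,\partial}+\nu_1$, $\Phi^*\omega_{2,\partial}=\omega_{1,\partial}$, the $\infty$-jets of $\F_1$ (from $t\le 0$) and of $\Phi^*\F_2$ (from $t\ge 0$) coincide at $t=0$, which is precisely what makes the glued object smooth. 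Your phrase ``first order'' happens to be morally close only because the model is linear in $t$, so its full $t$-jet is determined by the zeroth and first order terms — but this should be made explicit, not left implicit. Relatedly, the picture of ``gluing the two thickened pieces'' and verifying that the pulled-back normal-form data matches could suggest that the model extension of $\F_1$ and the actual $\Phi^*\F_2$ agree on a whole collar; they do not — they only share the common $\infty$-jet at the boundary leaf. The glued foliation is $\F_1$ on one half-collar and $\Phi^*\F_2$ on the other (not the model on either side), and smoothness is a jet-matching statement, not an on-the-nose equality of data on an overlap.
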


As in the symplectic setting (\Cref{sec:turbulization_and_gluing}) we use turbulization to produce conformal symplectic foliations that are tame at the boundary. 
Unlike symplectic structures, (twisted-)exact conformal symplectic structures also exist on closed manifolds; consider for example the conformal symplectization of a contact manifold. 
Thus it is no surprise that in addition to boundaries of cosymplectic type, conformal turbulization also applies to boundaries of contact type, where these notions are defined as follows:

\begin{definition}\label{def:FoliatedConformalBoundaryTypes}
A conformal symplectic foliation $(M,\F,\eta,\omega)$ transverse to the boundary $\partial M$ is said to be of:
\begin{itemize}
\item \emph{cosymplectic type} if there exist an admissible form $\alpha \in \Omega^1(\F_\partial)$ for $\omega_\partial$ satisfying $\d_{\eta_\partial} \alpha = 0$;
\item  convex/concave \emph{contact type} if there exists an admissible form $\alpha \in \Omega^1(\F_\partial)$ for $\omega_\partial$ satisfying $\d_{\eta_\partial} \alpha =  \pm\omega_\partial$ (c.f.\ \Cref{lem:contacttypeboundaryequivalence}).
\end{itemize}
\end{definition}

In both cases we can use turbulization to change the foliation to become tangent to the boundary. Since the proofs are slightly different in each case the constructions are stated in separate theorems.

\begin{theorem}
\label{thm:ConformalTurbulization}
Let $(M,\F,\eta,\omega)$ a conformal symplectic foliation with contact type boundary, and such that $\F_\partial=\ker\gamma_\partial$ is unimodular. 
Then, there exists a conformal symplectic foliation $(\wtd{\F},\wtd{\eta},\wtd{\omega})$ on $M$ such that:
\begin{enumerate}[(i)]

    \item
    If $(\F,\eta,\omega)$ has \emph{convex contact boundary}, the boundary leaf of $(\wtd{F},\wtd{\eta},\wtd{\omega})$ is
    \[ (\partial M,\wtd{\eta}_\partial = \eta_\partial \pm \gamma_\partial, \wtd{\omega}_\partial = \d_{\eta_\partial \pm \gamma_\partial} \alpha),\]
    for any $\alpha \in \Omega^1(\partial M)$ satisfying $\omega_\partial = \d_{\eta}\alpha$. 
    Moreover, $(\wtd{\F},\wtd{\eta},\wtd{\omega})$ is either of the following:
    \begin{enumerate}
        \item tame at the boundary;
        \item positive/negative $\gamma_\partial$-tame at the boundary, where the sign corresponds to the sign in the equation above. 
        If $\eta_\partial = 0$ then the resulting foliation is also holonomy-like.
    \end{enumerate}
    
    \item
    If $(\F,\eta,\omega)$ has \emph{concave contact boundary}, the boundary leaf of $(\wtd{F},\wtd{\eta},\wtd{\omega})$ is
   \[ (\wtd{\eta}_\partial = \eta_\partial \mp \gamma_\partial, \wtd{\omega}_\partial = \d_{\eta_\partial \mp \gamma_\partial} \alpha),\]
    for any $\alpha \in \Omega^1(\partial M)$ satisfying $\omega_\partial = \d_{\eta}\alpha$. Moreover,  $(\wtd{\F},\wtd{\eta},\wtd{\omega})$ is either of the following:
    \begin{enumerate}
        \item tame at the boundary;
        \item positive/negative $(-\gamma_\partial)$-tame  at the boundary, where the sign corresponds to the sign in the equation above. If $\eta_\partial = 0$ then the resulting foliation is |holonomy|-like.
    \end{enumerate}
\end{enumerate}
    Moreover, in either case above, on an arbitrary small neighborhood of the boundary we have $\wtd{\omega} = \d_{\wtd{\eta}} \wtd{\lambda}$ with $\wtd{\lambda}_\partial = \alpha$, and $(\wtd{\F},\wtd{\eta},\wtd{\omega})$ agrees with $(\F,\eta,\omega)$ away from this neighborhood. 
    Furthermore, if $\omega = d_\eta \lambda$ globally on $(M,\F)$, then $\wtd{\omega}=\d_{\wtd{\eta}}\wtd{\lambda}$ globally, with $\wtd{\lambda}$ extending $\lambda$.
\end{theorem}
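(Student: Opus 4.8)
The plan is to construct $(\wtd\F,\wtd\eta,\wtd\omega)$ explicitly in a collar neighborhood of the boundary, interpolating between the given structure and a ``rotated'' model foliation tangent to $\partial M$, then patch. First I would invoke \Cref{cor:ConformalContactTypeNormalform} to put the conformal symplectic structure, on a collar $(-\varepsilon,0]\times\partial M$, into the normal form $\eta=\eta_\partial$, $\omega=\d_{\eta_\partial}(e^{\pm t}\alpha)$ for an admissible $\alpha$ with $\omega_\partial=\d_{\eta_\partial}\alpha$; the sign is $+$ for convex and $-$ for concave boundary. The leafwise primitive here is $\lambda_t:=e^{\pm t}\alpha$. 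The underlying foliation $\F$ in this collar is $\ker(\d t)$ after a change of coordinates, but to turbulize I want to view the relevant $1$-form as $\gamma_\partial$ extended off the boundary, and then rotate it into $\d t$. Because $\F_\partial=\ker\gamma_\partial$ is unimodular, $\gamma_\partial$ is closed, which is precisely the hypothesis that makes the turbulized form integrable and the gluing smooth.

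The main construction I would carry out is to choose a cutoff $f\colon(-\varepsilon,0]\to[0,1]$ as in \Cref{sec:sympl_turbul} (zero near $-\varepsilon$, equal to $1$ with all derivatives at $t=0$, extendable as the constant $1$), and set, on the collar,
\[
\wtd\gamma := (1-f(t))\,\d t + f(t)\,(\pm\gamma_\partial + \text{correction}),
\]
so that $\wtd\F:=\ker\wtd\gamma$ agrees with $\F$ near $t=-\varepsilon$ and is tangent to $\partial M$ at $t=0$. Simultaneously I would define $\wtd\lambda$ to be a leafwise $1$-form interpolating between $\lambda=e^{\pm t}\alpha$ (near $t=-\varepsilon$) and $\alpha$ (at $t=0$), chosen so that $\wtd\eta$, determined by $\d\wtd\omega=\wtd\eta\wedge\wtd\omega$ with $\wtd\omega:=\d_{\wtd\eta}\wtd\lambda$, works out to $\eta_\partial\pm\gamma_\partial$ on the boundary leaf (convex case), respectively $\eta_\partial\mp\gamma_\partial$ (concave case); the $\pm\gamma_\partial$ term is exactly the holonomy contribution picked up by turning the foliation. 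Here the bookkeeping with $\d_\eta$ rather than $\d$ requires care: one must verify $\d_{\wtd\eta}^2=0$ on $\wtd\F$, i.e. that $\wtd\eta$ restricted to $\wtd\F$ is leafwise closed, which follows because $\gamma_\partial$ is closed and $\eta_\partial$ is $\d_{\eta_\partial}$-compatible. I would then check leafwise nondegeneracy of $\wtd\omega=\wtd\omega_\partial+(\text{terms in }\d t)$ restricted to $\ker\wtd\gamma$, which holds for $\varepsilon$ small because $\alpha\wedge\omega_\partial^{n-1}>0$ by admissibility. The tameness statements (i)(a), (i)(b), (ii)(a), (ii)(b) correspond to two choices of the cutoff data: one choice makes $\wtd\eta$ constant equal to the limiting value past $t=0$ (giving plain or $\pm\gamma_\partial$-tameness), the holonomy-like refinement when $\eta_\partial=0$ coming from the fact that then $\wtd\eta=\pm\gamma_\partial$ is literally a holonomy form of $\wtd\F$ by the computation $\d\wtd\gamma=\wtd\eta\wedge\wtd\gamma$.

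The exactness addendum is the cleanest part: since on the whole collar $\wtd\omega=\d_{\wtd\eta}\wtd\lambda$ by construction with $\wtd\lambda_\partial=\alpha$, and $(\wtd\F,\wtd\eta,\wtd\omega)=(\F,\eta,\omega)$ outside the collar, if $\omega=\d_\eta\lambda$ globally then $\wtd\lambda$ and $\lambda$ agree near $t=-\varepsilon$ and can be glued to a global leafwise primitive of $\wtd\omega$; one only needs that the interpolation of primitives is chosen to match $\lambda$ exactly (not just up to $\d_\eta$ of something) in the overlap, which costs nothing since we are free to prescribe $\wtd\lambda$ there. I expect the main obstacle to be the simultaneous interpolation of $(\wtd\F,\wtd\lambda)$: one must choose $\wtd\gamma$ and $\wtd\lambda$ so that all of (a) integrability of $\wtd\F$, (b) $\wtd\eta|_{\wtd\F}$ leafwise closed, (c) leafwise nondegeneracy of $\d_{\wtd\eta}\wtd\lambda$, and (d) the prescribed boundary values of $\wtd\eta$ hold \emph{at once} along the whole collar, not just at the two ends. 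I would handle this by first doing the purely smooth turbulization of $\F$ as in \Cref{sec:sympl_turbul}, then defining $\wtd\lambda$ on the turbulized collar by a fiberwise (in $t$) Moser-type argument analogous to \Cref{thm:FoliatedConformalSymplecticNormalform}, solving the relevant ODE leafwise so that the interpolating leafwise structures stay conformal symplectic throughout; the contact-type hypothesis guarantees the admissible primitive $\alpha$ exists to serve as the boundary value, and unimodularity of $\F_\partial$ guarantees the resulting $\wtd\gamma$ is genuinely a closed-times-function combination, hence integrable and smoothly extendable across $t=0$.
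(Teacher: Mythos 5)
The overall strategy — normal form via \Cref{cor:ConformalContactTypeNormalform}, rotate the foliation in the collar, interpolate the leafwise primitive — is the right one and matches the paper. However, there are two concrete errors that would sink the proposal as written, plus a missing idea.

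First, you have the turbulization pointing the wrong way. After applying \Cref{thm:FoliatedConformalSymplecticNormalform}, the foliation in the collar $(-\varepsilon,0]\times\partial M$ is the product foliation $(-\varepsilon,0]\times\F_\partial$, i.e.\ $\F=\ker\gamma_\partial$ (pulled back from $\partial M$), \emph{not} $\ker\d t$. Your $\wtd\gamma=(1-f)\,\d t + f(\pm\gamma_\partial+\dots)$ with $f(-\varepsilon)=0$, $f(0)=1$ equals $\d t$ near $t=-\varepsilon$ (so $\wtd\F$ would \emph{not} agree with $\F$ there) and equals $\pm\gamma_\partial$ at the boundary (so $\wtd\F$ would not be tangent to $\partial M$). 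The interpolation must run from $\gamma_\partial$ near $t=-\varepsilon$ to $\d t$ near $t=0$, e.g.\ $\wtd\gamma = f(t)\gamma_\partial \pm g(t)\,\d t$ with $f$ decreasing from $1$ to $0$ and $g$ increasing from $0$ to $1$.

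Second, your definition of $\wtd\eta$ is circular: you propose to set $\wtd\omega:=\d_{\wtd\eta}\wtd\lambda$ (which already uses $\wtd\eta$) and then determine $\wtd\eta$ from $\d\wtd\omega=\wtd\eta\wedge\wtd\omega$. You need to fix $\wtd\eta$ \emph{first}, and there is a canonical choice forced by the turbulized foliation: $\wtd\eta$ must agree with a holonomy form of $\wtd\gamma$ (up to the pre-existing $\eta_\partial$). Writing $\d\wtd\gamma = \mu\wedge\wtd\gamma$ for the holonomy form $\mu$ of $\wtd\F$, one takes $\wtd\eta := \eta_\partial \mp (\dot f/g)\gamma_\partial$ and then defines $\wtd\omega:=\d_{\wtd\eta}(e^{h(t)}\alpha)$ for a suitable profile $h$. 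Note a subtlety your write-up misses: the factor $\dot f/g$ is singular on the set where $g$ vanishes, so $\wtd\eta$ is \emph{not} globally defined as a $1$-form on the collar; it is only defined as a leafwise form on $\wtd\F$, and one must check this (which follows from $\wtd\gamma\wedge\wtd\eta = \wtd\gamma\wedge\eta_\partial - \dot f\,\d t\wedge\gamma_\partial$, which is regular).

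Third, the appeal to a ``fiberwise Moser-type argument'' to produce the leafwise structure will not do the job. Moser's trick compares two given conformal symplectic structures via an isotopy; here one must \emph{construct} a new leafwise structure on a different foliation $\wtd\F$, and the substantive step is a direct positivity check: $\wtd\gamma\wedge\wtd\omega^n = n\,e^{nh}(f\dot h - \dot f)\,\gamma_\partial\wedge\d t\wedge\alpha\wedge\d\alpha^{n-1}$, which is positive provided $\dot f\leq 0$, $\dot g\geq 0$, $(f,g)\neq(0,0)$, and $\dot h>0$ for $t<0$. The crucial cancellation here uses $\omega_\partial^n = (\d_{\eta_\partial}\alpha)^n = 0$ on the odd-dimensional $\partial M$, which is exactly why the contact-type hypothesis enters. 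Your proposal never reaches this computation, and without it the claim that $\wtd\omega$ is leafwise nondegenerate is unsupported.

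Once these fixes are in place — explicit $\wtd\gamma$, explicit $\wtd\eta$ as (signed) holonomy of $\wtd\gamma$ plus $\eta_\partial$, explicit $\wtd\omega=\d_{\wtd\eta}(e^{h}\alpha)$, and the wedge computation — the tameness and holonomy-like clauses follow from inspection of the formulas, and the exactness addendum is immediate since $\wtd\omega$ is given as $\d_{\wtd\eta}$ of an explicit primitive agreeing with $\lambda$ near $t=-\varepsilon$.
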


\begin{theorem}
\label{thm:ConformalTurbulization_cosymplectic}
Let $(M,\F,\eta,\omega)$ be a conformal symplectic foliation with boundary of cosymplectic type, and such that $\F_\partial = \ker \gamma_\partial$ is unimodular.
Then, there exists a conformal symplectic foliation $(\wtd{\F},\wtd{\eta},\wtd{\omega})$ on $M$ tame at the boundary and such that:
\begin{enumerate}[(i)]
    \item\label{item:thm_conformal_turbul_1} $(\F,\eta,\omega)$ and $(\wtd{F},\wtd{\eta},\wtd{\omega})$ agree away from an arbitrarily small neighborhood of $\partial M$.
    \item The conformal symplectic structure on the boundary leaf is given by
    \[ \wtd{\omega}_\partial = \omega_\partial 
    \pm
    \alpha \wedge \gamma_\partial,\]
    where $\alpha \in \Omega^1(\F_\partial)$ is any admissible form satisfying $\d_{\eta_\partial} \alpha = 0$,
    and the sign can be chosen arbitrarily.
\end{enumerate}
\end{theorem}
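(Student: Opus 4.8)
The plan is to adapt the symplectic turbulization of \Cref{thm:turbulization} — equivalently, the contact case \Cref{thm:ConformalTurbulization} — replacing the de Rham differential by the twisted differential $\d_{\eta_\partial}$ throughout. This is harmless because $\eta_\partial$ is closed, so $\d_{\eta_\partial}^2=0$, and the cosymplectic hypothesis $\d_{\eta_\partial}\alpha=0$ from \Cref{def:FoliatedConformalBoundaryTypes} plays exactly the role that leafwise closedness of $\alpha$ plays in the symplectic case. First I would invoke the foliated normal form \Cref{thm:FoliatedConformalSymplecticNormalform} with the admissible form taken to be the given $\alpha$ (allowed, since $\d_{\eta_\partial}\alpha=0$ in particular forces $\alpha\wedge\omega_\partial^{n-1}>0$). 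This yields a collar $(-\varepsilon,0]\times\partial M$ on which $\F=(-\varepsilon,0]\times\F_\partial$ with $\F_\partial=\ker\gamma_\partial$ for a closed $\gamma_\partial$ (unimodularity), $\eta=\eta_\partial$ has no $\d t$-component, and $\omega=\omega_\partial+\d_{\eta_\partial}(t\alpha)$, which restricts on $\F$ to $\omega_\partial+\d t\wedge\alpha$ since $\d_{\eta_\partial}\alpha=0$ leafwise. As in the symplectic case, after shrinking $\varepsilon$ I would additionally arrange the extension of $\omega_\partial$ to the collar to be \emph{genuinely} $\d_{\eta_\partial}$-closed there; this is handled exactly as in \Cref{thm:turbulization} (a Thurston-type extension, using unimodularity of $\F_\partial$ together with $\d_{\eta_\partial}\alpha=0$).

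Next I would turbulize: fix a profile $f\colon(-\varepsilon,0]\to[0,1]$ that is $0$ near $-\varepsilon$, identically $1$ near $0$, and extends by the constant $1$ across $t=0$; put $\wtd\gamma:=(1-f)\gamma_\partial+f\,\d t$ and $\wtd\F:=\ker\wtd\gamma$. Since $\gamma_\partial,\d t$ are closed, $\wtd\gamma\wedge\d\wtd\gamma=0$, so $\wtd\F$ is a foliation; it agrees with $\F$ near $t=-\varepsilon$, is tangent to $\partial M$, and makes $\partial M$ a compact leaf. For the leafwise conformal symplectic form I keep $\wtd\eta:=\eta_\partial$ and set
\[
\wtd\omega:=\omega+g(t)\,\alpha\wedge\gamma_\partial ,
\]
with $g\colon(-\varepsilon,0]\to[0,\infty)$ equal to $0$ near $-\varepsilon$, equal to $1$ near $0$, and $\ge 1$ on $\supp f'$. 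Since $\alpha\wedge\gamma_\partial$ restricts to zero on $\F$, the leafwise structure $(\wtd\F,\wtd\eta,\wtd\omega)$ coincides with $(\F,\eta,\omega)$ away from the collar — establishing item~\ref{item:thm_conformal_turbul_1} — and near $t=0$ the boundary leaf is $(\partial M,\ \wtd\eta_\partial=\eta_\partial,\ \wtd\omega_\partial=\omega_\partial+\alpha\wedge\gamma_\partial)$, whose constant extension over $[0,\infty)\times\partial M$ is smooth, i.e.\ tameness at the boundary. The opposite sign $\wtd\omega_\partial=\omega_\partial-\alpha\wedge\gamma_\partial$ is obtained by running the same construction with $-\gamma_\partial$ (also a closed defining form of $\F_\partial$) in place of $\gamma_\partial$, which tilts $\wtd\F$ the other way; this accounts for the free $\pm$ in the statement.

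It remains to verify that $\wtd\omega|_{\wtd\F}$ is leafwise $\d_{\wtd\eta}$-closed and non-degenerate. For closedness, $\d_{\eta_\partial}(\alpha\wedge\gamma_\partial)=(\d_{\eta_\partial}\alpha)\wedge\gamma_\partial=0$ (as $\gamma_\partial$ is closed and $\d_{\eta_\partial}\alpha$ is a $\gamma_\partial$-multiple), so $\d_{\eta_\partial}\wtd\omega=\d_{\eta_\partial}\omega+g'\,\d t\wedge\alpha\wedge\gamma_\partial$; the first summand vanishes by the preparatory step, and the second restricts to $0$ on $\wtd\F$ because $\d t$ and $\gamma_\partial$ are proportional there. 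For non-degeneracy, on a leaf of $\wtd\F$ one decomposes $\wtd\F=(2n{-}2\text{-dimensional symplectic block})\oplus\langle K,v_0\rangle$, where $K$ spans $\ker\omega_\partial|_{\F_\partial}$ and $v_0$ is the tilted direction $fR-(1-f)\partial_t$ ($R$ transverse to $\F_\partial$ in $\partial M$); after decoupling $v_0$ from the symplectic block, a short computation gives $\wtd\omega(K,v_0)=\alpha(K)\,(1-f(1-g))$, which is nonzero precisely because $\alpha(K)\ne 0$ (this is the content of admissibility) and $1-f(1-g)\ge 1$ once $g\ge 1$; the free sign then lets us fix $\wtd\omega^{\,n}>0$. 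The step I expect to be the real obstacle is the preparatory one — producing the genuinely $\d_{\eta_\partial}$-closed normal form near $\partial M$ — since every subsequent claim, in particular the closedness of $\wtd\omega$ on the new compact leaf $\partial M$, rests on it; everything else is a direct transcription of \Cref{thm:turbulization}.
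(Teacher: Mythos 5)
Your proposal follows essentially the same route as the paper: apply the foliated normal form (\Cref{thm:FoliatedConformalSymplecticNormalform}) with the cosymplectic admissible form $\alpha$, and then turbulize $\F$ while adding a leafwise‑vanishing correction term proportional to $\alpha\wedge\gamma_\partial$; the paper takes $\wtd\gamma = f\gamma_\partial\pm g\,\d t$ and $\wtd\omega=\omega_\partial+\d t\wedge\alpha\pm\alpha\wedge\gamma_\partial$ directly, while you use a single profile $f$ and a cutoff $g$ on the correction, which is an equivalent choice. Your non‑degeneracy check via the splitting $\langle K, v_0\rangle$ reproduces exactly the factor $1-f(1-g)$ that one also gets from the cleaner global computation $\wtd\gamma\wedge\wtd\omega^n = n\bigl(1-f(1-g)\bigr)\gamma_\partial\wedge\d t\wedge\alpha\wedge\omega_\partial^{n-1}$ (the analogue of what the paper does explicitly in the contact‑type case), so the verification is fine.

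Two caveats. First, a small slip: $\d_{\eta_\partial}\alpha=0$ does \emph{not} force $\alpha\wedge\omega_\partial^{n-1}>0$; admissibility is an independent part of the cosymplectic‑type hypothesis (\Cref{def:FoliatedConformalBoundaryTypes}), not a consequence of $\d_{\eta_\partial}$‑closedness. Second, the one step you yourself flag as ``the real obstacle'' — choosing representatives of $\omega_\partial$ (and implicitly $\eta_\partial$) as genuine forms on $\partial M$ that are $\d_{\eta_\partial}$‑closed on all of $\partial M$ rather than only leafwise on $\F_\partial$, so that $\d_{\eta_\partial}\wtd\omega$ actually vanishes on the new compact leaf — is stated but not carried out in your writeup. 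You defer it to a ``Thurston‑type extension as in \Cref{thm:turbulization},'' which is a reasonable plan, but it is precisely the point that needs a sentence of justification (using unimodularity of $\F_\partial$ to produce a closed $\gamma_\partial$ and working in a transversely $\RR$‑adapted chart). The paper's own two‑line proof is equally terse on this; so while this is the genuine technical content, it is not a sign that you've taken a wrong turn, only that the key lemma has been named rather than proved.
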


Even though we will not directly need it in the rest of the paper, it is interesting to point out that by adding a further interval-worth of compact leaves near the boundary one can always obtain a conformal symplectization as conformal symplectic structure at the boundary leaf:
\begin{corollary}
\label{cor:conformal_turbulization}
Let $(M,\F,\eta,\omega)$ be a conformal symplectic foliation with $\F_\partial$ unimodular and of (convex or concave) contact type, with $\eta_\partial$ admitting a closed extension to $\partial M$. 
Then, there exists an conformal symplectic foliation $(\wtd{\F},\wtd{\eta},\wtd{\omega})$ on $M$, tame at the boundary, such that the following holds.
\begin{enumerate}
    \item
    $(\F,\eta,\omega)$ and $(\wtd{\F},\wtd{\eta},\wtd{\omega})$ agree away from an arbitrarily small neighborhood of $\partial M$.
    \item
    The conformal symplectic structure on the boundary leaf is given by
    \[ \wtd{\omega}_\partial = \d_{\pm \gamma_\partial} \alpha,\]
    where the sign can be chosen,
    $\gamma_\partial \in \Omega^1(\partial M)$ is the closed form defining $\F_\partial$ and $\alpha \in \Omega^1(\partial M)$ is any form satisfying $\omega_\partial = \d_{\eta}\alpha$.
    \item $\wtd{\F}$ restricts to the foliation by closed leaves $\{pt\}\times\partial M$ on a sufficiently small neighborhood of the boundary $(-\delta,0]\times \partial M$.
\end{enumerate}
\end{corollary}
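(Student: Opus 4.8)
The plan is to first turbulize $(\F,\eta,\omega)$ near $\partial M$ by means of \Cref{thm:ConformalTurbulization}, and then to glue on a short cylinder of compact leaves on which the Lee form is deformed from $\eta_\partial\pm\gamma_\partial$ down to $\pm\gamma_\partial$, so that the contact-type boundary leaf becomes a conformal symplectization. Concretely, applying \Cref{thm:ConformalTurbulization} --- say in the convex case, the concave one being analogous --- produces a conformal symplectic foliation $(\F_1,\eta_1,\omega_1)$ on $M$ which is tame at the boundary, agrees with $(\F,\eta,\omega)$ away from an arbitrarily small collar of $\partial M$, is twisted-exact near $\partial M$ with $\omega_1=\d_{\eta_1}\lambda_1$ and $\lambda_1|_{\partial M}=\alpha$, and whose boundary leaf is $(\partial M,\ \theta,\ \d_\theta\alpha)$ with $\theta:=\eta_\partial+\gamma_\partial$. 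The hypothesis that $\eta_\partial$ admits a closed extension to $\partial M$ is exactly what makes $\theta$ and $\gamma_\partial$ closed $1$-forms on the boundary leaf. It then suffices to construct, on a product cylinder $C:=(-\delta,0]_s\times\partial M$ foliated by the closed leaves $\F_C:=\{\{s\}\times\partial M\}$, a conformal symplectic foliation $(\F_C,\eta_C,\omega_C)$ that is $s$-independent and equal to $(\theta,\d_\theta\alpha)$ near $s=-\delta$, is $s$-independent near $s=0$, and restricts to $(\gamma_\partial,\d_{\gamma_\partial}\alpha)$ on $\{0\}\times\partial M$: gluing $C$ onto $(M,\F_1,\eta_1,\omega_1)$ along $\{-\delta\}\times\partial M$ via \Cref{thm:gluing_turbul} then yields a conformal symplectic foliation on a manifold diffeomorphic to $M$ with the announced properties.

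To build $(\F_C,\eta_C,\omega_C)$, fix a smooth cutoff $\chi\colon(-\delta,0]\to[0,1]$ with all derivatives vanishing at the endpoints, $\chi\equiv1$ near $-\delta$ and $\chi\equiv0$ near $0$, and set on the leaf $\{s\}\times\partial M$
\[ \eta_C:=\chi(s)\,\theta+(1-\chi(s))\,\gamma_\partial=\chi(s)\,\eta_\partial+\gamma_\partial,\qquad \omega_C:=\d_{\eta_C}\alpha. \]
Each $\eta_C$ is closed and $\omega_C$ is $\d_{\eta_C}$-closed by construction, so the only real content is leafwise non-degeneracy, $\omega_C^{\,n}>0$ for every $s$. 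This is where the work lies: using $(\eta_C\wedge\alpha)^2=0$ one has $\omega_C^{\,n}=(\d\alpha)^n-n(\d\alpha)^{n-1}\wedge\eta_C\wedge\alpha$, and a pointwise computation --- substituting $\d\alpha|_{\F_\partial}=\omega_\partial+\eta_\partial\wedge\alpha$, using that $\omega_\partial^{\,n}$ vanishes on the $(2n-1)$-dimensional leaves of $\F_\partial$, together with the admissibility $\alpha\wedge\omega_\partial^{\,n-1}>0$ of \eqref{eq:ConformalAdmissibleForm} --- shows that, for the appropriate choice of the sign $\pm$ (forced by the co-orientation of $\F_\partial$), $\omega_C^{\,n}$ is a strictly positive multiple of the fixed leafwise volume form $\gamma_\partial\wedge\alpha\wedge\omega_\partial^{\,n-1}$, uniformly in $s\in(-\delta,0]$. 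In particular $\omega_C$ is a conformal symplectic form on every leaf, and its value $\d_{\gamma_\partial}\alpha$ at $s=0$ is exactly the conformal symplectic structure in conclusion (2).

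It remains to glue and read off the conclusions. Near $s=-\delta$ one has $\eta_C\equiv\theta$, $\omega_C\equiv\d_\theta\alpha$, $\F_C=\ker\d s$, and $\omega_C$ is twisted-exact with primitive $\alpha$; after a reparametrization of the collar this matches smoothly with the tame extension of $(\F_1,\eta_1,\omega_1)$ past $\partial M$, so \Cref{thm:gluing_turbul} applies and the glued foliation is smooth. Near $s=0$ everything is $s$-independent, which gives tameness at the boundary, conclusion (3) (the collar $(-\delta,0]\times\partial M$ foliated by the closed leaves $\{s\}\times\partial M$), and the shape of the boundary leaf in (2); and since only an arbitrarily small collar of $\partial M$ was altered, conclusion (1) holds as well. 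Twisted-exactness of $\wtd\omega$ near $\partial M$ with primitive restricting to $\alpha$ is also visible from the construction, and if $\omega=\d_\eta\lambda$ globally then $\lambda$ extends over $C$ as $\alpha$-along-the-leaves.

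The main obstacle is the non-degeneracy check: one must verify that trading the Lee form $\eta_\partial$ for $\pm\gamma_\partial$ --- thereby discarding all the information carried by $\eta_\partial$ --- does not cause the leaf form $\d_{\eta_C}\alpha$ to degenerate anywhere along the interpolation, and in particular that $\d_{\pm\gamma_\partial}\alpha$ is itself a conformal symplectic form on $\partial M$. The remaining ingredients (the choice of $\chi$ with vanishing derivatives at the endpoints, and the collar reparametrization matching the tame extension coming from \Cref{thm:ConformalTurbulization}) are routine bookkeeping already packaged into \Cref{thm:ConformalTurbulization,thm:gluing_turbul}.
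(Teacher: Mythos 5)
Your proof takes essentially the same route as the paper's: turbulize via \Cref{thm:ConformalTurbulization} to obtain a tame boundary leaf $(\partial M,\eta_\partial\pm\gamma_\partial,\d_{\eta_\partial\pm\gamma_\partial}\alpha)$, then glue on a product cylinder of compact leaves whose Lee form interpolates $\chi(s)\eta_\partial+\gamma_\partial$ from $\eta_\partial+\gamma_\partial$ down to $\gamma_\partial$, which is precisely the paper's formula $\eta:=f(t)\eta_\partial+\gamma_\partial$ on $[0,1]\times\partial M$. The only difference is that you spell out the leafwise non-degeneracy of $\d_{\chi(s)\eta_\partial+\gamma_\partial}\alpha$ along the interpolation, which the paper treats as immediate.
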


We now give the proof of \Cref{thm:ConformalTurbulization} and \Cref{cor:conformal_turbulization} above.

\begin{proof}[Proof of \Cref{thm:ConformalTurbulization}]
We start by proving the statement in the case that $(\F,\eta,\omega)$ has convex boundary of contact type.
Using \Cref{thm:FoliatedConformalSymplecticNormalform} and the contact type boundary condition we fix a collar neighborhood $(-\varepsilon,0] \times \partial M$ on which
\[ \F = \ker \gamma_\partial, \quad \eta = \eta_\partial,\quad \omega = \d_{\eta_{\partial}}(e^t\alpha),\]
where $\gamma_\partial$ is a closed $1-$form defining $\F_\partial$ (which exists by assumption of unimodularity), and, with an abuse of notation, $\eta_\partial$ is seen as a closed $1-$form defined on $\partial M$ (which extends the given leaf-wise form by assumption).
Furthermore, we choose smooth functions $f,g:(-\varepsilon,0] \to [0,1]$ satisfying $(f,g)\neq (0,0)$ everywhere and
\[ 
f(t) = \begin{cases} 1 & \text{for $t$ near $-\varepsilon$}  \\ -t & \text{for $t$ near $0$} \end{cases},\quad g(t)  = \begin{cases} 0 & \text{for $t$ near $-\varepsilon$} \\ 1 & \text{for $t$ near $0$} \end{cases},
\]
with $\dot{f}\leq 0$ and $\dot{g}\geq 0$.

Then we define
\begin{equation}\label{eq:TurbulizationDefiningForms}
    \wtd{\gamma} = f(t)\gamma_\partial 
    \pm
    g(t)\d t,\quad \wtd{\eta} :=  \eta_\partial
    \mp \frac{\dot{f}}{g}
    \gamma_\partial,\quad \wtd{\omega} = \d_{\eta_\partial
    \mp\frac{\dot{f}}{g}\gamma_\partial}( e^{h(t)}\alpha),
\end{equation} 
where $h\colon(-\epsilon,0]\to\RR$ equals $e^t$ near $t=-\epsilon$, has $\dot{h}(t)>0$ for $t<0$, and has all derivatives zero at $t=0$.
Notice that $\wtd{\eta},\wtd{\omega}$ are well defined everywhere on $\wtd{\F}$, since
\[
\wtd{\gamma}\wedge \wtd{\eta} = \wtd{\gamma} \wedge \eta_\partial -\dot{f}\d t \wedge \gamma_\partial.
\]

To see that $(\wtd{\F},\wtd{\eta},\wtd{\omega})$ is a conformal symplectic foliation observe that $\wtd{\eta}$ and $\wtd{\omega}$ are, respectively, $\d$- and $\d_{\wtd{\eta}}$-closed on $\wtd{\F}$. The leafwise non-degeneracy is checked as follows:

\begin{align*}
    \wtd{\gamma} \wedge \wtd{\omega}^n &= (f \gamma_\partial  
    \pm
    g \d t) \wedge (\d_{\eta_\partial}(e^{h(t)}\alpha) 
    \pm e^{h}\frac{\dot{f}}{g}
    \gamma_\partial \wedge \alpha)^n \\
    &= f \gamma_\partial \wedge \d_{\eta_\partial}(e^h\alpha)^n 
    \pm g  dt \wedge d_{\eta_\partial}(e^h\alpha)^n +n\dot{f}
    e^h \d t \wedge \gamma_\partial \wedge \alpha \wedge \d_{\eta_\partial}(e^h \alpha)^{n-1} \\
    &=
    n f \dot{h} e^{nh} \gamma_\partial \wedge \d t \wedge \alpha \wedge \d\alpha^{n-1}
    \pm g 
    e^{nh} \d t \wedge (\d_{\eta_\partial}\alpha)^n
    +ne^{n h}\dot{f}
    \d t \wedge \gamma_\partial \wedge \alpha \wedge \d \alpha^{n-1} 
    \\
    &\overset{(*)}{=} 
    n e^{nh}
    (
    f\dot{h} -\dot{f}
    )
    \, \gamma_\partial \wedge \d t \wedge \alpha \wedge \d\alpha^{n-1} > 0,
\end{align*}
as desired.
Here, for $(*)$ we used that $\d_{\eta_\partial}\alpha^n = \omega_\partial^n = 0$.

The foliation $\wtd{\F} := \ker \wtd{\gamma}$ is positive/negative $\gamma_\partial$-tame at the boundary and agrees with $\F$ away from the boundary.
Moreover, the restriction of $(\wtd{\eta},\wtd{\omega})$ to $\F$ agrees with $(\eta,\omega)$ near $t = -\varepsilon$.

Lastly, the statement about exactness of $\wtd{\omega}$ under the assumption of exactness of $\omega$, as well as the fact that $\wtd{\eta}$ is a holonomy form for $\wtd{\gamma}$ in the case where $\eta_\partial=0$, are clear for the explicit formulas above.
Moreover, $\F$ can be arranged to be tame at the boundary by choosing $f$ constant on a neighborhood of $t=1$. In this case we define the leafwise conformal symplectic structure by:
\[
\wtd{\eta} \coloneqq \eta_\partial \pm \gamma_\partial,
\quad
\wtd{\omega} \coloneqq \d_{\eta_\partial \pm \gamma_\partial}(e^t\alpha) .
\]
Observe that $\eta$ is not a holonomy form for $\wtd{\gamma}$ in this case.

If instead $(\F,\eta,\omega)$ has concave boundary of contact type, then Theorem \ref{thm:FoliatedConformalSymplecticNormalform} provides a collar neighborhood $(-\varepsilon,0] \times \partial M$ on which
\[ \F = \ker \gamma_\partial,\quad \eta = \eta_\partial, \quad \omega = \d_{\eta_\partial} (e^{-t}\alpha).\]
Here $\alpha \in \Omega^1(\partial M)$ satisfies $\d_{\eta_\partial} \alpha = \omega$ and $\gamma_\partial \wedge \alpha \wedge \omega_\partial^{n-1} < 0$. 
The same computations as above then show that defining

\[ \wtd{\gamma} := f(t) \gamma_\partial \pm g(t) \d t,\quad \wtd{\eta}:= \eta_\partial \pm \frac{\dot{f}}{g} \gamma_\partial,\quad \wtd{\omega} := \d_{\wtd{\eta}}(e^{h(t)}\alpha),\]
gives the desired conformal symplectic foliation. 
Moreover, the explicit formula shows that the conformal symplectic leafwise structure is exact if $\omega$ is, with primitive $\lambda$ as in the statement, and that $\eta$ minus a holonomy form. Also, same considerations as in the convex case hold for arranging $\F$ tame at the boundary.
\end{proof}

\begin{proof}[Proof of \Cref{thm:ConformalTurbulization_cosymplectic}]
For the case that $(\F,\eta,\omega)$ has cosymplectic boundary, we use again Theorem \ref{thm:FoliatedConformalSymplecticNormalform} to find a collar neighborhood $(-\varepsilon,0] \times \partial M$ on which
\[ \F = \ker \gamma_\partial,\quad \eta = \eta_\partial, \quad \omega = \omega_\partial + \d_{\eta_\partial}(t \alpha),\]
for an admissible form $\alpha$ satisfying $\d_{\eta_\partial} \alpha = 0$. Then a straightforward computation shows that
\[ 
\wtd{\gamma} = f \gamma_\partial \pm g \d t ,
\quad
\wtd{\eta} := \eta_\partial ,
\quad 
\wtd{\omega} = \omega_\partial + \d t \wedge \alpha \pm \alpha\wedge \gamma_\partial,
\]
defines the desired conformal symplectic foliation.
\end{proof}

\begin{proof}[Proof of \Cref{cor:conformal_turbulization}]
Consider the trivial cobordism $[0,1] \times \partial M$ endowed with the conformal symplectic foliation
\[ \left( \F := \bigcup_{t \in [0,1]} \{t\} \times \partial M,\quad \eta := f(t) \eta_\partial + \gamma_\partial,\quad \omega := \d_{\eta} \alpha\right),\]
where $f:[0,1]\to \R$ is a smooth function satisfying
\[ 
f(t) = 
\begin{cases} 
1 & \text{for $t$ near $0$} \\ 
0 & \text{for $t$ near $1$.} 
\end{cases}
\]

\noindent
First applying Theorem \ref{thm:ConformalTurbulization} and then gluing the above cobordism to the boundary completes the proof.
\end{proof}

\subsection{Conformal symplectization of homotopic contact structures.}
\label{sec:conf_sympl_homot_contact_str}

\begin{lemma}
\label{prop:interpolating_conformal_symplectizations_homotopic_contact_str}

Let $\alpha_0$ and $\alpha_1$ be homotopic (through almost contact forms) contact forms on a manifold $N$ of dimension at least $5$. Then $\S^1 \times [0,1] \times N$ admits a conformal symplectic foliation tame at the boundary such that:
\begin{enumerate}[(i)]
\item The boundary leaves are isomorphic to $(\S^1 \times N, \d_{\pm \d \theta} \alpha_i)$, $i=0,1$, where the signs can be chosen freely.
\item The foliation contains at most one closed leaf in the interior isomorphic to $(\S^1 \times N, \d_{\pm \R \d\theta} \alpha_{ot}$, where the sign and $R> 0$ can be chosen freely, and $\alpha_{ot}$ is any overtwisted contact form in the almost contact class of $\alpha_i$. If one of the $\alpha_i$ is overtwisted we may arrange the foliation to have no closed interior leaves.
\end{enumerate}
\end{lemma}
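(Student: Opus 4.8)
The plan is to build everything from a single building block --- a foliated conformal symplectic cobordism between the conformal symplectization of an overtwisted contact form and that of an arbitrary contact form in the same almost contact class --- and then to assemble the statement by gluing two such blocks along an interior leaf. Recall the model: for a contact form $\beta$ on $N$ (of dimension $2n-1$), the triple $(\S^1\times N,\ \d\theta,\ \d_{\d\theta}\beta)$ is a conformal symplectic manifold, since $\d_{\d\theta}\d_{\d\theta}\beta=-\d(\d\theta)\wedge\beta=0$ and $(\d_{\d\theta}\beta)^n=-n\,\d\theta\wedge\beta\wedge(\d\beta)^{n-1}$ is a volume form; this is the ``conformal symplectization'' of $(N,\beta)$, and the compact leaves of the foliation to be constructed will all be copies of it.

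For the building block, fix an overtwisted contact form $\alpha_{ot}$ and an arbitrary contact form $\alpha$ on $N$ lying in the same almost contact class, together with a homotopy $(\alpha_s,\mu_s)_{s\in[0,1]}$ of almost contact forms from $(\alpha_{ot},\d\alpha_{ot})$ to $(\alpha,\d\alpha)$. On $[0,1]\times N$ the $2$-form $\omega:=\mu_s+\d s\wedge\alpha_s$ satisfies $\omega^n=n\,\d s\wedge\alpha_s\wedge\mu_s^{\,n-1}>0$, so $([0,1]\times N,\omega)$ is an almost symplectic cobordism from $(N,\ker\alpha_{ot})$ to $(N,\ker\alpha)$. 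As $\dim N\ge 5$ and $\ker\alpha_{ot}$ is overtwisted, \Cref{thm:EliMurphyMain} makes $\omega$ homotopic weakly relative to the boundary to an exact symplectic form $\d\lambda$; using \Cref{rmk:weak_relat_to_relat} and the normal forms of \Cref{sec:normal_forms_near_boundary} one then arranges $([0,1]\times N,\d\lambda)$ to have boundary of contact type, concave at $\{0\}\times N$ with contact form $\alpha_{ot}$ and convex at $\{1\}\times N$ with contact form $\alpha$. Now equip $\S^1\times[0,1]\times N$ with the foliation $\F:=\ker\d\theta$ and the leafwise data $(\eta:=0,\ \omega:=\d\lambda)$ pulled back from the $[0,1]\times N$ factor. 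This is a conformal symplectic foliation transverse to the boundary, whose induced boundary foliation is $\F_\partial=\ker\d\theta$ (unimodular), of concave, resp.\ convex, contact type, with $\alpha_{ot}$, resp.\ $\alpha$, on its $N$-leaves. Applying \Cref{thm:ConformalTurbulization} at each boundary component --- taking as the closed form defining $\F_\partial$ there a multiple $c\,\d\theta$ with $c>0$ freely chosen --- then produces a conformal symplectic foliation on $\S^1\times[0,1]\times N$ that agrees with the previous one off two collars (hence has no interior compact leaf), is tame, resp.\ $\pm c\,\d\theta$-tame, at the boundary as desired, and whose two boundary leaves are the conformal symplectizations $(\S^1\times N,\d_{\pm c\,\d\theta}\alpha_{ot})$ and $(\S^1\times N,\d_{\pm c\,\d\theta}\alpha)$, all signs free.

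To finish, fix an overtwisted $\alpha_{ot}$ in the common almost contact class of $\alpha_0$ and $\alpha_1$, and apply the building block to the pairs $(\alpha_{ot},\alpha_0)$ and $(\alpha_{ot},\alpha_1)$: at the $\alpha_i$-ends take $c=1$, and at the two $\alpha_{ot}$-ends take a common $c=R>0$ with signs so that the two resulting $\alpha_{ot}$-leaves carry opposite holonomy (= Lee) forms $\pm R\,\d\theta$ and structures $\d_{\pm R\,\d\theta}\alpha_{ot}$. The map $(\theta,x)\mapsto(-\theta,x)$ of $\S^1\times N$ reverses orientation and intertwines these structures and holonomy forms, so by \Cref{thm:gluing_turbul} it glues the two blocks into a conformal symplectic foliation on $\S^1\times[0,1]\times N$, tame at the boundary, with boundary leaves $(\S^1\times N,\d_{\pm\d\theta}\alpha_i)$ and exactly one interior compact leaf $(\S^1\times N,\d_{\pm R\,\d\theta}\alpha_{ot})$ --- all signs and $R>0$ free. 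If one of $\alpha_0,\alpha_1$ is already overtwisted one instead takes $\alpha_{ot}$ equal to it and uses a single block, obtaining no interior compact leaf. This gives (i) and (ii).

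The one delicate point, glossed over above, is the upgrade of the conclusion of \Cref{thm:EliMurphyMain}: that theorem controls the symplectic form only weakly relative to the boundary, whereas we need a symplectic cobordism with honest contact-type ends carrying precisely the prescribed contact forms --- this is exactly what makes the turbulized boundary leaves equal to the conformal symplectizations $\d_{\pm\d\theta}\alpha_i$ rather than to conformally equivalent models. The intended remedy is to pass to the conformal symplectic category (\Cref{rmk:weak_relat_to_relat}) to trade the weak homotopy for one relative to the boundary, and then to reshape collars of the two ends into genuine conformal symplectization collars using \Cref{thm:ConformalSymplecticNormalform} and \Cref{cor:ConformalContactTypeNormalform}; one must also check that the contact form one reads off each end is homotopic through contact forms to the desired $\alpha_i$ inside its almost contact class, which can be ensured by absorbing that homotopy into the family $(\alpha_s,\mu_s)$ defining $\omega$. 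The remaining verifications --- the non-degeneracy identities, the transversality and unimodularity of $\ker\d\theta$ along the boundary, and the sign/holonomy bookkeeping in the gluing --- are routine.
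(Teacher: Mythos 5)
Your proposal is correct and follows essentially the same route as the paper's own proof: build a Liouville (or exact conformal symplectic) cobordism on $[0,1]\times N$ from the overtwisted form to each $\alpha_i$ via \Cref{thm:EliMurphyMain}, cross with $\S^1$ and view it as a product foliation, turbulize both ends via \Cref{thm:ConformalTurbulization} with closed defining forms $\pm R\,\d\theta$ and $\pm\d\theta$, and glue the two pieces along their common $(\S^1\times N,\d_{\pm R\,\d\theta}\alpha_{ot})$ leaf, degenerating to a single block when one of the $\alpha_i$ is overtwisted. The only added content in your writeup is the final paragraph, where you make explicit the point — left implicit in the paper — that \Cref{thm:EliMurphyMain} only gives weak-relative control and that one must pass through \Cref{rmk:weak_relat_to_relat} and the conformal normal forms to land exactly on $\d_{\pm\d\theta}\alpha_i$ at the boundary leaves; that is a useful clarification but does not change the structure of the argument.
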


\begin{proof}
Suppose first that $\alpha_0$ and $\alpha_1$ are tight.
According to \cite[Theorem 1.1]{EliMur15} there is a Liouville structure on $[0,1] \times N$ which restricts to $\alpha_{ot}$ and $\alpha_{0}$ on the concave and convex boundary respectively. 
Thus, the product $ \S^1 \times[0,1] \times N$ admits a symplectic foliation which can be turbulized using \Cref{thm:ConformalTurbulization} (with the choice of $\gamma_\partial$ equal to $\pm R\d\theta$ and $\pm \d\theta$ at the two ends). 
The resulting boundary leaves equal 
\[ (\S^1 \times N , \d_{\pm R\d \theta} \alpha_{ot}),\quad \text{and} \quad (\S^1 \times N,\d_{\pm \d \theta} \alpha_0).\]
Repeating the argument for $\alpha_1$ and gluing the resulting manifolds using \Cref{thm:sympl_gluing_turbul} proves the claim.

In the case where $\alpha_0$ is overtwisted, one can just apply the first part of the argument above with $\alpha_0$ instead of $\alpha_{ot}$.
If $\alpha_1$ is overtwisted, one can apply the above proof to $\alpha_0'=\alpha_1$ and $\alpha_1'=\alpha_0$, then apply the orientation preserving diffeomorphism
\[
\S^1\times[0,1]\times M \to \S^1\times[0,1]\times M, \quad 
(\theta,t,p)\mapsto (-\theta,-t,p)
\]
in order to conclude.
\end{proof}

\subsection{Round connected sum}
\label{sec:fol_round_handle}

The classification of simply connected $5-$manifolds (\Cref{sec:simply-conn-5folds})
is up to connected sum. 
In light of this, we would like a connected sum construction for (conformal) symplectic foliations;
however, such a construction is not available even in the setting of smoothly foliated manifolds. 
This being said, there is a notion of $\S^1-$connected sums in the smoothly foliated setting, which results in a smooth foliation on the $\S^1-$connected sum of two foliated manifolds along two positively transverse curves, in such a way that the resulting leaves are connected sums of the leaves of the original two foliations.

As the connected sum of two symplectic manifolds is not symplectic in general
(cf.\ \cite[Lemma 7.2.2]{McDSalBook}),
one cannot expect to be able to do the same with symplectically foliated manifolds. 
To avoid this problem we slightly modify the construction using turbulization. To be precise, we work with \emph{$\S^1$-connected sums} of manifolds defined as follows.
Let $\gamma_i \subset M_i^m$, $i=1,2$ be (embedded) closed curves and identify their tubular neighborhoods with $\S^1 \times \D^{m-1}$. Then we define the \emph{$\S^1$-connected sum} as:
\[ M_1 \#_{\S^1} M_2 := \left(M_1 \setminus \Int\S^1 \times \D^{m-1}) \right) \cup_\partial \left(M_2 \setminus \Int (\S^1 \times \D^{m-1})\right),\]
where the boundaries are glued by identifying
\[ (\theta,x) \sim  (-\theta,x),\quad \theta \in \S^1, x \in \partial \D^{m-1}.\]

\begin{proposition}\label{prop:round_sum_conf_sympl_fol}
Let $\gamma_i \subset M_i$ be a closed curve (positively) transverse to a conformal symplectic foliation $(\F_i,\eta_i,\omega_i)$, for $i = 1,2$. Then $M_1 \#_{\S^1} M_2$ admits a conformal symplectic foliation.
\end{proposition}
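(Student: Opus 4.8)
The strategy is to reduce the round-connected sum to a gluing of two conformal symplectic foliations that are tame at their boundaries, so that \Cref{thm:gluing_turbul} (or its tameness-free incarnation) applies. The key point is that a tubular neighborhood of a curve $\gamma_i$ transverse to $\F_i$ carries a very explicit model: by the normal-form \Cref{thm:FoliatedConformalSymplecticNormalform} applied along $\gamma_i$, the foliation near $\gamma_i$ is a product, with the curve cutting each leaf in a single point and the leafwise conformal symplectic form standard in the $\D^{2n}$-directions. Thus $\S^1\times\D^{2n}\cong\nu(\gamma_i)$ inherits a conformal symplectic foliation transverse to the boundary $\S^1\times\S^{2n-1}$, and its \emph{complement} $M_i\setminus\Int\nu(\gamma_i)$ also inherits one, again transverse to the new boundary $\S^1\times\S^{2n-1}$.

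First I would make the boundary-restricted data on the two pieces match. On $\partial(M_i\setminus\Int\nu(\gamma_i)) = \S^1\times\S^{2n-1}$ the induced foliation is (up to the normal form) $\ker d\theta$, i.e.\ the leaves are the copies $\{\theta\}\times\S^{2n-1}$; the restricted leafwise $2$-form is the standard symplectic form $\omega_{st}$ on $\S^{2n-1}$ regarded as the boundary of the ball with its Liouville structure, which is of contact type with primitive the standard contact form $\alpha_{st}$. Hence both $M_i\setminus\Int\nu(\gamma_i)$ have \emph{contact type} boundary in the sense of \Cref{def:FoliatedConformalBoundaryTypes}, with unimodular boundary foliation $\F_\partial=\ker d\theta$. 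Now I apply conformal symplectic turbulization, \Cref{thm:ConformalTurbulization}, to each of the two complements: one gets conformal symplectic foliations $(\wtd\F_i,\wtd\eta_i,\wtd\omega_i)$ that are tame at the boundary (choice (a) in the theorem), agreeing with the original data away from a collar, and with boundary leaf of the form $(\S^1\times\S^{2n-1},\wtd\eta_\partial,\d_{\wtd\eta_\partial}\alpha_{st})$ for a controlled $\wtd\eta_\partial$. One should choose the sign of the turbulization on the two pieces so that the orientation-reversing gluing map $(\theta,x)\mapsto(-\theta,x)$ identifies the two boundary leaves as conformal symplectic foliated manifolds and matches the holonomy forms; since $\theta\mapsto-\theta$ reverses the sign of $d\theta$ and hence of the Lee/holonomy form, picking opposite-sign turbulizations on $M_1$ and $M_2$ makes everything line up.

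With both pieces tame at the boundary and an orientation-reversing diffeomorphism $\phi\colon\partial(M_1\setminus\Int\nu(\gamma_1))\to\partial(M_2\setminus\Int\nu(\gamma_2))$, $\phi(\theta,x)=(-\theta,x)$, respecting the leafwise conformal symplectic structures and the holonomy forms, \Cref{thm:gluing_turbul} produces a conformal symplectic foliation on
\[
\bigl(M_1\setminus\Int\nu(\gamma_1)\bigr)\cup_\phi\bigl(M_2\setminus\Int\nu(\gamma_2)\bigr) = M_1\#_{\S^1}M_2,
\]
which is what we want. (If one prefers to avoid checking tameness, one can instead invoke \Cref{cor:ConfFoliatedFilling}/\Cref{cor:conformal_turbulization}-type normalizations to arrange both boundary leaves to be honest conformal symplectizations and glue those, but the turbulization route above is the cleanest.)

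\textbf{Main obstacle.}
The delicate point is bookkeeping of \emph{signs and orientations}: one must verify that after turbulization the two boundary leaves are genuinely conformally symplectomorphic via the orientation-reversing map $\phi$, including the matching of the holonomy (Lee) forms required by \Cref{thm:gluing_turbul}, and that the co-orientations of $\F_1$, $\F_2$ combine to a well-defined co-orientation on the round sum. This is exactly the place where the reversal $\theta\mapsto-\theta$ in the definition of $\#_{\S^1}$ is used in an essential way, and where choosing opposite signs in the two applications of \Cref{thm:ConformalTurbulization} is forced. Everything else — the product normal form near $\gamma_i$, the contact-type nature of the induced boundary, and the existence of the turbulized tame model — is supplied directly by the results already established above.
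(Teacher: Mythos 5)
Your proposal follows essentially the same route as the paper: identify a product normal form on the tubular neighborhoods $\S^1 \times \D^{2n}$ of the transverse curves, remove them to obtain conformal symplectic foliations with contact-type boundary and unimodular boundary foliation $\ker\d\theta$, turbulize both pieces via \Cref{thm:ConformalTurbulization} with opposite signs so that the resulting boundary leaves $(\S^1\times\S^{2n-1}, \pm\d\theta, \d_{\pm\d\theta}\alpha_{st})$ match under the orientation-reversing gluing $(\theta,x)\mapsto(-\theta,x)$, and then invoke \Cref{thm:gluing_turbul}. The additional discussion of the sign/orientation bookkeeping is accurate and simply makes explicit what the paper leaves as a one-line remark about choosing opposite signs.
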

\begin{proof}
A tubular neighborhood of each of the curves can be identified with $\S^1 \times \D^{2n}$, where $\dim M_i = 2n+1$. Furthermore, using a leafwise Moser trick we may assume that on these neighborhoods the conformal symplectic foliation is equivalent to
\[ \F = \bigcup_{ \theta \in \S^1} \{ \theta \} \times \D^{2n},\quad \eta = 0, \quad \omega = \omega_{st}.\]
Thus after removing (a slightly smaller) $\S^1 \times \D^{2n}$ the foliations are transverse to the boundary and of contact type (Defintion \ref{def:FoliatedConformalBoundaryTypes}).
As such they can be turbulized using \Cref{thm:ConformalTurbulization}. The conformal symplectic structure on the resulting boundary leaves $\S^1 \times \S^{2n-1}$ are
\[ \eta_\partial = \pm \d \theta,\quad \omega_\partial = \d_{\pm \d \theta} \alpha_{st}.\]
Hence, if we choose opposite signs on each of the pieces we can glue using  \Cref{thm:gluing_turbul}, giving a conformal symplectic foliation on $M_1 \#_{\S^1} M_2$.
\end{proof}

It turns out that connected sums of simply connected $5$-folds can also be obtained by $\S^1$-connected sums. 
This will be key in Section \ref{sec:proof_conf_sympl_fol_5folds}.

\begin{lemma}
\label{lemma:round_conn_sum_homology}
Consider two $5$-dimensional manifolds $M_1$ and $M_2$ such that $M_1$ is simply connected and contains a closed curve $\gamma_1$, while $\pi_1(M_2) = \ZZ$ and generated by a closed curve $\gamma_2$.
Then the associated $\S^1$-connected sum satisfies:
\begin{enumerate}
    \item $H_2(M_1 \#_{\S^1} M_2) = H_2(M_1) \oplus H_2(M_2)$;
    \item $\pi_1(M_1 \#_{\S^1} M_2) = 1$;
    \item $w_2(M_1 \#_{\S^1} M_2) = 0$ if and only if $w_2(M_1) = 0$ and $w_2(M_2) = 0$.
\end{enumerate}
\end{lemma}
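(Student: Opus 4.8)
The plan is to compute all three invariants using a Mayer--Vietoris / van Kampen decomposition of $M_1 \#_{\S^1} M_2$ into the two pieces $M_i^\circ := M_i \setminus \Int(\S^1 \times \D^4)$, glued along their common boundary $\S^1 \times \S^3$. First I would record the homology and fundamental group of each piece. Removing a tubular neighborhood of a curve in a $5$-manifold does not change $H_2$ (the curve has codimension $4$, so the effect on $H_2$ and $\pi_1$ is trivial by general position: $H_2(M_i^\circ) \cong H_2(M_i)$, and the inclusion $M_i^\circ \hookrightarrow M_i$ is a $\pi_1$-isomorphism); in particular $M_1^\circ$ is simply connected and $\pi_1(M_2^\circ) = \ZZ$, generated (up to the gluing identification) by the $\S^1$-factor of the boundary.

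For the fundamental group, I would apply van Kampen to $M_1^\circ \cup M_2^\circ$ with intersection $\S^1 \times \S^3$, whose $\pi_1$ is $\ZZ$ generated by the $\S^1$-factor $c$. The image of $c$ in $\pi_1(M_1^\circ) = 1$ is trivial, while in $\pi_1(M_2^\circ) = \ZZ$ it maps to (a generator, or at least a nonzero multiple — one has to check the gluing $\theta \mapsto -\theta$ only flips a sign, so it maps to a generator). Hence van Kampen gives $\pi_1(M_1 \#_{\S^1} M_2) = 1 *_{\ZZ} \ZZ = 1$, proving (2). For (1), I would run the Mayer--Vietoris sequence for $M_1^\circ \cup M_2^\circ$ in degrees around $2$, using $H_*(\S^1 \times \S^3)$ and the computation above that $H_2(\S^1\times\S^3) = 0$ while $H_1(\S^1 \times \S^3) = \ZZ \to H_1(M_1^\circ) \oplus H_1(M_2^\circ) = 0 \oplus \ZZ$ is injective (it is the same map detected by van Kampen). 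Then the relevant portion reads $0 = H_2(\S^1\times\S^3) \to H_2(M_1^\circ) \oplus H_2(M_2^\circ) \to H_2(M_1 \#_{\S^1}M_2) \to H_1(\S^1 \times \S^3) \xrightarrow{\text{inj}} H_1(M_1^\circ)\oplus H_1(M_2^\circ)$, so the connecting map out of $H_2$ of the glued manifold is zero and $H_2(M_1 \#_{\S^1} M_2) \cong H_2(M_1) \oplus H_2(M_2)$, giving (1).

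For (3), I would argue as in the proof of Lemma~\ref{lem:EliashbergTrickClassification}: on an (oriented, simply connected, hence spin-or-not-detected-by-$w_2$) $5$-manifold, $w_2$ is the obstruction to a global $4$-frame, equivalently it is detected on the $2$-skeleton. Since $H_2$ splits as a direct sum and $TM_i|_{\S^1\times\D^4}$ is trivial (the tube is contractible inside a chart), a global $4$-frame on $M_1 \#_{\S^1} M_2$ exists iff one exists on each $M_i$: restrict to $M_i^\circ$, extend over the removed tube since the gluing data $\S^1 \times \S^3 \to V_4(\RR^5)$ is null-homotopic by the same contractibility argument used in Lemma~\ref{lem:EliashbergTrickClassification}. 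Hence $w_2(M_1 \#_{\S^1} M_2) = 0$ iff $w_2(M_1) = w_2(M_2) = 0$. The main obstacle I anticipate is bookkeeping the gluing identification $(\theta,x)\sim(-\theta,x)$ carefully: one must check it does not change the fact that the boundary circle maps to a \emph{generator} of $\pi_1(M_2^\circ)$ (a sign change is harmless), and that orientations are handled consistently so the Mayer--Vietoris maps are the ones claimed; everything else is a routine application of van Kampen, Mayer--Vietoris, and the obstruction-theoretic description of $w_2$ already used in the excerpt.
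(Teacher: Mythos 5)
For items (1) and (2) your approach matches the paper's in substance: Mayer--Vietoris for the decomposition $M_1^\circ \cup_{\S^1\times\S^3} M_2^\circ$ in homology, van Kampen for $\pi_1$, and the key observation that the inclusion $H_1(\S^1\times\S^3)\to H_1(M_1^\circ)\oplus H_1(M_2^\circ)$ is injective because $\gamma_2$ is a generator. You shortcut the step $H_\ast(M_i^\circ)\cong H_\ast(M_i)$ (degrees $1$ and $2$) with a codimension/general-position remark where the paper runs a second Mayer--Vietoris; either is fine.

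Item (3) is where you have a genuine gap. You ultimately reduce to the frame-extension argument of Lemma~\ref{lem:EliashbergTrickClassification}: restrict a $4$-frame to $M_i^\circ$, then extend over the removed solid torus $\S^1\times\D^4$ because the gluing map $\S^1\times\S^3\to V_4(\RR^5)$ is null-homotopic, ``the tube being contractible inside a chart.'' That contractibility argument in the cited lemma depends crucially on $\S^1\times\D^4$ sitting inside an embedded $\D^5$, which in turn uses that the curve is null-homotopic in the ambient manifold. This is fine for $M_1$, which is simply connected, but it is \emph{false} for $M_2$: by hypothesis $\gamma_2$ generates $\pi_1(M_2)=\ZZ$, so the tube $\S^1\times\D^4$ around $\gamma_2$ is not contractible in $M_2$, and the restriction of the frame to $\S^1\times\S^3$ has no reason to extend across $\S^1\times\D^4$. (Even setting that aside, ``a global $4$-frame exists'' is a priori stronger than ``$w_2=0$'' once $H^4(M_2)\neq 0$, which happens here since $H_1(M_2)=\ZZ$ gives $H^4(M_2;\ZZ)\cong\ZZ$ by Poincar\'e duality, so there may be a secondary obstruction in $H^4(M_2;\pi_3(V_4(\RR^5)))$.) The paper avoids both issues by working only with the $2$-skeleton: choose a CW-structure on $\S^1\times\S^3$ with no $2$-cells, extend it to $\S^1\times\D^4$ still with no $2$-cells, and then to $M_i$; then $M_i$ and $M_i^\circ$ share the same $2$-skeleton, $TM_i|_{M_i^\circ}=TM_i^\circ$, and the obstruction class $w_2$---which is exactly the obstruction to a $4$-frame over the $2$-skeleton---is the same for both. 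This cellular argument makes no use of $\pi_1(M_i)$ and is the correct replacement for your contractibility step. The remainder of your item~(3) (using the cohomological Mayer--Vietoris splitting $H^2(M_1\#_{\S^1}M_2)\cong H^2(M_1^\circ)\oplus H^2(M_2^\circ)$ and naturality of $w_2$ to pass from $M_1\#_{\S^1}M_2$ to $M_1^\circ,M_2^\circ$) agrees with the paper and is correct.
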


\begin{proof}
For $i=1,2$, denote $M_i^\circ := M_i\setminus \gamma_i$.
Then, applying Mayer--Vietoris we find
\begin{equation*}
 \to H_2(\SSS^1\times\SSS^{3}) \to 
H_2(M_1^\circ)\oplus H_2(M_2^\circ) \to
H_2(M_1 \#_{\S^1} M_2) \to 
H_1(\SSS^1\times\SSS^{3}) \to
H_1(M_1^\circ)\oplus H_1(M_2^\circ)\to,
\end{equation*}
which reduces to 
\begin{equation}
\label{eqn:mayer_vietoris_round_handle_to_conn_sum}
0 \to 
H_2(M_1^\circ)\oplus H_2(M_2^\circ)\to
H_2(M_1 \#_{\S^1} M_2) \to 
\ZZ\xrightarrow{h}
H_1(M_1^\circ)\oplus H_1(M_2^\circ).
\end{equation}
Furthermore, we claim that $H_1(M_i^\circ)=H_1(M_i)$ and $H_2(M_i^\circ)=H_2(M_i)$, $i=1,2$.
Indeed, again applying Mayer--Vietoris gives:
\begin{multline*}
H_2(\SSS^1\times\SSS^{3})\to H_2(M_i^\circ)\oplus H_2(\SSS^1\times\DD^{4})\to
H_2(M_i) \to H_1(\SSS^1\times \SSS^{3})\xrightarrow{f}
\\ 
H_1(M_i^\circ)\oplus H_1(\SSS^1\times \DD^4) \to H_1(M_i) \to H_0(\SSS^1\times\SSS^{3})\xrightarrow{g} H_0(M_i^\circ)\oplus H_0(\SSS^1\times\DD^{4}).
\end{multline*}
As the maps $f$ and $g$ are injective, this implies: 
\begin{equation*}
    0\to H_2(M_i^\circ)\to
H_2(M_i) \to 0 ,
\quad 
 0\to  H_1(\SSS^1\times \SSS^{3}) \to H_1(M_i^\circ)\oplus H_1(\SSS^1\times \DD^{4}) \to H_1(M_i) \to 0.
\end{equation*}
In particular, $H_1(M_i^\circ)=H_1(M_i)$ and $H_2(M_i^\circ)=H_2(M_i)$ as desired.
Going back to \Cref{eqn:mayer_vietoris_round_handle_to_conn_sum}, since $\gamma$ is a generator in homology $h$ is injective. Thus we find $H_2(M_1 \#_{\S^1} M_2) = H_2(M_1) \oplus H_2(M_2)$.

That $M_1 \#_{\S^1} M_2$ is simply connected follows immediately from Van Kampen's theorem using the same covering as above, so we omit the details. Hence it remains to show the relation for the second Stiefel-Whitney class.

Consider the following portion of the Mayer-Vietoris sequence:
\begin{multline}
\label{eqn:proof_w2}
H^1(M_1 \#_{\S^1} M_2)\to
H^1(M_1^\circ)\oplus H^1(M_2^\circ) \to
H^1(\S^1\times \S^3)\to\\
H^2(M_1 \#_{\S^1} M_2)\to
H^2(M_1^\circ)\oplus H^2(M_2^\circ)\to 
H^2(\S^1\times \S^3)
\end{multline}

Here, $H^2(\S^1\times \S^3)=0$, as well as $H^1(M)=H^1(M_1^\circ)=0$, because both $M_1 \#_{\S^1} M_2$ and $M_1^\circ$ are simply-connected. Moreover, the restriction map $H^1(M_2^\circ) \to H^1(\S^1 \times \S^3)$ is surjective.
As such, \Cref{eqn:proof_w2} reduces to
\[
0\to
H^2(M) \xrightarrow{\sim}
H^2(M_1^\circ) \oplus H^2(M_1^\circ) \to 0.
\]
In particular, by naturality of the Stiefel-Whitney classes, $w_2(M)=w_2(M_1^\circ) \oplus w_2(M_2^\circ)$, so that $w_2(M)$ is trivial if and only if both $w_2(M_1^\circ)$ and $w_2(M_2^\circ)$ are.

Hence, it is left to prove that
$w_2(M_i)$ is trivial if and only if $w_2(M_i^\circ)$ is.
This can be argued using the interpretation of $w_2$ as the obstruction to find a trivialization of the tangent bundle over the $2-$skeleton of a cellular decomposition of the manifold.
Start by choosing a cellular decomposition of $\S^1\times\S^3$ without $2-$cells, and extend it to a cellular decomposition of $\S^1\times \DD^4$ also without $2-$cells; this can be further extended to a cellular decomposition of $M_i$.
Hence, by construction the induced cellular decomposition on $M_i^\circ=M_i\setminus \Int(\S^1\times\DD^4)$ has the same $2-$skeleton as that of $M_1$.
In particular, as $TM_i^\circ = TM_i\vert_{M_1^\circ}$, the obstruction to find a trivialization of the tangent bundle of $M_i$ and $M_i^\circ$ over their respective (coinciding) $2-$skeleton is the same; in other words, $w_2(M_i)$ is trivial if and only if $w_2(M_i^\circ)$ is, as desired.
\end{proof}

\subsection{Connected sums with exotic spheres}
\label{sec:conf_sympl_fol_conn_sum_exotic_spheres}

Let $f:\S^{n-1} \to \S^{n-1}$ be an (orientation preserving) diffeomorphism. The \emph{twisted sphere} with twist $f$ is defined by
\[ S_f := \D^n \cup_f \D^n.\]
The diffeomorphism type of $S_f$ depends only on the isotopy class of $f$. Thus we may assume that $f$ is a compactly supported diffeomorphism of $\R^{2n+1}$, extended as the identity to $\S^{2n+1}$.

Milnor \cite{Mil59b} showed that suitable choices of $f$ yield exotic spheres. 
As a consequence, Lawson \cite[Corollary 6]{Law71} showed that exotic spheres admit (smooth) foliations. 
To be precise, consider an embedding of $\R^{2n}$ into a leaf of a foliation on $\S^{2n+1}$. Cutting $\S^{2n+1}$ along this region and gluing back using a diffeomorphism $f$ we obtain $S_f$. Since the cut is made inside a leaf the foliation is preserved.

In turn, we use the results in \cite[Appendix by S. Courte]{CKS18} to adapt this statement to conformal symplectic foliations. That is, we prove that if $M$ admits a conformal symplectic foliation then so does the connected sum with any exotic sphere:

\begin{proof}[Proof of \Cref{thm:conf_sympl_fol_up_to_exotic_sphere}]
Suppose $M$ admits a conformal symplectic foliation $(\F,\eta,\omega)$, and consider a loop $\gamma \subset M$ transverse to $\F$. On a neighborhood $\S^1 \times \D^{2n}$ of $\gamma$ the conformal symplectic foliation in equivalent to
\[ \F = \bigcup_{\theta \in \S^1 } \{\theta\} \times \D^{2n},\quad \eta = 0 ,\quad \omega = \omega_{st}.\]

Cut $M$ along the cyclinder $\S^1 \times \S^{2n-1}$ contained in the above local model, and turbulize both boundaries using \Cref{thm:ConformalTurbulization}. 
Then the resulting conformal symplectic foliation has two boundary leaves each isomorphic to $ \left(\S^1 \times \S^{2n-1},\, \eta_\partial = \d \theta,\, \omega_\partial = \d_{\d\theta} \alpha_{st}\right)$.
We insert the trivial cobordism $[0,1]\times \S^1 \times \S^{2n-1}$ with the conformal symplectic foliation from \Cref{prop:interpolating_conformal_symplectizations_homotopic_contact_str}. 
This smoothly recovers $M$ but with a different conformal symplectic foliation, which contains a compact leaf
\[ \left(\S^1 \times \S^{2n-1}, \eta = R \d\theta,\, \omega = \d_{R \d\theta} \alpha_{ot}\right),\]
for $R> 0$ arbitrarily large. 
Now, if $R\gg R_0>0$, the piece of symplectization $\left( ( -R_0,R_0) \times \S^{2n-1}, \d(e^t \alpha_{ot}) \right)$ can be embedded in this compact leaf.

Consider now the symplectization $\R^{2n}$ of the standard overtwisted contact structure on $\R^{2n-1}$. 
By \cite[Theorem 9.2]{CKS18} any given compactly supported diffeomorphism of $\R^{2n}$ can be realized (up to isotopy) by a compactly supported symplectomorphism of the symplectization.
By extending as the identity, these symplectomorphisms can be realized in $\left((-R_0,R_0)\times \S^{2n-1}, \d(e^t\alpha_{ot})\right)$ provided $R_0>0$ is big enough (which we hence assume).

To conclude the proof, one can then simply cut $M$ along $(-R_0,R_0) \times \S^{2n-1}$, and glue it back via the compactly supported symplectomorphism in the smooth (compactly supported) isotopy classes of exotic (compactly supported) diffeomorphisms of $(-R_0,R_0) \times \S^{2n-1}$ given as in the previous paragraph. 
As the starting exotic diffeomorphism varies, the resulting conformally foliated manifold can then be made diffeomorphic to the connected sum of $M$ with any given exotic sphere $\Sigma$.
\end{proof}

\section{Conformal symplectic foliations on simply-connected $5$-manifolds}
\label{sec:proof_conf_sympl_fol_5folds}

We prove here \Cref{thm:conf_sympl_fol_5folds}, stating the existence of  conformal symplectic foliations on every simply-connected $5-$manifold.
The strategy we follow goes along the lines of that in \cite{ACa72}, although with adaptations in order to ensure that the constructed foliations are conformal symplectic.

First, as a direct consequence of \Cref{thm:ConformalTurbulization,thm:gluing_turbul} we obtain the following construction:
\begin{corollary}
\label{prop:double_conf_sympl_fol}
Let $(W_i,\omega_i)$, $i=1,2$ be symplectic manifolds with the same (convex/concave) contact boundary $B$, and $\phi_i:W_i\to W_i$ a symplectomorphism which is the identity near the boundary. Then the gluing 
\[ (W_1)_{\phi_1} \cup_{B \times \SSS^1} (W_2)_{\phi_2},\]
of the mapping tori $(W_i)_{\phi_i}$
admits a conformal symplectic foliation.
\end{corollary}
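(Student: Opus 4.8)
The plan is to present each mapping torus as a symplectically foliated manifold transverse to its boundary, turbulize both, and glue. Let $\pi_i\colon (W_i)_{\phi_i}\to\S^1$ be the natural projection and $\d\theta$ the pullback of the angular $1$-form; I would equip $(W_i)_{\phi_i}$ with the foliation $\mathcal{F}_i:=\ker\d\theta$, whose leaves are the fibres $W_i$, together with the leafwise $2$-form $\omega_i$. The latter descends to the mapping torus exactly because $\phi_i$ is a symplectomorphism, so $(\mathcal{F}_i,\eta_i:=0,\omega_i)$ is a (conformal, in fact honest) symplectic foliation. Since $\phi_i$ is the identity near $\partial W_i = B$ we have $\partial(W_i)_{\phi_i} = B\times\S^1$, the foliation $\mathcal{F}_i$ is transverse to it, and the induced boundary foliation is $\mathcal{F}_{i,\partial}=\ker(\d\theta|_{B\times\S^1})$, which is unimodular. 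Finally, because $W_i$ has convex (resp.\ concave) contact boundary, the restriction $\omega_i|_B$ is $\d\alpha$ for a contact form $\alpha$ on $B$; since $W_1$ and $W_2$ share the same contact boundary I would fix a single such $\alpha$ with $\omega_1|_B=\omega_2|_B=\d\alpha$, arranging (if needed, via the symplectic collar neighbourhood theorem, cf.\ \Cref{cor:ConformalContactTypeNormalform}) the two symplectic forms to coincide near $B$. Then $\alpha$, viewed leafwise, is an admissible form with $\d_{\eta_{i,\partial}}\alpha = \d\alpha = \pm\omega_{i,\partial}$, i.e.\ $(\mathcal{F}_i,0,\omega_i)$ has boundary of convex (resp.\ concave) contact type in the sense of \Cref{def:FoliatedConformalBoundaryTypes}.

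Next I would apply \Cref{thm:ConformalTurbulization} to $(W_i)_{\phi_i}$ with $\gamma_\partial = \d\theta$, in the ``tame at the boundary'' variant, obtaining a conformal symplectic foliation $(\widetilde{\mathcal{F}}_i,\widetilde{\eta}_i,\widetilde{\omega}_i)$ that is tame at the boundary, agrees with $(\mathcal{F}_i,0,\omega_i)$ outside an arbitrarily small collar of $B\times\S^1$, and whose boundary leaf is $\bigl(B\times\S^1,\ \widetilde{\eta}_{i,\partial}=\pm\d\theta,\ \widetilde{\omega}_{i,\partial}=\d_{\pm\d\theta}\alpha\bigr)$ for a sign that may be chosen freely. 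I would pick the sign $+$ for $W_1$ and $-$ for $W_2$. Then I would invoke \Cref{thm:gluing_turbul} with the gluing diffeomorphism $\psi\colon B\times\S^1\to B\times\S^1$, $\psi(x,\theta)=(x,-\theta)$: this map is orientation-reversing; both foliations are tame, so their holonomy forms vanish and are trivially matched; and since $\psi$ fixes the $B$-coordinate and $\psi^*\d\theta = -\d\theta$, one gets $\psi^*(-\d\theta)=\d\theta$ and $\psi^*\bigl(\d_{-\d\theta}\alpha\bigr)=\d_{\d\theta}\alpha$, so $\psi$ respects the induced conformal symplectic structures on the two boundary leaves. \Cref{thm:gluing_turbul} then produces a conformal symplectic foliation on the glued manifold, which — as the foliations were altered only in collars of the boundary — is $(W_1)_{\phi_1}\cup_{B\times\S^1}(W_2)_{\phi_2}$.

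The step I expect to be the main obstacle is the orientation-and-sign bookkeeping: one has to choose the gluing so that it simultaneously reverses orientation on $B\times\S^1$, sends the Lee form $-\d\theta$ of one boundary leaf to $+\d\theta$ of the other, and intertwines the leafwise forms $\d_{\pm\d\theta}\alpha$; this is what forces the reflection $\theta\mapsto-\theta$ together with opposite sign choices in the two applications of \Cref{thm:ConformalTurbulization}. The only other point worth stating carefully is that ``the same contact boundary'' must be used at the level of forms, not just contact structures, namely to secure a common primitive $\alpha$ with $\omega_1|_B = \omega_2|_B = \d\alpha$, so that the two turbulized boundary leaves become the same conformal symplectization $(B\times\S^1,\pm\d\theta,\d_{\pm\d\theta}\alpha)$; this is routine via the (conformal) symplectic collar neighbourhood theorem. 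With these in hand the corollary is indeed immediate from \Cref{thm:ConformalTurbulization} and \Cref{thm:gluing_turbul}.
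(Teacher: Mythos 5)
Your proof is correct and fills in the details of exactly the argument the paper sketches ("turbulize the natural symplectic foliation on each mapping torus, then glue"): you take the fibration foliation $\ker\d\theta$ on each $(W_i)_{\phi_i}$ with leafwise form $\omega_i$, observe it has foliated contact-type boundary, apply \Cref{thm:ConformalTurbulization} with $\gamma_\partial=\d\theta$ and opposite sign choices, and glue via $(x,\theta)\mapsto(x,-\theta)$ using \Cref{thm:gluing_turbul}. The sign and orientation bookkeeping you carry out (including the check that $\psi^*(\d_{-\d\theta}\alpha)=\d_{\d\theta}\alpha$ and that tameness trivializes the holonomy-form matching) is precisely the content the paper leaves implicit.
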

The foliation is obtained by turbulizing the natural symplectic foliation on each of the mapping tori, and then gluing along their common boundary.

We use this result to construct symplectically foliated mapping tori $N_k$ satisfying $H_2(N_k)=\ZZ_k\oplus \ZZ_k$.
Then, we describe explicit conformal symplectic foliations on $\S^5$, $\S^2\times \S^3$, $X_\infty$ and $X_\infty \# \S^2\times \S^3$. 
Lastly, in order to get a conformal symplectic foliation on any simply connected $5-$manifold, we take round-connect sum (\Cref{sec:fol_round_handle}) of properly chosen $N_k$'s, together with a symplectically foliated $\SSS^5$ or $X_\infty$ (c.f.\ \Cref{sec:simply-conn-5folds} for the definition of $X_\infty$) and as many copies of symplectically foliated $\SSS^2\times\SSS^3$ as needed to get the right free part of $H_2$.

\paragraph*{Constructing the mapping tori with $H_2 = \ZZ_k\oplus \ZZ_k$.}
Let $k\geq1$, and 
consider the Brieskorn manifold $S=\Sigma(2,2,k,1)=\{z_0^2+z_1^2+z_2^k+z_3=0\} \cap \SSS^7\subset \CC^4$. Notice that $S$ is naturally diffeomorphic to $\SSS^5$. Consider moreover the contact open book decomposition
\[ S\to \CC , (z_0,z_1,z_2,z_3) \mapsto z_3 .
\]
This open book decomposition has pages which are naturally diffeomorphic to the Brieskorn variety $V\coloneqq V(2,2,k)=\{z_0^2+z_1^2+z_3^k=\epsilon\}\subset \CC^3$. 
Denote then the symplectic monodromy of this open book by $\phi\colon V \to V$,
and its mapping torus by $V_\phi$.
Consider also the double $N_k$ of $V_\phi$, i.e.\ two copies of $V_\phi$ (one with the orientation reversed) glued along their boundary via the identity map.
As done for instance in \cite[Section 4]{DurLaw72}, one can explicitly compute that $H_2(E)=\ZZ_k\oplus \ZZ_k$. 

Lastly, notice that,  by \Cref{prop:double_conf_sympl_fol}, $N_k$ admits a conformal symplectic foliation which just coincides with the fibers of the mapping tori away from the gluing region in the construction of the double.

\paragraph*{Constructing the mapping tori with $H_2 = \ZZ\oplus \ZZ$.}
Consider this time the link of singularity $S=\{(z_0+z_1^2)(z_0^2+z_1^5)+z_2^2+z_3=0\}\cap\S^7\subset \CC^4$.
Again, $S$ is naturally diffeomorphic to $\S^5$.
Moreover, the projection 
\[ S\to \CC , \quad  (z_0,z_1,z_2,z_3) \mapsto z_3 
\]
describes an open book decomposition of $\S^5$ with pages which are naturally diffeomorphic to $T\coloneqq \{(z_0+z_1^2)(z_0^2+z_1^5)+z_2^2=\epsilon\}\subset \CC^3$. 
Let $\psi\colon T \to T$ be the symplectic monodromy,
and $T_\psi$ its mapping torus.
Let also $P$ be the double of $T_\psi$, constructed analogously to the $N_k$'s above.
As explained again in \cite[Section 4]{DurLaw72}, one can explicitly compute that $H_2(E)=\ZZ\oplus \ZZ$. 
What's more,  by \Cref{prop:double_conf_sympl_fol}, $P$ also admits a conformal symplectic foliation, coinciding with the fibers of the mapping tori away from the gluing region of the double.

\paragraph*{Conformal symplectic foliations on $\SSS^5$, $\SSS^2\times\SSS^3$ and $X_\infty \# \S^2\times \S^3$.}
These manifolds actually admit symplectic foliations.
The case of $\SSS^5$ is described in \cite{Mit18} (and then put in the general framework for instance in \cite[Chapter 6]{TorresThesis}).
On $\SSS^2\times\SSS^3$ one can just use the symplectic foliation  defined by the product of the Reeb foliation on $\SSS^3$ by the $\SSS^2-$factor.
Lastly, on $X_\infty \# \S^2\times\S^3$ we just use the symplectic foliations constructed in \Cref{thm:sympl_fol_5folds}. 

\paragraph*{Conformal symplectic foliation on $X_\infty$.}
Recall from the classification in Section \ref{sec:simply-conn-5folds} that $X_\infty = S^2 \wtd{\times} \S^3$ is the nontrivial $\S^3$-bundle over $S^2$. Such bundles can be constructed by choosing a loop of diffeomorphisms $\phi: \S^1 \to \Diff(\S^3)$ and using it to glue two trivial $\S^3-$bundles over $\DD^2$ along their common boundary $\S^1\times\S^3$:
\[ \D^2 \times \S^3 \cup_\phi \D^2 \times \S^3.\]
Up to isomorphism the resulting bundle only depends on $[\phi] \in \pi_1(\Diff(\S^3))$; moreover, according to \cite{Hat83}, the latter group is isomorphic $ \ZZ/2$.
In particular, if $[\phi]$ is non-trivial in $\pi_1(\Diff(\S^3))$, the gluing above gives 
$X_\infty$.

Consider now a loop in $\S^5$, transverse to the symplectic foliation  constructed by Mitsumatsu \cite{Mit18}. 
On a neighborhood $\S^1\times \DD^4$ of this loop the symplectic foliation looks like
\[ \left(\S^1 \times \D^4, \F = \bigcup_{z \in \S^1} \{z\} \times \D^4,\omega = \omega_{st}\right),\]
where $\omega_{st}$ is the standard symplectic structure on $\D^4$.

Removing this neighborhood yields a symplectic foliation on 
$\S^5\setminus \S^1\times \DD^4$
which on a neighborhood $(-\varepsilon,0] \times \S^3 \times \S^1$ looks like
\begin{equation} 
\label{eqn:sympl_fol_Xinfty}
\F = \ker \d\theta,\quad \omega = \d(e^{-t}\alpha_3),
\end{equation}
where $t\in (-\varepsilon,0]$, $ \theta \in \S^1$, and $\alpha_3 \in \Omega^1(\S^3)$ denotes the standard contact structure.
Notice that, as any two loops in $\S^5$ are isotopic, then one simply has $\S^5\setminus \S^1\times \DD^4 = \DD^2\times \S^3$ (because this is true for the standard unknot).
In other words, we get a symplectic foliation on $\DD^2\times\S^3$ which on a neighborhood $(-\varepsilon,0] \times \S^3 \times \S^1$ of the boundary looks like in \Cref{eqn:sympl_fol_Xinfty}.

Now, applying the turbulization construction from \Cref{thm:ConformalTurbulization} 
we obtain a conformal symplectic foliation tame at the boundary, and restricting to
\begin{equation}\label{eq:ConformalSymplecticFoliationXinfty}
    \left(\{0\}\times\S^3 \times \S^1,\, \eta = \d \theta,\, \omega = \d_{\d \theta} \alpha_3\right).
\end{equation}

If we consider $\S^3 \subset \C^2$ then a generator of $\pi_1(\Diff(\S^3))$ is given by the path
\[ \phi_\theta(z_1,z_2) = (e^{i\theta}z_1, z_2),\]
for $\theta\in\S^1$; see for example \cite[p.263]{Pras06}. 
Moreover it is easily checked that $\phi_t$ preserves the standard contact form on $\S^3$. 
This implies that the gluing map 
\[ \phi:\S^1 \times \S^3 \to \S^1 \times \S^3,\quad (\theta,x) \mapsto (\theta,\phi_\theta(x)),\]
used to construct $X_\infty$, is an isomorphism of the conformal symplectic structure from Equation \ref{eq:ConformalSymplecticFoliationXinfty}. 
Hence, we obtain a conformal symplectic foliation on $X_\infty$.

\paragraph*{Proof of \Cref{thm:conf_sympl_fol_5folds}.}
Let now $M$ be the almost contact simply-connected $5-$manifold on which we need to construct the conformal symplectic foliation.
According to Smale-Barden's statement, it is enough to construct a conformal symplectic foliation on an almost contact, simply-connected manifold $M'$ such that it has the same $H_2$ as $M$ and $w_2(M')$ is trivial if and only if $w_2(M)$ is.

Let's start by dealing with the case of $M$ spin, i.e.\ $w_2(M)=0$.
Write then $H_2(M)=\ZZ^r \oplus \bigoplus_j \ZZ_{k_j}\oplus \ZZ_{k_j}$.
We now further distinguish two cases: $r=2k$ even or $r=2k+1$ for some $k\geq0$.
\begin{itemize}
\item[$\bullet$] In the case of $r=2k$, we consider the round connected sum $M'$ of the following conformal-symplectically foliated spin manifolds: 
$\SSS^5$ with the Mitsumatsu foliation and a transverse curve $\gamma_0$ in it;
then, for each $j$, the mapping tori $N_{k_j}$ with curves $\gamma_{k_j}$ transverse to the foliations and such that they generate the (non-torsion) fundamental group $\pi_1(N_{k_j})$;
and lastly $k$ copies of the conformally symplectically foliated mapping torus $T$ with $k$ disjoint transverse curves $\delta_1,\ldots,\delta_k$.
\item[$\bullet$] In the case of $r=2k+1$, we consider as $M'$ the round connected sum  of the following conformal-symplectically foliated spin manifolds: 
$\S^2\times\S^3$ with the above described symplectic foliation, and a transverse curve $\gamma_0$ in it; then,
for each $j$, the mapping tori $N_{k_j}$ with curves $\gamma_{k_j}$ transverse to the foliations and such that they generate the (non-torsion) fundamental group $\pi_1(N_{k_j})$;
and lastly $k$ disjoint copies of the conformally symplectically foliated mapping torus $T$ with $r$ disjoint transverse curves $\delta_1,\ldots,\delta_r$.
\end{itemize}
(Notice that all these transverse curves exist by construction of the foliations.)
Using \Cref{prop:round_sum_conf_sympl_fol} 
and \Cref{lemma:round_conn_sum_homology} 
inductively (notice that in each set of generators for the connect sum, there is only one which is simply connected, while in the others the chosen curves are non-torsion and generate the $\pi_1$), $M'$ can be seen to be spin, satisfy $H_2(M')=H_2(M)$, and to admit a conformal symplectic foliation, as desired.
\\

Let's now deal with the non-spin case.
Write here $H_2(M)=\ZZ^{r+1} \oplus \bigoplus_j \ZZ_{k_j}\oplus \ZZ_{k_j}$ (notice that in the non-spin almost contact case, $\rank(H_2(M;\ZZ))\geq1$ by the Smale-Barden's classification).
As before, we further distinguish two cases: $r=2k$ or $r=2k+1$ for some $k\geq0$.
\begin{itemize}
\item[$\bullet$] In the case of $r=2k$, we consider the round connected sum $M'$ of the following conformal-symplectically foliated spin manifolds: 
$X_\infty$ with the conformal symplectic foliation described above and a transverse curve $\gamma_\infty$ in it;
then, for each $j$, the mapping tori $N_{k_j}$ with curves $\gamma_{k_j}$ transverse to the foliations and such that they generate the (non-torsion) fundamental group $\pi_1(N_{k_j})$;
and lastly $k$ copies of the conformally symplectically mapping torus $T$ with $k$ disjoint transverse curves $\delta_1,\ldots,\delta_k$.
\item[$\bullet$] In the case of $r=2k+1$, we consider as $M'$ the round connected sum  of the following conformal-symplectically foliated spin manifolds: 
$X_\infty\# \S^2\times \S^3$ with the above described conformal symplectic foliation, and a transverse curve $\gamma_\infty'$ in it; 
then, for each $j$, the mapping tori $N_{k_j}$ with curves $\gamma_{k_j}$ transverse to the foliations and such that they generate the (non-torsion) fundamental group $\pi_1(N_{k_j})$;
and lastly $k$ disjoint copies of the conformally symplectically foliated mapping torus $T$ with $r$ disjoint transverse curves $\delta_1,\ldots,\delta_r$.
\end{itemize}
Using again \Cref{prop:round_sum_conf_sympl_fol} 
and \Cref{lemma:round_conn_sum_homology}
inductively, $M'$ can be seen to have non-trivial $w_2$, to satisfy $H_2(M')=H_2(M)$, and to admit a conformal symplectic foliation.

\section{Existence of conformal symplectic foliations  in high dimensions}
\label{sec:general_existence}

\subsection{Conformal symplectic foliated cobordisms}
\label{sec:conf_sympl_fol_cobordisms}

Consider an oriented manifold $M^{2n}$ endowed with a \emph{contact foliation} $(\F,\xi :=\ker \alpha)$. That is, $\F$ is a (cooriented, codimension-$1$) foliation and $\alpha \in \Omega^1(\F)$ satisfies
\[ \alpha \wedge \d \alpha^{n-1} \neq 0.\]
Since $\F$ is cooriented its leaves inherit an orientation from $M$. We say a contact foliation $(\F,\xi = \ker \alpha)$ is \emph{positive} (resp. \emph{negative}) if $\alpha$ is a positive (resp. negative) contact form on $\F$.

Recall that $\lambda \in \Omega^1(M)$ is  a \emph{Liouville form} for the conformal symplectic structure $(\eta,\omega)$ if 
\[ \omega = \d_\eta \lambda.\]
Such a form is equivalent to the choice of a \emph{Liouville vector field} $Z \in \X(M)$, uniquely defined by
$ \lambda = \iota_Z \omega$.
A cooriented hypersurface $\Sigma \subset (M,\eta,\omega)$ is of \emph{positive} (resp. \emph{negative}) contact type, if there is a Liouville form $\lambda$ for $\omega$ whose restriction defines a positive (resp. negative) contact form on $\Sigma$. This is equivalent to the Liouville vector field being positively (resp. negatively) transverse to $\Sigma$, with respect to its coorientation.

\begin{definition}
We say that $(W,\F,\eta,\omega)$ is a \emph{foliated conformal symplectic cobordism} between contact foliated manifolds $(M_\pm,\F_\pm, \xi_\pm)$ if:
\begin{enumerate}
\item $W$ is an oriented cobordism from $M_-$ to $M_+$, i.e. $\partial W = \overline{M}_- \sqcup M_+$.
\item $(\F,\eta,\omega)$ is a conformal symplectic foliation transverse to the boundary which admits a (leafwise) Liouville form $\lambda$ near the boundary such that:
\begin{enumerate}
    \item $\F \cap M_\pm = \F_\pm$;
    \item $\lambda_\pm := \lambda|_{M_\pm}$ is a (positive) contact form defining $\xi_\pm$.
\end{enumerate}
\end{enumerate}
We will also denote such a cobordism simply by an arrow:
\[ (M_-,\F_-,\xi_-) \to (M_+,\F_+,\xi_+).\]
\end{definition}

\begin{remark}
The second condition above is equivalent to the existence of a (leafwise) Liouville vector field $Z$ for $(\eta,\omega)$ which is transverse to $\partial W$. We denote by $\partial_+ W$ (resp. $\partial_- W$) the part of $\partial W$ where $Z$ is pointing outwards (resp. inwards). Then, the above definition implies:
\[ \partial_+ W = M_+, \quad \partial_- W = \overline{M}_-,\]
and $\lambda_\pm = \iota_Z\omega|_{\partial_\pm W}$, defines (on $\partial_\pm W$) a positive/negative contact form for $\xi_\pm$.
Also note that the equality in the first condition is as cooriented foliations. Hence comparing $\partial_-W = \overline{M}_-$ to $M_-$ it is the orientation of the leaves of $\F_-$ which changes. This is consistent with $\lambda_-$ being a (leafwise) negative contact form on $\partial_-W$ and a (leafwise) positive contact form on $M_-$.
\end{remark}

Composition of morphisms is just composition of cobordisms (using \Cref{thm:FoliatedConformalSymplecticNormalform}) and the identity morphism at $(M,\F,\alpha)$ is the symplectization
\[ \left([0,1]\times M, \omega := \d(e^t \alpha)\right).\] 

Recall that a (contact) foliation is called unimodular if it can be defined by a closed $1$-form. 
In this setting,
we can use turbulization to ``invert'' cobordisms.

\begin{proposition}\label{prop:CobordismEquivalenceRelation}
The foliated conformal symplectic cobordism relation defines an equivalence relation, between manifolds with a unimodular contact foliation.
\end{proposition}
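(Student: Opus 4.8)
The plan is to verify the three defining properties of an equivalence relation: reflexivity, transitivity, and symmetry. Reflexivity is immediate from the identity morphism given by the symplectization $\bigl([0,1]\times M,\ \d(e^t\alpha)\bigr)$. Transitivity is composition of cobordisms: given cobordisms $(W_1,\F_1,\eta_1,\omega_1)\colon (M_-,\F_-,\xi_-)\to (M_0,\F_0,\xi_0)$ and $(W_2,\F_2,\eta_2,\omega_2)\colon (M_0,\F_0,\xi_0)\to (M_+,\F_+,\xi_+)$, one glues along $M_0$ using the collar normal form of \Cref{thm:FoliatedConformalSymplecticNormalform}; after rescaling the Liouville forms near $M_0$ (multiplying one side by a constant so that the contact forms $\lambda_0$ match, which is allowed since contact forms defining $\xi_0$ differ by a positive function and we may absorb it), the two symplectization collars agree and the glued conformal symplectic foliation is smooth, transverse to $\partial(W_1\cup_{M_0}W_2)$, with the required leafwise Liouville form near the new boundary. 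The only mild point is that the leafwise Lee forms $\eta_i$ must be matched near $M_0$; but near the boundary one can arrange $\eta_i$ to vanish (the symplectization collar has $\omega = \d(e^t\alpha)$, i.e.\ $\eta=0$ there), so this is automatic.

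\textbf{The symmetric step.} The heart of the proof — and the expected main obstacle — is symmetry: given a cobordism $(W,\F,\eta,\omega)\colon (M_-,\F_-,\xi_-)\to(M_+,\F_+,\xi_+)$ between \emph{unimodular} contact foliations, one must produce a cobordism in the reverse direction $(M_+,\F_+,\xi_+)\to(M_-,\F_-,\xi_-)$. The idea is \emph{not} to reverse $W$ itself (that would flip the Liouville vector field to point the wrong way), but rather to turbulize both ends of $W$ using \Cref{thm:ConformalTurbulization} and glue on an appropriate compensating piece. Concretely: since $\F_\pm$ is unimodular, write $\F_\pm = \ker\gamma_\pm$ with $\gamma_\pm$ closed. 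The boundary of $W$ is of convex (resp.\ concave) contact type at $M_+$ (resp.\ $M_-$). Apply \Cref{thm:ConformalTurbulization} at both ends to obtain a conformal symplectic foliation $(\wtd\F,\wtd\eta,\wtd\omega)$ on $W$ that is tame at the boundary, with boundary leaves carrying the conformal symplectizations $(\S^1\times M_\pm,\ \d_{\pm\gamma_\pm}\alpha_\pm)$ — more precisely, the compact boundary leaves are copies of $M_\pm$ with the turbulized data $\wtd\omega_\partial = \d_{\eta_\partial \pm \gamma_\partial}\alpha$. The unimodularity is exactly what \Cref{thm:ConformalTurbulization} requires.

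\textbf{Assembling the inverse.} Having turbulized $W$, I would then build the reverse cobordism by stacking: take $W$ with turbulized ends, then on the $M_+$-side glue (via \Cref{thm:gluing_turbul}, matching the holonomy forms $\pm\gamma_+$) a trivial cylinder $[0,1]\times M_+$ carrying the product foliation by copies of $M_+$, turbulized at one end so that the \emph{other} end is genuinely transverse and of contact type — playing the role of the new convex boundary $\partial_+$ of the inverse cobordism; and symmetrically on the $M_-$-side, arranging that end to be of concave contact type giving the new $\partial_-$. The net effect is a cobordism whose underlying manifold is (diffeomorphic to) $W$ with collars attached, but whose Liouville structure near the two ends is now oriented so that $M_+$ sits on the negative end and $M_-$ on the positive end — i.e.\ a cobordism $(M_+,\F_+,\xi_+)\to(M_-,\F_-,\xi_-)$. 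The crucial mechanism making this work, and impossible in the honest symplectic category, is that turbulization inserts closed leaves which are conformal symplectizations, and these carry no volume obstruction (\Cref{rmk:weak_relat_to_relat}), so the Liouville vector field can be reversed across a turbulized region. I expect the bookkeeping of signs (which sign of $\gamma_\pm$ to use at each gluing, and checking the induced contact forms are positive on the correct leaves) to be the fiddly part, but \Cref{lem:contacttypeboundaryequivalence} and the explicit boundary-leaf formulas in \Cref{thm:ConformalTurbulization} supply everything needed. Finally, one checks the equivalence-relation axioms are consistent (e.g.\ composing a cobordism with its constructed inverse yields something cobordant to the identity symplectization), though for an equivalence relation it suffices to have exhibited reverse cobordisms, together with reflexivity and transitivity already established.
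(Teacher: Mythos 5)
Your approach is essentially the same as the paper's: reflexivity via the symplectization collar, transitivity by gluing collars via the normal form of \Cref{thm:FoliatedConformalSymplecticNormalform}, and symmetry by turbulizing both ends and gluing on compensating cylinder pieces that are themselves turbulized at the matching end (the paper builds $\wtd W = [-\varepsilon,0]\times M_+ \cup_{M_+} \overline W \cup_{M_-} [0,\varepsilon]\times M_-$, with coorientation of $\F$ flipped on $\overline W$). Two small slips worth flagging: the attached cylinder must carry the foliation $\ker\gamma_\pm$, i.e.\ the product of $[0,1]$ with $\F_\pm$ (transverse to both ends), rather than ``the product foliation by copies of $M_\pm$'' --- with the latter there is no transverse boundary to turbulize; and your $\partial_\pm$ labels in the ``assembling'' paragraph are swapped relative to your own concluding sentence (the $M_+$-end must become the concave $\partial_-$ of the reversed cobordism, and the $M_-$-end the convex $\partial_+$). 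Neither affects the strategy, which correctly identifies the mechanism the paper uses: the turbulized compact leaves carry conformal symplectization data and absorb the change of direction of the Liouville vector field, avoiding the Stokes-type obstruction present in the honest symplectic case.
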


\begin{proof}
We can compose cobordisms using \Cref{thm:FoliatedConformalSymplecticNormalform},
and a cobordism from $(M,\F = \ker \gamma, \xi = \ker \alpha)$ to itself is given by the symplectization
\[ \left([0,1] \times M , \G = \ker \gamma, \omega = \d(e^t\alpha)\right).\]
 Hence the cobordism relation is transitive and reflexive. 

To see it is symmetric consider a cobordism $(W,\F := \ker \gamma, \eta, \omega)$ from $(M_-,\F_- := \ker \gamma_-, \xi_- := \ker \alpha_-)$ to $(M_+,\F_+ := \ker \gamma_+, \xi_+ := \ker \alpha_+)$.
As a smooth manifold, the cobordism in the opposite direction is given by
\[ \wtd{W} := [-\varepsilon,0] \times M_+ \cup_{M_+} \overline{W} \cup_{M_-} [0,\varepsilon] \times M_-,\]
where the gluing uses the (orientation reversing) identity map. Observe that
\[ \partial \wtd{W} = \overline{M}_+ \sqcup M_-,\]
so that $\wtd{W}$ is an oriented cobordism from $M_+$ to $M_-$. 

On the middle piece $\overline{W}$ consider the conformal symplectic foliation $(\F = \ker - \gamma, \eta, \omega)$. That is, we swap the coorientation of $\F$ to match changing the orientation on $W$. By turbulizing at both boundaries (\cref{thm:ConformalTurbulization}) we obtain a foliation which is tame at the boundary and has compact leaves:
\[ \left(M_+,\eta_+ + \gamma_+, \d_{\eta_+ + \gamma_\partial} \alpha_+\right),\quad \text{and} \quad \left(M_-,\eta_-+\gamma_-,\d_{\eta_- + \gamma_\partial} \alpha_-\right),\]
where $\eta_\pm := \eta|_{M_\pm}$ and $\gamma_\pm \coloneqq \gamma\vert_{M_\pm}$.
Note that the coorientation of the turbulized foliation is pointing inwards (into $\overline{W}$) along $M_+$ and outwards along $M_-$.
On the left piece, $[-\varepsilon,0] \times M_+$, consider the conformal symplectic foliation
\[ \left(\F := \ker \gamma_+,\eta_+,\d_{\eta_+}(e^t \alpha_+)\right).\]
Again, using \Cref{thm:ConformalTurbulization}, we turbulize this foliation at $\{0\} \times M_+$ to make it tame with boundary leaf
\[ \left(M_+, \eta_++\gamma_+, \d_{\eta_+ + \gamma_+} \alpha_+\right).\]
The coorientation of the compact leaf is pointing outwards.
This matches the negative boundary of the middle piece, so that the resulting (cooriented) foliation is smooth. The resulting left boundary is the contact foliated manifold
\[ \left(\overline{M}_+, \wtd{\F}_- = \ker \gamma_+,\wtd{\xi}_- = \ker \alpha_+\right)\]
Similarly we obtain a conformal symplectic foliation on the right piece $[0,\varepsilon] \times M_-$, which glues smoothly to the middle piece and whose positive boundary is the contact foliated manifold
\[ \left( M_- = \partial_- W,\wtd{\F}_+ = \ker \gamma_-,\wtd{\xi}_+ = \ker \alpha_-\right).
\qedhere
\]
\end{proof}

The proof of \Cref{prop:CobordismEquivalenceRelation} also shows that by attaching trivial cobordisms and turbulizing, we can transfer (connected components of) the positive boundary to the negative boundary and vice versa. More formally:

\begin{lemma}\label{lem:CobordismBoundarySwapping}
Given two contact foliations $(M_i,\F_i,\xi_i)$, $i=1,2$, the following are equivalent:
\begin{enumerate}
\item $(M_0,\F_0,\xi_0) \to (M_1,\F_1,\xi_1)$;
\item $(M_0,\F_0, \xi_0) \sqcup (\overline{M}_1,\overline{\F}_1,\xi_1) \to \emptyset$;
\item $\emptyset \to (\overline{M}_0,\overline{\F}_0,\xi_0) \sqcup (M_1,\F_1,\xi_1)$.
\end{enumerate}
Moreover, the corresponding cobordisms can be taken diffeomorphic.
\end{lemma}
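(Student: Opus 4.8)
The plan is to isolate from the proof of Proposition~\ref{prop:CobordismEquivalenceRelation} a single ``boundary-moving'' operation and observe that it interchanges the three statements. The operation takes a foliated conformal symplectic cobordism together with one connected component $N$ of its positive boundary, and modifies $(\mathcal{F},\eta,\omega)$ only near $N$, producing a foliated conformal symplectic cobordism --- diffeomorphic to the original one, with the diffeomorphism equal to the identity away from $N$ --- in which $N$ has become a component of the negative boundary carrying the contact foliated manifold $(\overline N,\overline{\mathcal{F}}_N,\xi_N)$; that is, the contact structure is unchanged and only the (co)orientation of the foliation (and the orientation of $N$) is reversed. Running the analogous construction on a component of the negative boundary moves it back to the positive side. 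Granting this, statement~(1) is transformed into~(2) by moving the positive component $M_1$, and into~(3) by moving the negative component $M_0$ to the positive side, after which~(2) and~(3) are equivalent by transitivity. Since each move only glues on a collar, all the resulting cobordisms share the same underlying smooth manifold, which is the final assertion of the lemma.

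To realize the boundary-moving operation I would argue exactly as in the symmetric half of the proof of Proposition~\ref{prop:CobordismEquivalenceRelation}, but localized at $N$. Using Theorem~\ref{thm:FoliatedConformalSymplecticNormalform} together with the contact-type condition, fix a collar $(-\varepsilon,0]\times N$ on which $\mathcal{F}=\ker\gamma_N$, $\eta=\eta_N$, $\omega=\d_{\eta_N}(e^t\alpha_N)$, where $\gamma_N$ can be taken closed (this is where the unimodularity hypothesis on the contact foliations enters) and $\alpha_N$ is a leafwise contact form for $\xi_N$. Turbulize at $N$ by Theorem~\ref{thm:ConformalTurbulization}(i): the foliation becomes tame at $N$, is unchanged away from a small collar, and has compact boundary leaf $(N,\eta_N+\gamma_N,\d_{\eta_N+\gamma_N}\alpha_N)$ cooriented outward. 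Glue onto this leaf the piece $[0,1]_s\times N$ equipped with $\mathcal{F}=\ker\gamma_N$, $\eta=\eta_N$, $\omega=\d_{\eta_N}(e^{-s}\alpha_N)$; its $s=0$ end is again of convex contact type, so Theorem~\ref{thm:ConformalTurbulization}(i) turbulizes it to a tame end with the same compact boundary leaf, this time cooriented so as to match, and Proposition~\ref{thm:gluing_turbul} then yields a smooth conformal symplectic foliation on the glued manifold. On the new end $\{1\}\times N$ the foliation is transverse, $\omega=\d_{\eta_N}(e^{-s}\alpha_N)$, and the leafwise Liouville vector field $-\partial_s$ points inward, so this is a concave contact-type negative boundary component with contact form $e^{-1}\alpha_N$ defining $\xi_N$; as in the last paragraph of the proof of Proposition~\ref{prop:CobordismEquivalenceRelation}, the orientation conventions for negative boundaries identify it with $(\overline N,\overline{\mathcal{F}}_N,\xi_N)$.

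The only genuinely delicate part, which I would spell out in full, is the bookkeeping of orientations and coorientations: one must check that the signs in the two applications of Theorem~\ref{thm:ConformalTurbulization} can be chosen so that the two turbulized ends glue smoothly and coherently cooriented, and that, under the conventions following the definition of a foliated conformal symplectic cobordism, the moved component carries exactly the contact structure $\xi_N$ with the foliation reoriented and the orientation of the underlying manifold reversed --- and nothing more. I expect this sign-chasing to be the main obstacle; the rest is a direct transcription of the construction already carried out for Proposition~\ref{prop:CobordismEquivalenceRelation}, now applied to one boundary component at a time rather than to all of $\partial W$ simultaneously.
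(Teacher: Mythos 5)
Your proposal is essentially the same as the paper's: turbulize the cobordism at the positive boundary $M_1$, attach a turbulized trivial cobordism $[0,1]\times M_1$ to flip the contact-type component from convex to concave, and conclude by transitivity. One point to flag: the paper takes the gluing piece to carry the \emph{reversed} coorientation, $\F := -\ker\gamma_1$ (together with $\omega = \d_{\eta_1}(e^{-t}\alpha_1)$), whereas you write $\F = \ker\gamma_N$. With your sign the restriction $\F\cap M_-$ at the new boundary $\{1\}\times N$ comes out as $\F_N$ rather than $\overline{\F}_N$, so the conclusion would be $(M_0,\F_0,\xi_0)\sqcup(\overline{N},\F_N,\xi_N)\to\emptyset$, not matching the lemma. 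This is exactly the coorientation bookkeeping you flagged as delicate but did not carry out; it is resolved by using $-\ker\gamma_N$ on the gluing piece, as in the proof of Proposition~\ref{prop:CobordismEquivalenceRelation}.
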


\begin{proof}
Suppose $(W,\F,\eta,\omega)$ is a cobordism as in the first statement. Then a neighborhood of the positive boundary is isomorphic to
\[ \left((-\varepsilon,0] \times M_1, \F = \ker \gamma_1,\eta = \eta_1,\omega = \d_{\eta_1} (e^t \alpha_1)\right),\]
where $\F_1 = \ker \gamma_1$, and $\xi_1 = \ker \alpha_1$. By turbulizing we can make it tame at the boundary such that the boundary leaf is cooriented outwards and equal to $\left(M_1,\d_{\eta_1 + \gamma_1}\alpha_1\right)$.
Similarly, the trivial cobordism
\[ \left([0,1] \times M_1, \F := - \ker \gamma_1, \eta:= \eta_1,\omega := \d_{\eta_1}(e^{-t} \alpha_1)\right),\]
can be turbulized at $\{0\} \times M_1$. The resulting boundary leaf equals $(M_1,\d_{\eta_1 + \gamma_1} \alpha_1)$ as is cooriented inwards (i.e. $\partial_t$ is positively transverse to it). As such the two cobordisms can be glued. 

The Liouville vector field of the resulting cobordism points inwards at all boundary components, showing that
\[ (M_0,\F_0,\xi_0) \sqcup (\overline{M}_1, \overline{\F}_1,\xi_1) \sim \emptyset.\]
The other implication follows similarly.
\end{proof}

\subsection{From almost symplectic to conformal symplectic}
\label{sec:from_alm_sympl_to_conf_sympl}

For simplicity, we introduce the following notation for product foliations.
Given a cooriented positive contact structure $\xi$ on an oriented smooth manifold $M$, we denote by $\S^1\times(M,\xi)$ the contact foliation $(\F=\ker(\d\theta),\xi)$. 
The product foliation of $\SSS^1$ with of an (almost) symplectic manifold is defined analogously.
In particular, \Cref{thm:EliMurphyMain} from \cite{EliMur15} tells that product of $\SSS^1$ with almost symplectic foliated cobordisms from overtwisted to tight contact foliations can be homotoped to symplectic ones.

It is worth pointing out that in in the (non-foliated) symplectic setting the assumption that the negative boundary is overtwisted is necessary, i.e.\ \Cref{thm:EliMurphyMain} is in general false without this assumption.
This is simply a consequence of the fact that overtwisted contact manifolds are not symplectically fillable \cite{BEM15,Nie06}. 
In particular, tight and overtwisted structures play a very different role with respect to the symplectic cobordism relation. 
In the foliated setting this difference vanishes, as we shall see with some results in this section.

\begin{remark} 
\label{rmk:hol_like_EM}
Throughout this whole section, we will often use either \Cref{thm:EliMurphyMain} and \Cref{rmk:weak_relat_to_relat}, or \Cref{thm:MakingCobordismSymplectic}, on product almost symplectic foliations $(\SSS^1\times W, \F=\ker\d\theta,\Omega)$, with $\theta\in\SSS^1$.
Notice that these results easily give a holonomy-like exact conformal symplectic leafwise structure on $\F$.
Indeed, in their proofs one can easily see that the $(\eta,\omega)$ obtained satisfies that $\omega=\d_\eta\lambda$, for an exact $\eta$ which is $0$ near the boundary.
Then, $\eta$ can be realized as a holonomy form of a $1-$form $\gamma$ defining $\F$ which is just (a constant multiple of) $\d\theta$ near the boundary.
\end{remark}

Recall that \Cref{prop:CobordismEquivalenceRelation} says that the conformal symplectic foliated cobordism relation is symmetric.
In what follows, we will need a more precise version of \Cref{prop:CobordismEquivalenceRelation} under the assumption of (topologically) trivial cobordisms, which allows to keep track of the underlying almost contact structure.

\begin{lemma}
\label{lemma:cobord_between_homotopic_str_transverse_boundary}
Consider a homotopy of almost contact structures $(\alpha_t,\omega_t)$ between contact forms $\alpha_0$ and $\alpha_1$ on $M$.
Suppose that $\alpha_1$ is overtwisted, and if $\dim M = 3$ additionally assume $\alpha_0$ is overtwisted.
Then, there exists a $\vert$holonomy$\vert$-like exact conformal symplectic foliated cobordism
\[ \left(\S^1 \times [0,1] \times M, \F
,\eta,
\d_\eta\lambda): \S^1 \times (M,\alpha_0) \to \S^1 \times (M,\alpha_1).
\right),\]
Moreover:
\begin{enumerate}[(i)]
    \item $(\F,\eta,\d_\eta\lambda)$ is in the same almost contact class (relative boundary) as $ \left(\ker \d \theta, \d t \wedge \alpha_t + \omega_t\right)$;
    \item  Near the left (resp. right) boundary $\lambda$ equals $e^t\alpha_0$ (resp.\ $e^t\alpha_1$) and $\eta=0$.
    \item $(\eta,\d_\eta\lambda)$ is holonomy-like 
    away from a closed leaf.
\end{enumerate}
\end{lemma}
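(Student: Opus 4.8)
The strategy is to build the cobordism in two stages, using the overtwistedness hypothesis to invoke \Cref{thm:MakingCobordismSymplectic} (together with \Cref{rmk:hol_like_EM}), and then to ``close up'' the construction into an honest cobordism between the product contact foliations by turbulizing and gluing trivial pieces. First I would take the product of $\S^1$ with the almost symplectic structure $\d t\wedge\alpha_t+\omega_t$ on $[0,1]\times M$: this is a product almost symplectic foliation $(\S^1\times[0,1]\times M,\ker\d\theta,\d t\wedge\alpha_t+\omega_t)$, and away from the two ends it may be assumed to be a genuine conformal symplectization $\d(e^t\alpha_i)$ of a contact form. Since $\alpha_1$ is overtwisted (and, when $\dim M=3$, also $\alpha_0$), \Cref{thm:MakingCobordismSymplectic} applies leafwise-trivially on each $\S^1$-slice — more precisely, by \Cref{rmk:hol_like_EM}, the homotopy \emph{relative to the boundary} produced there upgrades the product almost symplectic foliation to a conformal symplectic foliation $(\F,\eta,\omega=\d_\eta\lambda)$ which is exact, has $\eta$ exact and supported away from the boundary, and is holonomy-like for a $1$-form $\gamma$ defining $\F$ which equals a constant multiple of $\d\theta$ near the ends. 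This immediately gives properties (i) and (ii): the homotopy is relative to the boundary so the almost contact class (rel.\ boundary) is that of $(\ker\d\theta,\d t\wedge\alpha_t+\omega_t)$, and near the ends $\lambda=e^t\alpha_i$, $\eta=0$ by construction.

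The remaining issue is property (iii) together with the precise meaning of ``$\vert$holonomy$\vert$-like'' versus ``holonomy-like'': \Cref{thm:MakingCobordismSymplectic} only furnishes a homotopy \emph{weakly} relative to the boundary in the symplectic category, and to pass to a genuine foliated conformal symplectic cobordism $\S^1\times(M,\alpha_0)\to\S^1\times(M,\alpha_1)$ — i.e.\ one that is transverse to the boundary with a leafwise Liouville form restricting to the honest contact forms $\alpha_0,\alpha_1$ — one must be a little careful near the two ends. Here I would follow the same device as in the proofs of \Cref{prop:CobordismEquivalenceRelation} and \Cref{lem:CobordismBoundarySwapping}: after the first step the foliation near each boundary component is a conformal symplectization of $(\F_\partial=\ker(\pm R\,\d\theta),\xi_i)$ for some $R>0$, so by turbulizing (\Cref{thm:ConformalTurbulization}, convex case, with $\eta_\partial=0$, which by that theorem yields a \emph{holonomy-like} tame foliation with compact boundary leaf $(\S^1\times M,\d_{\pm R\,\d\theta}\alpha_i)$) and then gluing on trivial symplectization cobordisms $([0,\varepsilon]\times\S^1\times M,\ker\d\theta,\d(e^{\pm t}\alpha_i))$ turbulized appropriately at the inner end, one arranges that the total foliation is transverse to the two new boundary components with leafwise Liouville form $e^t\alpha_0$, $e^t\alpha_1$ and $\eta=0$ there. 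The compact leaves introduced by the turbulization are precisely the ``closed leaves'' excluded in (iii): away from them the foliation is holonomy-like (since all the turbulized and symplectization pieces are, by \Cref{thm:ConformalTurbulization}(i)(b) in the $\eta_\partial=0$ case and by the explicit formulas), whereas across the finitely many compact turbulization leaves the holonomy form $\mu_\F$ vanishes and $\eta$ need not, so globally the cobordism is only $\vert$holonomy$\vert$-like; this is exactly the asymmetry recorded in \Cref{def:hol_like}.

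The one point requiring genuine care — the \emph{main obstacle} — is bookkeeping the signs and the parameter $R$ through the turbulizations so that the inner turbulized ends match the symplectization collars and the whole assembly glues smoothly via \Cref{thm:gluing_turbul}, while simultaneously keeping the global almost contact class unchanged relative to the boundary. Concretely: \Cref{thm:ConformalTurbulization} with convex boundary produces a foliation that is tame \emph{or} $\pm\gamma_\partial$-tame; to glue two turbulized pieces along a common compact leaf one needs the tameness data $(\mu,\nu)$ to agree up to orientation-reversal, so I would choose opposite signs of $\gamma_\partial=\pm R\,\d\theta$ on the two sides of each such leaf, exactly as in the proof of \Cref{prop:round_sum_conf_sympl_fol}. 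Since all these modifications happen in arbitrarily small collars of the boundary and are themselves homotopies of almost contact structures (the turbulization of a conformal symplectization is homotopic through almost contact structures to the product, cf.\ the homotopy $(f,g)$ in the proof of \Cref{thm:ConformalTurbulization}), property (i) is preserved; and since $\eta$ was already $0$ near the ends before this step, the final $\lambda$ and $\eta$ have the form asserted in (ii). Assembling these pieces with \Cref{thm:FoliatedConformalSymplecticNormalform} for the gluing in the interior then yields the desired $\vert$holonomy$\vert$-like exact conformal symplectic foliated cobordism, completing the proof.
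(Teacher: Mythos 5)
The gap is in your first step, and it is essential, not a bookkeeping issue. You propose to keep the product foliation $\ker\d\theta$ and apply \Cref{thm:MakingCobordismSymplectic} (the correct reference for a topologically trivial cobordism would actually be \Cref{thm:EliMurphyMain}; \Cref{thm:MakingCobordismSymplectic} concerns cobordisms with an additional $\S^{2n-1}$ boundary component, which is absent here) ``leafwise-trivially on each $\S^1$-slice.'' This amounts to applying \cite{EliMur15} to the cobordism $[0,1]\times M$ from $(M,\xi_0=\ker\alpha_0)$ to $(M,\xi_1=\ker\alpha_1)$, which requires the \emph{concave} boundary $\xi_0$ to be overtwisted. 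The lemma only assumes the convex end $\alpha_1$ is overtwisted; for $\dim M\geq 5$ the form $\alpha_0$ may well be tight, and the h-principle cannot be invoked. The paper's proof deals with exactly this: it first homotopes the \emph{foliation} $\ker\d\theta$ through the family $\gamma_s = \cos(\varphi_s)\d\theta - \sin(\varphi_s)\d t$ to a turbulized foliation $\F_1$ with two closed interior leaves. In the ``middle piece'' between those leaves (where $\varphi_1=\pi$) the coorientation of the leaves is reversed, so there the leafwise almost symplectic cobordism has its concave end carrying a rescaling of $\alpha_1$, and \Cref{thm:EliMurphyMain} together with \Cref{rmk:weak_relat_to_relat} applies precisely because $\alpha_1$ is overtwisted. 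Explicit formulas for $\lambda$ near the closed leaves and the boundary (Equation~\ref{eq:LambdaConditions}) then complete the construction.

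Your second step — turbulizing and gluing trivial collars, tracking tameness data as in \Cref{prop:round_sum_conf_sympl_fol} — is a boundary-normalization device, so it must come \emph{after} a conformal symplectic structure already exists on the interior; it cannot retroactively supply the coorientation reversal that the h-principle needs. In short, the foliation must be rotated (turbulized) \emph{before} \cite{EliMur15} is invoked, not after, and this reordering is the missing idea.

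Your observation about property (iii) is on the right track: the closed leaves created by turbulization are exactly where holonomy-like fails and only $|$holonomy$|$-like survives, matching \Cref{def:hol_like}. But this does not repair step one.
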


\begin{proof}
Starting from the (trivial) almost symplectic foliation
\[ \left(\S^1 \times [0,1] \times M, \F_0 := \ker d\theta, \omega_0 := \d t \wedge \alpha_t + \omega_t,\right)\]
the homotopy to $(\F,\eta,\omega)$ is described in two steps. 
First we homotope the hyperplane distribution to the desired foliation, keeping track of the almost symplectic structure.
This homotopy is depicted in \Cref{fig:foliations}. The resulting foliation contains two closed leaves in the interior, diffeomorphic to $\S^1 \times M$.
After this, we keep the foliation fixed and homotope the leafwise almost symplectic structure into a conformal symplectic structure.

First, to define the homotopy of almost symplectic foliations, consider the following $1$-parameter families: 
\[ \gamma_s := \cos(\varphi_s(t)) \d \theta - \sin(\varphi_s(t)) \d t,
\quad 
\Omega_s :=  \left(\cos(\varphi_s(t)) \d t + \sin(\varphi_s(t)\right) \d \theta) \wedge \alpha_t + \omega_t,
\]
where $s \in [0,1]$, and  $\varphi_s$ is a $[0,1]-$family of functions as in \Cref{fig:homotopy_foliations}. 
(For later use, we also assume that $\varphi_1$ is such that $|\dot{\varphi}_1(t) \sin(\varphi_1(t))|$ is a smooth function, which can be arranged as the picture shows.)
It is easily seen that $(\gamma_s,\Omega_s)$ defines a homotopy of almost symplectic foliations since:
\[ 
\gamma_s \wedge \d \gamma_s = 0,\quad \gamma_s \wedge \Omega_s^n = n  \d \theta \wedge \d t \wedge \alpha_t \wedge \omega_t > 0 
.\]
Secondly, we want to homotope $\Omega_1$ to a leafwise exact conformal symplectic structure on $\F_1$ of the form
\[ \Omega_2 := \d_{\eta} \lambda,\]
where $\eta$ is defined by
\[ \eta := |\dot{\varphi} \sin(\varphi)| \, (\cos(\varphi) \d t + \sin(\varphi) \d \theta), \]
with $\varphi\coloneqq \varphi_1(t)$, and $\lambda$ satisfies
\begin{equation}\label{eq:LambdaConditions}
    \lambda = \begin{cases} e^{g(t)} \alpha_0 
    &
    \text{on the left component of }\supp(\pi-\varphi)
    \\ 
    e^{g(t)} \alpha_1 
    & 
    \text{on the right component of }\supp(\pi-\varphi),
    \end{cases}
\end{equation}
for $g:[0,1] \to \R$ is as in \Cref{fig:function_g}. Note that up to sign $\eta$ equals the holonomy form of $\gamma_1$, so that $\Omega_2$ is \absholonomy. The conditions in \Cref{eq:LambdaConditions} implies that around the compact leaves $\Omega_2$ equals the result of the turbulization construction \Cref{thm:ConformalTurbulization}.

It remains to find suitable $\lambda$, and show that the resulting $\Omega_2$ is homotopic to $\Omega_{1,1}$ through leafwise almost symplectic forms.
To this end observe that a homotopy $(\alpha_{t,s},\omega_{t,s})$ of $1$-parameter families of almost contact structures on $M$ induces a homotopy of leafwise almost symplectic forms
\[ \Omega_{1,s} := \left( \cos(\varphi) \d t + \sin (\varphi) \d \theta\right) \wedge \alpha_{t,s} + \omega_{t,s}\]
We apply this with a homotopy $(\alpha_{t,s},\omega_{t,s})_{s\in[0,1]}$ satisfying the following properties:
\begin{enumerate}[(i)]
    \item $(\alpha_{t,s}, \omega_{t,s}) = (\alpha_t,\omega_t)$ for $(s,t) \in \{0\} \times [0,1] \cup [0,1] \times \{0,1\}$. 
    This ensures the homotopy starts at $(\alpha_t,\omega_t)$ and is relative to the boundary;
    
    \item The homotopy ends at
    \[ 
    (\alpha_{t,1},\omega_{t,1}) = 
    \begin{cases} 
    \left( e^{g(t)} \alpha_0, e^{g(t)} \d \alpha_0\right) 
    & 
    \text{on the left component of }\supp(\pi-\varphi)
    \\ 
    \left( e^{g(t)} \alpha_1, e^{g(t)}  \d \alpha_1\right) 
    & 
    \text{on the right component of }\supp(\pi-\varphi), 
    \end{cases}\]
    with $g$ as before.
\end{enumerate}

On $\supp(\pi-\varphi)$ we then define $\lambda$ by \Cref{eq:LambdaConditions}. A straightforward computation shows that, in this region, the linear interpolation from $\Omega_{1,1}$ to $\Omega_2$ is through leafwise almost symplectic forms on $\F_1$, and is relative to the boundary.
Lastly, on the middle piece $\SSS^1\times[0,1]\times M \setminus \supp(\pi - \varphi)$ one can obtain the desired extension of $\lambda$, and a homotopy (relative to the boundary) from $\Omega_{1,1}$ to $\d_\eta \lambda$ simply by using \cite{EliMur15} and \Cref{rmk:weak_relat_to_relat}.
\end{proof}

\begin{figure}[t]
     \centering
     \begin{subfigure}[b]{0.6\textwidth}
         \centering
         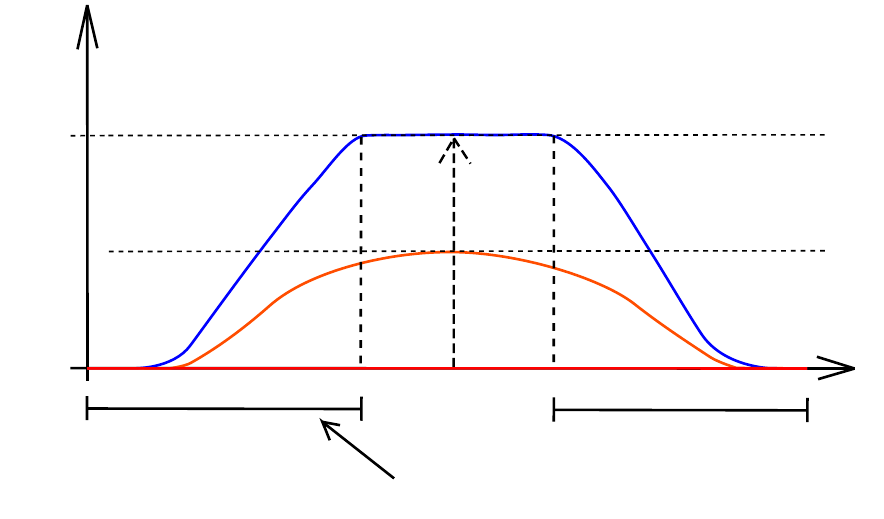
         \caption{Graphs of the phase function $\phi_s$ for different values of $s$. }
         \label{fig:homotopy}
     \end{subfigure}
     \hfill
     \begin{subfigure}[b]{0.35\textwidth}
         \centering
\begingroup%
  \makeatletter%
  \providecommand\color[2][]{%
    \errmessage{(Inkscape) Color is used for the text in Inkscape, but the package 'color.sty' is not loaded}%
    \renewcommand\color[2][]{}%
  }%
  \providecommand\transparent[1]{%
    \errmessage{(Inkscape) Transparency is used (non-zero) for the text in Inkscape, but the package 'transparent.sty' is not loaded}%
    \renewcommand\transparent[1]{}%
  }%
  \providecommand\rotatebox[2]{#2}%
  \newcommand*\fsize{\dimexpr\f@size pt\relax}%
  \newcommand*\lineheight[1]{\fontsize{\fsize}{#1\fsize}\selectfont}%
  \ifx\svgwidth\undefined%
    \setlength{\unitlength}{162.59678628bp}%
    \ifx\svgscale\undefined%
      \relax%
    \else%
      \setlength{\unitlength}{\unitlength * \real{\svgscale}}%
    \fi%
  \else%
    \setlength{\unitlength}{\svgwidth}%
  \fi%
  \global\let\svgwidth\undefined%
  \global\let\svgscale\undefined%
  \makeatother%
  \begin{picture}(1,0.69479598)%
    \lineheight{1}%
    \setlength\tabcolsep{0pt}%
    \put(0,0){\includegraphics[width=\unitlength,page=1]{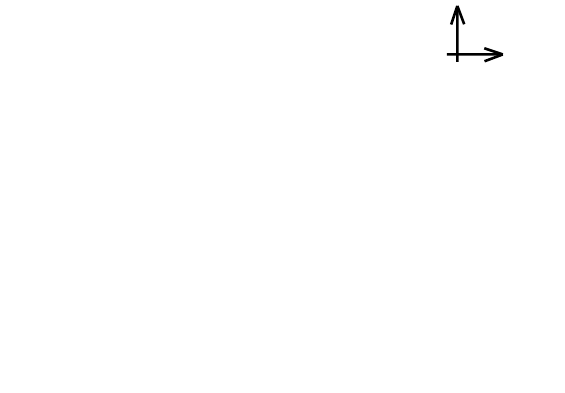}}%
    \put(0.74339258,0.64806911){\makebox(0,0)[lt]{\lineheight{1.25}\smash{\begin{tabular}[t]{l}$\theta$\end{tabular}}}}%
    \put(0.86161677,0.54204294){\makebox(0,0)[lt]{\lineheight{1.25}\smash{\begin{tabular}[t]{l}$t$\end{tabular}}}}%
    \put(0,0){\includegraphics[width=\unitlength,page=2]{foliations.pdf}}%
    \put(0.06490796,0.61672964){\makebox(0,0)[lt]{\lineheight{1.25}\smash{\begin{tabular}[t]{l}$s=0$\end{tabular}}}}%
    \put(-0.00510512,0.38130361){\makebox(0,0)[lt]{\lineheight{1.25}\smash{\begin{tabular}[t]{l}$s=1/2$\end{tabular}}}}%
    \put(0.07381874,0.06859062){\makebox(0,0)[lt]{\lineheight{1.25}\smash{\begin{tabular}[t]{l}$s=1$\end{tabular}}}}%
  \end{picture}%
\endgroup%

         \caption{The resulting foliations.}
         \label{fig:foliations}
     \end{subfigure}
        \caption{The initial homotopy to an almost symplectic foliation in the proof of \Cref{lemma:cobord_between_homotopic_str_transverse_boundary}.}
        \label{fig:homotopy_foliations}
\end{figure}

\begin{figure}[t]
     \centering
     \def\svgwidth{0.35\textwidth}
\begingroup%
  \makeatletter%
  \providecommand\color[2][]{%
    \errmessage{(Inkscape) Color is used for the text in Inkscape, but the package 'color.sty' is not loaded}%
    \renewcommand\color[2][]{}%
  }%
  \providecommand\transparent[1]{%
    \errmessage{(Inkscape) Transparency is used (non-zero) for the text in Inkscape, but the package 'transparent.sty' is not loaded}%
    \renewcommand\transparent[1]{}%
  }%
  \providecommand\rotatebox[2]{#2}%
  \newcommand*\fsize{\dimexpr\f@size pt\relax}%
  \newcommand*\lineheight[1]{\fontsize{\fsize}{#1\fsize}\selectfont}%
  \ifx\svgwidth\undefined%
    \setlength{\unitlength}{126.17708788bp}%
    \ifx\svgscale\undefined%
      \relax%
    \else%
      \setlength{\unitlength}{\unitlength * \real{\svgscale}}%
    \fi%
  \else%
    \setlength{\unitlength}{\svgwidth}%
  \fi%
  \global\let\svgwidth\undefined%
  \global\let\svgscale\undefined%
  \makeatother%
  \begin{picture}(1,0.65234608)%
    \lineheight{1}%
    \setlength\tabcolsep{0pt}%
    \put(0,0){\includegraphics[width=\unitlength,page=1]{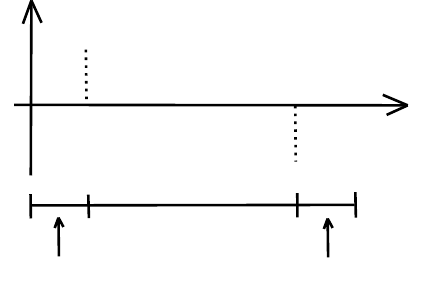}}%
    \put(0.04215668,0.01113655){\makebox(0,0)[lt]{\lineheight{1.25}\smash{\begin{tabular}[t]{l}slope $+1$\end{tabular}}}}%
    \put(0.65207083,0.00989125){\makebox(0,0)[lt]{\lineheight{1.25}\smash{\begin{tabular}[t]{l}slope $+1$\end{tabular}}}}%
    \put(0.21329998,0.12343753){\makebox(0,0)[lt]{\lineheight{1.25}\smash{\begin{tabular}[t]{l}$t_0$\end{tabular}}}}%
    \put(0.62569423,0.12123531){\makebox(0,0)[lt]{\lineheight{1.25}\smash{\begin{tabular}[t]{l}$t_1$\end{tabular}}}}%
    \put(0.82749062,0.16156015){\makebox(0,0)[lt]{\lineheight{1.25}\smash{\begin{tabular}[t]{l}$1$\end{tabular}}}}%
    \put(-0.00394721,0.16051277){\makebox(0,0)[lt]{\lineheight{1.25}\smash{\begin{tabular}[t]{l}$0$\end{tabular}}}}%
    \put(0.92479402,0.35481855){\makebox(0,0)[lt]{\lineheight{1.25}\smash{\begin{tabular}[t]{l}$t$\end{tabular}}}}%
    \put(0,0){\includegraphics[width=\unitlength,page=2]{function_g.pdf}}%
  \end{picture}%
\endgroup%

        \caption{The function $g$ \Cref{lemma:cobord_between_homotopic_str_transverse_boundary} is obtained by smoothening the piecewise linear function in the picture,
        where $t_0$ and $t_1$ are the zeroes of $\cos\varphi$.
        More precisely, the smoothening is done in such a way that $\dot{g}$ is strictly positive before $t_0$ and after $t_1$, and strictly negative between them, and so that all the derivatives of $g$ are zero at $t_0$ and $t_1$.
        }
        \label{fig:function_g}
\end{figure}

An immediate consequence is the following more precise version of \Cref{cor:ConfFoliatedFilling}, which tells that the product foliation $\S^1 \times (\S^{2n+1},\xi_{ot})$ can be filled by a conformal symplectic foliation on the solid torus:

\begin{corollary}
\label{cor:CobordismSphereFilling}
Let $(\D^{2n},\Omega):\emptyset \to (\S^{2n-1},\alpha_{ot})$, $n >2$, be an almost symplectic cobordism, with $\alpha_{ot}$ an overtwisted contact form in the almost contact class of $\alpha_{st}$. Then, there exists a foliated \absholonomy exact conformal symplectic cobordism
\[(W,\F,\eta,\d_{\eta}\lambda):\emptyset \to \S^1 \times (\S^{2n-1},\alpha_{ot}).\]
Moreover, the following hold:
\begin{enumerate}[(i)]
    \item $W$ is diffeomorphic to $\S^1 \times \D^{2n}$;
    \item $(\F,\eta,\d_\eta,\omega)$ is homotopic (through almost contact structures), relative to the boundary, to the almost symplectic foliation
    \[ \big(\bigcup_{\theta \in \S^1} \{ \theta \} \times \D^{2n}, \,\Omega\big);\]
    \item On any collar neighborhood $(\varepsilon,0]\times \partial W$ we can arrange $\eta = 0$, and $\lambda = e^t\alpha_{ot}$.
    \item The foliation $\F$ contains two closed leaves, and $(\eta,\d_\eta \lambda)$ is holonomy-like away from one of the two.
\end{enumerate}
\end{corollary}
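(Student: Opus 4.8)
The plan is to deduce \Cref{cor:CobordismSphereFilling} directly from \Cref{lemma:cobord_between_homotopic_str_transverse_boundary}, by interpolating from the tight sphere to $\alpha_{ot}$ and then capping the tight end with the standard symplectic ball. Since $n>2$ we have $\dim\S^{2n-1}=2n-1\geq 5$. Let $\alpha_{st}$ be the standard tight contact form on $\S^{2n-1}$; by hypothesis $\alpha_{ot}$ lies in its almost contact class, so I fix a homotopy of almost contact structures $(\alpha_t,\omega_t)_{t\in[0,1]}$ with $(\alpha_0,\omega_0)=(\alpha_{st},\d\alpha_{st})$ and $(\alpha_1,\omega_1)=(\alpha_{ot},\d\alpha_{ot})$. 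Applying \Cref{lemma:cobord_between_homotopic_str_transverse_boundary} to $M=\S^{2n-1}$ and this homotopy --- whose dimension hypothesis is met precisely because $2n-1\geq 5$, so that the tightness of $\alpha_0$ causes no trouble --- produces an \absholonomy exact conformal symplectic foliated cobordism
\[ (C,\F_C,\eta_C,\d_{\eta_C}\lambda_C)\colon \S^1\times(\S^{2n-1},\alpha_{st})\to \S^1\times(\S^{2n-1},\alpha_{ot}),\qquad C=\S^1\times[0,1]\times\S^{2n-1}, \]
which has exactly two closed interior leaves, is \holonomy away from one of them, satisfies $\eta_C=0$ and $\lambda_C=e^s\alpha_{st}$ (resp.\ $e^s\alpha_{ot}$) near its negative (resp.\ positive) boundary, and whose underlying almost contact structure is homotopic, relative to the boundary, to $(\ker\d\theta,\,\d s\wedge\alpha_t+\omega_t)$.

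Next I cap off the tight end. Since $(\S^{2n-1},\alpha_{st})$ is the convex contact type boundary of the standard symplectic ball $(\D^{2n},\omega_{st}=\d\lambda_{st})$, the product $\big(\S^1\times\D^{2n},\ \F':=\bigcup_{\theta\in\S^1}\{\theta\}\times\D^{2n},\ \eta'=0,\ \omega'=\omega_{st}\big)$ is a conformal symplectic foliated cobordism $\emptyset\to\S^1\times(\S^{2n-1},\alpha_{st})$; it is exact, and \holonomy since $\eta'=0$ is a holonomy form of $\F'$. Gluing it onto the negative end of $C$ via the composition of conformal symplectic foliated cobordisms (\Cref{thm:FoliatedConformalSymplecticNormalform}) gives a conformal symplectic foliated cobordism $(W,\F,\eta,\d_\eta\lambda)\colon\emptyset\to\S^1\times(\S^{2n-1},\alpha_{ot})$ with $W=\S^1\times\D^{2n}\cup_{\S^1\times\S^{2n-1}}\S^1\times[0,1]\times\S^{2n-1}\cong\S^1\times\D^{2n}$, proving (i). Property (iii) is the normal form of $\lambda_C$ near the positive boundary of $C$. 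Property (iv) follows since the two closed leaves of $\F$ are those of $\F_C$, and on the ball piece $\eta=0$ is a holonomy form while $\lambda=\lambda_{st}$, so that $(W,\F,\eta,\d_\eta\lambda)$ is globally exact, globally \absholonomy, and \holonomy away from one closed leaf, inheriting all of this from $C$.

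It remains to prove (ii): that the underlying almost contact structure of $W$ is homotopic, relative to the boundary, to $\big(\bigcup_{\theta\in\S^1}\{\theta\}\times\D^{2n},\Omega\big)$. Combining the homotopy for $C$ supplied by \Cref{lemma:cobord_between_homotopic_str_transverse_boundary} with the (unmodified) ball piece, $(\F,\d_\eta\lambda)$ is homotopic, relative to $\partial W$, to the product foliation $\ker\d\theta$ on $\S^1\times\D^{2n}$ carrying the leafwise non-degenerate $2$-form $\Omega_0$ obtained by matching $\omega_{st}$ on the ball with $\d s\wedge\alpha_t+\omega_t$ on the collar; note that $\Omega_0$, like $\Omega$, restricts to $\d\alpha_{ot}$ on $\S^1\times\S^{2n-1}$. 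To identify $(\ker\d\theta,\Omega_0)$ with $(\ker\d\theta,\Omega)$ relative to the boundary, I would first homotope $\Omega$ near $\partial\D^{2n}$, fixing its boundary value, to the symplectization germ $\d(e^s\alpha_{ot})$, so that $\Omega$ and $\Omega_0$ share a common germ along $\S^{2n-1}$; the remaining discrepancy is then a purely formal obstruction class, which I expect to absorb into the freedom in choosing the interpolating homotopy $(\alpha_t,\omega_t)$ in the first step (that indeterminacy is governed by $\pi_1$ of the space of almost contact structures on $\S^{2n-1}$, which contains the relevant obstruction group). I anticipate that this bookkeeping --- matching the formal data with the prescribed $\Omega$ --- is the only genuine obstacle; all of the geometric content is already packaged into \Cref{lemma:cobord_between_homotopic_str_transverse_boundary}, so the corollary is, as advertised, an immediate consequence of it.
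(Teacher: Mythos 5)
Your decomposition into a standard solid torus plus a cylinder handled by \Cref{lemma:cobord_between_homotopic_str_transverse_boundary} is the same as the paper's, but you assemble it in the wrong order, and that creates a genuine gap in the proof of~(ii). You first \emph{choose} an arbitrary almost contact homotopy $(\alpha_t,\omega_t)$ from $(\alpha_{st},\d\alpha_{st})$ to $(\alpha_{ot},\d\alpha_{ot})$, build the answer, and only then try to compare its formal class to the prescribed $\Omega$. Your last paragraph concedes that a formal obstruction remains and expresses the \emph{hope} of absorbing it into the choice of $(\alpha_t,\omega_t)$; that is precisely the nontrivial point, and you do not prove it.

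The paper sidesteps this entirely by reversing the order of operations. One first homotopes the \emph{given} $\Omega$ on $W=\S^1\times\D^{2n}$, relative to $\partial W$, so that it becomes $\omega_{std}$ over a small tube $T=\S^1\times\D^{2n}_\varepsilon$ around the core (an elementary normalization). Then the restriction of $\Omega$ to $W\setminus\mathring T\simeq\S^1\times[0,1]\times\S^{2n-1}$ is itself read off as the homotopy $(\alpha_t,\omega_t)$ to which \Cref{lemma:cobord_between_homotopic_str_transverse_boundary} is applied; the lemma then produces a homotopy \emph{relative to the boundary} directly from $\Omega$ on $W\setminus T$ to the conformal symplectic structure, which glues to $T$. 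Because every step is a homotopy of the given data rel~$\partial W$, property~(ii) holds tautologically and no obstruction ever appears. To repair your argument, replace your free choice of $(\alpha_t,\omega_t)$ by this $\Omega$-determined one; your ``absorb the obstruction'' remark is then vacuously true because the obstruction vanishes by construction.
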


\begin{proof}
Up to homotopy of $\Omega$, one can assume that, for $\DD^{2n}_\epsilon$ a disk of very small radius $\epsilon$ centered at the origin, $(M\coloneqq \SSS^{1}\times\DD^{2n},\F=\ker\d\theta,\Omega)$ restricts over $T\coloneqq \SSS^1\times\DD^{2n}_\epsilon$ to $(\F=\ker\d\theta,\omega_{std})$, with $\omega_{std}$ the standard symplectic structure on $\RR^{2n}$.
In particular, $\Omega$ gives a homotopy of contact forms from $\alpha_{std}$ to $\alpha_{ot}$ on $M\setminus \mathring{T}\simeq \SSS^1\times[0,1]\times\SSS^{2n-1}$.
One can then simply apply \Cref{lemma:cobord_between_homotopic_str_transverse_boundary} to get a homotopy, relative boundary, from $\Omega$ to a $\vert$holonomy$\vert$-like exact conformal symplectic structure on $M\setminus T$.
This then glues well to $T= (\SSS^1\times\DD^{2n}_\epsilon,\F=\ker\d\theta,\omega_{std})$, which is holonomy-like exact conformal symplectic, along their common boundary, thus concluding the proof.
\end{proof}

Although we do not need it for our applications, let us point out an explicit result confirming what we claimed above, namely that the fact that the concave boundary component of almost symplectic \emph{foliated} cobordisms is overtwisted is not an essential assumption in order to obtain leafwise conformal symplectic structures on them. 
More precisely, we now prove that \Cref{lemma:cobord_between_homotopic_str_transverse_boundary} remains true even if both the positive and negative boundary are tight:

\begin{corollary}
\label{cor:lemma_true_even_both_tight}
Let $\xi_0=\ker\alpha_0$ and $\xi_1=\ker\alpha_1$ be contact structures on $M^{2n}$, with $n>2$, homotopic as almost contact structures via $(\alpha_t,\omega_t)$.
Then, there exists an exact $\vert$holonomy$\vert$-like conformal symplectic foliated cobordism
\[ \left(\S^1 \times [0,1] \times M, \F,\eta,\d_\eta\lambda\right),\]
from $\S^1\times (M,\xi_0)$ to $\S^1 \times (M,\xi_1)$.
Furthermore:
\begin{enumerate}[(i)]
\item  $(\F,\eta,\d_\eta\lambda)$ is in the same almost contact class (relative boundary) as $ \left(\ker \d \theta, \d t \wedge \alpha_t + \omega_t\right)$, where $t$ denotes the interval coordinate on $\S^1 \times [0,1] \times M$.
\item Near the boundary, $\lambda$ equals $e^t\alpha_0$, resp.\ $e^t\alpha_1$, and $\eta=0$.
\item $(\F,\eta,\d\lambda)$ is holonomy-like away from a closed leaf.
\end{enumerate}
\end{corollary}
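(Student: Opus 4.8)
The plan is to deduce \Cref{cor:lemma_true_even_both_tight} from \Cref{lemma:cobord_between_homotopic_str_transverse_boundary} by factoring the homotopy $(\alpha_t,\omega_t)$ through an overtwisted contact structure. First I would invoke \cite{BEM15} to fix an overtwisted contact form $\alpha_{ot}$ on $M$ lying in the common almost contact class of $\alpha_0$ and $\alpha_1$ (possible since $\dim M\geq 5$). Then I would choose any homotopy of almost contact structures $\beta=(\beta_t,\mu_t)$ with $\beta_0=\alpha_0$ and $\beta_1=\alpha_{ot}$, and set $\gamma:=\overline{\beta}\ast(\alpha_t,\omega_t)$, the concatenated path running $\alpha_{ot}\to\alpha_0\to\alpha_1$. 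By construction $\beta\ast\gamma$ is homotopic rel endpoints to $(\alpha_t,\omega_t)$, and --- this is the point --- the reversed path $\overline{\gamma}$ runs from $\alpha_1$ to the \emph{overtwisted} form $\alpha_{ot}$.

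Next I would apply \Cref{lemma:cobord_between_homotopic_str_transverse_boundary} twice. Applied to $\beta$, whose right end $\alpha_{ot}$ is overtwisted, it produces an exact $\vert$holonomy$\vert$-like conformal symplectic foliated cobordism $V_0=(\S^1\times[0,1]\times M,\F_0,\eta_0,\d_{\eta_0}\lambda_0)$ from $\S^1\times(M,\alpha_0)$ to $\S^1\times(M,\alpha_{ot})$, whose underlying almost contact structure is, rel boundary, in the class of $(\ker\d\theta,\d t\wedge\beta_t+\mu_t)$, which equals $e^t\alpha_0$ (resp.\ $e^t\alpha_{ot}$) with $\eta_0=0$ near the left (resp.\ right) boundary, and which is holonomy-like away from one closed leaf. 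Applied to $\overline{\gamma}$ --- again with overtwisted right end $\alpha_{ot}$ --- it produces $V_1\colon\S^1\times(M,\alpha_1)\to\S^1\times(M,\alpha_{ot})$ with the analogous features, its almost contact structure lying rel boundary in the class of $(\ker\d\theta,\d t\wedge\overline{\gamma}_t+\cdots)$. I would then reverse $V_1$: by \Cref{prop:CobordismEquivalenceRelation} --- concretely, through the turbulize-and-glue recipe in its proof, together with \Cref{lem:CobordismBoundarySwapping} --- there is a conformal symplectic foliated cobordism $\overline{V_1}\colon\S^1\times(M,\alpha_{ot})\to\S^1\times(M,\alpha_1)$, diffeomorphic to $\S^1\times[0,1]\times M$, exact and $\vert$holonomy$\vert$-like, which coincides near its left (resp.\ right) boundary with the symplectization $\d(e^t\alpha_{ot})$ (resp.\ $\d(e^t\alpha_1)$) and has vanishing Lee form there.

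Finally, since $V_0$ and $\overline{V_1}$ both coincide with the standard symplectization of $(M,\alpha_{ot})$ near $\S^1\times(M,\alpha_{ot})$, I would glue them via \Cref{thm:FoliatedConformalSymplecticNormalform} into the composition $\overline{V_1}\circ V_0$. This is a conformal symplectic foliated cobordism from $\S^1\times(M,\alpha_0)$ to $\S^1\times(M,\alpha_1)$ on $\S^1\times[0,1]\times M$; it is exact, equals $e^t\alpha_0$ (resp.\ $e^t\alpha_1$) with $\eta=0$ near the two boundaries, is $\vert$holonomy$\vert$-like and holonomy-like away from one closed leaf; and its underlying almost contact structure is, rel boundary, in the class of $(\ker\d\theta,\d t\wedge(\beta\ast\gamma)_t+\cdots)$, hence of $(\ker\d\theta,\d t\wedge\alpha_t+\omega_t)$. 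This would give all the asserted properties.

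The step I expect to be the main obstacle is the bookkeeping for the \emph{almost contact datum} under the reversal of $V_1$: one has to check that reorienting the cobordism, flipping the coorientation of $\F$, inserting the turbulized collars, and reparametrizing so that the leafwise Liouville field points outward at the new positive end, together amount to reversing the associated path of almost contact structures --- up to the homotopy-inessential hyperplane-field rotations created by turbulization, exactly as in the bookkeeping already carried out in the proof of \Cref{lemma:cobord_between_homotopic_str_transverse_boundary}. Tracking, at each turbulization, whether \Cref{thm:ConformalTurbulization} is invoked with its $+$ or its $-$ option will also pin down the signs hidden in ``$\vert$holonomy$\vert$-like'' and the location of the exceptional closed leaf. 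Everything else reduces to a routine concatenation of the boundary normal forms from \Cref{thm:FoliatedConformalSymplecticNormalform}.
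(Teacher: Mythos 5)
Your approach is genuinely different from the paper's, which is a neat alternative worth comparing: you factor the abstract homotopy $(\alpha_t,\omega_t)$ through an overtwisted $\alpha_{ot}$ on $M$ itself, apply \Cref{lemma:cobord_between_homotopic_str_transverse_boundary} twice (both times with $\alpha_{ot}$ as the overtwisted right endpoint), reverse one of the two cobordisms via the turbulize-and-collar construction from \Cref{prop:CobordismEquivalenceRelation}, and glue. The paper instead works \emph{locally}: it fixes the trivial almost symplectic foliation on $\S^1\times[0,1]\times M$, removes a solid torus $N=\S^1\times\D^{2n}$ around a transverse curve in the interior, fills $N$ by \Cref{cor:CobordismSphereFilling}, and then homotopes the leafwise structure on the complement $V\setminus N$ relative to its (now three) boundary components by the non-foliated Eliashberg--Murphy result \Cref{thm:MakingCobordismSymplectic} together with \Cref{rmk:hol_like_EM} --- the removed sphere boundary supplies the overtwisted concave end that this theorem needs. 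The payoff of the paper's route is that the Eliashberg--Murphy step preserves the underlying foliation $\ker\d\theta$ on $V\setminus N$ and introduces \emph{no} closed leaves there; the only closed leaves come from the single application of \Cref{cor:CobordismSphereFilling} inside $N$, which yields exactly two, precisely one of which is the non-holonomy-like one.

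There are two gaps in your plan as written. First, the closed-leaf count in item (iii): each invocation of \Cref{lemma:cobord_between_homotopic_str_transverse_boundary} produces two interior closed leaves with one non-holonomy-like, and reversing $V_1$ via the \Cref{prop:CobordismEquivalenceRelation} recipe adds two further turbulization leaves. Since reversing the coorientation of the middle piece leaves both the holonomy form and the Lee form unchanged, the problematic leaf of $V_1$ stays problematic in $\overline{V_1}$; together with the one from $V_0$ you obtain (at least) two non-holonomy-like closed leaves, which is strictly weaker than ``holonomy-like away from \emph{a} closed leaf'' as claimed in the corollary. Second, the almost contact statement (i) under reversal is not a consequence of anything stated in the paper: \Cref{prop:CobordismEquivalenceRelation} is purely a statement about the existence of a reverse cobordism and says nothing about its almost contact class relative to the boundary. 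You would need a separate lemma showing that the turbulize-and-collar reversal, applied to a cobordism in the class of $(\ker\d\theta,\d t\wedge\overline{\gamma}_t+\cdots)$, lands in the class of $(\ker\d\theta,\d t\wedge\gamma_t+\cdots)$; this is plausible (the orientation and coorientation reversals cancel on the leaves, and the turbulization collars are constant homotopies at the ends), but it is not ``exactly as in'' \Cref{lemma:cobord_between_homotopic_str_transverse_boundary}, which tracks a specific one-parameter family $\varphi_s$ and never reverses an ambient orientation. The paper avoids both issues by never reversing a foliated cobordism here.
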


\begin{proof}
Consider $(V\coloneqq \SSS^1\times[0,1]_t\times M,\G=\ker\d\theta,\Omega= \d t \wedge \alpha_t + \omega_t)$.
Let also $N=\SSS^1_\theta\times\DD^{2n}$ be a neighborhood of a (positively) transverse curve $\gamma$ where $\G=\ker\d\theta$ and $\Omega = \omega_{std}$, with $\omega_{std}$ the standard symplectic structure on $\RR^{2n}$.
Up to homotopy (relative boundary of $V$) of $\Omega$, one can arrange that $\Omega$ restricts to $\d r \wedge \alpha_{ot} + \d\alpha_{ot}$ on a neighborhood $\SSS^1\times(-\epsilon,\epsilon)_r\times\SSS^{2n-1}$ of $\partial N = \SSS^1\times\SSS^{2n-1}$ inside $V$ (here, $\partial_r$ points outwards from $N$), with $\alpha_{ot}$ defining an overtwisted contact structure $\xi_{ot}$ on $\SSS^{2n-1}$ which is in the same almost contact class as the standard tight contact structure.

Then, \Cref{cor:CobordismSphereFilling} tells that $(\G,\Omega)$ on  $N$ can be homotoped, among almost contact structures and relative boundary (of $N$), to a $\vert$holonomy$\vert$-like exact conformal symplectic foliation on $N$, which is moreover holonomy-like away from a closed leaf.
Moreover, on the complement $V\setminus N = \SSS^1\times ([0,1]\times M \setminus \DD^{2n})$ there is a leafwise homotopy (relative boundary) of almost symplectic structures ending at an exact holonomy-like conformal symplectic leafwise structure according to  \Cref{thm:MakingCobordismSymplectic} (c.f.\ \Cref{rmk:hol_like_EM}).
Composing these two homotopies then give the desired homotopy on $V$.
\end{proof}

We now state another corollary of the previous results,
which is one of the main lemmas for the proof of \Cref{prop:conf_sympl_fol_taut} in \Cref{sec:proof_main_thm}.

\begin{lemma}
\label{lemma:alm_sympl_cap_to_foliated_conf_sympl_cap}
Let $(\alpha,\omega)$ be an almost contact structure on $M^{2n+1}$, with $n> 2$. 
Let $\xi_\pm := \ker \alpha_\pm$ be overtwisted contact structures in the almost contact class $(\pm \alpha,\omega)$.
Then, there is an exact $\vert$holonomy$\vert$-like conformal symplectic foliated cobordism 
\[
 (W,\F,\eta,\d_\eta\lambda): \S^1 \times (M,\xi_+) \sqcup \S^1 \times (\overline{M},\xi_-) \to \emptyset.
\]
Moreover:
\begin{enumerate}[(i)]
    \item The cobordism $W$ is diffeomorphic to $\S^1 \times [0,1] \times M$;
    
    \item Let $(\alpha_t,\omega_t)$ be any almost contact homotopy from $(\alpha_+,\d \alpha_+)$ to $(-\alpha_-,\d \alpha_-)$.
    Then the conformal symplectic foliation $(\F,\eta,\omega)$ is homotopic, relative to $\partial W$ and among almost contact structures, to the almost symplectic foliation
    \[ \big( \bigcup_{\theta \in \S^1} \{\theta\} \times [0,1] \times M, \d t \wedge \alpha_t + \omega_t\big).\]
    
    \item Near the left/right boundary, $\lambda$ coincides with $\d(e^{\pm}\alpha_{\pm})$ respectively, and $\eta=0$.
    
    \item $(\F,\eta,\d_\eta\lambda)$ is holonomy-like away from two closed leaves.
    
\end{enumerate}
\end{lemma}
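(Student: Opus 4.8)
The plan is to deduce the statement from the interpolation Lemma~\ref{lemma:cobord_between_homotopic_str_transverse_boundary} together with the boundary-swapping Lemma~\ref{lem:CobordismBoundarySwapping}.

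First I would fix an almost contact homotopy $(\alpha_t,\omega_t)$ from $(\alpha_+,\d\alpha_+)$ to $(-\alpha_-,\d\alpha_-)$, which may be taken locally constant near $t=0$ and $t=1$; such a homotopy exists because, by hypothesis, $\xi_\pm=\ker\alpha_\pm$ lie in the almost contact classes $\pm(\alpha,\omega)$, so that $(\alpha_+,\d\alpha_+)$ and $(-\alpha_-,\d\alpha_-)$ are homotopic through almost contact structures. Note that $\alpha_+$ and $-\alpha_-$ are contact forms whose underlying contact structures are the overtwisted $\xi_+$ and $\xi_-$, and that $\dim M=2n+1\geq 7>3$; hence the hypotheses of Lemma~\ref{lemma:cobord_between_homotopic_str_transverse_boundary} are met.

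Applying Lemma~\ref{lemma:cobord_between_homotopic_str_transverse_boundary} with $\alpha_0:=\alpha_+$ and $\alpha_1:=-\alpha_-$ then yields a $\vert$holonomy$\vert$-like exact conformal symplectic foliated cobordism
\[
(W_0,\F_0,\eta_0,\d_{\eta_0}\lambda_0)\colon \S^1\times(M,\xi_+)\ \longrightarrow\ \S^1\times\bigl(M,\ker(-\alpha_-)\bigr),
\]
with $W_0\cong\S^1\times[0,1]\times M$, lying in the almost contact class rel.\ boundary of $(\ker\d\theta,\ \d t\wedge\alpha_t+\omega_t)$, with $\eta_0=0$ and $\lambda_0=e^{t}\alpha_+$ near the left boundary and $\eta_0=0$, $\lambda_0=-e^{t}\alpha_-$ near the right boundary, and holonomy-like away from the two interior closed leaves of $\F_0$. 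I would then move the positive boundary to the negative side using Lemma~\ref{lem:CobordismBoundarySwapping}: since near $\S^1\times(M,\ker(-\alpha_-))$ one has $\eta_0=0$ and $\F_0=\ker\d\theta$, the construction in that lemma's proof (turbulize the convex symplectization collar, then glue on a trivial symplectization collar) applies verbatim and produces a \emph{diffeomorphic} conformal symplectic foliated cobordism
\[
(W,\F,\eta,\d_\eta\lambda)\colon \S^1\times(M,\xi_+)\ \sqcup\ \S^1\times(\overline M,\xi_-)\ \longrightarrow\ \emptyset,
\]
where one identifies in the usual way the orientation- and co-orientation-reversal of $\S^1\times(M,\ker(-\alpha_-))$ with $\S^1\times(\overline M,\xi_-)$. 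Since $W$ is $W_0$ with a collar glued on, $W\cong\S^1\times[0,1]\times M$, giving (i); the folding takes place inside a collar and preserves the almost contact class rel.\ boundary of (ii), and---because the homotopy was chosen locally constant at the ends---the glued-in collar is a genuine piece of symplectization of $\alpha_\pm$ with vanishing Lee form, which gives (ii) and, after rescaling the collar coordinate, (iii). Finally, the turbulization performed in the folding introduces at most one further closed leaf, and there $\eta_\partial=0$, so by Theorem~\ref{thm:ConformalTurbulization}(i)(b) the foliation is holonomy-like in a neighborhood of it; hence $(\F,\eta,\d_\eta\lambda)$ is holonomy-like away from (at most) the two interior closed leaves coming from $\F_0$, which is (iv).

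The main obstacle is the orientation and co-orientation bookkeeping through the folding step: checking that the reversed positive boundary of $W_0$ really is $\S^1\times(\overline M,\xi_-)$ with the co-orientations matching those implicit in the product-foliation notation, that the almost contact homotopy in (ii) is precisely the one induced by the chosen $(\alpha_t,\omega_t)$, and that the symplectization collars glued in during the folding carry $\eta=0$ on the nose so that (iii) holds without further correction. None of the individual steps is hard, but these compatibilities must be verified carefully.
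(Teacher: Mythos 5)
Your approach is genuinely different from the paper's. The paper adapts the proof of \Cref{lemma:cobord_between_homotopic_str_transverse_boundary} directly: after turbulizing at both ends to make both Liouville directions point inward, the middle piece becomes an almost symplectic \emph{filling} of $\S^1\times(M,\xi_+)\sqcup\S^1\times(\overline M,\xi_-)$, not a cobordism, and since \Cref{thm:EliMurphyMain} does not apply to fillings, the paper removes a solid torus neighborhood of a transverse curve, applies \Cref{thm:MakingCobordismSymplectic} to the complement, and re-fills via \Cref{cor:CobordismSphereFilling}. You instead try to keep the middle piece a cobordism by invoking \Cref{lemma:cobord_between_homotopic_str_transverse_boundary} as a black box with $\alpha_0=\alpha_+$, $\alpha_1=-\alpha_-$, and then fold the convex end via \Cref{lem:CobordismBoundarySwapping}, which would be cleaner if it worked.

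There is, however, a genuine gap in the application of \Cref{lemma:cobord_between_homotopic_str_transverse_boundary}. That lemma implicitly requires the input homotopy to end at the almost contact structure $(\alpha_1,\,c\,\d\alpha_1)$ for some $c>0$: its conclusion gives $\lambda = e^t\alpha_1$ and $\eta=0$ near the right boundary, so $\omega = \d\lambda = e^t(\d t\wedge\alpha_1 + \d\alpha_1)$, and for this to be homotopic \emph{rel boundary} to $\d t\wedge\alpha_t + \omega_t$ one needs $\omega_1$ to be a positive conformal multiple of $\d\alpha_1$ (this is also visible in the proof, where $\omega_t$ is homotoped to $e^{g(t)}\d\alpha_1$ on the right turbulization region, relative to $t=1$). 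Your homotopy ends at $(\alpha_1,\omega_1)=(-\alpha_-,\d\alpha_-)$, and since $\d\alpha_1 = \d(-\alpha_-) = -\d\alpha_-$, one has $\omega_1 = -\d\alpha_1$: a \emph{negative} multiple. No further homotopy of almost contact structures fixed at $t=1$ can repair this, as the leafwise $2$-form would have to change sign through degenerate forms. Consequently the claim that $W_0$ lies in the almost contact class rel boundary of $(\ker\d\theta,\d t\wedge\alpha_t+\omega_t)$ fails at $t=1$, and the conformal class of $\omega$ near the new negative boundary after folding (namely $-e^{-t}(\d t\wedge\alpha_- + \d\alpha_-)$) has the opposite sign from the required $\d(e^{-t}\alpha_-)$. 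This sign obstruction is precisely what forces the paper's middle piece to be a filling: $\xi_-$ is a negative contact structure on $M$ (positive on $\overline M$), and it can only appear as a concave boundary of a foliated conformal symplectic cobordism over $\S^1\times M$ with its induced orientation — not as the convex end that \Cref{lemma:cobord_between_homotopic_str_transverse_boundary} produces. The folding in \Cref{lem:CobordismBoundarySwapping} flips the Liouville direction and the ambient orientation, but not the sign of the conformal class of the leafwise symplectic form.
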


\begin{figure}[t]\centering
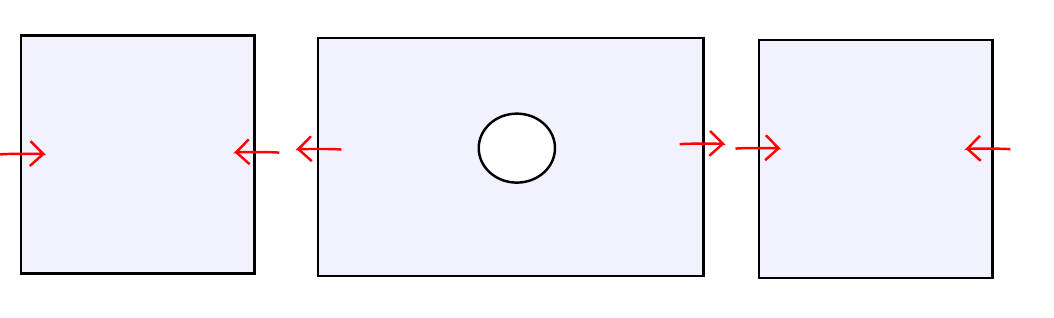
\caption{The picture depicts the several cobordisms which are stacked together in order to prove \Cref{lemma:alm_sympl_cap_to_foliated_conf_sympl_cap}.
(The foliated cobordism $\emptyset\sim (\S^{2n-1},\xi_{ot})\times \S^1$ used to fill the remaining boundary component is not depicted for simplicity.)
}
\label{fig:pieces_lemma}
\end{figure}

\begin{proof}
The idea of the proof is completely analogous to that of \Cref{lemma:cobord_between_homotopic_str_transverse_boundary}, with an additional minor ``complication''.
First, one inverts the direction of the Liouville vector fields at both boundaries by conformal symplectic turbulization, as done in the proof of \Cref{lemma:cobord_between_homotopic_str_transverse_boundary}.
However, this results here in the product of $\SSS^1$ with an almost symplectic filling of $(M,\xi_0)\sqcup(\overline{M},\xi_1)$, which has to be transformed to a foliated conformal symplectic filling, via a homotopy relative to the boundary.
The idea is then to apply \Cref{thm:MakingCobordismSymplectic} (c.f.\ \Cref{rmk:hol_like_EM}) after removing a neighborhood of a transverse curve, then \Cref{cor:CobordismSphereFilling} to fill such neighborhood again.
The building blocks are described in \Cref{fig:pieces_lemma}.
For completeness, we also give the details of this proof using global formulas using what done in the proof of \Cref{lemma:cobord_between_homotopic_str_transverse_boundary}.
\\

As done with the first two homotopies in the proof of \Cref{lemma:cobord_between_homotopic_str_transverse_boundary}, one can first homotope rel.\ boundary (among almost contact structures) the almost symplectic foliation $(\ker\d\theta,\Omega)$ to an almost symplectic foliation $(\calF,\Omega')$ such that:
\begin{itemize}
    \item $\calF$ is the desired (i.e.\ as in the conclusion of the statement) smooth foliation, obtained by turbulizing $\ker\d\theta$ near both the boundary components;
    \item for some $\delta>0$, $\Omega'$ is a $\vert$holonomy$\vert$-like exact conformal symplectic leafwise structure $(\eta,\d_\eta\lambda)$ on $\SSS^1\times[0,\delta]\times M \cup \SSS^1\times[1-\delta,1]\times M$;
    \item $(\eta,\d_\eta\lambda)$ is holonomy-like away from a closed leaf;
    \item near $\SSS^1\times\{\delta\}\times M$, $\eta=0$ and $\lambda = e^{-t}\alpha_0$;
    \item  near $\SSS^1\times\{1-\delta\}\times M$, $\eta=0$ and $\lambda = e^{t}\alpha_1$;
    \item $\calF=\ker(-\d\theta)$ on $\SSS^1\times[\delta,1-\delta]\times M$;
    \item $\Omega'$ is $\theta-$invariant on $\SSS^1\times[\delta,1-\delta]\times M$.
\end{itemize}

Denote $X\coloneqq \SSS^1\times[\delta,1-\delta]\times M$ for simplicity.
Consider now a neighborhood of a transverse curve of the form $N\coloneqq \SSS^1_\theta\times \DD^{2n}$, where $\F=\ker\d\theta$.
We also assume, up to a $\theta-$invariant homotopy, that $\Omega'$ looks like a piece of positive half-symplectization of $(\SSS^{2n+1},\xi_{ot})$ near the boundary of $N$.
We then remove the interior of $N$ from $X$, and denote $X'$ the resulting smooth manifold, as well as $(\calF',\Omega'')$ the resulting almost symplectic foliation.
Then, by (a $\theta-$invariant version of) \Cref{thm:MakingCobordismSymplectic} (c.f.\ \Cref{rmk:hol_like_EM}), $\Omega''$ is homotopic, rel.\  the three boundary components of $X'$, to an holonomy-like exact conformal symplectic foliated structure on $(X',\calF')$.

Lastly, one can use \Cref{cor:CobordismSphereFilling} in order to homotope (among almost contact structures, and relative boundary) in the remaining piece, i.e.\ the almost symplectic foliation in the $\SSS^1\times\DD^{2n}$, to an $\vert$holonomy$\vert$-like exact conformal symplectic foliation, which is holonomy-like away from a closed leaf.
This concludes the proof of \Cref{lemma:alm_sympl_cap_to_foliated_conf_sympl_cap}.
\end{proof}

\subsection{Proof of \Cref{thm:exist_conf_sympl_fol}}
\label{sec:proof_h_principle}

If $\dim M = 3$ the result follows from \cite{Thurston76}, so we assume $\dim M \geq 7$. 
Let $(\xi := \ker \alpha, \omega)$ be an almost contact structure. 
By \cite{Mei17} there exists a homotopy $\xi_t$, $t \in [0,1]$ from $\xi$ to a minimal foliation $\G$ (i.e. all leaves dense). 
We now claim that $\G$ admits a (leafwise) almost symplectic structure $\Omega$.
Indeed, we can interpret $\xi_t$ as a vector bundle $\widehat{\xi}$ on $M \times [0,1]$, and using parallel transport it follows that
\[\widehat{\xi} \simeq \pi^*\xi_0,\]
where $\pi:M \times [0,1] \to M$ is the projection onto the first factor. Thus $\omega$ induces an almost symplectic structure on $\widehat{\xi}$, and in particular on $\G = \xi_1$.
Since $\G$ is minimal,
it is taut, hence applying \Cref{prop:conf_sympl_fol_taut} to $(\G,\Omega)$ concludes the proof.

\subsection{Proof of \Cref{prop:conf_sympl_fol_taut}}
\label{sec:proof_main_thm}

The strategy of the proof is to decompose the manifold into several pieces. We construct the desired homotopy on each of the pieces, and then show they can be glued together.

Since $\G$ is taut there exist closed transverse loops intersecting every leaf. Fix two (disjoint) such loops $\gamma_\pm$, positively transverse to $\G$.
After a homotopy of $\omega$ we may assume that a small neighborhood of each curve is isomorphic to
\[ \S^1 \times (\D^{2n},\omega_{st}).\]
Let $T_\pm$ denote thickened tori around $\gamma_\pm$ contained in the neighborhood above. We choose (orientation preserving) coordinates
\begin{equation}\label{eq:MainCylinderCoordinates}
    T_- \simeq \S^1 \times [0,1] \times \S^{2n-1},\quad \text{and,}\quad T_+ \simeq \S^1 \times [0,1] \times \overline{\S^{2n-1}}.
\end{equation}
Note that in these coordinates $\partial_t$ (corresponding to the interval coordinate) points away from $\gamma_-$ and towards $\gamma_+$.

Let $\alpha_{ot,\pm} \in \Omega^1(\S^{2n-1})$ be overtwisted contact forms in the almost contact class $(\mp \alpha_{st},\d \alpha_{st})$. In particular, $\alpha_+$ is a positive contact form on $\overline{S^{2n-1}}$. Then, after another homotopy, we can arrange that
\[ \Omega|_{T_\pm} = \d t \wedge \alpha_{ot,\pm} + \d \alpha_{ot,\pm}.\]
The complement of the two cylinders consists of three connected components. These can be interpreted as foliated almost symplectic cobordisms between contact foliations:
\begin{equation}
\label{eqn:cobordisms}
\emptyset \to \S^1 \times (\S^{2n-1},\alpha_{ot,+}),\quad \S^1 \times (\S^{2n-1},\alpha_{ot,+}) \to \S^1 \times (\overline{\S^{2n-1}},\alpha_{ot,-}),\quad \text{and}\quad\S^1 \times (\overline{\S^{2n-1}}, \alpha_{ot,-}) \to \emptyset.
\end{equation}
On the first cobordism we use \Cref{cor:CobordismSphereFilling} to homotope $(\G,\omega)$, relative to the boundary, to an exact conformal symplectic foliation. 
Secondly, according to \Cref{cor:BMRelative} the middle cobordism can be homotoped to an exact conformal symplectic foliated cobordism. 
Note that this homotopy is not relative to the boundary; instead, it induces a homotopy of overtwisted almost contact foliations on the boundary ending at an honest (overtwisted) contact foliation.
Lastly, in the third cobordism we can homotope $(\G,\omega)$, relative to the boundary, to an exact conformal symplectic foliation using \Cref{lemma:alm_sympl_cap_to_foliated_conf_sympl_cap} and \Cref{cor:CobordismSphereFilling}.

It remains to extend the homotopies over $T_\pm$. Since the arguments are the same on both cylinders we focus on $T_-$. In the coordinates of \Cref{eq:MainCylinderCoordinates} any homotopy of $\Omega$ can be written as
\[ \Omega_s = \d t \wedge \alpha_{\theta,t,s} + \omega_{\theta,t,s},\]
for a $3$-parameter family of almost contact structures $(\alpha_{\theta,t,s},\omega_{\theta,t,s})$, where $(\theta,t,s) \in \S^1 \times I^2$, on $\S^{2n-1}$. 

Recall that an admissible form (\Cref{eq:admissibleform})  is equivalent to a transverse vector field. Hence the homotopy from \Cref{cor:BMRelative} determines a homotopy of vector fields. We want to assume that $\partial_t$ smoothly extends these vector fields.
Although this need not be true a priori, we can always arrange it by reparametrizing $T_-$. 
Consequently we may assume that the family of contact forms $(\alpha_{\theta,t,s},\omega_{\theta,t,s})$ for
\[ (\theta,t,s) \in \S^1 \times (I \times \{0\} \cup \{0,1\} \times I),\]
admits an overtwisted basis (on each of the smooth pieces of the parameter space, coinciding at the corners).
Using the h-principle from \cite[Theorem 1.6]{BEM15} we can extend it to a family of contact forms for $(\theta,t,s) \in \S^1 \times I^2$ as desired. 
Finally, we apply \Cref{lem:ConformalRelativeTrick} below to $\Omega_1$. This gives a homotopy, relative to the boundary of $T_-$ to a leafwise exact conformal symplectic structure, thus concluding the proof.
\hfill \qedsymbol

\begin{lemma}\label{lem:ConformalRelativeTrick}
Consider a family of contact forms $\alpha_{t,k}$, $(t,k)\in [0,1] \times K$ on $M$.
Then, the $K$-family of almost symplectic structures
\[\Omega_k = d t \wedge \alpha_{t,k} + \d\alpha_{t,k},\quad  k \in K\]
on $[0,1] \times M$ is homotopic relative to the boundary to a $K$-family of exact conformal symplectic structures.
Moreover, if $\alpha_{t,k}$ is independent of $t$, for $t$ near the endpoints, the homotopy is relative to a neighborhood of the boundary.
\end{lemma}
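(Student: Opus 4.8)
The plan is to write the homotopy down essentially by hand, as a straight-line interpolation towards the exact conformal symplectic structure whose Lee form is a large negative multiple of $\d t$. The one idea needed is that allowing a genuinely nonzero Lee form, rather than insisting on a closed primitive, is exactly what makes the homotopy fix the boundary automatically.

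First I would fix the target. Regard $\alpha_{t,k}$ and $\d\alpha_{t,k}$ as $t$-families of forms on $M$, pulled back to $[0,1]_t\times M$, so that $\Omega_k=\d t\wedge\alpha_{t,k}+\d\alpha_{t,k}$ is non-degenerate by the contact condition $\alpha_{t,k}\wedge(\d\alpha_{t,k})^{n-1}>0$; write $\dot\alpha_{t,k}:=\partial_t\alpha_{t,k}$. For a constant $N>0$ put $\lambda_k:=\alpha_{t,k}$ and $\eta_k:=-N\,\d t$. Then $\eta_k$ is closed, so $\d_{\eta_k}\circ\d_{\eta_k}=-\d\eta_k\wedge(\cdot)=0$, and a one-line computation gives
\[
\d_{\eta_k}\lambda_k=\d\lambda_k+N\,\d t\wedge\lambda_k=\d t\wedge\big(N\alpha_{t,k}+\dot\alpha_{t,k}\big)+\d\alpha_{t,k};
\]
hence $(\eta_k,\d_{\eta_k}\lambda_k)$ is an exact conformal symplectic structure the moment this $2$-form is non-degenerate. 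Next I would take the linear homotopy
\[
\Omega_{k,s}:=(1-s)\,\Omega_k+s\,\d_{\eta_k}\lambda_k=\d t\wedge\Big(\big((1-s)+sN\big)\alpha_{t,k}+s\,\dot\alpha_{t,k}\Big)+\d\alpha_{t,k},\qquad s\in[0,1],
\]
joining $\Omega_{k,0}=\Omega_k$ to $\Omega_{k,1}=\d_{\eta_k}\lambda_k$. It depends smoothly on $k$, and it is relative to the boundary for free: restricting to $\{0\}\times M$ or $\{1\}\times M$ kills every $\d t$-term, leaving $\d\alpha_{0,k}$, resp.\ $\d\alpha_{1,k}$, for all $s$.

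The only point to verify is non-degeneracy of the $\Omega_{k,s}$. Writing $\alpha_{t,k}\wedge(\d\alpha_{t,k})^{n-1}=\mu_{t,k}>0$ and $\dot\alpha_{t,k}\wedge(\d\alpha_{t,k})^{n-1}=r_{t,k}\,\mu_{t,k}$ for a sign-unrestricted function $r_{t,k}$ on $M$, one gets $\Omega_{k,s}^n=n\big((1-s)+sN+s\,r_{t,k}\big)\,\d t\wedge\mu_{t,k}$. By compactness of $K\times[0,1]\times M$ there is $C\ge0$ with $r_{t,k}\ge-C$; choosing $N>C$ makes the bracket $\ge(1-s)+s(N-C)>0$ for every $s\in[0,1]$ (if $K$ is not compact, simply let $N=N(k)$ vary continuously). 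This gives the first statement. For the ``moreover'': if $\alpha_{t,k}$ is $t$-independent near $t=0,1$, then $\dot\alpha_{t,k}$, and hence $r_{t,k}$, vanishes on a collar of $\partial([0,1]\times M)$; I would then replace $\eta_k$ by $-\phi(t)\,\d t$ for a smooth $\phi$ with $\phi\equiv1$ near $\{0,1\}$ and $\phi(t)>\sup_{k,x}\big(-r_{t,k}(x)\big)$ everywhere — possible because the supremum vanishes near the endpoints, where $\phi\equiv1$ already beats it. Near the endpoints $\d_{\eta_k}\lambda_k=\d t\wedge\alpha_{t,k}+\d\alpha_{t,k}=\Omega_k$, so $\Omega_{k,s}\equiv\Omega_k$ there for all $s$, and the non-degeneracy computation above goes through verbatim with $N$ replaced by $\phi(t)$.

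I do not expect a real obstacle: the whole argument collapses to the elementary sign estimate above. The only things to watch are the bookkeeping between the exterior derivative on $M$ and on $[0,1]\times M$, and the observation — small but essential — that one must not rescale the primitive by $e^{Nt}$ (which would spoil the form on $\{1\}\times M$) but instead absorb the rescaling into the Lee form.
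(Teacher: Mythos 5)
Your proof is correct and uses essentially the same argument as the paper: take the exact conformal symplectic structure $\d_\eta\alpha_{t,k}$ with a Lee form $\eta$ that is a large negative ($t$-dependent) multiple of $\d t$, form the straight-line interpolation to $\Omega_k$, and verify non-degeneracy via a compactness estimate on $\dot\alpha_{t,k}\wedge(\d\alpha_{t,k})^{n-1}$. The paper phrases the Lee form as $f(t)\,\d t$ with $f\le -1$, $f=-C$ on $\supp\dot\alpha_t$ and $f=-1$ near the endpoints, which is exactly your $\phi(t)$ from the ``moreover'' clause; your write-up is a bit more explicit about why relativity to the boundary holds automatically and about the non-compact-$K$ case, but there is no substantive difference.
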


\begin{figure}[thp]
    \centering
    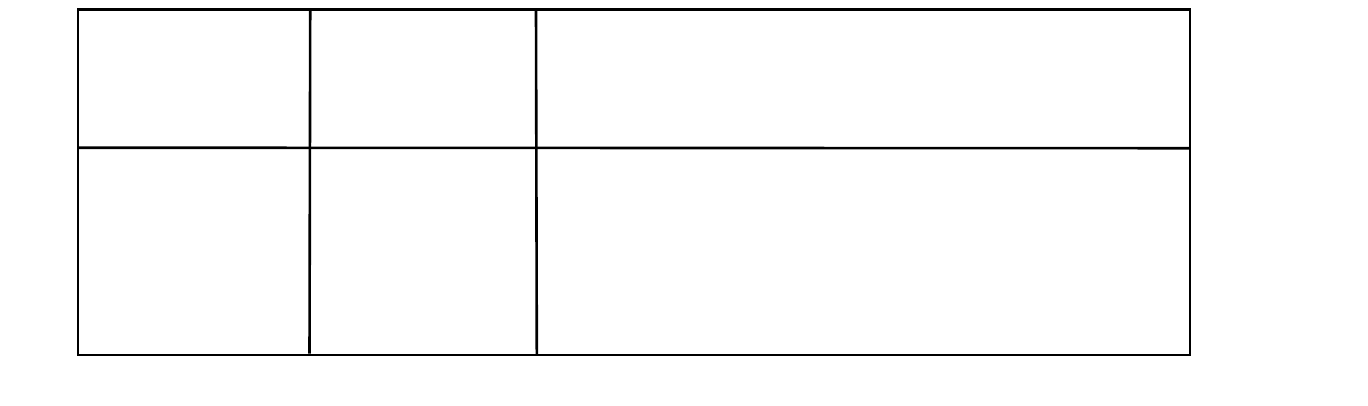
    \caption{The picture describes the regions in the manifold and which result is used to obtain the homotopy. The middle unlabeled piece is the part where the $h-$principle from \cite{BerMei} is applied.
    }
\end{figure}

\begin{proof}
By a homotopy relative to the endpoints, we can arrange that the family $\alpha_t$ is constant for $t$ near the endpoints.
Next observe that
\[ \d_{-C \d t}\alpha_t = \d t \wedge (C \alpha_t + s\dot{\alpha}_t) + \d \alpha_t, \]
is non-degenerate for any $s\in[0,1]$ provided that $C>0$ is sufficiently large, which we hence assume to be the case. 
Let $f \in C^\infty([0,1]_t)$, with $f\leq -1$, be equal to $-C$ on $\supp \dot{\alpha}_t$ and $-1$ near the boundary. 
Then $\d_{\d f}\alpha_t$ is conformal symplectic, and the linear homotopy to $\d t \wedge \alpha_t + \d \alpha_t$ is relative to the boundary.
\end{proof}

\subsection{Deformations to contact structures}
\label{sec:deformation_to_contact}

A natural question is that of whether the foliations from \Cref{thm:exist_conf_sympl_fol} can be deformed into contact structures in the spirit of \cite{EliThu98}. 
In higher dimensions there are several notions of convergence/deformation which are interesting to consider, but possibly the simplest is the following. It is the analogue of linear deformation in dimension three as defined in \cite{EliThu98}.

\begin{definition}[{\cite[Definition 2.2.5]{ToussaintThesis}}]\label{def:typeIdeformation}
A foliation $\F = \ker \alpha$ on $M^{2n+1}$ is said to admit a \emph{Type I} deformation if there exists a $1$-parameter family $\alpha_t \in \Omega^1(M)$ such that $\alpha_0 = \alpha$ and
\[ \alpha_t \wedge \d \alpha_t^n = t^n f_t \vol_M,\]
for $f_t \in C^\infty(M)$ satisfying $f_0 > 0$ and $\vol_M$ a (positive) volume form.
\end{definition}
The name refers to the fact that a deformation $\alpha_t$ is Type I if and only if its first order approximation is. 
More precisely, given $\alpha_t$ we can define its linearization:
\[ \alpha_t^{lin} := \alpha_0 + t \frac{\d}{\d t}\Big|_{t=0} \alpha_t,\]
and a simple computation shows $\alpha_t$ is Type I if and only if its linearization is.
Note however, that linear contact deformations (i.e. $\alpha_t := \alpha + t \beta$ for some $\alpha$ and $\beta$) are not necessarily Type I.

\begin{theorem}[{\cite[Lemma 6.1]{BerMei}, \cite[Theorem 2.2.13]{ToussaintThesis}}]\label{thm:TypeIDeformations}
Consider a foliation $\F$ with associated holonomy form $\mu_\F \in \Omega^1(\F)$. Then $\F$ admits a Type $I$ deformation if and only if there exists $\lambda \in \Omega^1(\F)$ such that $\d_{\mu_\F} \lambda$ is a leafwise conformal symplectic structure.
\end{theorem}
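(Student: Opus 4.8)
The plan is to prove both implications directly, after reducing to \emph{linear} deformations and exploiting the identity $\d_\mu\alpha=0$, where $\mu\in\Omega^1(M)$ is any $1$-form with $\d\alpha=\mu\wedge\alpha$; then $\mu|_\F$ is the holonomy form $\mu_\F$, and it does not depend on the choice of $\mu$ since two such forms differ by a multiple of $\alpha$. By the linearization remark recalled just before the statement, $\F=\ker\alpha$ admits a Type I deformation if and only if it admits a linear one $\alpha_t=\alpha+t\beta$, $\beta\in\Omega^1(M)$, so I would work only with these. The key computation is that for such a linear family $\d_\mu\alpha_t=\d\alpha_t-\mu\wedge\alpha_t=t\,\d_\mu\beta$, hence $\d\alpha_t=\mu\wedge\alpha_t+t\,\d_\mu\beta$; since $(\mu\wedge\alpha_t)^2=0$ and any wedge containing two factors of $\alpha_t$ vanishes, a short binomial expansion collapses to
\[
\alpha_t\wedge\d\alpha_t^{\,n}=t^n\,\alpha_t\wedge(\d_\mu\beta)^n .
\]
Thus $\alpha_t$ is a Type I deformation exactly when $\alpha\wedge(\d_\mu\beta)^n$ is a positive multiple of $\vol_M$ — the factor $t^n$ is automatic, and $f_0>0$ is precisely this positivity.

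Next I would pass to the leafwise picture by setting $\lambda:=\beta|_\F\in\Omega^1(\F)$. Restricting $\d_\mu\beta$ to a leaf and using $\mu|_\F=\mu_\F$ gives $(\d_\mu\beta)|_\F=\d_{\mu_\F}\lambda$; moreover $\d_{\mu_\F}^2=0$ because $\mu_\F$ is leafwise closed, so $\d_{\mu_\F}\lambda$ is automatically $\d_{\mu_\F}$-closed and ``leafwise conformal symplectic'' means precisely $(\d_{\mu_\F}\lambda)^n>0$. Evaluating $\alpha\wedge(\d_\mu\beta)^n$ on an adapted frame $(R,e_1,\dots,e_{2n})$ with $\alpha(R)=1$ and $(e_1,\dots,e_{2n})$ a positively oriented leafwise frame, only the term in which $\alpha$ contracts $R$ survives, yielding $\bigl(\alpha\wedge(\d_\mu\beta)^n\bigr)(R,e_1,\dots,e_{2n})=(\d_{\mu_\F}\lambda)^n(e_1,\dots,e_{2n})$. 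Hence, once the leaves are oriented so that $\alpha\wedge(\text{leafwise volume})$ represents $\vol_M$, positivity of $\alpha\wedge(\d_\mu\beta)^n$ on $M$ is equivalent to leafwise non-degeneracy of $\d_{\mu_\F}\lambda$.

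Both directions now follow. For the ``only if'' direction, a linear Type I deformation $\alpha+t\beta$ produces, via the two previous paragraphs, a leafwise form $\lambda=\beta|_\F$ with $\d_{\mu_\F}\lambda$ leafwise conformal symplectic. For the ``if'' direction, given such a $\lambda$, extend it arbitrarily to $\tilde\lambda\in\Omega^1(M)$ and set $\alpha_t:=\alpha+t\tilde\lambda$; by the key identity $\alpha_t\wedge\d\alpha_t^{\,n}=t^n\,\alpha_t\wedge(\d_\mu\tilde\lambda)^n$, and at $t=0$ the form $\alpha\wedge(\d_\mu\tilde\lambda)^n$ is positive because its leafwise restriction is $(\d_{\mu_\F}\lambda)^n>0$; hence $f_0>0$ and $\alpha_t$ is a Type I deformation of $\F$.

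The binomial/sign bookkeeping in the key identity and the choice of compatible orientations on the leaves are routine. The only points requiring mild care are that $(\d_\mu\beta)|_\F=\d_{\mu_\F}(\beta|_\F)$ is independent of the chosen extension $\mu$ (it is, since two choices differ by a multiple of $\alpha$), and that the gauge ambiguity $\mu_\F\mapsto\mu_\F+\d_\F h$ in the holonomy form is harmless for the existence of a suitable $\lambda$, since $\d_{\mu_\F+\d_\F h}(e^h\lambda)=e^h\,\d_{\mu_\F}\lambda$. I do not expect a genuine obstacle here: the argument is essentially formal, the only delicate ingredient being the orientation conventions that make ``positive'' mean the same thing on $M$ and on the leaves.
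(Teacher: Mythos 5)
Your argument is correct and self-contained. Note that the paper itself does not prove this statement but cites it from \cite{BerMei} and \cite{ToussaintThesis}; nonetheless your computation is precisely the expected direct argument, and I verified every step. The reduction to linear deformations is properly justified by the linearization remark preceding Definition~\ref{def:typeIdeformation}; the binomial collapse giving $\alpha_t\wedge\d\alpha_t^n=t^n\,\alpha_t\wedge(\d_\mu\beta)^n$ is right (only the $k=0,1$ terms of $(\mu\wedge\alpha_t+t\,\d_\mu\beta)^n$ survive, and the $k=1$ term is killed by $\alpha_t\wedge\alpha_t=0$); the passage to the leafwise quantity via $(\d_\mu\tilde\lambda)|_\F=\d_{\mu_\F}\lambda$ is legitimate because, for $X,Y\in\ker\alpha$, $\d\tilde\lambda(X,Y)$ depends only on $\tilde\lambda|_\F$ together with integrability of $\F$; and the independence of the extension $\mu$ (two choices differ by $g\alpha$, and the extra terms die after wedging with $\alpha$) as well as the gauge identity $\d_{\mu_\F+\d_\F h}(e^h\lambda)=e^h\,\d_{\mu_\F}\lambda$ are both checked correctly. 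The only point I would phrase a little more carefully is the orientation convention: you need $\F$ to be co-oriented by $\alpha$ and the leaves oriented so that $\alpha\wedge(\text{leafwise volume})>0$, which is the ambient convention of the paper; with that understood, positivity of $\alpha\wedge(\d_\mu\beta)^n$ is indeed equivalent to $(\d_{\mu_\F}\lambda)^n>0$. No genuine gap.
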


This theorem motivates the following definition:
\begin{definition}\label{def:TypeIConformalSymplectic}
A conformal symplectic foliation $(\F,\eta,\omega)$ admits a Type I deformation if $\omega  = \d_{\mu_\F} \lambda$ for some $\lambda \in \Omega^1(\F)$.
\end{definition}

It is clear that the conformal symplectic foliations constructed in \Cref{thm:exist_conf_sympl_fol} do not admit a Type I deformation. Indeed, these foliations always contain leaves on which $\eta = - \mu_\F$. 
In fact, there is a deeper obstruction than just the fact that $\eta$ and $\mu_\F$ have opposite signs to the existence of Type I deformations, as we now describe.

Recall first that given a contact manifold $(M,\alpha)$, \Cref{thm:ConformalTurbulization} allows us to construct a conformal symplectic foliation
\begin{equation}\label{eq:concaveconcavemodel}
    \left(\S^1 \times [-1,1] \times M, \F, \eta, \omega\right)
\end{equation}
satisfying the following conditions:
\begin{enumerate}[(i)]
    \item The foliation $\F$:
    \begin{enumerate}
        \item has a single closed leaf diffeomorphic to $\S^1 \times M$, and $\mu_\F = \pm \d\theta$ restricted to this leaf;
        \item equals $\ker \d \theta$ near the boundary (where $\theta \in \S^1$);
        \item every leaf accumulates onto the closed leaf.
\end{enumerate}
    \item The leafwise conformal symplectic structure has convex contact boundary (\Cref{def:ConformalContactType}).
\end{enumerate}
We will call any conformal symplectic foliation as above a \emph{concave-concave turbulization model} for $(M,\alpha)$.

Observe that near the boundary (where $\F = \ker \d \theta$) we have $\mu_\F = 0$, and on the interior $(\eta,\omega)$ is not prescribed by the model. 
Hence, a priori by using a different model than in the proof of \Cref{thm:exist_conf_sympl_fol} we can avoid the problem that $\eta = -\mu_\F$.
However, as shown in the following lemma,  the existence of a model which admits a Type I deformation is often still obstructed. Recall, that a closed contact manifold is called \emph{co-fillable} if it is a connected component of a symplectic manifold with (possibly disconnected) convex contact boundary.

\begin{lemma}
\label{lem:obstruction_deformation}
Let $(M,\alpha)$ be a closed contact manifold which is not co-fillable. Then, no concave-concave turbulization model for $(M,\alpha)$ admits a Type I deformation.
\end{lemma}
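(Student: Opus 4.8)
The strategy is a proof by contradiction: assume some concave-concave turbulization model $(\S^1 \times [-1,1] \times M, \F, \eta, \omega)$ for $(M,\alpha)$ admits a Type I deformation, i.e.\ $\omega = \d_{\mu_\F} \lambda$ for some $\lambda \in \Omega^1(\F)$, and derive from this a symplectic manifold with convex contact boundary having $(M,\alpha)$ (up to a positive conformal factor on the contact form, which does not change co-fillability) as one of its components. The key structural input is the closed leaf $L \cong \S^1 \times M$ with $\mu_\F|_L = \pm \d\theta$, together with the fact that every nearby leaf accumulates onto $L$ and the foliation is $\ker\d\theta$ near the boundary of the $[-1,1]$-factor.

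\textbf{Key steps.} First I would restrict attention to the region between the closed leaf $L$ and one of the two boundary components, say $\S^1 \times \{1\} \times M$. On this region $\F$ is transverse to the $[-1,1]$-direction away from $L$, and the leaves interpolate between the boundary leaf $\ker \d\theta$ (on which $\mu_\F = 0$) and $L$; the leafwise conformal symplectic structure has convex contact boundary there. Second, on the closed leaf $L = \S^1 \times M$ we have $\mu_\F = \pm\d\theta$, so $\d_{\mu_\F}\lambda = \d\lambda \mp \d\theta \wedge \lambda$; writing $\lambda = f\,\d\theta + \beta$ in a natural splitting (with $\beta$ a $\theta$-family of $1$-forms on $M$) and using that $\omega|_L$ is leafwise symplectic, one extracts that $\omega|_L$ restricts, on each slice $\{\theta\}\times M$, to a $\d_{\pm\d\theta}$-closed non-degenerate form, i.e.\ precisely the data one would expect from the mapping-torus structure of a conformal symplectization. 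The crucial point is that $\d_{\mu_\F}$-exactness with $\mu_\F = \d\theta$ on $L$ is exactly the condition for $\omega|_L$ to be a genuine (untwisted) symplectic form once one passes to the cover / unwinds the $\d\theta$-twist by the conformal factor $e^{\mp\theta}$. Third, I would use this to build an honest symplectic form: cut $L$ open along $\{\theta_0\} \times M$ to get $[0,1]_\theta \times M$ with $e^{\mp\theta}\omega|_L$ an honest symplectic form, check it has convex contact boundary on both $\{0\}\times M$ and $\{1\}\times M$ (inherited from the contact-type boundary condition and the turbulization normal form of \Cref{thm:ConformalTurbulization}/\Cref{cor:ConformalContactTypeNormalform}), and then cap off appropriately — either glue the two ends via the monodromy to form a symplectic mapping torus with cosymplectic-type boundary, or better, observe that the neighbourhood of $L$ together with the contact-type boundary of the model produces a genuine symplectic cobordism one end of which is $(M,\alpha)$ (up to conformal rescaling) with its convex orientation. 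This contradicts the assumption that $(M,\alpha)$ is not co-fillable.

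\textbf{Main obstacle.} The delicate part is the bookkeeping around the closed leaf $L$: one must carefully convert the leafwise $\d_{\mu_\F}$-exactness into an honest symplectic filling datum, controlling what happens to $\lambda$ and $\omega$ both transverse to $L$ (where leaves spiral into $L$, so $\lambda$ need not extend continuously across $L$ as a form on the ambient manifold — only leafwise) and along the $\S^1$-direction inside $L$. In particular, one needs that the conformal factor $e^{\mp\theta}$ used to untwist is globally consistent, and that the resulting symplectic form near $\{0\}\times M = \{1\}\times M$ genuinely has the convex contact boundary claimed — this is where the explicit normal form from \Cref{thm:FoliatedConformalSymplecticNormalform} and the fact that the model has convex contact boundary (condition (ii) in the definition of concave-concave model, together with the ``concave'' contact type on the internal side of $L$) are essential. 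A clean way to organize this, and the route I would actually take, is: restrict $(\eta,\omega)$ to an arbitrarily small saturated neighbourhood of $L$, pass to the honest symplectization picture on a single slice via the untwisting, and invoke that the contact-type boundary of the whole model provides the genuine convex boundary component, so that excising a collar of $L$ on the far side yields a compact symplectic cobordism from $\emptyset$ (or from a cosymplectic-type boundary that can be further capped using \Cref{thm:turbulization}-type constructions) to $(M, e^{c}\alpha)$, exhibiting co-fillability — the desired contradiction.
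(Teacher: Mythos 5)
Your approach misses the key object of the paper's proof and contains a step that does not hold as stated. Cutting the closed leaf $L\cong\S^1\times M$ open along $\{\theta_0\}\times M$ does produce an exact symplectic form $e^{\mp\theta}\omega|_L=\d(e^{\mp\theta}\lambda|_L)$ on $[0,1]_\theta\times M$, but the boundary data on the two cut slices is $\lambda|_L$ restricted to $\{\theta_0\}\times M$, a $1$-form that lives entirely inside $L$ and is a priori unrelated to $\alpha$; the form $\alpha$ only appears at the boundary of the whole model $\S^1\times\{\pm1\}\times M$, which is disjoint from $L$. So even a successful construction along these lines would at best produce a cofilling of some contact form on $M$ that need not be (a positive rescaling of) $\alpha$, which does not give the desired contradiction. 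Independently, the assertion that the two cut slices are of convex contact type does not follow from the hypotheses: it would require the Liouville vector field of $\lambda|_L$ to be transverse to the slices of $L$, which is additional structure; and your intermediate claim that ``$\omega|_L$ restricts, on each slice $\{\theta\}\times M$, to a ... non-degenerate form'' cannot be literally correct, since the slices are odd-dimensional.

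The paper's proof hinges instead on a noncompact leaf $\mathcal L\cong(-\infty,0]\times M$ meeting the boundary of the model, not on the closed leaf $L$. On $\mathcal L$ the holonomy form $\mu_\F$ is exact (it vanishes near the boundary end, and $H^1(\mathcal L)\cong H^1(M)$), so $\omega|_{\mathcal L}=\d_{\mu_\F}\lambda$ is conformally equivalent to an honest exact symplectic form. The decisive point is a \emph{sign comparison along this single leaf}: at the boundary end, the convex-contact normal form forces $\lambda|_{\{0\}\times M}=\alpha$, a positive contact form; near $L$, the leaf spirals into $L$, and since on $L$ one has $\omega|_L=\d\lambda_L-\d\theta\wedge\lambda_L$ with $\mu_\F|_L=\d\theta$, the restriction of $\lambda$ to slices of $L$ is a \emph{negative} contact form, so by continuity $\lambda|_{\{t_0\}\times M}$ is a negative contact form for $t_0\ll0$. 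This sign flip means that the compact piece $[t_0,0]\times M\subset\mathcal L$ has both boundary components of convex contact type, i.e.\ it is a cofilling of $(M,\alpha)$ — the contradiction. Your proposal never leaves $L$, never uses a noncompact leaf, and never makes the positive-versus-negative comparison at the two ends of such a leaf; your second alternative in step three gestures at ``the neighbourhood of $L$ together with the contact-type boundary,'' which is the right direction, but it is not developed into the continuity-and-sign argument that actually carries the proof.
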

In particular, the lemma applies to the proof of \Cref{thm:exist_conf_sympl_fol}, where we use a concave-concave model for $(\S^{2n-1},\alpha_{ot})$. 
Indeed, by \cite{BEM15,Nie06} these contact manifolds are not co-fillable.
\begin{proof}
We prove the case that $\mu_\F = \d\theta$ along the closed leaf. The proof of the other case is similar.
Assume by contradiction that there exists a type I deformation, so that $\omega = d_{\mu_\F} \lambda$ for some $\lambda \in \Omega^1(\F)$. In particular, on the closed leaf $\S^1 \times M$ we have $\omega = \d \lambda - \d \theta \wedge \lambda$ so that $\lambda$ defines a negative contact form on $M$.

Now consider a non-compact leaf $(-\infty,0] \times M$ of $\F$, intersecting the right boundary. 
This leaf accumulates onto the compact leaf. 
Hence, by continuity $\lambda|_{\{t_0\} \times M}$ is a negative contact form for some $t_0 \ll 0$. 
On the other hand, by definition of the model, $\lambda = e^t\alpha$ on $(-\varepsilon,0] \times M$. In particular $\lambda|_{\{0\} \times M}$ is a positive contact form. 

Thus, $(M,\alpha)$ is a connected component of the conformal symplectic manifold $([t_0,0] \times M, \d_{\mu_\F} \lambda)$. 
Since, $\mu_\F$ is exact away from the compact leaf, this implies $(M,\alpha)$ is co-fillable giving a contradiction.
\end{proof}

\subsubsection{Deformation to contact structures for non taut foliations}
\label{sec:deformation_to_contact_non_taut}

Consider a taut almost symplectic foliation on a manifold $M^{2n+1}$, $n \geq 2$. On the complement of two curves \cite[Theorem B]{BerMei} yields a taut conformal symplectic foliation that admits a Type I deformation. As already expected in \cite[Remark 6.4]{BerMei}, tautness is not necessary if we remove an additional curve:
\begin{proposition}
\label{prop:existence_deformations_contact_away_tubes}
Let $(M^{2n+1},\zeta,\mu)$, $n \geq 2$, be an almost contact manifold. Fix any three curves, two of which are parallel. Then, on the complement of these curves, $(\zeta,\mu)$ is homotopic to a non-taut, exact conformal symplectic foliation which admits a Type I deformation. 
\end{proposition}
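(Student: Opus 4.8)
The plan is to run the proof of \Cref{prop:conf_sympl_fol_taut} with two modifications. First, never cap off a boundary component: the two capping steps there invoke \Cref{cor:CobordismSphereFilling} and \Cref{lemma:alm_sympl_cap_to_foliated_conf_sympl_cap}, and these are exactly what produces the closed leaves with $\eta=-\mu_\F$ that obstruct Type~I deformations by \Cref{lem:obstruction_deformation}; here the three removed curves will supply the boundary instead, and nothing will be filled. Second, feed the holonomy form of the foliation directly into the h-principle of \Cref{thm:ber_mei}, so that the output conformal symplectic foliation is \holonomy, hence Type~I deformable by \Cref{def:TypeIConformalSymplectic} and \Cref{thm:TypeIDeformations}.

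Concretely, I would first homotope $(\zeta,\mu)$, with support in small tubular neighbourhoods $N_i\cong\S^1\times\D^{2n}$ of $\gamma_i$, so that it reads $(\ker\d\theta,\omega_{st})$ there, with $\gamma_1,\gamma_2$ realized as parallel core circles of a single tube of the curve they are copies of, and set $M^\circ:=M\setminus(\mathring N_1\cup\mathring N_2\cup\mathring N_3)$. By a relative (parametric) version of Meigniez's h-principle \cite{Mei17} applied to $M^\circ$, keeping the foliation equal to $\ker\d\theta$ and transverse to $\partial M^\circ$ near the boundary, I would homotope $(\zeta,\mu)|_{M^\circ}$ to a minimal — in particular taut — almost symplectic foliation $(\G,\Omega)$ which equals $(\ker\d\theta,\omega_{st})$ near $\partial M^\circ$ and all three of whose boundary components meet every leaf. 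Now apply \Cref{thm:ber_mei} to $(M^\circ,\G,\Omega)$ with the choice $\eta:=\mu_\G$ of leafwise closed form (a holonomy form of $\G$) and the splitting $\partial_+M^\circ:=\partial N_3$, $\partial_-M^\circ:=\partial N_1\sqcup\partial N_2$: this produces $\lambda\in\Omega^1(\G)$ with $\omega:=\d_{\mu_\G}\lambda$ leafwise non-degenerate, homotopic to $\Omega$, and restricting to overtwisted contact forms on the leaves over $\partial M^\circ$. The resulting $(\G,\mu_\G,\omega)$ is an exact conformal symplectic foliation whose Lee form is a holonomy form of $\G$; it is therefore \holonomy, hence admits a Type~I deformation, and since $\G\sim\zeta|_{M^\circ}$ and $\omega\sim\Omega\sim\mu|_{M^\circ}$ it is homotopic to $(\zeta,\mu)|_{M^\circ}$ through almost contact structures. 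It is, however, still taut.

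It then remains to destroy tautness without losing the \holonomy property. The move is to turbulize $\G$ along an embedded hypersurface $\Sigma\cong\S^1\times\S^{2n-1}$ — cutting $M^\circ$ along $\Sigma$, turbulizing both new boundary leaves by \Cref{thm:ConformalTurbulization} or \Cref{thm:ConformalTurbulization_cosymplectic}, and regluing by \Cref{thm:gluing_turbul} — so as to create a genuine Reeb-like component, hence a non-taut foliation agreeing with $(\G,\mu_\G,\omega)$ away from $\Sigma$ and still exact; this is where the parallel pair $\gamma_1,\gamma_2$ is used, since sliding $\partial N_1$ slightly into $M^\circ$ along the annulus joining $\gamma_1$ to $\gamma_2$ yields such a $\Sigma$ inside a collar where, by \Cref{thm:FoliatedConformalSymplecticNormalform}, $\G=\ker\d\theta$ with $\mu_\G=0$ and $\omega$ is a piece of conformal symplectization.

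The main obstacle — and the only point requiring real care beyond the taut case of \cite{BerMei} — is precisely this last step. A naive turbulization along $\Sigma$ is not \holonomy: in the exact region $\Sigma$ is of convex contact type seen from one side and concave contact type from the other, and turbulizing at a concave contact boundary produces, on the new compact leaf, a conformal symplectic structure with $\eta=-\mu_\F$ (a concave--concave turbulization model for an overtwisted sphere), which by \Cref{lem:obstruction_deformation} destroys Type~I deformability and is moreover incompatible, through \Cref{thm:gluing_turbul}, with the convex-side turbulization. The resolution is to perform the turbulization either at a hypersurface which is of \emph{convex} contact type from \emph{both} sides (as for the interface of two turbulized mapping tori of Liouville domains, c.f.\ \Cref{rmk:deform_non_taut}) or at a hypersurface of \emph{cosymplectic} type with vanishing Lee form — for instance the boundary of a tubular neighbourhood of an unknotted circle sitting in a symplectic $2$-plane inside a leaf, on which $\S^1\times\S^{2n-2}$ carries a leafwise closed admissible form — so that the new compact leaf has $\eta=\mu_\F=0$. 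One then checks, exactly as in the proofs of \Cref{prop:conf_sympl_fol_taut} and \Cref{prop:round_sum_conf_sympl_fol}, that the holonomy forms on the two sides of $\Sigma$ match so that \Cref{thm:gluing_turbul} applies, that the glued foliation is globally \holonomy, and that the Reeb-like component indeed renders it non-taut.
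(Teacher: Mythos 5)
Your route differs genuinely from the paper's. You feed the holonomy form $\eta := \mu_\G$ directly into the h-principle of \Cref{thm:ber_mei} applied once to the whole complement $M^\circ$, obtaining a taut, holonomy-like exact conformal symplectic foliation in one shot, and then try to destroy tautness afterwards. The paper instead reruns the piecewise construction from the proof of \Cref{prop:conf_sympl_fol_taut} (which is already non-taut, since each of \Cref{cor:CobordismSphereFilling}, \Cref{cor:BMRelative} and \Cref{lemma:alm_sympl_cap_to_foliated_conf_sympl_cap} turbulizes) and simply observes, via \Cref{rmk:hol_like_EM}, that the resulting leafwise structure is $\vert$holonomy$\vert$-like everywhere and holonomy-like away from exactly three ``bad'' closed leaves, each coming from an application of \Cref{cor:CobordismSphereFilling} and sitting inside a solid torus around one of the three prescribed curves; removing those tubes finishes the proof. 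Your first step --- choosing $\eta=\mu_\G$ in \Cref{thm:ber_mei} to get a holonomy-like output on $M^\circ$ --- is a clean and correct observation, and indeed isolates the mechanism behind the paper's \Cref{rmk:hol_like_EM}.

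The gap is in the last step. You correctly identify that a naive cut-and-reglue turbulization produces one convex-type closed leaf (holonomy-like) and one concave-type closed leaf (with $\wtd\eta=-\mu_{\wtd\F}$), but neither of your proposed fixes works. The cosymplectic option fails: \Cref{thm:ConformalTurbulization_cosymplectic} leaves the Lee form equal to $\eta_\partial$, while the new defining form $\wtd\gamma=f\gamma_\partial\pm g\,\d t$ has holonomy form $\mu_{\wtd\F}=\mp(\dot f/g)\,\gamma_\partial$, which is nonzero near the closed leaf $\{f=0\}$ since $\dot f\neq 0$ at a transversal zero; thus $\wtd\eta=0\neq\mu_{\wtd\F}$ there and the foliation is not holonomy-like (and in fact the cosymplectic-type hypersurface you describe is also problematic: the boundary of a tubular neighbourhood of a circle sitting \emph{inside} a leaf has dimension and transversality issues, while near $\partial N_1$ the leaves $\{\theta\}\times\S^{2n-1}$ carry the exact symplectic form $\d\lambda_\partial$ of an overtwisted contact form, and $\S^{2n-1}$ admits no closed admissible $1$-form since $H^1(\S^{2n-1})=0$). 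The ``convex-from-both-sides'' option is also unavailable near $\partial N_1$: a pushed-off copy $\Sigma$ of $\partial N_1$ sits in a collar where $(\eta,\omega)$ is a piece of Liouville flow, and a level hypersurface of a Liouville flow is convex from one side and concave from the other, never from both; the two-mapping-tori example in \Cref{rmk:deform_non_taut} is a different global configuration, not something you can locate inside the present $M^\circ$. A fix compatible with your setup would be to turbulize \emph{at} the convex boundary $\partial N_3 = \partial_+M^\circ$ (without cutting and regluing), where $\eta_\partial=\mu_\G=0$, so that \Cref{thm:ConformalTurbulization}(i)(b) yields a holonomy-like foliation tangent to $\partial N_3$ whose boundary leaf is a Reeb-like dead end; this makes the foliation non-taut while preserving the holonomy-like property, and still is a homotopy of hyperplane fields supported near $\partial N_3$.
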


\begin{proof}
The proof essentially follows from that of \Cref{prop:conf_sympl_fol_taut}. 
(We point out that, even though \Cref{prop:conf_sympl_fol_taut} assumes $\dim M \geq 3$, this is only used when applying \Cref{cor:CobordismSphereFilling}.
As we now describe, these tubes will be removed from the ambient manifold so the proof goes through in dimension $5$.)

The start of the proof of \Cref{prop:conf_sympl_fol_taut} should be modified as follows.
We first choose two curves $\gamma_\pm$ and homotope the almost contact structure to be a symplectic foliation near them, and positively transverse to them. 
Then, we apply \cite{Mei17} relatively to these foliated neighborhoods (c.f.\ \cite[Remark 6.4]{BerMei}), in such a way to deform the almost contact structure to a minimal almost symplectic foliation relative to the foliated neighborhoods of $\gamma_\pm$.

All the results (namely \Cref{cor:CobordismSphereFilling}, \Cref{cor:BMRelative}, and \Cref{lemma:alm_sympl_cap_to_foliated_conf_sympl_cap}) used in the proof of \Cref{prop:conf_sympl_fol_taut} give $\vert$holonomy$\vert$-like exact conformal symplectic foliations. Moreover they are holonomy-like away from closed leafs coming from the turbulization construction.
The only part where this $\vert$holonomy$\vert$-like property has not been explicitly stated is in the construction of the homotopy in the regions $T_\pm$.
However, there the holonomy-like property follows from the same consideration as in \Cref{rmk:hol_like_EM}.

In total the argument produces three such closed leaves, all resulting from \Cref{cor:CobordismSphereFilling}.
As such, it is enough to consider the three tubular neighborhoods of transverse curves to which \Cref{cor:CobordismSphereFilling} is applied (two of which are parallel to $\gamma_+$) in the proof of \Cref{prop:conf_sympl_fol_taut}.
\end{proof}

Instead of removing curves, we can try to deform the conformal symplectic foliations from \Cref{thm:exist_conf_sympl_fol} on the whole manifold. As we show now, this produces a Type I linear deformation to ``singular contact structures''. That is, a hyperplane field which is a (positive) contact structure on the complement of an (not necessarily connected) embedded hypersurface. 

\begin{proposition}
\label{prop:deformation_to_singular_contact}
The conformal symplectic foliations constructed in \Cref{thm:exist_conf_sympl_fol} admit a Type I deformation to singular contact structures, with singularities along three closed leafs. 
\end{proposition}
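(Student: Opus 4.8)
The plan is to produce the deformation in linearised form, as $\alpha_t = \gamma + t\,\widehat\lambda$ with $\F = \ker\gamma$ and $\widehat\lambda\in\Omega^1(M)$ an extension of a carefully chosen leafwise $1$-form, refining the mechanism behind \Cref{thm:TypeIDeformations} and \Cref{def:TypeIConformalSymplectic}. Recall the structure of the foliation $(\F,\eta,\omega = \d_\eta\lambda)$ underlying \Cref{thm:exist_conf_sympl_fol}: by the proof of \Cref{prop:conf_sympl_fol_taut} it is $\vert$holonomy$\vert$-like, and is in fact holonomy-like --- $\eta = \mu_\F$ --- away from the three solid tori $W_1,W_2,W_3\cong\S^1\times\D^{2n}$ to which \Cref{cor:CobordismSphereFilling} is applied (cf.\ the proof of \Cref{prop:existence_deformations_contact_away_tubes}). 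Inside each $W_j$ there is a region $V_j$ on which $\eta = -\mu_\F$, containing a single closed leaf $\Sigma_j\cong\S^1\times\S^{2n-1}$; near $\partial V_j$ one has $\eta = \mu_\F = 0$ and $\F = \ker(\pm\d\theta)$, while throughout $V_j$ the leafwise Liouville form $\lambda$ is a positive multiple of $\alpha_{st}$ and along $\Sigma_j$ the holonomy form $\mu_\F$ is a non-zero multiple of $\d\theta$, with $\F$ near $\Sigma_j$ the turbulization having $\Sigma_j$ as compact leaf.

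First I would record the linearisation identity: writing $\d\gamma = \mu\wedge\gamma$ with $\mu|_\F = \mu_\F$ and using $(\mu\wedge\gamma)^2 = 0$, a direct computation gives
\[
\alpha_t\wedge\d\alpha_t^n \;=\; t^n\,\gamma\wedge(\d_\mu\widehat\lambda)^n \;+\; t^{n+1}\,\widehat\lambda\wedge(\d\widehat\lambda)^n,
\]
where $\gamma\wedge(\d_\mu\widehat\lambda)^n$ depends only on the leafwise datum $\d_{\mu_\F}(\widehat\lambda|_\F)$. On $M\setminus(W_1\cup W_2\cup W_3)$ we have $\eta = \mu_\F$, so $\d_{\mu_\F}\lambda = \d_\eta\lambda = \omega$ and the leading coefficient equals $\gamma\wedge\omega^n > 0$; there I keep $\widehat\lambda = \lambda$. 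This reduces the statement to a local problem on each $W_j$: modify $\lambda$ on $V_j$ to a leafwise $1$-form $\lambda_j$, equal to $\lambda$ near $\partial V_j$ (so the pieces glue), with $\gamma\wedge(\d_{\mu_\F}\lambda_j)^n \geq 0$ and zero set exactly $\Sigma_j$, and such that the extension $\widehat\lambda$ vanishes to first order along $\Sigma_j$. The last condition makes $\widehat\lambda\wedge(\d\widehat\lambda)^n$ vanish to order $n+1\geq 2$ along $\Sigma_j$, so that for $t > 0$ small the whole of $\alpha_t\wedge\d\alpha_t^n$ is non-negative, vanishing exactly along $\Sigma_1\cup\Sigma_2\cup\Sigma_3$.

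For the local construction I would model $V_j$ on $\S^1_\theta\times[-1,1]_s\times\S^{2n-1}$ with $\Sigma_j = \{s = 0\}$, and regard $\d_{\mu_\F}\lambda_j$, restricted to the slices $\{s = \mathrm{const}\} = \S^1\times\S^{2n-1}$, as an $s$-family of $\d_{\mu_\F}$-closed $2$-forms. Near $\partial V_j$ one sets $\lambda_j = \lambda$, so the leading term is $\gamma\wedge\omega^n > 0$; the content is to interpolate, as $s$ runs from $\partial V_j$ in towards $\Sigma_j$ and back out, through leafwise symplectic forms of the correct orientation, ending at $s = 0$ at a $\d_{\mu_\F}$-closed $2$-form on $\Sigma_j$ whose $n$-th power vanishes identically, the interpolation being arranged so that $\gamma\wedge(\d_{\mu_\F}\lambda_j)^n$ vanishes to even order (say, order two) in the normal direction to $\Sigma_j$, hence is non-negative near it. That such a degeneration of the leafwise symplectic form along the whole of $\Sigma_j$ is unavoidable is exactly the obstruction of \Cref{lem:obstruction_deformation}, stemming from non-co-fillability of $(\S^{2n-1},\alpha_{ot})$ (cf.\ \cite{BEM15,Nie06}). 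Gluing the local modifications to the unchanged $\widehat\lambda = \lambda$ on the complement then yields a global $\widehat\lambda$ with $\alpha_t\wedge\d\alpha_t^n = t^n f_t\,\vol_M$, $f_t\geq 0$, $f_t^{-1}(0) = \Sigma_1\cup\Sigma_2\cup\Sigma_3$ for small $t>0$: the desired Type I deformation to a singular contact structure with singularities along the three closed leaves.

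The hard part is this local construction. By \Cref{lem:obstruction_deformation}, $\d_{\mu_\F}\lambda_j$ is forced to degenerate along $\Sigma_j$ (were it non-degenerate there, it would carry the orientation opposite to the one required), so one must build a degeneracy that is at once \emph{non-negative} --- so that $\ker\alpha_t$ is a genuine positive contact structure off $\Sigma_j$, rather than one flipping its coorientation across $\Sigma_j$ --- and \emph{confined} to $\Sigma_j$, so that the singular locus does not fatten; pinning down the order of vanishing while keeping the $t^{n+1}$-term dominated near $\Sigma_j$, which is why $\widehat\lambda$ is required to vanish to first order along $\Sigma_j$, is the delicate point. Everything else is the turbulization bookkeeping already in place in the proof of \Cref{prop:conf_sympl_fol_taut}.
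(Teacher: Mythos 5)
Your high-level strategy is essentially the paper's: deform linearly, $\alpha_t = \gamma + t\widehat\lambda$, take $\widehat\lambda = \lambda$ on the complement of the three solid tori (where the foliation is holonomy-like), and modify $\widehat\lambda$ inside. The linearization identity you record is correct. The gap is in the local model near each $\Sigma_j$, precisely at the step you flag as ``the delicate point''.

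You propose to control the sign of $t^n A + t^{n+1}B$, with $A = \gamma\wedge(\d_{\mu_\F}\lambda_j)^n$ and $B=\widehat\lambda\wedge(\d\widehat\lambda)^n$, by arranging $A\geq 0$ vanishing to order $2$ in the transverse coordinate $s$ and $\widehat\lambda$ vanishing to first order, so $B$ vanishes to order $n+1\geq 3$. But these requirements are in tension, and you never exhibit a $\lambda_j$ achieving both. With the natural ansatz $\lambda_j = k(s)\alpha$ ($\alpha$ a contact form on the transverse $\S^{2n-1}$), keeping $\lambda_j$ close to $\lambda = e^{\pm s}\alpha$ near $\partial V_j$ and non-degenerate off $\Sigma_j$ forces $k\geq 0$ with an isolated zero at $s=0$, hence $k$ vanishing to even order $\geq 2$; then $A \sim k^{n-1}(\dot k f - \dot f k)\sim s^{2n}$, far higher than the order $n+1$ you allot to $B$. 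Near $\Sigma_j$ the $t^{n+1}$-term would then dominate, with no sign control. If $\lambda_j$ is \emph{not} of the form $k\alpha$ the situation is worse, since there is no reason for $B$ to have definite sign at all, and the bookkeeping collapses. There is also a secondary confusion: near $\Sigma_j$ the foliation is a turbulization, so the slices $\{s=\mathrm{const}\}$ are transverse to $\F$ rather than contained in its leaves, which muddles your reduction to ``an $s$-family of $\d_{\mu_\F}$-closed $2$-forms''.

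The paper avoids this tension entirely by choosing $\widehat\lambda = k\alpha$ pointwise proportional to the transverse contact form: then $\widehat\lambda\wedge(\d\widehat\lambda)^n \equiv 0$ \emph{identically} on $V_j$, because every term reduces to $\alpha\wedge\alpha$ or to $\alpha\wedge\d\alpha^n$ pulled back from $\S^{2n-1}$, both of which vanish. There is therefore no $t^{n+1}$-term to dominate; only the non-negativity of the $t^n$-coefficient $n s^n k^{n-1}(\dot k f - \dot f k)$ needs to be arranged, which the paper does with an explicit $k$ (e.g.\ $k=1-\sqrt{1-t^2}$ near the closed leaf) satisfying $\dot k f - \dot f k\geq 0$ with equality only at $t=0$. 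So the missing idea is not a sharper estimate on orders of vanishing, but the observation that a fiberwise-contact choice of $\widehat\lambda$ kills the problematic term outright.
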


\begin{proof}
The Type I linear contact deformation has been already described in the proof of \Cref{prop:existence_deformations_contact_away_tubes} away from 
solid tori in which \Cref{cor:CobordismSphereFilling} is applied.
In fact, the same argument proves the existence of a Type I deformation everywhere except on 
neighborhoods of three closed leaves in these tori, which correspond to turbulizations where both sides are concave contact boundaries.
Hence it suffices to describe how to extend the deformation near such closed leaves.

Consider the manifold $\S^1 \times[-1,1]\times \SSS^{2n-1}$ with coordinates $(\theta,t,x)$, and $\alpha \in \Omega^1(\SSS^{2n-1})$ a positive contact form.
We know that on the boundary of this manifold, the previously constructed exact holonomy-like conformal symplectic structures satisfies $\eta=0$ and $\lambda = e^{-t}\alpha$.
The turbulized smooth foliation obtained as in \Cref{thm:exist_conf_sympl_fol} can be described in this region as:
\[ 
\F \coloneqq \ker \gamma,
\quad 
\gamma \coloneqq f(t) \d \theta - g(t) \d t,
\]
where $f,g:\R \to \R$ satisfy the following properties:
\begin{enumerate}[(i)]
    \item $f$ has a single zero at $t =0$ with $\dot{f}(0)=1$, and $f = \begin{cases} -1 & \text{$t$ near $-1$} \\ 1 & \text{$t$ near $1$}\end{cases}$.
    \item $g \geq 0$, supported inside $(-1/2,1/2)$, and $g(0) = 1$.
\end{enumerate}
In order to guarantee the last property, an explicit possible formula for $k$ near $t=0$, where $f$ can be assumed wlog to be $f(t)=t$, is for given by $k(t)=1-\sqrt{1-t^2}$. 
It is then easy to extend such local choice away from this neighborhood of $t=0$ to a function satisfying the above conditions.

Furthermore, choose a function $k\colon[-1,1]\to \RR$ as in \Cref{fig:graph_f_k}, satisfying: 
\begin{itemize}
    \item $k=0$ at $t=0$ and $k>0$ otherwise,
    \item $k=e^{-t+1}$ near $t=-1$ and $k=e^{t-1}$ near $t=1$,
    \item $\dot{k} f - \dot{f}k\geq0$, with $=0$ only for $t=0$.
\end{itemize}
We then consider the linear deformation $\gamma_s\coloneqq \gamma + sk\alpha$.
To see it defines a Type I deformation observe that:
\begin{align*}
\gamma_s \wedge \d\gamma_s^n
& =
n \dot{f} s^{n} k^{n} \d t\wedge \d \theta \wedge  \alpha \wedge \d \alpha^{n-1}
+
n f s^{n} \dot{k} k^{n-1} \d\theta \wedge \d t \wedge \alpha \wedge \d\alpha^{n-1}
\\
&=
n s^{n}k^{n-1} (\dot{k} f - \dot{f}k) \d \theta \wedge \d t\wedge \alpha \wedge \d\alpha^{n-1}.
\end{align*}

By the previous choice of $k$, the local linear deformation $\gamma_s$ is then a contact deformation away from $t=0$, i.e.\ away from the closed leaf coming from this turbulization, and it glues well (up to rescaling) to the deformation previously described away from this turbulization region.
\end{proof}

\begin{figure}[th]
    \centering
\begingroup%
  \makeatletter%
  \providecommand\color[2][]{%
    \errmessage{(Inkscape) Color is used for the text in Inkscape, but the package 'color.sty' is not loaded}%
    \renewcommand\color[2][]{}%
  }%
  \providecommand\transparent[1]{%
    \errmessage{(Inkscape) Transparency is used (non-zero) for the text in Inkscape, but the package 'transparent.sty' is not loaded}%
    \renewcommand\transparent[1]{}%
  }%
  \providecommand\rotatebox[2]{#2}%
  \newcommand*\fsize{\dimexpr\f@size pt\relax}%
  \newcommand*\lineheight[1]{\fontsize{\fsize}{#1\fsize}\selectfont}%
  \ifx\svgwidth\undefined%
    \setlength{\unitlength}{209.27125067bp}%
    \ifx\svgscale\undefined%
      \relax%
    \else%
      \setlength{\unitlength}{\unitlength * \real{\svgscale}}%
    \fi%
  \else%
    \setlength{\unitlength}{\svgwidth}%
  \fi%
  \global\let\svgwidth\undefined%
  \global\let\svgscale\undefined%
  \makeatother%
  \begin{picture}(1,0.70196691)%
    \lineheight{1}%
    \setlength\tabcolsep{0pt}%
    \put(0,0){\includegraphics[width=\unitlength,page=1]{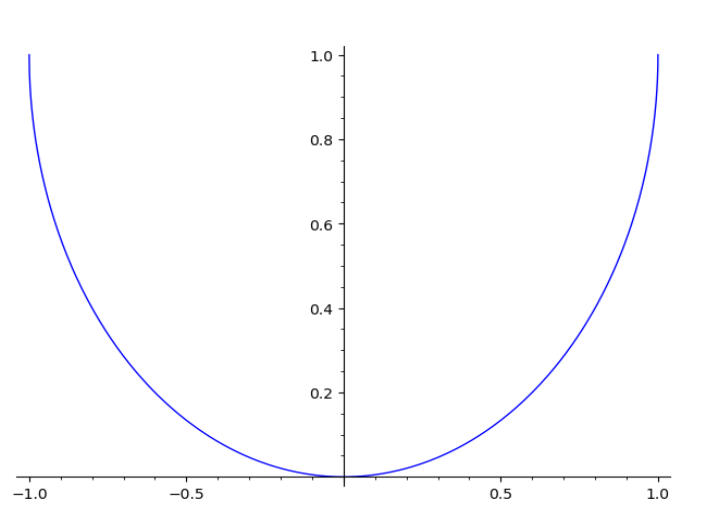}}%
    \put(0.93841492,0.03253584){\makebox(0,0)[lt]{\lineheight{1.25}\smash{\begin{tabular}[t]{l}$f$\end{tabular}}}}%
    \put(0.44629189,0.66928206){\makebox(0,0)[lt]{\lineheight{1.25}\smash{\begin{tabular}[t]{l}$k$\end{tabular}}}}%
    \put(0.14999794,0.27452857){\makebox(0,0)[lt]{\lineheight{1.25}\smash{\begin{tabular}[t]{l}$(f,k)$\end{tabular}}}}%
    \put(0.02618038,0.18372787){\color[rgb]{1,0,0}\makebox(0,0)[lt]{\lineheight{1.25}\smash{\begin{tabular}[t]{l}$(\dot{f},\dot{k})$\end{tabular}}}}%
    \put(0,0){\includegraphics[width=\unitlength,page=2]{graph_f_k.pdf}}%
  \end{picture}%
\endgroup%

    \caption{Parametric graph of the functions $f$ and $k$}
    \label{fig:graph_f_k}
\end{figure}

\subsubsection{Proof of \Cref{thm:deformations_self_round_connected_sums}}
\label{sec:deformation_for_round_connected_sums}

By a homotopy of $(\zeta,\mu)$ we can assume that there is a neighborhood $N \simeq \S^1 \times \D^{2n}$ of the curve $\gamma$ on which
\[ \zeta = \bigcup_{\theta \in \S^1} \{ \theta \} \times \D^{2n},\quad \mu = \omega_{st}.\]
Applying \cite{Mei17} relative to this neighborhood (see \cite[Remark 6.4]{BerMei}) we can homotopy $\zeta$ to a taut foliation $\G$ (equal to the product foliation on the neighborhood above).
Note that at this point we are in the hypothesis of  \Cref{rmk:starting_from_fol}.

We fix two parallel copies $\gamma_\pm$ of the curve $\gamma$ inside $N$, and positively transverse to $\G$. 
Then, as done in the proof of \Cref{prop:conf_sympl_fol_taut}, we apply \cite{BerMei} to the complement of $\gamma_\pm$, and compose it with the foliated cobordisms $T_\pm$ described in that proof, in order to obtain a foliated conformal symplectic cobordism
\[
\S^1\times (\S^{2n-1},\alpha_{ot,-})
\to \S^1\times (\overline{\S^{2n-1}},\alpha_{ot,+}).
\]
Moreover, we can arrange the conformal symplectic foliation to be leafwise exact and holonomy-like.

Consider now an orientation reversing diffeomorphism $\psi:\S^{2n-1} \to \S^{2n-1}$. This induces an orientation \emph{preserving} diffeomorphism
\[\Psi:\S^1 \times \S^{2n-1} \tois \S^1 \times \overline{\S^{2n-1}},\quad (\theta,x) \mapsto (\theta,\psi(x)).\]
As such, the above cobordism can also be interpreted as a foliated conformal symplectic cobordism
\begin{equation}\label{eq:thm18cobordism}
\S^1\times (\S^{2n-1},\alpha_{ot,-})
\to \S^1\times (\S^{2n-1},\alpha'_{ot,+}=\psi^*\alpha_{ot,+}).
\end{equation}
Since $\Psi$ is orientation preserving, $\alpha_{ot,+}'$ is a positive, overtwisted contact form.

Recall now that the $0$-th homotopy group of the space of almost contact structures on $\S^{2n-1}$, is a group under taking connected sum. 
The identity element is moreover given by the homotopy class of the standard tight contact structure (which is the same as that of $\xi_{ot,-}$).
Because of the dimensional assumption $2n-1 = 4k+1 \geq 5$, it also follows from \cite{Har63} that this group is finite. 
Therefore, there exist an integer $N >0$ such that $\#_{i=1}^N \xi_{ot,+}'$ is in the same almost contact class as $\xi_{ot,-}$.
(Note that if $k=1$ we can choose $N=1$.)

Consider then the standard symplectic ball $(\D^{2n},\omega_{st})$, and remove $N$ smaller balls $\D^{2n}_\varepsilon$ from its interior. 
Then, \Cref{thm:EliMurphyMain} ,we obtain a (non-foliated) conformal symplectic cobordism $(\S^{2n-1},\xi_{ot,+}') \to \sqcup_N(\S^{2n-1},\xi_{ot,+}')$. 
We compose this with $N$ Weinstein $1$-handle attachments to reach as convex boundary $(\#_N \S^{2n-1} = \S^{2n-1},\#_N \xi_{ot,+}')$. 
As pointed out before $\#_N \xi_{ot,+}'$ is homotopic to $\xi_{ot,-}$. 
Thus, by adding another (smoothly trivial) cobordism we can obtain $(\S^{2n-1},\xi_{ot,-})$ as the concave boundary. 
The resulting Liouville cobordism $(X,\lambda_X)$ is depicted in \Cref{fig:cobordism_X}.
Note that as a smooth manifold $X$ is obtained from $[0,1]\times \S^{2n-1}$ by doing $N$ self connected sums.

\begin{figure}[h]
    \centering
    \def\svgwidth{0.8\textwidth}
    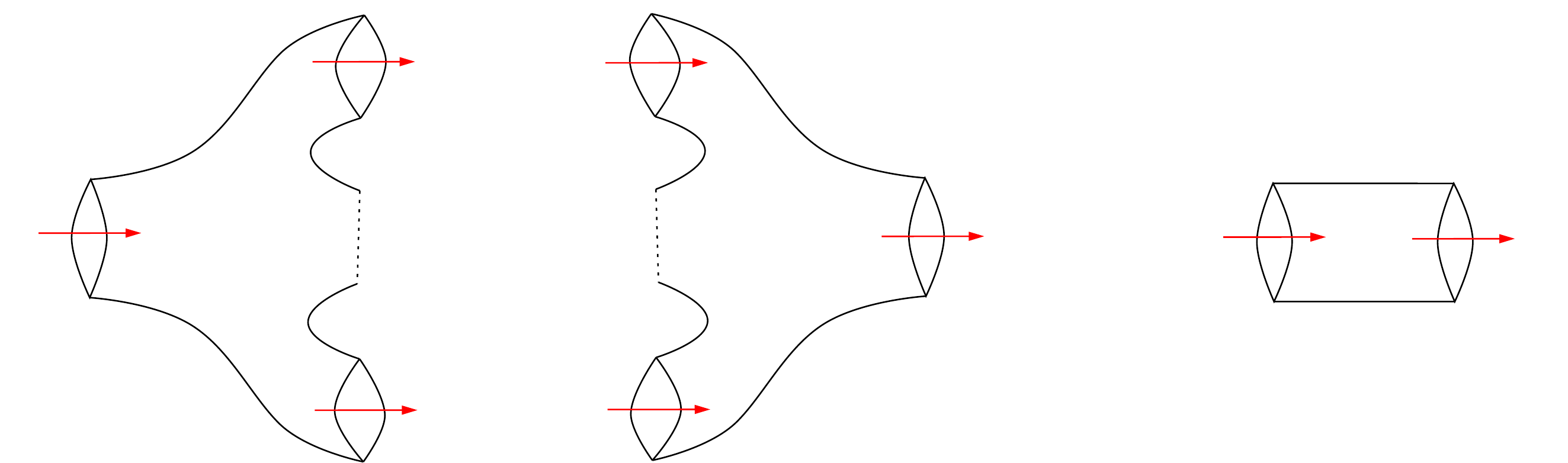
    \caption{A schematic description of the two pieces composing the cobordism $X$.}
    \label{fig:cobordism_X}
\end{figure}

According to \Cref{rmk:weak_relat_to_relat}, we can now fix any desired contact form at the boundary, at the expense of $(X,\lambda_X)$ becoming a conformal symplectic cobordism. 
Taking the product with $\S^1$ we then obtain a foliated conformal symplectic cobordism
\[ \S^1 \times (\S^{2n-1},\alpha_{ot,+}') \to \S^1 \times (\S^{2n-1},\alpha_{ot,-}').\]
Again, note that as a smooth manifold $\S^1 \times X$ is from $\S^1 \times [0,1] \times \S^{2n-1}$ by self round-connected sums.

Finally, we glue the above cobordism to the one of \Cref{eq:thm18cobordism}. This yields the desired exact, holonomy-like conformal symplectic foliation on the round connect sum of $M$ with $k$ disjoint copies of $T^2 \times \S^{4n+1}$. The existence of a Type I deformation lastly follows from \Cref{thm:TypeIDeformations}.
\hfill
\qedsymbol

\bibliographystyle{alphaurl}
\bibliography{biblio}

\end{document}